\newcommand{\tensor}{\otimes}
\newcommand{\colim}{\operatorname{colim}}
\newcommand{\hofib}{\operatorname{hofib}}
\newcommand{\Spec}{\operatorname{Spec}}
\newcommand{\iso}{\cong}
\newcommand{\weq}{\simeq}
\newcommand{\isomto}{{\stackrel{\sim}{\;\longrightarrow\;}}}
\newcommand{\isomt}{{\stackrel{{\scriptscriptstyle{\sim}}}{\;\rightarrow\;}}}
\newcommand{\smallsim}{{\scriptscriptstyle{\sim}}}
\newcommand{\sma}{{\scriptstyle{\wedge}\,}}
\newcommand{\longhookrightarrow}{\lhook\joinrel\longrightarrow}
\newcommand{\longtwoheadrightarrow}{\relbar\joinrel\twoheadrightarrow}
\newcommand{\op}{\mathrm{op}}
\newcommand{\onto}{\twoheadrightarrow}
\renewcommand{\hom}{\operatorname{Hom}}
\newcommand{\Map}{\operatorname{Map}}
\newcommand{\id}{\iota}
\newcommand{\Id}{\mathrm{id}}
\newcommand{\real}{{\mathbb R}}
\newcommand{\cplx}{{\mathbb C}}
\newcommand{\Q}{{\mathbb Q}}
\newcommand{\Z}{{\mathbb Z}}
\newcommand{\aone}{{\mathbb A}^1}
\newcommand{\pone}{{\mathbb P}^1}
\newcommand{\gm}[1]{{{\mathbb G}_{m}^{#1}}}
\newcommand{\MW}{\mathrm{MW}}
\newcommand{\et}{\text{\'et}}
\newcommand{\ho}[2][]{\mathscr{H}_{#1}({#2})}
\newcommand{\M}{{\mathrm{M}}}
\newcommand{\bpi}{\bm{\pi}}
\newcommand{\piaone}{{\bpi}^{\aone}}
\newcommand{\Nis}{{\operatorname{Nis}}}
\newcommand{\Sigmas}{\mathfrak{S}}
\newcommand{\switch}{\wp}
\newcommand{\Sm}{\mathrm{Sm}}
\newcommand{\Ab}{\mathrm{Ab}}
\newcommand{\K}{{{\mathbf K}}}
\renewcommand{\H}{{{\mathbf H}}}
\newcommand{\cuppr}{\mathbin{\smile}}
\newcommand{\hocolim}{\operatornamewithlimits{hocolim}}
\newcommand{\Loc}{\mathrm{L}}
\newcommand{\Laone}{\Loc_{\aone}}
\newcommand{\Hoh}{\mathrm{H}}
\newcommand{\Eoh}{\mathrm{E}}
\newcommand{\ZZ}{\Z}
\newcommand{\Addresses}{{% additional braces for segregating \footnotesize
 \bigskip
 \footnotesize

 A.~Asok, Department of Mathematics, University of Southern California, 3620 S. Vermont Ave.,
  Los Angeles, CA 90089-2532, United States; \textit{E-mail address:} \url{asok@usc.edu}

  \medskip

 K.~Wickelgren, Department of Mathematics, Georgia Institute of Technology, 686 Cherry Street
  Atlanta, GA 30308, United States; \textit{E-mail address:} \url{wickelgren@post.harvard.edu}

 \medskip

 T.B.\ Williams, Department of Mathematics, The University of British Columbia, 1984 Mathematics Road
Vancouver, B.C. Canada V6T 1Z2; \textit{E-mail address:} \url{tbjw@math.ubc.ca}

}}
\newcounter{intro}
\theoremstyle{plain}
\newtheorem{thm}{Theorem}[subsection]
\newtheorem{lem}[thm]{Lemma}
\newtheorem{cor}[thm]{Corollary}
\newtheorem{prop}[thm]{Proposition}
\newtheorem*{claim*}{Claim} %Claim
\newtheorem{conj}[thm]{Conjecture}
\newtheorem*{thm*}{Theorem}
\newtheorem*{problem*}{Problem}
\newtheorem{thmintro}{Theorem}
\theoremstyle{definition}
\newtheorem{defn}[thm]{Definition}
\newtheorem{construction}[thm]{Construction}
\newtheorem{notation}[thm]{Notation}
\newtheorem{convention}[thm]{Convention}
\theoremstyle{remark}
\newtheorem{rem}[thm]{Remark}
\newtheorem{remintro}[thmintro]{Remark}
\newtheorem{entry}[thm]{}
\numberwithin{equation}{subsection}
\begin{document}
\pagestyle{fancy}
\renewcommand{\sectionmark}[1]{\markright{\thesection\ #1}}
\fancyhead{}
\fancyhead[LO,R]{\bfseries\footnotesize\thepage}
\fancyhead[LE]{\bfseries\footnotesize\rightmark}
\fancyhead[RO]{\bfseries\footnotesize\rightmark}
\chead[]{}
\cfoot[]{}
\setlength{\headheight}{1cm}

\author{Aravind Asok\thanks{Aravind Asok was partially supported by National Science Foundation Award DMS-1254892.} \and Kirsten Wickelgren\thanks{Kirsten Wickelgren was partially supported by National Science Foundation Award DMS-1406380} \and Ben Williams}

\title{{\bf The simplicial suspension sequence in $\aone$-homotopy}}
\date{}
\maketitle

\begin{abstract}
We study a version of the James model for the loop space of a suspension in unstable ${\mathbb A}^1$-homotopy theory. We
use this model to establish an analog of G.W.~Whitehead's classical refinement of the Freudenthal suspension theorem in
${\mathbb A}^1$-homotopy theory: our result refines F. Morel's ${\mathbb A}^1$-simplicial suspension theorem. We then
describe some $E_1$-differentials in the EHP sequence in ${\mathbb A}^1$-homotopy theory. These results are analogous to
classical results of G.W.~Whitehead's. Using these tools, we deduce some new results about unstable ${\mathbb
  A}^1$-homotopy sheaves of motivic spheres, including the counterpart of a classical rational non-vanishing result.
\end{abstract}

\begin{footnotesize}
\tableofcontents
\end{footnotesize}

\section{Introduction}
If $K$ is an $(n-1)$-connected pointed CW complex, then the suspension map ${\mathrm E} :
\pi_{q}(K) \to \pi_{q+1}(\Sigma K)$ fits into a long exact sequence of the form:
\[
\begin{split}
\xymatrix{
\pi_{3n-2}(K) \ar^-{{\mathrm E}}[r]& \pi_{3n-1}(\Sigma K) \ar^-{{\mathrm H}}[r] & \pi_{3n-1}(\Sigma K^{\sma 2}) \ar^-{{\mathrm P}}[r] & \pi_{3n-3}(K) \ar^-{{\mathrm E}}[r] & \cdots } \\
\xymatrix{
 \cdots \ar[r] & \pi_q(K) \ar^-{{\mathrm E}}[r] & \pi_{q+1}(\Sigma K) \ar^-{{\mathrm H}}[r] & \pi_{q+1}(\Sigma K^{\sma 2}) \ar^-{{\mathrm P}}[r] &
  \pi_{q-1}(K) \ar[r] & \cdots.} \end{split}
\]
Together with an elementary connectivity estimate for $\Sigma K^{\sma 2}$, this exact sequence may be viewed as a refinement of the Freudenthal suspension theorem. The exact sequence above was first constructed by G.W. Whitehead if $K = S^n$ \cite[Theorem 1 p. 211]{WhiteheadFreudenthalTheorems}, and by W.D. Barcus for $K$ as above \cite[Proposition 2.9]{Barcus} (see also \cite[Theorem XII.2.2 p. 543]{Whitehead} for a textbook treatment of the general statement).

The morphisms ${\mathrm H}$ and ${\mathrm P}$ appearing in the above exact sequence were also studied by Whitehead in great detail in the case where $K = S^n$ \cite[\S 10]{WhiteheadHopf}.  The morphism $\mathrm{H}$ is the Hopf invariant, and Whitehead linked the morphism ${\mathrm P}$ with Whitehead products.  In more detail, begin by observing that the $(n+1)$-fold suspension ${\mathrm E}^{n+1}: \pi_{q-n}(S^n) \to \pi_{q+1}(S^{2n+1})$ is an isomorphism for $q < 3n-1$. Define ${\mathrm P}': \pi_{q-n}(S^n) \to \pi_{q-1}(S^n)$ by ${\mathrm P}'(\alpha) = [\alpha,\id_n]$, where $\id_n$ is the identity map on the $n$-sphere and the bracket denotes Whitehead product. For ${\mathrm P}: \pi_{q+1}(S^{2n+1}) \to \pi_{q-1}(S^n)$, Whitehead observed that
\[
{\mathrm P} = {\mathrm P}' \circ ({\mathrm E}^{n+1})^{-1} \text{ if } q < 3n-1.
\]
While Whitehead established this result for spheres, it has been known for some time that morphism ${\mathrm P}$ is, for general $(n-1)$-connected spaces, still closely related to Whitehead products; see, e.g., \cite[\S 2]{SuspensionTriad} or \cite[Theorem 3.1 and p. 231]{Ganea} for a very general statement. In any case, these kinds of tools were used to great effect in early computations of unstable homotopy groups of spheres, e.g., by James and Toda \cite{JamesConstruction, Jamessuspensionsequence, Toda}.

The goal of this paper, whose title pays homage to the work of James \cite{Jamessuspensionsequence}, is to establish analogs of the above results in the Morel--Voevodsky unstable $\aone$-homotopy category \cite{MV} and to deduce some consequences of these results. The jumping-off point is to give a James-style model for the loop space of a suspension in $\aone$-homotopy theory (see Theorem \ref{thm:aonejamesconstruction}). Using this model, we deduce the following result, which can be thought of as a refinement of Morel's $\aone$-simplicial suspension theorem \cite[Theorem 6.61]{MField}.

\begin{thmintro}[See Theorem \ref{thm:EHP_range}, Remark \ref{rem:refinesfreudenthal} and Theorem \ref{P=piWhitehead}]
\label{thmintro:aoneehpsequence}
Assume $k$ is a perfect field. If $\mathscr X$ is a pointed $\aone$--$(n-1)$-connected simplicial presheaf on $(\Sm_k)_{\Nis}$, with $n \ge 2$, then there is an exact sequence of $\aone$-homotopy sheaves of the form:
\[
\begin{split}
\xymatrix{\piaone_{3n-2}(\mathscr X) \ar^-{\mathrm E}[r] &
\piaone_{3n-1}(\Sigma \mathscr{X}) \ar^-{{\mathrm H}}[r] & \piaone_{3n-1}(\Sigma \mathscr{X}^{\sma 2}) \ar^-{{\mathrm P}}[r] & \piaone_{3n-3}(\mathscr{X}) \ar^-{{\mathrm E}}[r] & \cdots
} \\
\xymatrix{
 \cdots \ar[r] & \piaone_q(\mathscr{X}) \ar^-{{\mathrm E}}[r] & \piaone_{q+1}(\Sigma \mathscr{X})\ar^-{{\mathrm H}}[r] & \piaone_{q+1}(\Sigma \mathscr{X}^{\sma 2}) \ar^-{{\mathrm P}}[r] &
  \piaone_{q-1}(\mathscr{X}) \ar[r] & \cdots; }
 \end{split}
\]
the map ${\mathrm E}$ is (simplicial) suspension, the map ${\mathrm H}$ is a James--Hopf invariant, and the map ${\mathrm P}$ is described, as above, in terms of Whitehead products.
\end{thmintro}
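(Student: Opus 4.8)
The plan is to transport the classical James-theoretic proof of the EHP sequence into unstable $\aone$-homotopy theory, using the $\aone$-James construction of Theorem~\ref{thm:aonejamesconstruction} as the basic input. Write $J(\mathscr X)$ for the $\aone$-James model, so that the canonical inclusion $\mathscr X = J_1(\mathscr X) \hookrightarrow J(\mathscr X)$ is $\aone$-weakly equivalent to the simplicial suspension unit $\mathrm E \colon \mathscr X \to \Omega\Sigma\mathscr X$; recall that $J(\mathscr X) = \colim_k J_k(\mathscr X)$ for an increasing James filtration with cofibre sequences $J_{k-1}(\mathscr X) \to J_k(\mathscr X) \to \mathscr X^{\sma k}$, and that there are James--Hopf maps $H_k \colon J(\mathscr X) \to J(\mathscr X^{\sma k})$. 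Since $k$ is perfect and $\mathscr X$ is $\aone$-$(n-1)$-connected with $n \ge 2$, Morel's $\aone$-connectivity theorem gives that $\mathscr X^{\sma k}$ is $\aone$-$(kn-1)$-connected, hence that $\Sigma\mathscr X^{\sma 2}$ is $\aone$-$2n$-connected; feeding the cofibre sequences into the $\aone$-Blakers--Massey theorem then shows that $J_{k-1}(\mathscr X) \to J_k(\mathscr X)$, and therefore $J_{k-1}(\mathscr X) \to J(\mathscr X)$, is $\aone$-$(kn-1)$-connected. In particular $J_2(\mathscr X) \to \Omega\Sigma\mathscr X$ is an isomorphism on $\piaone_i$ for $i \le 3n-2$, and $\mathscr X^{\sma 2} = J_1(\mathscr X^{\sma 2}) \to \Omega\Sigma\mathscr X^{\sma 2}$ is an isomorphism on $\piaone_i$ for $i \le 4n-2$.

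Next I would build the EHP fibre sequence in a range. The second James--Hopf map $\mathrm H := H_2$ annihilates $J_1(\mathscr X) = \mathscr X$ and carries $J_2(\mathscr X)$ into $J_1(\mathscr X^{\sma 2}) = \mathscr X^{\sma 2}$, inducing on $J_2(\mathscr X)/J_1(\mathscr X) = \mathscr X^{\sma 2}$ an $\aone$-weak equivalence; in particular the composite $\mathscr X = J_1(\mathscr X) \hookrightarrow J_2(\mathscr X) \xrightarrow{\mathrm H} \mathscr X^{\sma 2}$ is $\aone$-nullhomotopic, whence a canonical map $\mathscr X \to \hofib\bigl(\mathrm H|_{J_2(\mathscr X)}\bigr)$. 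The $\aone$-Blakers--Massey theorem applied to the homotopy cocartesian square exhibiting $\mathscr X^{\sma 2}$ as $J_2(\mathscr X)/J_1(\mathscr X)$ shows this map is $\aone$-$(3n-1)$-connected. Comparing the two fibre sequences $\hofib(\mathrm H|_{J_2(\mathscr X)}) \to J_2(\mathscr X) \to \mathscr X^{\sma 2}$ and $\hofib(\mathrm H) \to \Omega\Sigma\mathscr X \xrightarrow{\mathrm H} \Omega\Sigma\mathscr X^{\sma 2}$ along the filtration inclusions — using that the middle terms agree on $\piaone_i$ for $i \le 3n-2$ and the right-hand terms agree on $\piaone_i$ for $i \le 4n-2$ — a five-lemma chase of the two long exact sequences shows $\hofib(\mathrm H|_{J_2(\mathscr X)}) \to \hofib(\mathrm H)$ is an isomorphism on $\piaone_i$ for $i \le 3n-2$. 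Composing, the canonical map $\mathscr X \to \hofib(\mathrm H)$ — which realizes $\mathrm E$ followed by the map onto the fibre — is an isomorphism on $\piaone_i$ for $i \le 3n-2$.

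Taking the long exact sequence of $\aone$-homotopy sheaves associated to the genuine fibre sequence $\hofib(\mathrm H) \to \Omega\Sigma\mathscr X \xrightarrow{\mathrm H} \Omega\Sigma\mathscr X^{\sma 2}$, substituting $\piaone_i(\hofib(\mathrm H))$ by $\piaone_i(\mathscr X)$ wherever $i \le 3n-2$, and reindexing via $\piaone_q(\Omega\Sigma\mathscr Y) \iso \piaone_{q+1}(\Sigma\mathscr Y)$, one obtains precisely the displayed exact sequence, with $\mathrm E$ the simplicial suspension and $\mathrm H$ the James--Hopf invariant; this, together with the precise range, is Theorem~\ref{thm:EHP_range}. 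Combining with the $\aone$-$2n$-connectedness of $\Sigma\mathscr X^{\sma 2}$ gives at once that $\mathrm E \colon \piaone_q(\mathscr X) \to \piaone_{q+1}(\Sigma\mathscr X)$ is an isomorphism for $q \le 2n-1$ and an epimorphism for $q = 2n$, recovering Morel's $\aone$-simplicial suspension theorem and refining it by identifying the obstruction groups with $\piaone_*(\Sigma\mathscr X^{\sma 2})$; this is Remark~\ref{rem:refinesfreudenthal}.

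It remains to identify the connecting morphism $\mathrm P$. By the construction above, $\mathrm P$ is (in the relevant range) the boundary map of the fibration-up-to-dimension-$(3n-1)$ sequence $\mathscr X \to J_2(\mathscr X) \to \mathscr X^{\sma 2}$, i.e.\ the $\aone$-analog of the universal Whitehead-product map $\Omega(\mathscr X^{\sma 2}) \to \mathscr X$; the identification of this boundary with $\aone$-Whitehead products — and, when $\mathscr X$ is a motivic sphere, the recovery of the analog of Whitehead's relation expressing $\mathrm P$ through $\mathrm P'(\alpha) = [\alpha,\id]$ as recalled in the introduction — is the content of Theorem~\ref{P=piWhitehead}, proved by matching the motivic James--Hopf construction against the theory of $\aone$-Whitehead products. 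I expect the main obstacle to lie exactly here, rather than in the (essentially formal, once Theorem~\ref{thm:aonejamesconstruction} and a motivic Blakers--Massey theorem are in hand) derivation of the exact sequence: in contrast with the topological case one must first develop a workable $\aone$-Whitehead-product calculus and then keep track of the $\mathrm{GW}(k)$-linear sign subtleties — powers of $\epsilon = -\langle -1 \rangle$ entering through the switch map on $\mathscr X^{\sma 2}$ — that have no classical analog.
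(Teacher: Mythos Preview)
Your derivation of the exact sequence is correct but follows a genuinely different route from the paper. The paper first establishes the classical connectivity estimate $\phi\colon K \to \hofib(\mathrm H)$ is $(3n-2)$-connected for simplicial sets via the Serre spectral sequence of the fibration $\hofib(\mathrm H)\to J(K)\to J(K^{\sma 2})$, together with the Hilton--Milnor splitting and Kuhn's identification of $\mathrm H$ on homology; it then argues at points to pass to simplicial presheaves, and finally transports everything to $\aone$-homotopy by applying Morel's theorem that $\Laone$ preserves simplicial fiber sequences with simply connected base (Theorem~\ref{thm:MField6.53}) and the relative $\aone$-connectivity theorem (Corollary~\ref{cor:relativehurewicz}). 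You instead work directly in the $\aone$-local world, replacing the Serre spectral sequence argument by an application of the $\aone$-Blakers--Massey theorem to the pushout square $J_1(\mathscr X)\to J_2(\mathscr X)\to \mathscr X^{\sma 2}$ and a five-lemma comparison with the full James construction. This is more economical---it avoids Hilton--Milnor and the spectral sequence entirely---but it is not really more elementary: the $\aone$-Blakers--Massey theorem you invoke is itself proved (see the references in Section~\ref{ss:lowdegree}) using exactly the localization machinery of Theorem~\ref{thm:MField6.53} and Corollary~\ref{cor:relativehurewicz}, so the same ingredients are present, only repackaged. Your connectivity bookkeeping is slightly optimistic (the paper obtains $(3n-2)$-connectedness of $\phi$, not $(3n-1)$), but this does not affect the range of the resulting exact sequence.

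On the identification of $\mathrm P$, your sketch is too vague to count as a proof, as you yourself flag. The paper's argument (Theorem~\ref{P=piWhitehead}) requires the additional hypothesis that $\mathscr X = \Sigma\mathscr Z$ is itself a suspension---not merely that $\mathscr X$ is a sphere---and proceeds by invoking Arkowitz's cofiber sequence $\Sigma(\mathscr Z\sma\mathscr Z)\xrightarrow{[\id,\id]}\Sigma\mathscr Z\vee\Sigma\mathscr Z\to\Sigma\mathscr Z\times\Sigma\mathscr Z$ (Lemma~\ref{lem:cofib_wh_iXiY}), folding it to produce a map of the top cofiber row into the bottom fiber row in a diagram comparing $\Sigma\mathscr Z\to J_2(\Sigma\mathscr Z)\to (\Sigma\mathscr Z)^{\sma 2}$ with $\hofib(\mathrm H)\to J(\Sigma\mathscr Z)\to J((\Sigma\mathscr Z)^{\sma 2})$, and then tracking the induced map on homotopy fibers. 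The sign and $GW(k)$ subtleties you anticipate do arise, but later: they enter in the computation of $\mathrm H\mathrm P$ via the Boardman--Steer formalism (Section~\ref{ss:propertiesofh}), not in the identification of $\mathrm P$ itself.
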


We go on to discuss various consequences of the existence of this exact sequence. We analyze the low-degree portion of this sequence in Theorem \ref{thm:lowdegree} and give a more explicit description of the sequence in the first degree where the suspension map fails to be an isomorphism. When $\mathscr{X}$ is a motivic sphere, it is shown in \cite{WickelgrenWilliams} that the exact sequences of Theorem \ref{thmintro:aoneehpsequence} can be extended to all degrees after localizing at $2$. By suitably varying the input sphere, these sequences can be strung together to obtain the EHP spectral sequence converging to the $2$-local $S^1$-stable $\aone$-homotopy sheaves of spheres.

By construction, the $E_1$-differentials in this spectral sequence arise from the composite map ${\mathrm H}{\mathrm
  P}$, which in certain degrees we can analyze integrally. To state the result, recall that Morel computed the first
non-vanishing $\aone$-homotopy sheaf of a motivic sphere in terms of Milnor--Witt K-theory \cite[Corollary
6.43]{MField}. He also showed that there is an isomorphism of rings $K^{\MW}_0(k) \iso GW(k)$, i.e., the zeroth Milnor--Witt K-theory group of a field $k$ is isomorphic to the Grothendieck-Witt ring of isomorphism classes of symmetric bilinear forms over $k$ \cite[Lemma 3.10]{MField}, defined to be the group completion of the monoid of isomorphism classes of non-degenerate symmetric bilinear forms. Given this terminology, the class of the composite ${\mathrm H}{\mathrm P}$ can be seen to correspond with a symmetric bilinear form, which we can describe. More precisely, we establish the following result (see the statement in the body of the text and Remark \ref{rem:zmod2equivariantinterpretation} for a more conceptual explanation of the formula).

\begin{thmintro}[See Theorem \ref{thm:HP}]
\label{thmintro:HP}
Assume $k$ is a perfect field, and let $p,q$ be integers with $p > 1, q \geq 1$. The map
\[
{\mathrm H}{\mathrm P}:\K^{\MW}_{2q} = \bpi_{2p+3}^{\aone}(\Sigma (S^{p + 1+ q \alpha})^{\wedge 2}) \longrightarrow \bpi_{2p+1}(\Sigma (S^{p + q \alpha})^{\wedge 2}) = \K^{\MW}_{2q}
\]
corresponds to the element $\langle 1 \rangle+ (-1)^{p+1+q}\langle -1 \rangle^q \in GW(k)$.
\end{thmintro}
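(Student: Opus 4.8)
The plan is to follow G.W.~Whitehead's classical argument --- first identify $\mathrm{P}$ as a Whitehead square, then apply the James--Hopf invariant $\mathrm{H}$ to that square --- but to run the whole argument inside the $\aone$-James model of Theorem~\ref{thm:aonejamesconstruction}, keeping careful track of the Milnor--Witt refinements of every sign that occurs.

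I would begin with bookkeeping reductions. By Morel's computation \cite[Corollary 6.43]{MField}, both $\piaone_{2p+3}(\Sigma(S^{p+1+q\alpha})^{\wedge 2})$ and $\piaone_{2p+1}(\Sigma(S^{p+q\alpha})^{\wedge 2})$ are the first non-vanishing $\aone$-homotopy sheaves of the motivic spheres in question, hence canonically $\K^{\MW}_{2q}$; and since, by Morel's results, any endomorphism of the sheaf $\K^{\MW}_{2q}$ is multiplication by an element of $\K^{\MW}_0(k) = GW(k)$, it suffices to compute $\mathrm{H}\mathrm{P}(\langle 1\rangle)$ for the canonical generator $\langle 1\rangle$. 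Next I factor $\mathrm{H}\mathrm{P}$, as in the EHP spectral sequence, through $\piaone_{2p+1}(S^{p+1+q\alpha})$ --- the first metastable homotopy sheaf of $\mathscr{X}_1 := S^{p+1+q\alpha}$. Writing $\iota \in \piaone_{p+1}(\mathscr{X}_1) = \K^{\MW}_q$ for the fundamental class, Theorem~\ref{P=piWhitehead} (the $\aone$-version of Whitehead's $\mathrm{P} = \mathrm{P}'\circ(\mathrm{E}^{n+1})^{-1}$ with $\mathrm{P}'(\alpha)=[\alpha,\iota_n]$) identifies $\mathrm{P}(\langle 1\rangle)$ with the Whitehead square $[\iota,\iota] \in \piaone_{2p+1}(\mathscr{X}_1)$, exactly as $\mathrm{P}(\iota_{2n+1}) = [\iota_n,\iota_n]$ classically. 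Thus the theorem reduces to computing $\mathrm{H}([\iota,\iota]) \in \piaone_{2p+1}(\Sigma(S^{p+q\alpha})^{\wedge 2}) = \K^{\MW}_{2q}$.

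The heart of the argument is this computation of $\mathrm{H}$ on a Whitehead square. Passing to adjoints, $[\iota,\iota]$ is the class of the commutator of the James inclusion $S^{p+q\alpha}\hookrightarrow \Omega_s\Sigma S^{p+q\alpha}$ with itself, and $\mathrm{H} = h_2$ is described on the James filtration by its failure to be a monoid map, the correction term being the bilinear smash pairing $\Omega_s\Sigma S^{p+q\alpha} \times \Omega_s\Sigma S^{p+q\alpha} \to \Omega_s\Sigma (S^{p+q\alpha})^{\wedge 2}$ supplied by the $\aone$-James splitting. Running Whitehead's commutator bookkeeping with this pairing, the terms involving $h_2$ of the individual factors cancel and one is left with $\mathrm{H}([\iota,\iota]) = (\langle 1\rangle + \deg\tau)\cdot(\text{generator})$, where $\deg\tau \in GW(k)$ is the $\aone$-degree of the transposition $\tau$ of the two smash factors of $\mathscr{X}_1\wedge\mathscr{X}_1$ --- in other words, $\mathrm{H}$ of the Whitehead square is the Lefschetz number of $\tau$, precisely as $\mathrm{H}[\iota_m,\iota_m] = 1+(-1)^m = 1 + \deg(\tau_{S^m\wedge S^m})$ classically \cite{Whitehead}. (This is also the $\mathbb{Z}/2$-equivariant reading promised in Remark~\ref{rem:zmod2equivariantinterpretation}: in the James model $\mathrm{P}$ is a shift of the connecting map $\Sigma(J_2/J_1) = \Sigma\mathscr{X}_1^{\wedge 2} \to \Sigma\mathscr{X}_1 \to \mathscr{X}_1$ and $\mathrm{H}$ the projection back onto $J_2/J_1$, so $\mathrm{H}\mathrm{P}$ is a self-map of $\mathscr{X}_1^{\wedge 2}$ which the diagonal identifies with $\mathrm{id}+\tau$.) Finally I compute $\deg\tau$ using Morel's degree formulas \cite{MField}: writing $\mathscr{X}_1 = A \wedge B$ with $A = (S^1_s)^{\wedge(p+1)}$ and $B = \gm{\wedge q}$, the transposition of $A\wedge B\wedge A\wedge B$ decomposes into a transposition within the $A$'s (contributing $(-1)^{(p+1)^2} = (-1)^{p+1}$), a transposition within the $B$'s (contributing $\epsilon^{q^2} = \epsilon^q$, where $\epsilon = -\langle-1\rangle$ is Morel's value for the $\gm{}$-transposition), and cross-terms that cancel in pairs; hence $\deg\tau = (-1)^{p+1}\epsilon^q = (-1)^{p+1+q}\langle -1\rangle^q$, and $\mathrm{H}\mathrm{P}(\langle 1\rangle) = \langle 1\rangle + (-1)^{p+1+q}\langle -1\rangle^q$, as claimed.

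The main obstacle is the middle step: making rigorous, inside the $\aone$-James model, the combinatorial formula for $h_2$ on a commutator together with its bilinear correction term, and then accounting correctly for every motivic twist --- $-1$ versus $\langle -1\rangle$ versus $\epsilon$, as well as the shuffle isomorphisms relating $(\Sigma S^{p+q\alpha})^{\wedge 2}$, $\Sigma^2(S^{p+q\alpha})^{\wedge 2}$, and $\Sigma(S^{p+q\alpha})^{\wedge 2}$ --- since an off-by-$\langle-1\rangle$ error is exactly what could slip in here. One must also check that the hypotheses $p>1$, $q\geq 1$ place everything in the stable/metastable range where $\mathrm{H}$ is additive and Morel's identifications with $\K^{\MW}$ apply. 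As an independent sanity check, once one knows $\mathrm{H}\mathrm{P}$ has the form $\langle 1\rangle + (\pm1)\langle-1\rangle^q$, its four possible values $0$, $2\langle 1\rangle$, $h = \langle 1\rangle + \langle-1\rangle$, $\langle 1\rangle - \langle-1\rangle$ are distinguished by rank together with signature, so complex realization (rank $1+(-1)^{p+1+q}$, matching Whitehead's count for $S^{p+1+q}$) and real realization (signature $1+(-1)^{p+1}$) pin down the answer unconditionally, and exhibit the noteworthy cases $\mathrm{H}\mathrm{P}=h$ (for $p$ even, $q$ odd) and $\mathrm{H}\mathrm{P}=0$ (for $p$ even, $q$ even).
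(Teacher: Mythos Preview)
Your proposal is correct and follows essentially the same strategy as the paper: identify $\mathrm{P}$ with the Whitehead square via Theorem~\ref{P=piWhitehead}, compute $\mathrm{H}$ of that square as $\id \pm (\text{exchange})$, and evaluate the degree of the exchange map using Morel's formula $\deg(\gm{}\text{-swap}) = \epsilon$. The paper carries out your ``commutator bookkeeping'' step by adapting the Boardman--Steer axiomatic treatment of Hopf invariants (Lemma~\ref{CartanSuspensions}, Proposition~\ref{lambda2WhiteheadProd_suspensions}, Proposition~\ref{prop:s*H2P}): the Cartan formula for $\lambda_2 = \Sigma\circ\mathrm{H}$ on a product, together with the vanishing of cup products on suspensions, is exactly the cancellation you describe, and yields $\Sigma\mathrm{H}[\iota,\iota] = -\Sigma^2 e + \Sigma^2\id$ with $e$ the exchange on $\mathscr{Z}^{\wedge 2}$ for $\mathscr{Z} = S^{p+q\alpha}$; one then descends along the suspension isomorphism. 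Note that the paper's exchange $e$ lives on $(S^{p+q\alpha})^{\wedge 2}$ rather than your $\tau$ on $(S^{p+1+q\alpha})^{\wedge 2}$, which is why the paper obtains $1 - (-1)^p\epsilon^q$ while you obtain $1 + (-1)^{p+1}\epsilon^q$; these agree, and your sanity check via complex and real realization is precisely the content of Remark~\ref{rem:zmod2equivariantinterpretation}.
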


One consequence of this result is the following analog of the classical fact, due to Hopf, that $\pi_{4n-1}(S^{2n})$ is non-trivial.

\begin{thmintro}[See Theorem \ref{thm:rationalized}]
Fix a base field $k$ assumed to be perfect and to have characteristic unequal to $2$. Let $n,q \geq 2$ be even integers, and let $j$ be an integer. There is a surjection
\[
\bpi^{\aone}_{2n-1+j\alpha} S^{n + q \alpha} \tensor \Q \stackrel{{\mathrm H} \tensor \Q}{\longrightarrow} \K^{\MW}_{2q-j} \tensor \Q,
\]
and the sheaf $\bpi^{\aone}_{2n-1+j\alpha} S^{n + q \alpha} \tensor \Q$ is non-trivial if either $k$ is formally real or if $j \leq 2n-1$.
\end{thmintro}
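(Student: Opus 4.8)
The plan is to read everything off Theorem~\ref{thmintro:HP} (i.e.\ Theorem~\ref{thm:HP}) together with Morel's computation of the first non-vanishing $\aone$-homotopy sheaf of a motivic sphere. Write $S^{n+q\alpha}=\Sigma S^{n-1+q\alpha}$, so that the James--Hopf invariant of Theorem~\ref{thmintro:aoneehpsequence} is a map
\[
{\mathrm H}\colon \piaone_{2n-1+j\alpha}(S^{n+q\alpha})\longrightarrow \piaone_{2n-1+j\alpha}\bigl(\Sigma(S^{n-1+q\alpha})^{\wedge 2}\bigr)=\piaone_{2n-1+j\alpha}(S^{2n-1+2q\alpha}).
\]
Since $S^{2n-1+2q\alpha}$ is $\aone$-$(2n-2)$-connected, Morel's theorem gives $\piaone_{2n-1}(S^{2n-1+2q\alpha})=\K^{\MW}_{2q}$, and contracting $j$ times identifies the target of ${\mathrm H}$ with $\K^{\MW}_{2q-j}$; so ${\mathrm H}$ is precisely the map in the statement, and the first task is its rational surjectivity.

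For the surjection I would invoke Theorem~\ref{thmintro:HP} with parameter $p=n-1$ (legitimate for $n\ge 4$; the case $n=2$ is discussed at the end) and the same $q$: that result computes the composite ${\mathrm H}{\mathrm P}$ as the endomorphism of $\K^{\MW}_{2q}$ equal to multiplication by $\langle 1\rangle+(-1)^{p+1+q}\langle -1\rangle^q=\langle 1\rangle+(-1)^{n+q}\langle -1\rangle^q\in GW(k)$. By construction this composite factors as $\K^{\MW}_{2q}\xrightarrow{{\mathrm P}}\piaone_{2n-1}(S^{n+q\alpha})\xrightarrow{{\mathrm H}}\K^{\MW}_{2q}$, the second arrow being the James--Hopf invariant above; contraction is exact and $\K^{\MW}_0$-linear, so after contracting $j$ times one gets $\K^{\MW}_{2q-j}\xrightarrow{{\mathrm P}}\piaone_{2n-1+j\alpha}(S^{n+q\alpha})\xrightarrow{{\mathrm H}}\K^{\MW}_{2q-j}$ with composite still multiplication by $\langle 1\rangle+(-1)^{n+q}\langle -1\rangle^q$. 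When $n$ and $q$ are both even this scalar is $\langle 1\rangle+\langle 1\rangle=2\cdot 1_{GW}$, which is a unit after $\tensor\Q$; hence ${\mathrm H}{\mathrm P}\tensor\Q$ is an isomorphism, and since it factors through ${\mathrm H}\tensor\Q$ the latter is surjective. (In the Whitehead-product description of ${\mathrm P}$ from Theorem~\ref{thmintro:aoneehpsequence}, this says the Whitehead square of $\iota_{S^{n+q\alpha}}$ has Hopf invariant $2$, the evident motivic counterpart of Hopf's $H[\iota_{2m},\iota_{2m}]=\pm 2$.)

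For non-triviality it is then enough, via this surjection, that $\K^{\MW}_{2q-j}\tensor\Q$ be a non-trivial Nisnevich sheaf. When $2q-j\ge 0$ this holds for every $k$: $\K^{\MW}_m\tensor\Q$ surjects onto $\K^{M}_m\tensor\Q$, which is non-trivial because, e.g., $\{t_1,\dots,t_m\}$ has infinite order in $\K^{M}_m$ of a rational function field (for $m=0$ use the rank map $GW\to\Z$). When $2q-j<0$ one has $\K^{\MW}_{2q-j}\tensor\Q=W\tensor\Q$, which is non-trivial exactly when $k$ is formally real. This already yields the conclusion whenever $2q-j\ge 0$ or $k$ is formally real. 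To cover the remaining window $2q<j\le 2n-1$ with $k$ not formally real (which can occur when $q<n$), where $\K^{\MW}_{2q-j}\tensor\Q$ vanishes, one must instead produce a rationally non-torsion element of $\piaone_{2n-1+j\alpha}(S^{n+q\alpha})$ not seen by this single Hopf invariant; I would look for it among the $j$-fold contractions of suspended Whitehead squares of smaller motivic spheres appearing in the metastable EHP tower of Theorem~\ref{thmintro:aoneehpsequence}, combined with the low-degree analysis (Theorem~\ref{thm:lowdegree}).

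The main obstacle, I expect, is precisely this last point — securing non-triviality in the full stated range for fields that are not formally real — together with the boundary case $n=2$, where $p=n-1=1$ lies outside the hypotheses of Theorem~\ref{thmintro:HP} and $\piaone_{3+j\alpha}(S^{2+q\alpha})$ must be handled directly. By contrast, the surjection itself, and non-triviality when $2q-j\ge 0$ or $k$ is formally real, are essentially bookkeeping on top of Theorem~\ref{thmintro:HP}.
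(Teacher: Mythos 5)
Your argument for the surjection is exactly the paper's: in the proof of Theorem \ref{thm:rationalized} the authors take $p=n-1$ in Theorem \ref{thm:HP}, note that the composite ${\mathrm H}{\mathrm P}$ is multiplication by $1+(-1)^{n+q}\langle -1\rangle^{q}=2$ when $n,q$ are even, and conclude that ${\mathrm H}\tensor\Q$ is onto because ${\mathrm H}{\mathrm P}\tensor\Q$ is invertible, with the $j$-twisted identification of the target with $\K^{\MW}_{2q-j}$ coming from \cite[Theorem 6.13]{MField}, just as in your contraction step. For non-triviality the paper likewise feeds this surjection into Corollary \ref{cor:nontrivialityofrationalization} (non-triviality of $\K^{\MW}_{m}\tensor\Q$ for $m\ge 0$ over any such $k$, and of $\mathbf{W}\tensor\Q$ for formally real $k$), which is your argument up to the cosmetic difference that you certify $\K^{\M}_{m}\tensor\Q\neq 0$ via symbols in a rational function field rather than by contracting down to $\K^{\M}_{0}=\Z$. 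The two points you flag as obstacles are not ideas you failed to extract from the paper: the body statement, Theorem \ref{thm:rationalized}, is asserted only for $n>2$ and, when $k$ is not formally real, only for $0\le j\le 2q$; the introduction's ``$n,q\ge 2$'' and ``$j\le 2n-1$'' do not match what is proved (and are not provable by this method when $q<n$), so the window $2q<j\le 2n-1$ for non-formally-real $k$ and the case $n=2$ are simply not covered anywhere in the paper, and you need not reconstruct an argument for them.
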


Relying on the computations of \cite{AsokFaselA3minus0}, we analyze the low-degree portion of the EHP sequence in great
detail in the special case were $X = {\mathbb A}^3 \setminus 0$. In particular, we give a description of the next
non-vanishing $\aone$-homotopy sheaf (i.e., beyond that computed by Morel) of $\Sigma {\mathbb A}^3 \setminus 0 \iso {\pone}^{\sma 3}$ in Theorem \ref{thm:pi45ponesmash3}. The following statement is an easy-to-state special case of a more general result.

\begin{thmintro}[See Theorem \ref{thm:pi45ponesmash3}]
\label{thmintro:computationpi45ponesmash3}
If $k$ is a field of characteristic $0$ and containing an algebraically closed subfield, then, for any integer $i \geq 0$, there is an isomorphism of sheaves of the form:
\[
\bpi_{4+i+ 5\alpha}^{\aone}(S^{3+i}_s \wedge \gm{\sma 3} ) \iso \Z/24.
\]
\end{thmintro}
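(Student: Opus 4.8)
This statement is the simplest case of the more general Theorem~\ref{thm:pi45ponesmash3}, which identifies the sheaf $\bpi^{\aone}_{4+i+5\alpha}(S^{3+i}_s\wedge\gm{\sma 3})$ over a perfect base field with an explicit extension of strictly $\aone$-invariant sheaves built out of (mod~$24$) Milnor and Milnor--Witt $K$-theory; the isomorphism with the constant sheaf $\Z/24$ is then read off by imposing the extra hypotheses on $k$. The plan is to prove that general statement. The first move is a reduction to a single sheaf: since $S^{3+i}_s\wedge\gm{\sma 3}\iso\Sigma^{i+1}(\A^3\setminus 0)$ is $\aone$--$(2+i)$-connected, the simplicial suspension $\bpi^{\aone}_{4+i}(S^{3+i}_s\wedge\gm{\sma 3})\to\bpi^{\aone}_{5+i}(S^{4+i}_s\wedge\gm{\sma 3})$ lies in the isomorphism range $q\le 2(3+i)-2$ of Morel's $\aone$-simplicial suspension theorem \cite[Theorem~6.61]{MField} (equivalently Theorem~\ref{thm:EHP_range}) for every $i\ge 0$; since $\gm$-contractions are exact and commute with simplicial suspension, the same remains true after the fifth contraction. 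Thus all the sheaves $\bpi^{\aone}_{4+i+5\alpha}(S^{3+i}_s\wedge\gm{\sma 3})$ are canonically isomorphic, and it suffices to treat $i=0$, i.e.\ to compute $\bpi^{\aone}_4((\pone)^{\sma 3})$ together with its fifth $\gm$-contraction.

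Next I would run the $\aone$-EHP sequence of Theorem~\ref{thmintro:aoneehpsequence} on the $\aone$--$1$-connected space $\mathscr X=\A^3\setminus 0=S^2_s\wedge\gm{\sma 3}$ (so $n=2$). Using $\Sigma\mathscr X=(\pone)^{\sma 3}$, $\mathscr X^{\sma 2}=S^4_s\wedge\gm{\sma 6}$, and $\Sigma\mathscr X^{\sma 2}=\A^6\setminus 0$, the relevant portion reads
\[
\bpi^{\aone}_5(\A^6\setminus 0)\xrightarrow{\ {\mathrm P}\ }\bpi^{\aone}_3(\A^3\setminus 0)\xrightarrow{\ {\mathrm E}\ }\bpi^{\aone}_4((\pone)^{\sma 3})\xrightarrow{\ {\mathrm H}\ }\bpi^{\aone}_4(\A^6\setminus 0).
\]
By Morel's connectivity estimate together with his computation of the first non-vanishing $\aone$-homotopy sheaf \cite[Corollary~6.43]{MField}, the space $\A^6\setminus 0$ is $\aone$--$4$-connected with $\bpi^{\aone}_5(\A^6\setminus 0)=\K^{\MW}_6$, so that $\bpi^{\aone}_4(\A^6\setminus 0)=0$; hence ${\mathrm E}$ is surjective and
\[
\bpi^{\aone}_4((\pone)^{\sma 3})\iso\bpi^{\aone}_3(\A^3\setminus 0)\big/\operatorname{im}\bigl({\mathrm P}\colon\K^{\MW}_6\to\bpi^{\aone}_3(\A^3\setminus 0)\bigr).
\]

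Into this I would feed the Asok--Fasel computation \cite{AsokFaselA3minus0} of $\bpi^{\aone}_3(\A^3\setminus 0)$ --- the first $\aone$-homotopy sheaf of $\A^3\setminus 0$ lying beyond Morel's, which is exactly where the integer $24$ originates --- and then identify the image of ${\mathrm P}$. For the latter, Theorem~\ref{P=piWhitehead} exhibits ${\mathrm P}$ as an $\aone$-Whitehead product map composed with a desuspension, and the associated $E_1$-differential ${\mathrm H}{\mathrm P}\colon\K^{\MW}_6\to\K^{\MW}_6$ is computed by Theorem~\ref{thm:HP}: taking $p=1$, $q=3$ it is multiplication by $\langle 1\rangle+(-1)^{p+1+q}\langle -1\rangle^{q}$, with a short separate argument needed at the boundary value $p=1$, which is just outside the stated hypotheses there. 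Knowing in addition how the James--Hopf map ${\mathrm H}$ acts on $\bpi^{\aone}_3(\A^3\setminus 0)$ then pins down $\operatorname{im}({\mathrm P})$; forming the quotient and applying the (exact) fifth $\gm$-contraction yields the general formula of Theorem~\ref{thm:pi45ponesmash3}. Finally, when $k$ has characteristic $0$ and contains an algebraically closed subfield, every finitely generated field extension of $k$ contains $\sqrt{-1}$, hence has $2$-primary Witt ring, so the Milnor--Witt-theoretic summands collapse and what remains is the fifth contraction of a mod~$24$ Milnor $K$-theory sheaf, namely the constant sheaf $\K^{\M}_0/24=\Z/24$; combined with the reduction of the first paragraph this gives the claimed isomorphism for every $i\ge 0$.

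The main obstacle is the identification of $\operatorname{im}({\mathrm P})$ inside Asok--Fasel's sheaf: this requires honest control of both the Whitehead product map ${\mathrm P}$ and the James--Hopf map ${\mathrm H}$ in this range, and is the $\aone$-theoretic reflection of the classically delicate fact that the quaternionic Hopf class generates the order-$24$ group $\pi_3^s$. A secondary difficulty is checking that the hypotheses on $k$ genuinely collapse the general answer of Theorem~\ref{thm:pi45ponesmash3} to the constant sheaf $\Z/24$, rather than to an extension carrying a nonzero Witt-theoretic contribution.
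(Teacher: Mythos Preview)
Your overall scaffolding --- reduce to $i=0$ by the $\aone$-simplicial suspension theorem, run the EHP sequence for $\mathscr{X}=\A^3\setminus 0$, identify $\bpi_4^{\aone}((\pone)^{\sma 3})$ with $\bpi_3^{\aone}(\A^3\setminus 0)/\operatorname{im}({\mathrm P})$, and feed in the Asok--Fasel computation of $\bpi_3^{\aone}(\A^3\setminus 0)$ --- is exactly what the paper does. The genuine gap is in your proposed identification of $\operatorname{im}({\mathrm P})$. Theorem~\ref{thm:HP} only computes the composite ${\mathrm H}{\mathrm P}\colon\K^{\MW}_6\to\K^{\MW}_6$, where the ${\mathrm H}$ in question belongs to the EHP sequence of $S^{1+3\alpha}$; it does not determine the image of ${\mathrm P}$ inside $\bpi_3^{\aone}(\A^3\setminus 0)$. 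You concede as much when you write that one must ``in addition'' know how ${\mathrm H}$ acts on $\bpi_3^{\aone}(\A^3\setminus 0)$, but that map is not available: $S^{1+3\alpha}$ is only $\aone$-$0$-connected, so neither Theorem~\ref{thm:EHP_range} (which needs $n\ge 2$) nor Theorem~\ref{thm:HP} (which needs $p>1$) applies, and there is no ``short separate argument'' on offer for this boundary case. In short, the $E_1$-differential machinery does not see enough of $\bpi_3^{\aone}(\A^3\setminus 0)$ to pin down $\operatorname{im}({\mathrm P})$.

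The paper circumvents this by a different, and more indirect, route. It never uses Theorem~\ref{thm:HP} here. Instead, Proposition~\ref{prop:pi46ponesmash3} establishes the vanishing $\bpi_{4+6\alpha}^{\aone}(S^{3+3\alpha})=0$ by showing that ${\mathrm P}$ after $6$-fold contraction is the canonical surjection $\K^{\MW}_0\to\mathbf{W}$; the nontriviality of ${\mathrm P}$ is detected by \emph{complex realization} and Serre's computation that the Whitehead square of the identity generates $\pi_9(S^5)=\Z/2$. This is precisely where the hypothesis that $k$ have characteristic~$0$ and contain a quadratically closed subfield enters --- not, as you suggest, at the end to collapse Witt-theoretic terms. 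That vanishing, via Lemma~\ref{lem:contractionsfactor}, forces the composite $\K^{\MW}_5\to\bpi_3^{\aone}(\A^3\setminus 0)\to\bpi_4^{\aone}(S^{3+3\alpha})$ to factor through $\K^{\MW}_5/\mathbf{I}^6=\K^{\M}_5$. One then compares with the fine structure of the Asok--Fasel exact sequence (the sheaves $\mathbf{T}_5$, $\mathbf{S}_5$, $\mathbf{F}_5$) to obtain the short exact sequence $0\to\mathbf{F}_5'\to\bpi_4^{\aone}(S^{3+3\alpha})\to\mathbf{GW}^3_4\to 0$ with an epimorphism $\K^{\M}_5/24\to\mathbf{F}_5'$. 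The $5$-fold contraction kills $\mathbf{GW}^3_4$ outright (by \cite[Proposition~3.4.3]{AsokFaselA3minus0}) and sends $\K^{\M}_5/24$ to $\Z/24$; no Witt-collapsing argument is needed at that stage.
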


\begin{remintro}
Cohomology of homotopy sheaves of spheres such as those above appears in concrete applications to problems in algebra via techniques of obstruction theory; see, e.g., \cite{AsokFaselThreefolds,AsokFaselA3minus0} for more details. Our description of the sheaf $\bpi_4^{\aone}(S^{3+3\alpha})$ is well-suited to such cohomology computations. Our computation allows us to state a precise conjecture (see Conjecture \ref{conj:structureofpinplus1}) regarding the structure of the sheaf $\bpi_n^{\aone}({\mathbb A}^n \setminus 0)$ for $n \geq 4$. An explicit description of the sheaf $\bpi_n^{\aone}({\mathbb A}^n \setminus 0)$ for $n = 2,3$ was a key step in \cite{AsokFaselThreefolds,AsokFaselA3minus0} in the resolution of
Murthy's conjecture regarding splitting of rank $n$ vector bundles on smooth affine $(n+1)$-folds over algebraically closed fields. A resolution of Conjecture \ref{conj:structureofpinplus1} would, similarly, imply Murthy's conjecture in general.
\end{remintro}

We close this introduction with some general comments regarding prerequisites. When working with the (unstable) $\aone$-homotopy category in general and Morel's $\aone$-algebraic topology in particular, with the goal of making this paper as self-contained as possible, we have labored to present the material in an axiomatic framework involving the ``unstable $\aone$-connectivity property", which is introduced in Section \ref{ss:unstableconnectivity}. All of the results in Sections \ref{s:jamesconstruction} and \ref{s:aoneehpsequence} are written from this axiomatic perspective. We hope this style of presentation makes the material accessible to people who have some familiarity with homotopy theory of simplicial presheaves and the constructions of \cite{MV}, but not, for example, all of the technical results about strongly and strictly $\aone$-invariant sheaves contained in the first five chapters of \cite{MField}. Moreover, we hope that our presentation also makes \cite{MField} itself more accessible to the non-expert.

For the most part, Section \ref{s:eonedifferential} is written in the same axiomatic framework. In contrast, Sections \ref{ss:HPKmwsubsection} and \ref{s:applications} require more background. In particular, this portion of the text requires familiarity with facts about strongly and strictly $\aone$-invariant sheaves (see Section \ref{ss:milnorwittktheory} for more precise statements), and known explicit computations of homotopy sheaves. In Section \ref{s:applications}, we also appeal to structural results from the theory of quadratic forms and both the affirmation of the Milnor conjecture on quadratic forms and the Bloch--Kato conjecture.

\subsubsection*{Acknowledgements}
The first-named author would like to thank Fr\'ed\'eric D{\'e}glise for his hospitality during a stay at ENS-Lyon in September 2012 when parts of this work were conceived and Fabien Morel for pointing out many years ago that a James-style model for (simplicial) loops on the suspension exists in $\aone$-homotopy theory. This project was initially conceived as joint with Jean Fasel, and the authors would also like to sincerely thank him for his collaboration in the formative stages. Some of this work was done while the second-named author was in residence at MSRI during the Spring 2014 semester supported by NSF grant 0932078 000.  The authors are grateful to have been afforded the opportunity to work together in several places including the University of Duisberg--Essen during a special semester on motivic homotopy theory organized by Marc Levine (funded by the Alexander von Humboldt Foundation and the German Research Foundation) and the American Institute of Mathematics.  Finally, we thank the referees for a very careful reading and a number of thoughtful suggestions that improved the clarity of the presentation.

\subsubsection*{Notation}
Throughout, the (undecorated) symbol $S$ will be used to denote a base scheme assumed Noetherian and of finite Krull dimension. We write $\Sm_S$ for the category of schemes that are separated, smooth and have finite type over $S$. Script letters, e.g., $\mathscr{X},\mathscr{Y}$, will typically be used to denote ``spaces", i.e., pointed simplicial presheaves on $\Sm_S$ (from Section \ref{ss:unstableconnectivity} onward), while capital roman letters will typically be used to denote simplicial presheaves on more general sites. Typically, boldface letters will be used to denote strongly $\aone$-invariant sheaves of groups (again, from Section \ref{ss:unstableconnectivity} onward), with the exception of $\mathbf{C}$, which will always mean a category (often equipped with the structure of a site) and $\mathbf{R}$, which will be used to denote right derived functors.

Sheaf cohomology will always be taken with respect to the Nisnevich topology. See \ref{notation:spheres} for our conventions regarding motivic spheres; unfortunately the letter $S$ appears in our notation for spheres, but since it will always be decorated with a superscript, we hope no confusion arises. See \ref{notation:homotopysheaves} for a summary of notation pertaining to homotopy sheaves, \ref{convention:connectivity} for some discussion of our connectivity conventions, \ref{convention:fibersequences} for some recollections on our use of the term fiber sequence, \ref{convention:relativeconnectivity} for conventions about relative connectivity, \ref{notation:aonehomology} for notation regarding $\aone$-homology sheaves, and \ref{notation:aonetensorproduct} for notation regarding the $\aone$-tensor product. Finally, our conventions for loop spaces change in Sections \ref{ss:pi4ponesmash3} and \ref{ss:miscellaneouscomputations}; see \ref{convention:loops} for more details.

\section{The James construction revisited}
\label{s:jamesconstruction}
The James construction on a CW complex was originally introduced by I.M.~James in \cite{JamesConstruction}. Milnor observed that the construction could be recast in the language of simplicial sets \cite[p. 120]{MilnorFK}. Using this translation, it is straightforward to develop a version of the James construction in the category of simplicial presheaves. Section \ref{ss:classicaljames} reviews the James construction in the category of simplicial sets and extends these constructions to simplicial presheaves; the main result in the context of simplicial presheaves is Proposition \ref{prop:J(X)=ho-nis=Omega_SigmaX}.

In this section, we aim to develop a version of the James model in $\aone$-homotopy category; this idea is due originally to F.~Morel. Section \ref{ss:unstableconnectivity} recalls a number of structural properties of the $\aone$-homotopy category that will be used throughout the work: we point the reader to Definition \ref{defn:unstableaoneconnectivity}, Lemma \ref{lem:unstableconnectivityconsequences} and Theorem \ref{thm:unstablenconnectivity}. Section \ref{ss:furtherproperties} studies some aspects of $\aone$-fiber sequences in the context of the axiomatic setup of Section \ref{ss:unstableconnectivity}. Section \ref{ss:loopspaces} proves the main result, i.e., Theorem \ref{thm:aonejamesconstruction}, which provides a James-style model for loops on the suspension of a space in $\aone$-homotopy theory; this result depends on results about the Kan loop group in $\aone$-homotopy theory, for which we refer the reader to Theorem \ref{thm:kanloopgroupmodelaonelocal}.

\subsection{The James construction in simplicial homotopy theory}
Textbook treatments of the James construction can be found in \cite[Chapter VII.2]{Whitehead}, for the category of CW complexes, and \cite[\S 3.3.3]{wu} in the category of simplicial sets.

\label{ss:classicaljames}
\subsubsection*{The James construction for simplicial sets}
Let $K$ be a pointed simplicial set. An injection $\alpha: (1,2,\ldots,m) \to (1,2,\ldots, n)$ induces a map $\alpha_* : K^m \to K^n$. Let $\smallsim$ denote the equivalence relation on $\coprod_{n=0}^{\infty} K^n$ generated by $x \sim \alpha_*(x)$ for all {\em order-preserving} injections $\alpha$. The
James construction on $K$ is defined by the formula
\[
J(K) := \coprod_{n=0}^{\infty} K^n / \sim,
\]
i.e., $J(K)$ is the free (pointed) monoid on the pointed simplicial set $K$. The assignment $K \mapsto J(K)$ is functorial in $K$ by definition. The James construction is filtered by pointed simplicial sets $J_n(X) \subset J(K)$, defined by
\[
J_n(K) := \coprod_{m=0}^{n} K^m / \sim.
\]
We consider also $F(K)$, the pointed free group functor as in \cite[p. 293]{GoerssJardine} or \cite[\S 3.2]{wu}. Because
$J(K)$ is the free pointed monoid on $K$, there is an evident inclusion map $J(K) \hookrightarrow F(K)$. If $\Sigma K$
denotes the Kan suspension \cite[p. 191]{GoerssJardine}, and $G(K)$ denotes the Kan loop group
\cite[p. 276]{GoerssJardine}, then Milnor showed that there is a weak equivalence $F(K) \weq G(\Sigma K)$ \cite[Theorem V.6.15]{GoerssJardine}. By \cite[Corollary V.5.11]{GoerssJardine}, since $\Sigma K$ is reduced, we conclude that $F(K)$ is a model for $\Omega \Sigma K$; here $\Omega$ is the derived loops (for a model, take naive loops on a fibrant model of the input). The following result details the main properties of $J(-)$.

\begin{thm}[{\cite[Theorems 3.24 and 3.25]{wu}}]
\label{thm:classicalJames}
Suppose $K$ is a pointed simplicial set.
\begin{enumerate}[noitemsep,topsep=1pt]
\item If $K$ is connected, there is a weak equivalence $J(K) \weq F(K)$.
\item For any integer $n \geq 1$ there is a cofiber sequence of the form
\[
J_{n-1}(K) \longhookrightarrow J_n(K) \longrightarrow K^{\sma n}.
\]
\item The canonical map $\colim_n J_n(K) \to J(K)$ is an isomorphism.
\end{enumerate}
\end{thm}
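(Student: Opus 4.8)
The plan is to dispatch parts (3) and (2) by direct combinatorics and to concentrate the effort on part (1).

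Part (3) is formal. The James filtration $J_0(K)\subset J_1(K)\subset\cdots$ consists of sub-simplicial sets of $J(K)$, and in every simplicial degree each element of $J(K)$ is represented by a word in $K$ of some finite length $n$, hence lies in $J_n(K)$. So the filtration is exhaustive, the structure maps are monomorphisms, and the canonical map $\colim_n J_n(K)\to J(K)$ is a degreewise bijection, hence an isomorphism of simplicial sets. For part (2), note first that $J_{n-1}(K)\hookrightarrow J_n(K)$ is a monomorphism of pointed simplicial sets, hence a cofibration, so the strict quotient $J_n(K)/J_{n-1}(K)$ computes the homotopy cofiber; it remains to identify this quotient with $K^{\sma n}$. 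The composite $K^n\to J_n(K)\to J_n(K)/J_{n-1}(K)$ kills every tuple with a basepoint coordinate — such a tuple is identified in $J(K)$, via the order-preserving injection omitting that coordinate, with a word of length $\le n-1$ — and hence factors through $K^{\sma n}$; conversely, degreewise the words of length exactly $n$ with no basepoint letter form a complete, irredundant set of representatives for $J_n(K)\setminus J_{n-1}(K)$, so the induced map $K^{\sma n}\to J_n(K)/J_{n-1}(K)$ is a degreewise bijection compatible with the simplicial structure, hence an isomorphism. This gives the cofiber sequence.

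For part (1), the inclusion $J(K)\hookrightarrow F(K)$ of the free pointed monoid into the free pointed group is a morphism of simplicial monoids, and since $K$ is connected both $J(K)$ and $F(K)$ are connected. I would show this map is an integral homology isomorphism and then upgrade it to a weak equivalence. For the homology computation, use James-type stable splittings on both sides. The cofiber sequences of part (2) together with part (3) give a stable splitting $\Sigma J(K)\weq\bigvee_{n\ge 1}\Sigma K^{\sma n}$, a section of each collapse $\Sigma J_n(K)\to\Sigma K^{\sma n}$ being built from the reduced-word combinatorics; hence $\tilde H_*(J(K))\iso\bigoplus_{n\ge 1}\tilde H_*(K^{\sma n})$. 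On the other side, Milnor's equivalence $F(K)\weq G(\Sigma K)$ exhibits $F(K)$ as a model for $\Omega\Sigma K$, whose classical James splitting $\Sigma\Omega\Sigma K\weq\bigvee_{n\ge 1}\Sigma K^{\sma n}$ gives the identical computation for $\tilde H_*(F(K))$. Since $J(K)\hookrightarrow F(K)$ restricts to the identity on $K=K^{\sma 1}$ and is a map of monoids, it preserves the word-length filtrations and induces the identity on the associated graded homology $\tilde H_*(K^{\sma n})$; comparing the two (degreewise finite) filtration spectral sequences then shows it is a homology isomorphism. Finally, $J(K)$ and $F(K)$ are connected H-spaces, in particular nilpotent, and a homology isomorphism between nilpotent spaces is a weak equivalence.

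The step I expect to be the main obstacle is this last stretch of part (1): making the compatibility of the two James splittings precise enough to be sure the homology map is an isomorphism on the nose. An alternative that sidesteps the homology bookkeeping is to use that a simplicial monoid $M$ with $\pi_0 M$ a group satisfies $M\weq\Omega BM$, to apply this with $M=J(K)$, and to identify $\Omega BJ(K)$ with $F(K)\weq\Omega\Sigma K$ using that the group completion of the free monoid on $K$ is the free group; but then verifying that $B$ of the free monoid on $K$ is $\Sigma K$ requires a comparable amount of work.
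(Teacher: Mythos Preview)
The paper does not supply its own proof of this theorem; it simply records the statement and cites \cite[Theorems 3.24 and 3.25]{wu}. So there is no proof in the paper to compare against, and I will comment on the proposal on its own merits.

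Your arguments for (3) and (2) are correct and are the standard ones.

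For (1), the overall architecture---homology isomorphism plus H-space, hence simple, hence weak equivalence---is right. The problematic step is the claim that $J(K)\hookrightarrow F(K)$ ``preserves the word-length filtrations and induces the identity on the associated graded homology $\tilde H_*(K^{\sma n})$.'' The pointed free group $F(K)$ has no evident filtration with subquotients $K^{\sma n}$: a reduced word in the free group carries exponents $\pm 1$, so the set of reduced words of length exactly $n$ is not $K^{\sma n}$. As written, the comparison of filtrations does not go through.

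A clean repair is to use the Pontryagin ring structure rather than a filtration comparison. Over a field $F$, the James filtration together with K\"unneth gives $H_*(J(K);F)\iso T(\tilde H_*(K;F))$, the tensor algebra. On the other side $F(K)\weq\Omega\Sigma K$ by Milnor, and $H_*(\Omega\Sigma K;F)\iso T(\tilde H_*(K;F))$ by Bott--Samelson, which can be proved independently of the James model (e.g.\ via the Serre spectral sequence of the path--loop fibration over $\Sigma K$), so there is no circularity. The map $J(K)\to F(K)$ is a map of simplicial monoids, hence a map of Pontryagin rings, and it restricts to the identity on $K$, hence on the generators $\tilde H_*(K;F)$; an algebra map between tensor algebras that is the identity on generators is an isomorphism. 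Running this over all fields and invoking universal coefficients gives the integral homology isomorphism, and your final step finishes the argument. Your alternative via $M\weq\Omega BM$ for a grouplike simplicial monoid is also a legitimate route.
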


\begin{rem}
\label{rem:j1issuspension}
Consider the map $K = J_1(K) \to J(K)$. Under the weak equivalence $J(K) \weq \Omega \Sigma K$ of Point (1) of Theorem \ref{thm:classicalJames}, this map corresponds to the unit map $K \to \Omega \Sigma K$ of the loops-suspension adjunction.
\end{rem}

\subsubsection*{The James construction for simplicial presheaves}
Suppose $\mathbf{C}$ is a site. We will consider (pointed) simplicial presheaves on $\mathbf{C}$, though we do not introduce any special notation for this category. The category of (pointed) simplicial presheaves can be equipped with its injective local model structure \cite{Jardine}: cofibrations are given by monomorphisms, weak equivalences are defined locally with respect to the Grothendieck topology on $\mathbf{C}$ and fibrations are defined via the right-lifting property. Abusing terminology slightly, we will refer to the associated homotopy category as the (pointed) simplicial homotopy category.  The category of pointed simplicial presheaves is a pointed category, i.e., the canonical map from the initial to the final object is an isomorphism. We will typically write $\ast$ for the final object. With this terminology, one can extend the definition of the James construction to (pointed) simplicial presheaves in a straightforward fashion by applying the constructions above sectionwise.

\begin{defn}
\label{defn:J_etc_def}
Assume $X$ is a pointed simplicial presheaf on $\mathbf{C}$, and an $n \geq 0$ is an integer. Define pointed simplicial presheaves $G(X)$, $F(X)$, $J(X)$ and $J_n(X)$ by assigning to $U \in \mathbf{C}$ the following simplicial sets:
\[
\begin{split}
G(X)(U) &:= G(X(U)), \\
F(X)(U) &:= F(X(U)), \\
J(X)(U) &:= J(X(U)), \text{ and } \\
J_n(X)(U) &:= J_n(X(U)).
\end{split}
\]
We refer to the pointed simplicial presheaf $J(X)$ as the {\em James construction of $X$}, and $G(X)$ as the Kan loop group of $X$.
\end{defn}

\begin{rem}
Since we have not assumed $X$ to be reduced (i.e., having presheaf of $0$-simplices the constant presheaf $\ast$) in the above, the simplicial presheaf $G(X)$ will not in general have the homotopy type of the loop space of $X$ (e.g., take $X = \Delta_1$ the simplicial interval, in which case $G(\Delta^1)$ is the constant simplicial group on $\Z$, which is not contractible).
\end{rem}

The assignments $X \mapsto G(X)$, $X \mapsto F(X)$, $X \mapsto J(X)$, $X \mapsto J_n(X)$ are all evidently functorial in $X$. Moreover, there are morphisms $J_n(X) \to J(X)$ for any integer $n \geq 0$. We distinguish the morphism
\begin{equation}
\label{eqn:Emap}
{\mathrm E}: X = J_1(X) \longrightarrow J(X);
\end{equation}
we will refer to this morphism as {\em suspension} (cf. Remark \ref{rem:j1issuspension}).

\begin{prop}
\label{prop:simplicialKanloopgroup}
Suppose $X$ is a reduced pointed simplicial presheaf on $\mathbf{C}$, i.e., the presheaf of $0$-simplices is $\ast$. There is a weak equivalence
\[
G(X) \weq \Omega X.
\]
\end{prop}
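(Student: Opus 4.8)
The plan is to reduce to the classical (simplicial-set) statement recalled just above, namely Milnor's theorem that $F(K) \weq G(\Sigma K)$ together with the fact that $G(Y)$ is a model for $\Omega Y$ when $Y$ is reduced \cite[Corollary V.5.11, Theorem V.6.15]{GoerssJardine}. The key point is that the Kan loop group $G(-)$ is constructed sectionwise (Definition \ref{defn:J_etc_def}), and that local weak equivalences are detected on stalks (or, more precisely, can be checked sectionwise after taking a suitable fibrant/cofibrant replacement). So the strategy is: first establish the statement sectionwise, then promote it to a statement about simplicial presheaves by a general principle about sectionwise constructions.

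In more detail, first I would recall that for a reduced pointed simplicial set $L$ there is a natural weak equivalence $G(L) \weq \Omega L$: this is because $G(L)$ is a simplicial group model for the loops on $L$, and taking the classifying space recovers $L$ up to weak equivalence when $L$ is reduced. One must be slightly careful: on the left $\Omega X$ denotes the \emph{derived} loop space, so I would fix a sectionwise fibrant replacement $X \to X^f$ (e.g. apply $\operatorname{Ex}^\infty$ sectionwise, which preserves the reducedness hypothesis and is a filtered colimit of finite constructions, hence commutes with taking sections and preserves local weak equivalences), and observe that naive loops on $X^f$ computes the derived loops. Then for each $U \in \mathbf{C}$ the map $G(X)(U) = G(X(U)) \to \Omega(X(U))$ is a weak equivalence of simplicial sets by the classical result, using that $X(U)$ is reduced by hypothesis. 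The naturality of the Kan loop group construction guarantees these assemble into a map of simplicial presheaves $G(X) \to \Omega X$.

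The remaining step — and the one requiring the most care — is to conclude that a morphism of simplicial presheaves which is a sectionwise weak equivalence is a fortiori a local (i.e. injective-local, hence simplicial-homotopy) weak equivalence. This is immediate, since a sectionwise weak equivalence induces isomorphisms on sectionwise homotopy presheaves, hence on the associated sheaves, hence on stalks; and local weak equivalences between simplicial presheaves are exactly the stalkwise (for enough points) or, in general, the maps inducing isomorphisms on sheaves of homotopy groups \cite{Jardine}. The one subtlety is that $\Omega X$ on the right should be interpreted as the derived loop space \emph{in the local model structure}; but since we have replaced $X$ by a sectionwise-fibrant model $X^f$ and sectionwise fibrant simplicial presheaves compute derived mapping spaces out of cofibrant (in particular representable, or rather their associated presheaves) objects correctly up to local equivalence, naive sectionwise loops on $X^f$ indeed represent the derived loops $\Omega X$ in the simplicial homotopy category. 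Assembling these observations yields the desired weak equivalence $G(X) \weq \Omega X$, completing the proof. The main obstacle is purely bookkeeping: making sure all the auxiliary fibrant replacements are chosen sectionwise so that "apply a classical theorem in each section" is legitimate, and that the target $\Omega X$ is the correct derived object.
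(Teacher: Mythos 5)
Your proposal is correct and follows essentially the same route as the paper: the paper's proof is exactly the one-line observation that the map is a sectionwise weak equivalence by \cite[Corollary V.5.11]{GoerssJardine}, sectionwise equivalences being local equivalences. Your extra care about choosing the fibrant replacement sectionwise (so that the derived loops can be computed section by section) is a reasonable elaboration of what the paper leaves implicit.
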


\begin{proof}
This follows by observing that the induced map on sections over $U \in \mathbf{C}$ is a weak-equivalence by \cite[Corollary V.5.11]{GoerssJardine}.
\end{proof}

\begin{prop}
\label{prop:J(X)=ho-nis=Omega_SigmaX}
Suppose $X$ is a pointed simplicial presheaf on $\mathbf{C}$.
\begin{enumerate}[noitemsep,topsep=1pt]
\item The map $J(X) \to F(X)$ is a sectionwise equivalence.
\item The simplicial presheaf $J(X)$ is locally weakly equivalent to $\Omega \Sigma X$.
\end{enumerate}
Both of the weak equivalences just mentioned are functorial in $X$.
\end{prop}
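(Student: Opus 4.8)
The plan is to reduce every assertion to a sectionwise statement, invoke the classical simplicial facts recalled above pointwise over $\mathbf{C}$, and then use that a sectionwise weak equivalence of (pointed) simplicial presheaves is automatically a local weak equivalence for any Grothendieck topology on $\mathbf{C}$.

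For Point (1) I would simply observe that, by Definition~\ref{defn:J_etc_def}, evaluation at $U \in \mathbf{C}$ sends the canonical map $J(X) \to F(X)$ to the inclusion $J(X(U)) \hookrightarrow F(X(U))$ of the free pointed monoid into the free pointed group on the pointed simplicial set $X(U)$. Point (1) of Theorem~\ref{thm:classicalJames} identifies this inclusion as a weak equivalence of simplicial sets (for $X$ whose sections are connected, which is the situation of interest). Since this holds for every $U$, the map $J(X) \to F(X)$ is a sectionwise, hence local, weak equivalence; naturality in $X$ is built into the construction.

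For Point (2) I would concatenate the sectionwise equivalences already at hand. First, Milnor's comparison $F(K) \weq G(\Sigma K)$ of \cite[Theorem V.6.15]{GoerssJardine} is natural in the pointed simplicial set $K$, so evaluating at each $U$ produces a natural weak equivalence $F(X) \weq G(\Sigma X)$, with $\Sigma$ applied sectionwise. Second, the Kan suspension of any pointed simplicial set is reduced, so $\Sigma X$ is a reduced pointed simplicial presheaf, and Proposition~\ref{prop:simplicialKanloopgroup} supplies a natural weak equivalence $G(\Sigma X) \weq \Omega \Sigma X$. Composing these with Point (1) yields a chain
\[
J(X) \weq F(X) \weq G(\Sigma X) \weq \Omega \Sigma X
\]
of natural, sectionwise (hence local) weak equivalences, which establishes Point (2) and the asserted functoriality.

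I do not expect a genuine obstacle here. The one point that merits care is that $\Omega$ denotes \emph{derived} loops, which in the local model structure is not computed sectionwise in general; this is exactly why the argument must route through the reduced presheaf $\Sigma X$ and Proposition~\ref{prop:simplicialKanloopgroup}, rather than through naive sectionwise loops on a fibrant model. One should also keep in mind the connectivity hypothesis built into Point (1) of Theorem~\ref{thm:classicalJames}, which is the reason the statement is cleanest when applied to $X$ with connected sections.
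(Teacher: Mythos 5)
Your argument is essentially the paper's own proof: Point (1) is deduced sectionwise from Theorem \ref{thm:classicalJames}(1), and Point (2) by composing the sectionwise Milnor equivalence $F(X(U)) \weq G(\Sigma X(U))$ with Proposition \ref{prop:simplicialKanloopgroup} applied to the reduced presheaf $\Sigma X$, all naturally in $X$. Your parenthetical caveat about connectedness of sections is a fair observation (the paper's appeal to Theorem \ref{thm:classicalJames}(1) carries the same tacit hypothesis), but it does not change the argument.
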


\begin{proof}
Point (1) follows immediately from Theorem \ref{thm:classicalJames}(1): the inclusion maps $J(X(U) \to F(X(U))$ are weak equivalences; these maps are evidently functorial in $U$. For Point (2), observe that \cite[Theorem V.6.15]{GoerssJardine} guarantees that for any object $U \in \mathbf{C}$ there is a functorial weak equivalence
\[
G(\Sigma X(U)) \weq F(X(U)),
\]
where $\Sigma$ is the Kan suspension. Since $\Sigma X(U)$ is by construction reduced, the result then follows from Proposition \ref{prop:simplicialKanloopgroup}.
\end{proof}

\subsubsection*{The classifying space of the Kan loop group}
Suppose $H$ is a simplicial presheaf of groups. If $Y$ is a simplicial presheaf equipped with a right action $a: Y \times H \to Y$ of $H$, we will say that the action is {\em categorically free} if the morphism
\[
\xymatrix{
Y \times H \ar[r]^{(a,p_Y)}& Y \times Y
}
\]
is a monomorphism. If $Y$ carries a categorically free action of $H$, we write $Y/H$ for the quotient, i.e., the colimit of the diagram $Y \leftarrow Y \times H \rightarrow Y$.

\begin{entry}
\label{entry:barconstruction}
Following \cite[Chapter 7]{may1975}, if $H$ is a presheaf of groups, $Y$ is a space with a right $H$-action and $Z$ is a space with a left $H$-action, we may form the two-sided bar construction $B(Y,H,Z)$ as the ``geometric realization" of a certain functorially-constructed simplicial object $B(Y,H,Z)_{\bullet}$ having $n$-simplices of the form $Y \times H^{\times n} \times Z$.  In the present context, $B(Y,H,Z)_{\bullet}$ is a simplicial object in the category of simplicial presheaves and the ``geometric realization" is the homotopy colimit over $\Delta^{op}$, i.e.,
\[
B(Y,H,X) := \hocolim_{n \in \Delta^\op} B(Y, H, Z)_n.
\]
When $Y = X = \ast$, then we use the following special terminology: by the {\em simplicial classifying space $BH$} we mean $B(\ast,H,\ast)$, by the {\em universal bundle} $EH$ we mean $B(\ast,H,H)$, and by the {\em Borel construction} we mean $B(Y,H,\ast)$.
\end{entry}

The next result, which is a (pre)sheaf-theoretic variant of a classical fact about the Kan loop group (see, e.g., \cite[Point (5) p. 137]{Curtis}), follows immediately from Proposition \ref{prop:simplicialKanloopgroup}; we use this result in the next section.

\begin{prop}
\label{prop:classifyingspaceofkanloops}
For $X$ any reduced pointed simplicial presheaf on $\mathbf{C}$, there is a sectionwise weak equivalence $X \weq BG(X)$.
\end{prop}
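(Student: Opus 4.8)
The plan is to reduce the statement to the already-established Proposition \ref{prop:simplicialKanloopgroup} together with a standard fact about simplicial groups, namely that for a simplicial presheaf of groups $H$ the two-sided bar construction $B(\ast,H,\ast)$ together with the universal bundle $B(\ast,H,H)$ assemble into a (sectionwise) principal $H$-fibration $EH \to BH$ with $EH$ sectionwise contractible. Everything will be checked sectionwise, using that sectionwise weak equivalences of simplicial presheaves are in particular local weak equivalences, so that it suffices to work one $U \in \mathbf{C}$ at a time and invoke the classical statement for simplicial sets.

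First I would fix $U \in \mathbf{C}$ and pass to the simplicial set $X(U)$, which is reduced by hypothesis, so that $G(X)(U) = G(X(U))$ is the Kan loop group of a reduced simplicial set. The classical theory of the Kan loop group (see, e.g., \cite[Point (5) p. 137]{Curtis} or \cite[Chapter V]{GoerssJardine}) provides a natural principal $G(X(U))$-bundle with total space $EG(X(U)) = B(\ast, G(X(U)), G(X(U)))$ and base $BG(X(U)) = B(\ast, G(X(U)), \ast)$, together with a natural weak equivalence $X(U) \weq BG(X(U))$; this is precisely the statement that $X(U) \weq WG(X(U))$ in the older notation, where $WG$ is the classifying space of the loop group. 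Since the two-sided bar construction of \ref{entry:barconstruction} is formed as a homotopy colimit over $\Delta^{\op}$ and agrees sectionwise with the classical realization, $BG(X)(U) \weq BG(X(U))$ naturally in $U$, so these equivalences $X(U) \weq BG(X)(U)$ are natural in $U$ and hence assemble into a morphism of simplicial presheaves $X \to BG(X)$ that is a sectionwise weak equivalence.

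Alternatively — and this is perhaps cleaner given what is already available — I would deduce it formally: $G(X)$ acts categorically freely on $EG(X) := B(\ast, G(X), G(X))$, the quotient is $BG(X)$, and $EG(X)$ is sectionwise contractible because $B(\ast, H, H)$ always has an extra degeneracy (an explicit sectionwise simplicial contraction), so sectionwise $EG(X) \to BG(X)$ is a principal fibration with contractible total space, whence $\Omega BG(X) \weq G(X)$ sectionwise. Combining this with Proposition \ref{prop:simplicialKanloopgroup}, which gives $G(X) \weq \Omega X$ sectionwise for reduced $X$, we get $\Omega BG(X) \weq \Omega X$ sectionwise; since both $X$ and $BG(X)$ are reduced (hence connected) simplicial presheaves, a sectionwise equivalence on loop spaces together with naturality of the comparison map promotes to the desired sectionwise equivalence $X \weq BG(X)$, exactly as in the simplicial-set case \cite[Corollary V.6.4 and the surrounding discussion]{GoerssJardine}.

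The only point requiring any care — and thus the main obstacle, though a mild one — is bookkeeping the naturality: one must ensure the classical equivalence $X(U) \weq BG(X(U))$ is natural in the simplicial set $X(U)$ (it is, being built from the bar construction and the canonical map $G \to \Omega W G$), so that it sheafifies to a genuine morphism of simplicial presheaves rather than merely an objectwise zig-zag. Since \ref{entry:barconstruction} defines $B(Y,H,Z)$ as a homotopy colimit, which commutes with evaluation at $U$, and all the maps involved ($EH \to BH$, the contraction of $EH$, the unit $X \to BG(X)$) are manifestly functorial, no genuine difficulty arises; the result is immediate once the classical input is cited, as the paper indicates.
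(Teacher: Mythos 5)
Your first argument is exactly the paper's (one-line) proof: apply the classical statement that a reduced simplicial set is weakly equivalent to the classifying space of its Kan loop group sectionwise, note naturality in $U$, and invoke Proposition \ref{prop:simplicialKanloopgroup}; so your proposal is correct and takes essentially the same route. (Your alternative "promote an equivalence of loop spaces" sketch is the only loose point, since it still needs the unit map $X \to BG(X)$ being an equivalence, i.e.\ the classical fact itself, but the first argument already suffices.)
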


\subsection{The unstable \texorpdfstring{$\aone$}{A1}-connectivity property}
\label{ss:unstableconnectivity}
Before discussing the James construction in $\aone$-homotopy theory, we will recall some facts from $\aone$-algebraic
topology. We take $\mathbf{C} = \Sm_S$, i.e., the category of smooth schemes over $S$.

This category will be endowed throughout with the Nisnevich topology, as in \cite[\S 3]{MV}, and the category of simplicial
presheaves on $\Sm_S$ may be equipped with a \textit{simplicial} model structure, \cite{Jardine}, local with respect to
this topology. That is, the cofibrations are the monomorphisms of simplicial presheaves, and the weak equivalences may
be detected on Nisnevich stalks.  We warn the reader that in \cite{MField} and \cite{MV}, contrary to our conventions, the motivic homotopy category is constructed using simplicial sheaves.  In \cite[Theorem 1.2]{JardineMSS}, Jardine shows that the sheafification and the forgetful functor define an adjoint equivalent between the two theories.

By a {\em pointed space}, we will mean pointed simplicial presheaf on $\Sm_S$. A model structure for the $\aone$-homotopy category $\ho{S}$ can be constructed by left Bousfield localization of the simplicial model structure of simplicial presheaves on $\Sm_S$.

We will adopt the convention, at variance with that of \cite{hirschhorn2003}, that homotopy limits will be calculated by
first applying a functorial fibrant replacement objectwise to the diagram in question.

\subsubsection*{$\aone$-localization}
Recall from \cite[\S 3.2]{MV} the notion of an $\aone$-local object. It will be useful to remember that the simplicial homotopy
limit of a diagram of $\aone$-local objects is again $\aone$-local, this is the case because the fibrant,
$\aone$-local objects are the fibrant objects of a model category, and we may use \cite[Theorem 18.5.2]{hirschhorn2003}. We begin by recalling the basic properties of ``the" $\aone$-localization functor.

\begin{prop}
\label{prop:aonelocalization}
There exists an endofunctor $\Laone$ of the category of simplicial presheaves on $(\Sm_S)_{\Nis}$ and a natural transformation $\theta: \Id \to \Laone$ such that, for any space $\mathscr{X}$ the following statements hold.
\begin{itemize}[noitemsep,topsep=1pt]
\item[(i)] The space $\Laone \mathscr{X}$ is fibrant, $\aone$-local and the map $\mathscr{X} \to \Laone\mathscr{X}$ is an $\aone$-weak equivalence;
\item[(ii)] If $\mathscr{Y}$ is any simplicially fibrant $\aone$-local space, and $f: \mathscr{X} \to \mathscr{Y}$ is a morphism, then $f$ factors as $\mathscr{X} \to \Laone \mathscr{X} \to \mathscr{Y}$.
\item[(iii)] The functor $\Laone$ commutes with the formation of finite limits.
\end{itemize}
\end{prop}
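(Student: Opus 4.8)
The plan is to dispatch (i) and (ii) by the general machinery of left Bousfield localization, and to obtain (iii) by exhibiting $\Laone$ concretely via the Morel--Voevodsky singular construction, where finite-limit-preservation can be checked one building block at a time.

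For (i) and (ii): equip the category of simplicial presheaves on $(\Sm_S)_{\Nis}$ with the local injective (equivalently simplicial) model structure of \cite{Jardine}, and form its left Bousfield localization at the set of projections $\{\, U \times \aone \to U \,\}_{U \in \Sm_S}$, using \cite[\S 4]{hirschhorn2003}. This is precisely the $\aone$-local model structure: its cofibrations are the monomorphisms, its weak equivalences are the $\aone$-weak equivalences, and its fibrant objects are exactly the simplicially fibrant $\aone$-local spaces. Take $\Laone$ to be the functorial fibrant replacement supplied by the small-object argument, so that $\theta : \mathscr X \to \Laone \mathscr X$ is an $\aone$-acyclic monomorphism with $\aone$-fibrant target; then (i) is immediate. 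For (ii), a simplicially fibrant $\aone$-local $\mathscr Y$ is fibrant in the localized structure, so any $f : \mathscr X \to \mathscr Y$ lifts along the acyclic cofibration $\theta$, giving the factorization $\mathscr X \to \Laone \mathscr X \to \mathscr Y$. (If one prefers simplicial sheaves, pass through the Jardine equivalence recalled above; sheafification preserves finite limits, so nothing below is affected.)

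The content is (iii), which a generic Bousfield replacement does not satisfy. First I would replace the abstract $\Laone$ by the explicit model of \cite[\S 2]{MV} (applied with the interval $\aone$, cf.\ \cite[\S 3.2]{MV}), which is naturally $\aone$-weakly equivalent to it: let $\Delta^\bullet_{\aone}$ be the standard cosimplicial object of $\Sm_S$ with $\Delta^n_{\aone} = \Spec \mathcal O_S[x_0,\dots,x_n]/(x_0+\cdots+x_n-1)$, let $\mathrm{Sing}^{\aone}(\mathscr X)$ be the diagonal of the bisimplicial presheaf $(U,[m],[n]) \mapsto \mathscr X(U \times \Delta^n_{\aone})_m$, and fix a functorial Nisnevich-locally fibrant replacement functor $\mathrm{Ex}$ that preserves finite limits --- the Godement-type construction of \cite{MV} does, being assembled from stalk functors (each preserving finite limits), their right adjoints, products, and filtered colimits. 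Define $\Laone \mathscr X$ to be the value, at a sufficiently large ordinal, of the transfinite iteration of $\mathrm{Ex} \circ \mathrm{Sing}^{\aone}$ applied to $\mathscr X$ (taking colimits at limit stages), where ``sufficiently large'' means large enough that the iteration has stabilized to a simplicially fibrant $\aone$-local space. Then every functor in sight preserves finite limits: $\mathscr X \mapsto \mathscr X(U \times \Delta^n_{\aone})_m$ is a composite of restriction and evaluation and so preserves all limits; the diagonal functor on bisimplicial presheaves is restriction along $\Delta \to \Delta \times \Delta$ and so preserves all limits; $\mathrm{Ex}$ preserves finite limits by the choice just made; and the transfinite filtered colimit indexing the iteration commutes with finite limits in simplicial presheaves. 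Composing, this $\Laone$ preserves finite limits, which is (iii); and since it carries fibrant $\aone$-local spaces to themselves up to $\aone$-equivalence, it also commutes with finite homotopy limits, the form used in the discussion preceding Proposition \ref{prop:aonelocalization}.

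The hard part is not this bookkeeping but the input borrowed from \cite{MV}: that the iterated construction genuinely stabilizes to a simplicially fibrant $\aone$-local object and is naturally $\aone$-weakly equivalent to the abstract Bousfield replacement. Proving that requires, in order, that $\mathrm{Sing}^{\aone}$ preserves local weak equivalences and that the canonical map $\mathscr X \to \mathrm{Sing}^{\aone}(\mathscr X)$ is an $\aone$-equivalence; a cardinality/accessibility estimate showing the transfinite iteration stabilizes; and a verification that simplicial fibrancy survives the colimit (this is why one interleaves $\mathrm{Ex}$ with $\mathrm{Sing}^{\aone}$ rather than iterating $\mathrm{Sing}^{\aone}$ alone). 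I would cite \cite[\S 2]{MV} and \cite[\S 3.2]{MV} for all three, and note finally that since both $\Laone$ and the abstract replacement land in fibrant $\aone$-local spaces and agree up to $\aone$-equivalence, the truth of (iii) is independent of which model one fixes as ``the'' localization functor.
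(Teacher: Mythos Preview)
Your proposal is correct and follows essentially the same approach as the paper: both use the explicit Morel--Voevodsky localization built by iterating the singular construction $\mathrm{Sing}^{\aone}$ together with the Godement resolution as Nisnevich fibrant replacement, observing that each of these ingredients preserves finite limits. The paper's own proof is a terse ``Comments on the proof'' that simply cites \cite{MV} for (i) and (ii) and, for (iii), points to \cite[p.~87]{MV} for the fact that the singular construction commutes with limits and to \cite[\S 2 Theorem 1.66]{MV} for the Godement resolution; you have supplied more of the bookkeeping (diagonal of a bisimplicial object, filtered colimits commuting with finite limits at limit ordinals) that the paper leaves implicit.
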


\begin{proof}[Comments on the proof]
 Most of this statement is contained in \cite{MV}; we slightly modify the $\aone$-localization functor given in
 \cite[p. 107]{MV}. The functor is constructed by repeated application of the singular construction and a fibrant
 replacement functor in the category of simplicial presheaves. The singular construction commutes with limits (see
 \cite[p. 87]{MV}). We use the Godement resolution functor of \cite[\S 2 Theorem 1.66]{MV} as our functorial fibrant
 replacement for simplicial presheaves; this commutes with formation of finite limits by construction.
\end{proof}

In Morel's analysis, a distinguished role is played by Eilenberg--MacLane spaces \cite[p. 56]{MV} or classifying spaces of Nisnevich sheaves of groups \cite[p. 128]{MV} that are $\aone$-local.

\begin{defn}
\label{defn:aoneinvarianceproperties}
A sheaf of groups $\mathbf{G}$ is called {\em strongly $\aone$-invariant} if $B\mathbf{G}$ is $\aone$-local. A sheaf of abelian groups $\mathbf{A}$ is called strictly $\aone$-invariant if $K(\mathbf{A},i)$ is $\aone$-local for every $i \geq 0$.
\end{defn}

\subsubsection*{Homotopy sheaves and the unstable $\aone$-connectivity property}
Suppose $\mathscr{X}$ and $\mathscr{Y}$ are pointed spaces. We write $[\mathscr{Y},\mathscr{X}]_s$ for morphisms in the homotopy category of the injective local model structure (we will refer to this category as the {\em simplicial homotopy category}) and $[\mathscr{Y},\mathscr{X}]_{\aone}$ for morphisms in the $\aone$-homotopy category. We now fix some conventions that will remain in force throughout the paper.

\begin{notation}[Spheres, suspension and looping]
\label{notation:spheres}
Write $S^i_s$ for the simplicial $i$-sphere and $\gm{\sma j}$ for the $j$-fold smash product of $\gm{}$ (pointed by $1$)
with itself. Following conventions of $\Z/2$-equivariant homotopy theory, we write $S^{i+j\alpha}$ for the sphere
$S^i_s \sma \gm{\sma j}$. If $j = 0$, the $\gm{}$-term shall be dropped from the notation. The undecorated symbol $\Sigma$
will be used for simplicial suspension. Likewise, we use $\Omega$ for the derived simplicial loops, a model for which is
obtained by first taking a functorial fibrant replacement of the input and then applying naive loops.
\end{notation}

\begin{notation}[Homotopy sheaves]
\label{notation:homotopysheaves}
We define homotopy sheaves $\bpi_i(\mathscr{X},x)$ and $\aone$-homotopy sheaves $\bpi_i^{\aone}(\mathscr{X},x)$ as Nisnevich sheaves associated with the presheaves $U \mapsto [S^i_s \wedge U_+,(\mathscr{X},x)]_{s}$ and $U \mapsto [S^i_s \wedge U_+,(\mathscr{X},x)]_{\aone} = [S^i_s \wedge U_+,(\Laone \mathscr{X},x)]_s$. Likewise, $\bpi_{i+j\alpha}^{\aone}(\mathscr{X},x)$ is the sheafification for the Nisnevich topology of the presheaf $U \mapsto [S^{i+j\alpha} \sma U_+,(\mathscr{X},x)]_{\aone}$. For notational compactness, base-points will typically be suppressed from notation.
\end{notation}

\begin{convention}[Connectivity]
\label{convention:connectivity}
We borrow various bits of terminology from classical homotopy theory: a pointed space $(\mathscr{X},x)$ is simplicially connected (resp. $\aone$-connected) if the sheaf $\bpi_0(\mathscr{X})$ (resp. $\bpi_0^{\aone}(\mathscr{X})$) is $\ast$. Similarly, if $n \geq 1$ is an integer, we will say that $(\mathscr{X},x)$ is simplicially $n$-connected (resp. $\aone$-$n$-connected) if $(\mathscr{X},x)$ is simplicially (resp. $\aone$-)connected and $\bpi_i(\mathscr{X},x)$ (resp. $\bpi_i^{\aone}(\mathscr{X})$) is trivial for $i \leq n$.
\end{convention}

Morel's approach to $\aone$-algebraic topology in \cite{MField} consists in studying $\aone$-local spaces via their Postnikov towers and, in doing this, it is important to understand the structural properties of $\aone$-homotopy sheaves. Our discussion is inspired by Morel's axiomatic discussion of the so-called stable $\aone$-connectivity property in \cite[\S 6]{MStable}.

\begin{defn}
\label{defn:unstableaoneconnectivity}
We will say that the {\em unstable $\aone$-connectivity property holds for $S$} if the following two properties hold.
\begin{itemize}[noitemsep,topsep=1pt]
\item For any pointed space $(\mathscr{X},x)$, $\bpi_1^{\aone}(\mathscr{X})$ is strongly $\aone$-invariant.
\item Any strongly $\aone$-invariant sheaf of abelian groups $\mathbf{A}$ is strictly $\aone$-invariant.
\end{itemize}
\end{defn}

For the most part, the results in this text will be proven assuming that the unstable $\aone$-connectivity holds over $S$. From this point of view, one of the key results of \cite{MField} is the following.

\begin{thm}[Morel]
\label{thm:unstableaoneconnectivityperfectfields}
If $S$ is the spectrum of a perfect field, then the unstable $\aone$-connectivity property holds for $S$.\footnote{The careful reader will want to assume that $k$ is, in addition, infinite, since the published proof of \cite[Lemma 1.15]{MField}, a result due to Gabber, requires this restriction.}
\end{thm}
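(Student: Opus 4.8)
The plan is to reduce to Morel's work in \cite{MField}, since the statement is due to him. The two clauses of the unstable $\aone$-connectivity property are of rather different flavors — the second is a purely sheaf-theoretic assertion about sheaves of abelian groups, while the first concerns the $\aone$-localization functor — but in Morel's treatment they are genuinely intertwined and are proved together by an induction up the Postnikov tower whose geometric input is Gabber's geometric presentation lemma \cite[Lemma 1.15]{MField}. That lemma is established for $S = \Spec k$ with $k$ infinite (this is what forces the restriction recorded in the footnote) and is applied to smooth affine $k$-schemes; the perfectness of $k$ enters through the same geometry. So the first step is to isolate exactly what one needs from geometry — the structure theory of strongly $\aone$-invariant sheaves — and then to assemble the homotopy-theoretic consequences formally, using the exactness properties of $\Laone$ recorded in Proposition \ref{prop:aonelocalization}.

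For the second clause I would argue as Morel does. Using Gabber's lemma one first shows that a strongly $\aone$-invariant sheaf of groups $\mathbf{G}$ is \emph{unramified}: it is controlled by its fibers at points of smooth schemes together with residue data, with the expected injectivity on generization and an $\aone$-excision property along smooth closed subschemes. When $\mathbf{G} = \mathbf{A}$ is abelian, this unramified structure produces on every smooth $S$-scheme $X$ a Rost--Schmid (Gersten-type) complex $C^\ast_{RS}(X,\mathbf{A})$, with terms indexed by the points of $X$ of a given codimension. The two facts to prove are then: (i) $C^\ast_{RS}(-,\mathbf{A})$ is a flasque resolution of $\mathbf{A}$ for the Nisnevich (equivalently Zariski) topology — a Gersten-type acyclicity statement proved, once more, via Gabber's geometric presentation technique; and (ii) the complex is manifestly homotopy invariant, i.e.\ $C^\ast_{RS}(X \times \aone,\mathbf{A})$ is quasi-isomorphic to $C^\ast_{RS}(X,\mathbf{A})$. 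Combining (i) and (ii) shows that $H^n_{\Nis}(-,\mathbf{A})$ is $\aone$-invariant for all $n$, which is exactly the statement that every $K(\mathbf{A},n)$ is $\aone$-local, i.e.\ that $\mathbf{A}$ is strictly $\aone$-invariant.

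For the first clause the point is to understand $\bpi_1^{\aone}(\mathscr{X}) = \bpi_1(\Laone\mathscr{X})$ for the $\aone$-local space $\Laone\mathscr{X}$ and to show that its classifying space is $\aone$-local. Here I would first invoke Morel's unstable $\aone$-$0$-connectivity theorem: the explicit functor $\Laone$ of Proposition \ref{prop:aonelocalization}, built by iterating the singular construction and a Godement-style Nisnevich fibrant replacement, preserves simplicial connectedness over a perfect field. It follows that $\bpi_0$ of an $\aone$-local space is $\aone$-invariant, and hence that the basepoint component $\mathscr{Y}_0$ of $\mathscr{Y} := \Laone\mathscr{X}$ — being a homotopy fiber of a map of $\aone$-local spaces — is again $\aone$-local. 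One is thereby reduced to showing that the first Postnikov section $\tau_{\le 1}\mathscr{Y}_0 \simeq B\bpi_1^{\aone}(\mathscr{X})$ of the $\aone$-local connected space $\mathscr{Y}_0$ is $\aone$-local. This is where the second clause feeds back in: the higher stages of the Postnikov tower of $\mathscr{Y}_0$ are twisted principal fibrations with fibers $K(\bpi_m^{\aone}(\mathscr{Y}_0),m)$ for $m \ge 2$, and since each such sheaf is abelian and strongly — hence, by the second clause, strictly — $\aone$-invariant, these fibers are $\aone$-local; one then deduces $\aone$-locality of $B\bpi_1^{\aone}(\mathscr{X})$ from that of $\mathscr{Y}_0$ together with the Postnikov tower. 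Making this line of reasoning non-circular requires fusing the two clauses into a single induction on the Postnikov degree, carried out in \cite{MField}.

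The main obstacle — and essentially the only place the hypotheses on $k$ are used — is the geometric core: Gabber's presentation lemma and the Gersten-type acyclicity and homotopy invariance of the Rost--Schmid complex that it underpins. Everything downstream of that is formal bookkeeping with Postnikov towers, fiber sequences, and the exactness properties of $\Laone$. As this result is due to Morel, in the body of the paper one would simply cite \cite{MField} (together with \cite[Lemma 1.15]{MField} for the footnote) rather than reproduce this argument in detail.
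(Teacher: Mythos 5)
Your proposal is correct and matches the paper's approach: the paper simply cites Morel (\cite[Theorems 5.46 and 6.1]{MField}), which is exactly what you conclude one should do, and your sketch of Morel's argument (Gabber's lemma, strictly $\aone$-invariant sheaves, the $\bpi_1^{\aone}$ statement) accurately reflects the cited results. No gap.
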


\begin{proof}
See \cite[Theorems 5.46 and 6.1]{MField}.
\end{proof}

\begin{rem}
\label{rem:unstableoneconnectivityproperty}
The unstable $\aone$-connectivity property does not hold if $S$ is a Noetherian scheme of Krull dimension $\geq 2$; see Remark \ref{rem:stableaoneconnectivityproperty} for more details. Nevertheless, it is expected that the unstable $\aone$-connectivity property holds for the spectrum of an arbitrary field.  In fact, the perfection of the base field only intercedes in the verification of Point (2) of \textup{Definition \ref{defn:unstableaoneconnectivity}}.  There is some hope that it may hold for base schemes $S$ that are regular of dimension $\leq 1$.
\end{rem}

\subsubsection*{Fiber sequences}
\begin{convention}[Fiber and cofiber sequences]
\label{convention:fibersequences}
We use the terminology ``(co)fiber sequence" as in \cite[Definition 6.2.6]{Hovey}; we refer the reader there for more formal properties of (co)fiber sequences. We use the terminology {\em simplicial fiber sequence} for a fiber sequence in the injective local model structure on simplicial presheaves and {\em $\aone$-fiber sequence} for a fiber sequence in the $\aone$-local model structure. The theory of fiber sequences is simplified slightly by the fact that the injective local and $\aone$-local model structures are right proper.
\end{convention}

The following result, which is a version of \cite[Lemma 6.51]{MField}, studies the behavior of $\aone$-local objects in simplicial fiber sequences (cf. \cite[e.6 p. 5]{DrorFarjoun} for a completely analogous result for localizations of the classical homotopy category).  There appears to be a misprint in the statement of \cite[Lemma 6.51]{MField}; the statement concerns $\aone$-connectivity whereas the hypothesis used and the conclusion reached appear to concern $\aone$-locality.

\begin{lem}
\label{lem:Mfield6.51}
Suppose
\[
\xymatrix{ \mathscr{F} \ar[r] & \mathscr{E} \ar[r] & \mathscr{B} }
\]
is a simplicial fiber sequence of pointed spaces. If $\mathscr{B}$ and $\mathscr{F}$ are both $\aone$-local and $\mathscr{B}$ is simplicially connected, then $\mathscr{E}$ is $\aone$-local as well.
\end{lem}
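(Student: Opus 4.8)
The plan is to prove that the canonical map $\theta_{\mathscr E}\colon \mathscr E \to \Laone\mathscr E$ of Proposition \ref{prop:aonelocalization} is a simplicial weak equivalence; since $\Laone\mathscr E$ is $\aone$-local, this is exactly the statement that $\mathscr E$ is $\aone$-local. After replacing the given sequence by an equivalent one I may assume that $\mathscr E$, $\mathscr B$, $\mathscr F$ are simplicially fibrant, that $\mathscr E \to \mathscr B$ is a simplicial fibration, and that $\mathscr F$ is its fiber over the base point; the hypotheses on $\mathscr F$ and $\mathscr B$ are undisturbed, $\aone$-locality being invariant under simplicial weak equivalence. Setting $\mathscr G := \hofib(\Laone\mathscr E \to \Laone\mathscr B)$, naturality of $\theta$ produces a commutative ladder
\[
\xymatrix{
\mathscr F \ar[r]\ar[d] & \mathscr E \ar[r]\ar[d]^-{\theta_{\mathscr E}} & \mathscr B \ar[d]^-{\theta_{\mathscr B}} \\
\mathscr G \ar[r] & \Laone\mathscr E \ar[r] & \Laone\mathscr B
}
\]
of simplicial fiber sequences.

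Three inputs drive the comparison. First, $\theta_{\mathscr B}$ is a simplicial weak equivalence: it is an $\aone$-weak equivalence between objects that are simplicially fibrant and $\aone$-local, and such maps are simplicial weak equivalences by the elementary theory of left Bousfield localization. Second, $\mathscr G$ is $\aone$-local, being a simplicial homotopy pullback of the $\aone$-local objects $\Laone\mathscr E$, $\Laone\mathscr B$, $\ast$ (simplicial homotopy limits of $\aone$-local objects are again $\aone$-local, cf. \cite[Theorem 18.5.2]{hirschhorn2003}). Third---the crux---the left vertical map $\mathscr F \to \mathscr G$ is an $\aone$-weak equivalence. For this I would model the homotopy fiber by the finite limit $\mathscr F \simeq \mathscr E \times_{\mathscr B} \mathscr B^{\Delta^1} \times_{\mathscr B} \ast$ attached to the path object $\mathscr B^{\Delta^1} \to \mathscr B \times \mathscr B$: since $\Laone$ preserves finite limits (Proposition \ref{prop:aonelocalization}(iii)) and fibrant objects (Proposition \ref{prop:aonelocalization}(i)), so that $\Laone\mathscr B$ is fibrant and $(\Laone\mathscr B)^{\Delta^1} \to \Laone\mathscr B \times \Laone\mathscr B$ is again a path object, applying $\Laone$ to this limit produces a model of $\hofib(\Laone\mathscr E \to \Laone\mathscr B) = \mathscr G$. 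Hence the natural map $\mathscr F \to \mathscr G$ is identified with $\theta_{\mathscr F}\colon \mathscr F \to \Laone\mathscr F$, an $\aone$-weak equivalence; and as $\mathscr F$ is in addition $\aone$-local, $\mathscr F \to \mathscr G$ is even a simplicial weak equivalence.

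I would then compare the long exact sequences of homotopy sheaves attached to the two rows of the ladder. The hypothesis that $\mathscr B$ is simplicially connected ensures that $\mathscr B$ and $\Laone\mathscr B$ are connected, so these sequences run down through $\bpi_0$; since $\theta_{\mathscr B}$ and $\mathscr F \to \mathscr G$ are simplicial weak equivalences, the five lemma---with the terms $\bpi_0\mathscr B = \bpi_0\Laone\mathscr B = \ast$ handled directly---shows that $\theta_{\mathscr E}$ is an isomorphism on all homotopy sheaves, hence a simplicial weak equivalence, so $\mathscr E$ is $\aone$-local. The step requiring the most care is the identification $\Laone\mathscr F \simeq \mathscr G$: one genuinely needs both that $\Laone$ preserves finite limits and that it preserves fibrant objects, since $\Laone$ is not known to preserve fibrations and the strict fiber $\Laone(\mathscr E \times_{\mathscr B} \ast)$ would not visibly compute a homotopy fiber---this is why the path-object model is used, and it is in essence \cite[Lemma 6.51]{MField}. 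Everything else is routine manipulation of fiber sequences, with connectedness of the base entering only to anchor the bottom of the long exact sequence.
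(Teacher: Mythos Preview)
Your approach has a genuine gap at the step identifying $\Laone\mathscr{F}$ with $\mathscr{G} = \hofib(\Laone\mathscr{E} \to \Laone\mathscr{B})$. You correctly note that $\Laone$ is not known to preserve fibrations, so that applying $\Laone$ to the strict fiber $\mathscr{E} \times_{\mathscr{B}} \ast$ does not visibly produce a homotopy fiber. But your proposed fix via the path-object model does not resolve this: applying $\Laone$ to the finite limit $\mathscr{E} \times_{\mathscr{B}} \mathscr{B}^{\Delta^1} \times_{\mathscr{B}} \ast$ yields $\Laone\mathscr{E} \times_{\Laone\mathscr{B}} \Laone(\mathscr{B}^{\Delta^1}) \times_{\Laone\mathscr{B}} \ast$, which involves $\Laone(\mathscr{B}^{\Delta^1})$, \emph{not} $(\Laone\mathscr{B})^{\Delta^1}$. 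The cotensor $(-)^{\Delta^1}$ is not a finite limit in the category of simplicial presheaves, so finite-limit preservation does not identify these objects, and there is no reason for $\Laone(\mathscr{B}^{\Delta^1}) \to \Laone\mathscr{B} \times \Laone\mathscr{B}$ to be a fibration. Your observation that $(\Laone\mathscr{B})^{\Delta^1}$ is a path object for $\Laone\mathscr{B}$ is true but irrelevant to the limit you actually compute. In effect, the path-object trick merely trades one unpreserved fibration ($\mathscr{E} \to \mathscr{B}$) for another ($\mathscr{B}^{\Delta^1} \to \mathscr{B} \times \mathscr{B}$).

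The paper circumvents this entirely by testing $\aone$-locality via the map $\mathscr{X} \to \mathbf{R}\hom(\aone, \mathscr{X})$ rather than via $\theta_{\mathscr{X}}\colon \mathscr{X} \to \Laone\mathscr{X}$. The point is that $\mathbf{R}\hom(\aone, -)$ is a right Quillen functor, so it genuinely preserves simplicial fiber sequences; applying it to $\mathscr{F} \to \mathscr{E} \to \mathscr{B}$ produces another fiber sequence, and in the resulting ladder the two outer vertical maps are simplicial weak equivalences precisely by the hypothesis that $\mathscr{F}$ and $\mathscr{B}$ are $\aone$-local. A five-lemma argument at stalks (with care at $\pi_0$, where connectedness of $\mathscr{B}$ enters) then finishes. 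The choice of comparison functor is exactly the issue: $\Laone$ preserves only strict finite limits, whereas what is needed is preservation of \emph{homotopy} fiber sequences.
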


\begin{proof}
The proof is that given in \cite[Lemma 6.51]{MField}, but we have added some details for the convenience of the reader and for the sake of completeness.  By means of the existence of functorial factorizations, we may replace $\mathscr{E} \to \mathscr{B}$ by a simplicial fibration between simplicially fibrant pointed spaces, and we may assume $\mathscr{F}$ is the fiber over the basepoint of this map.

Write $\mathbf{R}\hom(\aone,\cdot)$ for the derived internal mapping object in the category of presheaves with the simplicial model structure. The model structure is closed monoidal, by \cite[Section 4]{barwick_left_2010} for example, and the functor $\mathbf{R}\hom(\aone,\cdot)$ is a right Quillen functor, \cite[Chapter 4]{Hovey}.  In particular, this means that $\mathbf{R}\hom(\aone,\cdot)$ preserves simplicial fiber sequences.

Condition (2) of \cite[Lemma 2.2.8]{MV}, combined with the definition of internal mapping objects, allows us to say a space $\mathscr{X}$ is $\aone$-local if and only if the map $\mathscr{X} \to \mathbf{R}\hom(\aone,\mathscr{X})$ induced by the projection map $\aone \to \ast$ is a simplicial weak equivalence. In the case of pointed spaces, we forget the basepoint and then apply this test.

Combining the above observations, one concludes that there is a morphism of simplicial fiber sequences of the form:
\begin{equation}
\label{eq:ladder}
\xymatrix{\mathscr{F} \ar[r]\ar^{\sim}[d] & \mathscr{E} \ar[r] \ar[d] & \mathscr{B} \ar^{\sim}[d] \\
    \mathbf{R}\hom(\aone,\mathscr{F}) \ar[r] &\mathbf{R}\hom(\aone,\mathscr{E})  \ar[r] & \mathbf{R}\hom(\aone,\mathscr{B});
    } \end{equation}
the two indicated arrows are simplicial weak equivalences.

It is now possible to test whether the map $\mathscr{E} \to \mathbf{R}\hom(\aone,\mathscr{E})$ a simplicial weak equivalence by arguing at points of the Nisnevich site. We refer to \cite[Chapter 3 \& Lemma 5.12]{jardine_local_2015} for the required local homotopy theory. If $p^*$ is point, then from Diagram \eqref{eq:ladder} we obtain a morphims of fiber sequences of Kan complexes of the form:
\begin{equation*} \xymatrix{p^*\mathscr{F} \ar[r]\ar^{\sim}[d] & p^*\mathscr{E} \ar[r] \ar[d] & p^*\mathscr{B} \ar^{\sim}[d] \\
    p^*\mathbf{R}\hom(\aone,\mathscr{F}) \ar[r] &p^*\mathbf{R}\hom(\aone,\mathscr{E})  \ar[r] & p^*\mathbf{R}\hom(\aone,\mathscr{B})
    } \end{equation*}
We wish to show that the map in the middle is a weak equivalence of (possibly disconnected) simplicial sets; that is, it induces an isomorphism on $\pi_0$ and on all homotopy groups for all choices of basepoint in $p^*\mathscr{E}$. It suffices, since $p^* \mathscr{E} \to p^*\mathscr{B}$ is a fibration, to consider basepoints lying in $p^*\mathscr{F}$, the fiber over the canonical basepoint of $p^*\mathscr{B}$.

The required isomorphism on homotopy groups and $\pi_0$ follows from a $5$-lemma argument; the potentially problematic case of $\pi_0$ is handled by identifying $\pi_0(p^*\mathscr{E})$ with the orbit space of $\pi_0(p^*\mathscr{F})$ under the action of $\pi_1(p^*\mathscr{B}, b_0)$, and similarly for the derived mapping spaces.
\end{proof}

\subsubsection*{Basic consequences of the unstable $\aone$-connectivity theorem}
We very briefly recall the Postnikov tower in the form we will use. For any pointed simplicially connected space $\mathscr{X}$, there is a tower of fibrations of the form
\[
\xymatrix{
 & & \mathscr{X} \ar[dl]\ar[d] \ar[dr] & & \\
\cdots \ar[r]^{p_{i+2}}& \mathscr{X}^{(i+1)} \ar[r]^{p_{i+1}} & \mathscr{X}^{(i)} \ar[r]^{p_{i}} & \mathscr{X}^{(i-1)} \ar[r]^{p_{i-1}} & \cdots
}
\]
such that (a) for any integer $j \geq 0$, $\hofib(p_{j}) = K(\bpi_{j}(\mathscr{X}),j)$, and (b) $\mathscr{X} \weq \operatorname{holim}_i \mathscr{X}^{(i)}$ \cite[Definition 1.31 and Theorem 1.37]{MV}.
We now deduce some consequences of the unstable $\aone$-connectivity property.

\begin{lem}
\label{lem:unstableconnectivityconsequences}
Suppose the unstable $\aone$-connectivity property holds for $S$.
\begin{enumerate}[noitemsep,topsep=1pt]
\item For any pointed space $(\mathscr{X},x)$, and any integer $i \geq 2$, the sheaves $\bpi_i(\Laone\mathscr{X},x)$ are strictly $\aone$-invariant.
\item If $(\mathscr{X},x)$ is a pointed, simplicially connected space, then $\mathscr{X}$ is $\aone$-local if and only if $\bpi_1(\mathscr{X},x)$ is strongly $\aone$-invariant and for any integer $i \geq 2$, $\bpi_i(\mathscr{X},x)$ is strictly $\aone$-invariant.
\end{enumerate}
\end{lem}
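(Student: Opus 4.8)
The plan is to reduce everything to two facts recalled above: that a simplicial homotopy limit of $\aone$-local spaces is again $\aone$-local, and Lemma \ref{lem:Mfield6.51} on the behaviour of $\aone$-locality in simplicial fiber sequences. Granting these, Point (1) becomes a reindexing trick and Point (2) an induction up the Postnikov tower.

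For Point (1), set $\mathscr{X}' := \Laone\mathscr{X}$, which is fibrant and $\aone$-local by Proposition \ref{prop:aonelocalization}(i), and fix $i \geq 2$. Since $\mathscr{X}'$ is fibrant, the naive loop space $\Omega^{i-1}\mathscr{X}'$ is a model for the $(i-1)$-fold derived loops, is again fibrant, and is built from $\mathscr{X}'$ and $\ast$ by iterated homotopy pullback; hence it is $\aone$-local. One has natural identifications of the associated Nisnevich sheaves $\bpi_i(\mathscr{X}',x) \cong \bpi_1(\Omega^{i-1}\mathscr{X}', c_x)$, where $c_x$ is the constant loop. Because $\Omega^{i-1}\mathscr{X}'$ is $\aone$-local, the map $\Omega^{i-1}\mathscr{X}' \to \Laone\Omega^{i-1}\mathscr{X}'$ is a simplicial weak equivalence, so $\bpi_1(\Omega^{i-1}\mathscr{X}', c_x) \cong \bpi_1^{\aone}(\Omega^{i-1}\mathscr{X}', c_x)$; by the first clause of Definition \ref{defn:unstableaoneconnectivity} (which is imposed for every pointed space) this sheaf is strongly $\aone$-invariant. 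As $i \geq 2$ it is also a sheaf of abelian groups, so by the second clause of Definition \ref{defn:unstableaoneconnectivity} it is strictly $\aone$-invariant. This yields Point (1).

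For Point (2), the forward implication follows from Point (1): if $\mathscr{X}$ is $\aone$-local then $\mathscr{X} \to \Laone\mathscr{X}$ is a simplicial weak equivalence, so $\bpi_i(\mathscr{X},x) \cong \bpi_i(\Laone\mathscr{X},x)$ is strictly $\aone$-invariant for $i \geq 2$, while $\bpi_1(\mathscr{X},x) \cong \bpi_1^{\aone}(\mathscr{X},x)$ is strongly $\aone$-invariant by Definition \ref{defn:unstableaoneconnectivity}. For the converse, I would argue via the Postnikov tower $\{\mathscr{X}^{(n)}\}$ of the simplicially connected space $\mathscr{X}$: since $\mathscr{X} \weq \holim_n \mathscr{X}^{(n)}$ and simplicial homotopy limits of $\aone$-local spaces are $\aone$-local, it suffices to show each $\mathscr{X}^{(n)}$ is $\aone$-local, which I establish by induction on $n$. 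In the base case $\mathscr{X}^{(1)} \weq K(\bpi_1(\mathscr{X}),1) = B\bpi_1(\mathscr{X})$ is $\aone$-local precisely because $\bpi_1(\mathscr{X})$ is strongly $\aone$-invariant. For $n \geq 2$, assuming $\mathscr{X}^{(n-1)}$ is $\aone$-local, I apply Lemma \ref{lem:Mfield6.51} to the simplicial fiber sequence $K(\bpi_n(\mathscr{X}),n) \to \mathscr{X}^{(n)} \to \mathscr{X}^{(n-1)}$: the base $\mathscr{X}^{(n-1)}$ is $\aone$-local by induction and simplicially connected (being a Postnikov section of the simplicially connected $\mathscr{X}$), while the fiber $K(\bpi_n(\mathscr{X}),n)$ is $\aone$-local because $\bpi_n(\mathscr{X})$ is strictly $\aone$-invariant; hence $\mathscr{X}^{(n)}$ is $\aone$-local, completing the induction.

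In effect, all the real content has been exported to Lemma \ref{lem:Mfield6.51} and to the closure of $\aone$-local objects under homotopy limits, so the argument above is essentially formal. The points deserving some care are the standard manipulations of homotopy sheaves---the identification $\bpi_i(\mathscr{X}',x) \cong \bpi_1(\Omega^{i-1}\mathscr{X}', c_x)$, the fibrancy bookkeeping for iterated naive loops, and the convergence $\mathscr{X} \weq \holim_n \mathscr{X}^{(n)}$ of the Postnikov tower---together with the observation that simple connectivity of $\mathscr{X}$ is exactly what makes the connectivity hypothesis of Lemma \ref{lem:Mfield6.51} hold at every stage; I do not expect a substantive obstacle beyond these.
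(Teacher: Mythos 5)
Your proof is correct and follows essentially the same route as the paper's: Point (1) by identifying $\bpi_i(\Laone\mathscr{X})$ with $\bpi_1$ of the iterated loops of the fibrant $\aone$-local space and invoking the two clauses of Definition \ref{defn:unstableaoneconnectivity}, and Point (2) by induction up the Postnikov tower using Lemma \ref{lem:Mfield6.51} together with closure of $\aone$-local objects under homotopy limits. One cosmetic slip: what the repeated application of Lemma \ref{lem:Mfield6.51} requires is simplicial \emph{connectedness} of the Postnikov stages (which is what your inductive step correctly uses), not ``simple connectivity'' of $\mathscr{X}$ as stated in your closing remark.
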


\begin{proof}
For (1), $\Omega^i \Laone \mathscr{X}$ is $\aone$-local and simplicially fibrant for every $i \geq 1$. In particular,
the sheaf $\bpi_1(\Omega^{i-1} \Laone \mathscr{X})$ is strongly $\aone$-invariant. Since this is abelian when $i \geq 2$, we conclude by the assumption that the unstable $\aone$-connectivity property holds that $\bpi_i(\Laone\mathscr{X},x)$ is strictly $\aone$-invariant for $i \geq 2$.

For Point (2), we use the existence and convergence of the Postnikov tower, together with an induction argument in combination with the results of Point (1).  Using this tower, it suffices to show that if $\mathscr{X}$ is a pointed, simplicially connected $\aone$-local space, and we have a simplicial fiber sequence of the form
\[
K(\mathbf{A},n) \longrightarrow \mathscr{X}' \longrightarrow \mathscr{X},
\]
where $\mathbf{A}$ is strongly $\aone$-invariant if $n = 1$, and strictly $\aone$-invariant if $n \geq 2$, then $\mathscr{X}'$ is $\aone$-local as well.  Either assumption guarantees that $K(\mathbf{A},n)$ is $\aone$-local and the result then follows by appeal to Lemma \ref{lem:Mfield6.51}.
\end{proof}

The following result, which is called the unstable $\aone$-connectivity theorem, justifies our terminology; this result is an axiomatic form of \cite[Theorem 6.38]{MField}.

\begin{thm}[Unstable $\aone$-connectivity theorem]
\label{thm:unstablenconnectivity}
Suppose $n \geq 0$ is an integer, and $(\mathscr{X},x)$ is a pointed, simplicially $n$-connected space.  The space $\Laone \mathscr{X}$ is simplicially connected, and if the unstable $\aone$-connectivity property holds for $S$, then it is simplicially $n$-connected.
\end{thm}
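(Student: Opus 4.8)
I would prove the two assertions separately: the first by inspecting the explicit construction of $\Laone$, and the second by an induction on $n$ that feeds off the first assertion together with Lemma~\ref{lem:unstableconnectivityconsequences}. For the first assertion, recall from the proof of Proposition~\ref{prop:aonelocalization} that $\Laone\mathscr{X}$ is built by a (transfinite) iteration of the singular construction $\operatorname{Sing}^{\aone}$ and the Godement fibrant replacement. The singular construction does not enlarge $\bpi_0$: sectionwise, $\pi_0(\operatorname{Sing}^{\aone}\mathscr{Y})$ is the coequalizer of the two maps $\pi_0(\mathscr{Y}(-\times\aone))\rightrightarrows\pi_0(\mathscr{Y}(-))$, hence a quotient of $\pi_0(\mathscr{Y}(-))$; the Godement resolution preserves $\bpi_0$, and $\bpi_0$ commutes with the filtered colimits occurring in the construction. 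Hence if $\mathscr{X}$ is simplicially connected, so is $\Laone\mathscr{X}$ (this is also \cite[Cor.~3.22]{MV}).

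For the second assertion, assume the unstable $\aone$-connectivity property holds for $S$ and induct on $n$, the case $n=0$ being the first assertion. Let $n\ge 1$ and suppose the claim holds for $n-1$. Since a simplicially $n$-connected space is in particular simplicially $(n-1)$-connected, the inductive hypothesis shows $\Laone\mathscr{X}$ is simplicially $(n-1)$-connected, so it remains only to prove that $\mathbf{A}:=\bpi_n(\Laone\mathscr{X})$ is trivial. Using the first assertion and the simplicial Postnikov tower, the $n$-th Postnikov section $(\Laone\mathscr{X})^{(n)}$ is identified with $K(\mathbf{A},n)$, and the structure map $q\colon \Laone\mathscr{X}\to K(\mathbf{A},n)$ induces an isomorphism on $\bpi_n$.

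The crucial point is that $q$ is simplicially null. First, $K(\mathbf{A},n)$ is $\aone$-local: when $n=1$, $\mathbf{A}=\bpi_1^{\aone}(\mathscr{X})$ is strongly $\aone$-invariant by hypothesis, so $K(\mathbf{A},1)=B\mathbf{A}$ is $\aone$-local by Definition~\ref{defn:aoneinvarianceproperties}; when $n\ge 2$, $\mathbf{A}$ is strictly $\aone$-invariant by Lemma~\ref{lem:unstableconnectivityconsequences}(1), so $K(\mathbf{A},n)$ is again $\aone$-local. Second, the composite $\mathscr{X}\to\Laone\mathscr{X}\xrightarrow{q}K(\mathbf{A},n)$ is simplicially null, because $K(\mathbf{A},n)$ is simplicially $n$-truncated, so this composite factors through the simplicial $n$-truncation of $\mathscr{X}$, which is contractible since $\mathscr{X}$ is simplicially $n$-connected. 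But $\mathscr{X}\to\Laone\mathscr{X}$ is an $\aone$-weak equivalence and $K(\mathbf{A},n)$ is $\aone$-local, so precomposition induces a bijection $[\Laone\mathscr{X},K(\mathbf{A},n)]_s\xrightarrow{\sim}[\mathscr{X},K(\mathbf{A},n)]_s$ (Proposition~\ref{prop:aonelocalization}); thus $q$ itself is simplicially null, hence induces the zero map on $\bpi_n$, and therefore $\mathbf{A}=0$. This completes the induction.

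The main obstacle, and really the only non-formal ingredient, is the first assertion: it cannot be extracted from the universal property of $\Laone$ alone, since a priori $\aone$-localization might create new connected components, so one must appeal to the explicit singular-construction model (Proposition~\ref{prop:aonelocalization}, or \cite{MV}). After that the argument is a clean bootstrap off Lemma~\ref{lem:unstableconnectivityconsequences} and the two axioms; the only points requiring care are treating $n=1$ (where $\mathbf{A}$ may be nonabelian) separately from $n\ge 2$, and confirming there is no circularity, as Lemma~\ref{lem:unstableconnectivityconsequences} is derived from the axioms and Lemma~\ref{lem:Mfield6.51} rather than from the present theorem.
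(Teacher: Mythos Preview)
Your proof is correct and follows essentially the same approach as the paper: the $n=0$ case via \cite[\S 2 Corollary 3.22]{MV}, then induction using that $K(\bpi_n^{\aone}(\mathscr{X}),n)$ is $\aone$-local (from the axioms and Lemma~\ref{lem:unstableconnectivityconsequences}(1)) to transport the obvious nullity of $\mathscr{X}\to K(\mathbf{A},n)$ across the $\aone$-weak equivalence $\mathscr{X}\to\Laone\mathscr{X}$. The only cosmetic difference is that the paper phrases the inductive step as an isomorphism $\hom(\bpi_n(\mathscr{X}),\mathbf{A})\cong\hom(\bpi_n^{\aone}(\mathscr{X}),\mathbf{A})$ for all strongly/strictly $\aone$-invariant $\mathbf{A}$ and then invokes Yoneda, whereas you work directly with the Postnikov map $q$ (i.e., the case $\mathbf{A}=\bpi_n^{\aone}(\mathscr{X})$ and the identity morphism); these are equivalent formulations of the same argument.
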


\begin{proof}
The case $n = 0$ of the theorem follows from \cite[\S 2 Corollary 3.22]{MV} and does not require the unstable $\aone$-connectivity property to hold.  That result is presented without a proof in \cite{MV}, but follows from the properties of the $\aone$-localization functor.  For a detailed proof, see, e.g., \cite[Theorem 1.2.20]{Strunk}.

Now, we treat the case $n = 1$. We begin by establishing a general result. For any simplicially connected space $\mathscr{X}$, and any Nisnevich sheaf of groups $\mathbf{G}$, there is a functorial bijection
\[
\hom(\bpi_1(\mathscr{X},x),\mathbf{G}) \iso [(\mathscr{X},x),B\mathbf{G}]_s
\]
by obstruction theory \cite[Lemma B.7(1)]{MField}. If $\mathbf{G}$ is strongly $\aone$-invariant, then $B\mathbf{G}$ is $\aone$-local, and there are functorial bijections of the form:
\[
\begin{split}
[(\mathscr{X},x),B\mathbf{G}]_s &\iso [(\mathscr{X},x),B\mathbf{G}]_{\aone} \\
&\iso [(\Laone \mathscr{X},x),B\mathbf{G}]_{\aone} \\
&\iso [(\Laone \mathscr{X},x),B\mathbf{G}]_s.
\end{split}
\]
Since $\Laone \mathscr{X}$ is simplicially connected by \cite[\S 2 Corollary 3.22]{MV}, we conclude that there is a functorial bijection of the form
\[
\hom(\bpi_1(\Laone \mathscr{X},x),\mathbf{G}) \iso  [(\Laone \mathscr{X},x),B\mathbf{G}]_s.
\]
Now, $\bpi_1(\Laone \mathscr{X},x) = \bpi_1^{\aone}(\mathscr{X})$ by definition, so combining all of the above isomorphisms, we conclude that if $\mathbf{G}$ is strongly $\aone$-invariant, then
\begin{equation}
\label{eqn:homoutofpi1}
\hom(\bpi_1(\mathscr{X},x),\mathbf{G}) \iso \hom(\bpi_1^{\aone}(\mathscr{X}),\mathbf{G}).
\end{equation}
Having established this bijection, we can proceed to the proof of the main result.

If $\mathscr{X}$ is simplicially $1$-connected, then $\hom(\bpi_1(\mathscr{X},x),\mathbf{G}) = 0$ for any sheaf of groups $\mathbf{G}$. If $\mathbf{G}$ is furthermore strongly $\aone$-invariant, we conclude by the isomorphism of (\ref{eqn:homoutofpi1}) that $\hom(\bpi_1^{\aone}(\mathscr{X}),\mathbf{G}) = 0$. Since the unstable $\aone$-connectivity property holds for $k$, we know that $\bpi_1^{\aone}(\mathscr{X})$ is strongly $\aone$-invariant. Therefore, by the Yoneda lemma, we know that $\bpi_1^{\aone}(\mathscr{X})$ must be trivial.

For the general case, one proceeds by induction on $n$. If $\mathscr{X}$ is a simplicially $(n-1)$-connected space, $n \geq 2$, then for any sheaf of abelian groups $\mathbf{A}$,
\[
\hom(\bpi_n(\mathscr{X}),\mathbf{A}) \iso [(\mathscr{X},x),K(\mathbf{A},n)]_s;
\]
this follows from \cite[Lemma B.7(2)]{MField}. An argument completely analogous to the one above, this time using Lemma \ref{lem:unstableconnectivityconsequences}(1) to conclude that the higher $\aone$-homotopy sheaves are strictly $\aone$-invariant, shows that if $\mathbf{A}$ is any strictly $\aone$-invariant sheaf, then
\[
\hom(\bpi_n(\mathscr{X}),\mathbf{A}) \iso \hom(\bpi_n^{\aone}(\mathscr{X}),\mathbf{A}).
\]
If $\mathscr{X}$ is simplicially $n$-connected, then as before we can again conclude by appealing to the Yoneda lemma.
\end{proof}

\begin{rem}
\label{rem:unstable0connectivityproperty}
We add one comment about the $n = 0$ case of the above theorem.  In fact, \cite[\S 2 Corollary 3.22]{MV} establishes a more general statement that we will frequently use below: if $\mathscr{X} \to \mathscr{X}'$ is an $\aone$-weak equivalence with $\mathscr{X}'$ an $\aone$-local space, then the induced morphism $\bpi_0(\mathscr{X}) \to \bpi_0(\mathscr{X}')$ is an epimorphism.
\end{rem}

\subsection{Further consequences of the unstable \texorpdfstring{$\aone$}{A1}-connectivity property}
\label{ss:furtherproperties}
In this section, we study further consequences of the unstable $\aone$-connectivity property introduced in Section \ref{ss:unstableconnectivity}. In particular, we recast some results of F. Morel in our axiomatic framework. First, we provide an analog of Proposition \ref{prop:simplicialKanloopgroup} in the context of $\aone$-homotopy theory (see Theorem \ref{thm:kanloopgroupmodelaonelocal}); this result is a key technical tool in all that follows. In particular, it allows us to establish Theorem \ref{thm:MField6.53}, which is a statement about preservation of simplicial fiber sequences under $\aone$-localization. Consequences of this result include a relative version of the unstable connectivity theorem, which appears below as Corollary \ref{cor:relativehurewicz}, and Theorem \ref{thm:MField6.60}, which is a technical result about the interaction between Postnikov towers and $\aone$-localization.

\subsubsection*{On $\aone$-homotopy types of connected spaces}
We begin by establishing a result about the behavior of the classifying space of the Kan loop group under $\aone$-localization; this result is culled from the proof of \cite[Theorem 6.46]{MField}.  Suppose $G$ is a simplicial presheaf of groups. Since $\Laone$ preserves finite products, $\Laone G$ is again a simplicial presheaf of groups, the morphism $G\to L_{\aone}G$ is a homomorphism, and there is an induced morphism
\begin{equation}
\label{eqn:aonelocalmodel}
BG \longrightarrow BL_{\aone}G.
\end{equation}
Regarding this morphism, one has the following result.

\begin{lem} \label{lem:BGtoBLaoneG}
If $G$ is a simplicial presheaf of groups, then the functorial map $B G \to B \Laone G$ is an $\aone$-weak-equivalence.
\end{lem}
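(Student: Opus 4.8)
The plan is to exploit the description of $BG$ as a homotopy colimit together with the fact that $\Laone$ commutes with finite products. Recall from \ref{entry:barconstruction} that $BG = B(\ast,G,\ast) = \hocolim_{n \in \Delta^{\op}} B(\ast,G,\ast)_n$ with $B(\ast,G,\ast)_n = G^{\times n}$, and likewise $B\Laone G = \hocolim_{n\in\Delta^{\op}}(\Laone G)^{\times n}$. By functoriality of the two-sided bar construction, the homomorphism $\theta_G \colon G \to \Laone G$ of Proposition~\ref{prop:aonelocalization} induces a map of simplicial objects $B(\ast,G,\ast)_\bullet \to B(\ast,\Laone G,\ast)_\bullet$ which in simplicial degree $n$ is the map $G^{\times n} \to (\Laone G)^{\times n}$, and the map $BG \to B\Laone G$ of \eqref{eqn:aonelocalmodel} is the induced map on homotopy colimits.

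The first step is to check that $G^{\times n} \to (\Laone G)^{\times n}$ is an $\aone$-weak equivalence for every $n \ge 0$. By Proposition~\ref{prop:aonelocalization}(iii) the canonical map $\Laone(G^{\times n}) \to (\Laone G)^{\times n}$ induced by the projections is an isomorphism; using the naturality of $\theta$ with respect to the projections $G^{\times n} \to G$ one sees that $G^{\times n} \to (\Laone G)^{\times n}$ factors as the $\aone$-weak equivalence $\theta_{G^{\times n}} \colon G^{\times n} \to \Laone(G^{\times n})$ of Proposition~\ref{prop:aonelocalization}(i) followed by this isomorphism, hence is itself an $\aone$-weak equivalence.

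The second step is to pass from this levelwise $\aone$-weak equivalence of $\Delta^{\op}$-diagrams to an $\aone$-weak equivalence on homotopy colimits. The $\aone$-local model structure on pointed simplicial presheaves on $(\Sm_S)_{\Nis}$ is a left Bousfield localization of the simplicial injective local model structure, hence is again left proper, combinatorial and simplicial; moreover left Bousfield localization does not change the class of cofibrations, so a (projectively) cofibrant replacement of the simplicial diagram $B(\ast,G,\ast)_\bullet$ of the sort used to form $\hocolim_{\Delta^{\op}}$ in \ref{entry:barconstruction} simultaneously computes the homotopy colimit in the $\aone$-local model structure. Since homotopy colimits of diagrams over a fixed index category carry levelwise weak equivalences to weak equivalences in any such model category (see, e.g., \cite[\S~18.5]{hirschhorn2003}), applying this to the map of the first step shows that $BG \to B\Laone G$ is an $\aone$-weak equivalence. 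I expect the one point to watch is precisely this last step — reinterpreting the homotopy colimit entering the definition of the bar construction as a homotopy colimit in the $\aone$-local model structure — since everything else is formal; note in particular that we do \emph{not} claim $B\Laone G$ is $\aone$-local (it need not be), so no appeal to preservation of fiber sequences under $\Laone$ is made or needed.
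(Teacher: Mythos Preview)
Your argument is correct and follows essentially the same approach as the paper: both use the bar-construction description of $BG$ as a homotopy colimit, observe that the levelwise maps $G^{\times n}\to(\Laone G)^{\times n}$ are $\aone$-weak equivalences, and then pass to the homotopy colimit. The paper is terser, invoking \cite[\S 2 Lemma 2.12]{MV} directly for the preservation of $\aone$-weak equivalences under $\hocolim$, whereas you unpack this via left Bousfield localization and cofibrant replacement; your extra justification of the levelwise equivalence via Proposition~\ref{prop:aonelocalization}(iii) is a welcome clarification.
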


\begin{proof}
Recall from \ref{entry:barconstruction} that
\[
B G = \hocolim_{n \in \Delta^\op} B( \ast, G, \ast)_n.
\]
Since the map $G^{\times n} \to (\Laone G)^{\times n}$ is an $\aone$-weak equivalence, and since $\hocolim$ preserves such equivalences \cite[\S 2 Lemma 2.12]{MV}, it follows that $BG \to B(\Laone G)$ is an $\aone$-weak equivalence.
\end{proof}

\begin{thm}
\label{thm:kanloopgroupmodelaonelocal}
Suppose the unstable $\aone$-connectivity property holds for $S$ and $(\mathscr{X},x)$ is a pointed reduced space. If $\pi_0(\Laone G(\mathscr{X}))$ is strongly $\aone$-invariant, then the objects $\Laone \mathscr{X}$ and $B\Laone
G(\mathscr{X})$ are simplicially weakly equivalent.
\end{thm}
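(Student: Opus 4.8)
The plan is to realize $B\Laone G(\mathscr{X})$ as an $\aone$-localization of $\mathscr{X}$: first exhibit an $\aone$-weak equivalence $\mathscr{X} \to B\Laone G(\mathscr{X})$, and then verify the target is $\aone$-local. For the first point, since $\mathscr{X}$ is reduced, Proposition \ref{prop:classifyingspaceofkanloops} provides a sectionwise (hence simplicial, hence $\aone$-) weak equivalence $\mathscr{X} \weq BG(\mathscr{X})$, and Lemma \ref{lem:BGtoBLaoneG} provides an $\aone$-weak equivalence $BG(\mathscr{X}) \to B\Laone G(\mathscr{X})$ (note $\Laone G(\mathscr{X})$ is again a presheaf of simplicial groups, since $\Laone$ preserves finite products). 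Composing yields the desired $\aone$-weak equivalence $\mathscr{X} \to B\Laone G(\mathscr{X})$. Granting $\aone$-locality of the target, I would then replace it by a simplicially fibrant model $\mathscr{Z}$ (still $\aone$-local, by right-properness), factor the composite $\mathscr{X} \to \mathscr{Z}$ through a map $\Laone \mathscr{X} \to \mathscr{Z}$ via Proposition \ref{prop:aonelocalization}(ii), and conclude that this map, being an $\aone$-weak equivalence between simplicially fibrant $\aone$-local spaces, is a simplicial weak equivalence; this produces the asserted simplicial weak equivalence as a zig-zag through $\mathscr{Z}$.

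The crux is therefore to show that $B\Laone G(\mathscr{X})$ is $\aone$-local; write $H = \Laone G(\mathscr{X})$. The classifying space $BH$ is simplicially connected, with $\bpi_1(BH) \iso \pi_0(H)$ and $\bpi_i(BH) \iso \bpi_{i-1}(H)$ for $i \geq 2$. By the intrinsic characterization of $\aone$-local simplicially connected spaces in Lemma \ref{lem:unstableconnectivityconsequences}(2), it suffices to check that $\pi_0(H)$ is strongly $\aone$-invariant and that $\bpi_j(H)$ is strictly $\aone$-invariant for every $j \geq 1$. The first assertion is precisely the hypothesis of the theorem. For $j \geq 2$, $\bpi_j(H) = \bpi_j(\Laone G(\mathscr{X}))$ is strictly $\aone$-invariant by Lemma \ref{lem:unstableconnectivityconsequences}(1) --- which is where the assumption that the unstable $\aone$-connectivity property holds for $S$ is consumed.

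The remaining case $j = 1$ is the step I expect to be the main obstacle, and it is exactly the place where one must use that $H$ is a presheaf of simplicial \emph{groups} rather than merely of simplicial sets. A priori the unstable $\aone$-connectivity property tells us only that $\bpi_1(H) = \bpi_1^{\aone}(G(\mathscr{X}))$ is \emph{strongly} $\aone$-invariant, which is weaker than what is needed. The resolution is the Eckmann--Hilton observation that $\bpi_1$ of a presheaf of simplicial groups is automatically a sheaf of \emph{abelian} groups (equivalently, $\bpi_1(H) \iso \bpi_2(BH)$, the latter being abelian as a second homotopy sheaf): then the second clause of the unstable $\aone$-connectivity property --- every strongly $\aone$-invariant sheaf of abelian groups is strictly $\aone$-invariant --- upgrades $\bpi_1(H)$ to strictly $\aone$-invariant. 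With the three verifications complete, Lemma \ref{lem:unstableconnectivityconsequences}(2) yields that $BH$ is $\aone$-local, and the argument concludes as sketched above. The only other point requiring care is the routine bookkeeping between simplicial fibrancy and $\aone$-locality in the final passage, since the bar-construction model of $B\Laone G(\mathscr{X})$ is not a priori simplicially fibrant.
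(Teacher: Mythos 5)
Your proposal is correct and follows essentially the same route as the paper: both reduce to showing that $B\Laone G(\mathscr{X})$ is $\aone$-local by using $\bpi_{i+1}(B\Laone G(\mathscr{X})) \iso \bpi_i(\Laone G(\mathscr{X}))$ together with Lemma \ref{lem:unstableconnectivityconsequences}, and then compare with $\Laone \mathscr{X}$ via Proposition \ref{prop:classifyingspaceofkanloops} and Lemma \ref{lem:BGtoBLaoneG}. Your explicit handling of $\bpi_1(\Laone G(\mathscr{X}))$ (abelian because $\Laone G(\mathscr{X})$ is a presheaf of simplicial groups, hence strictly $\aone$-invariant by the second clause of the connectivity property) spells out a point the paper leaves implicit, and your concluding fibrant-replacement bookkeeping is only a cosmetic variant of the paper's passage through $\Laone B\Laone G(\mathscr{X})$.
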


\begin{proof}
By Proposition \ref{prop:classifyingspaceofkanloops}, since $\mathscr{X}$ is reduced, we know that there is a simplicial
weak equivalence of the form $\mathscr{X} \weq BG(\mathscr{X})$. In particular, $\Laone\mathscr{X} \weq \Laone
BG(\mathscr{X})$. It will be sufficient to prove that there is a simplicial weak equivalence $\Laone B G( \mathscr{X})
\weq B \Laone G(\mathscr{X})$.

We show that $B\Laone G(\mathscr{X})$ is $\aone$-local. To this end, consider the sequence
\[
\Laone G(\mathscr{X}) \longrightarrow E\Laone G(\mathscr{X}) \longrightarrow B\Laone G(\mathscr{X}).
\]
We know this is a simplicial fiber sequence by reference to \cite{wendt2011} for instance.
This fiber sequence yields a long exact sequence of homotopy sheaves. Since $E\Laone G(\mathscr{X})$ is simplicially contractible, we conclude that
\[
\bpi_{i+1}(B\Laone G(\mathscr{X})) \iso \bpi_i(\Laone G(\mathscr{X}))
\]
for every integer $i \geq 0$. The space $B\Laone G(\mathscr{X})$ is simplicially connected as it is the classifying
space of a simplicial group. By assumption $\bpi_0(\Laone G(\mathscr{X}))$ is strongly $\aone$-invariant, so we conclude
that $\bpi_1(B\Laone G(\mathscr{X}))$ is strongly $\aone$-invariant. Since the unstable $\aone$-connectivity property
holds for $S$, we conclude from Lemma \ref{lem:unstableconnectivityconsequences}(1) that $\bpi_{j}(B\Laone
G(\mathscr{X}))$ is strictly $\aone$-invariant for $j \geq 2$. Therefore, applying Lemma
\ref{lem:unstableconnectivityconsequences}(2), we conclude that $B\Laone G(\mathscr{X})$ is itself $\aone$-local, and
the map $B \Laone G(\mathscr{X}) \to \Laone B \Laone G(\mathscr{X})$ is a simplicial weak equivalence. By Lemma
\ref{lem:BGtoBLaoneG}, the map $\Laone B G(\mathscr{X}) \to \Laone B \Laone G(\mathscr{X})$ is also a simplicial weak
equivalence. It follows that there is a map $B \Laone G(\mathscr{X}) \to \Laone B G (\mathscr{X})$ that is also a
simplicial weak equivalence, as required.
\end{proof}

\subsubsection*{On $\aone$-fiber sequences}
The following result is a slight variant of \cite[Theorem 6.53]{MField}, which is presented there without proof.

\begin{thm}
\label{thm:MField6.53}
Assume the unstable $\aone$-connectivity property holds for $S$. Suppose
\[
\mathscr{F} \longrightarrow \mathscr{E} \stackrel{f}{\longrightarrow} \mathscr{B}
\]
is a simplicial fiber sequence of pointed spaces and assume that $\mathscr{B}$ is simplicially connected. If, in addition, $\bpi_0^{\aone}(\Omega \mathscr{B})$ is strongly $\aone$-invariant, then the canonical map
\[
\Laone \mathscr{F} = \Laone \hofib(f) \longrightarrow \hofib (\Laone(f))
\]
is a simplicial weak equivalence. In particular, if $\bpi_1(\mathscr{B})$ is strongly $\aone$-invariant (e.g., trivial) then the canonical map of the previous display is a simplicial weak equivalence.
\end{thm}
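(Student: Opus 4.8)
The plan is to present the given simplicial fiber sequence as the fibration underlying a two-sided bar construction over $BG(\mathscr{B})$, and then to apply $\Laone$ to that presentation. First I would reduce to the case that $\mathscr{B}$ is reduced: since $\mathscr{B}$ is simplicially connected it is simplicially weakly equivalent to a reduced space, and replacing $\mathscr{B}$ by such a model (and $\mathscr{E}$ by the corresponding pullback) alters neither the hypotheses nor the conclusion. Put $G := G(\mathscr{B})$; by Proposition \ref{prop:classifyingspaceofkanloops} there is a simplicial weak equivalence $\mathscr{B} \weq BG$, and by Proposition \ref{prop:simplicialKanloopgroup} one has $G \weq \Omega\mathscr{B}$, so $\bpi_0(\Laone G) = \bpi_0^{\aone}(\Omega\mathscr{B})$ is strongly $\aone$-invariant by hypothesis. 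I would then invoke the classical classification of fibrations over a classifying space, in the guise that the original fiber sequence is simplicially weakly equivalent, over $\mathscr{B} \weq BG$, to a Borel-construction fiber sequence
\[
\mathscr{F}' \longrightarrow B(\ast, G, \mathscr{F}') \longrightarrow BG,
\]
where $\mathscr{F}'$ carries a categorically free right $G$-action and $\mathscr{F}' \weq \mathscr{F}$; concretely one takes $\mathscr{F}' := \mathscr{E} \times_{BG} EG$ with the $G$-action from $EG$ (using that $\mathscr{E} \to BG$ is a fibration), which is $\weq \mathscr{F}$ because $EG$ is simplicially contractible, and whose quotient by the categorically free $G$-action both equals $\mathscr{E}$ and models $B(\ast, G, \mathscr{F}')$.

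Next I would apply $\Laone$ to this bar-construction presentation. Since $B(\ast, G, \mathscr{F}') = \hocolim_{n \in \Delta^{\op}} G^{\times n} \times \mathscr{F}'$, and $\Laone$ preserves finite products (Proposition \ref{prop:aonelocalization}) while homotopy colimits preserve $\aone$-weak equivalences \cite[\S 2 Lemma 2.12]{MV}, the canonical map $B(\ast, G, \mathscr{F}') \to B(\ast, \Laone G, \Laone\mathscr{F}')$ is an $\aone$-weak equivalence, compatibly with the $\aone$-weak equivalence $BG \to B\Laone G$ of Lemma \ref{lem:BGtoBLaoneG}. The key point is that $B(\ast, \Laone G, \Laone\mathscr{F}')$ is already $\aone$-local. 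Indeed it is the total space of the simplicial fiber sequence $\Laone\mathscr{F}' \to B(\ast, \Laone G, \Laone\mathscr{F}') \to B\Laone G$ (cf. \ref{entry:barconstruction}, \cite{may1975, wendt2011}); its fiber $\Laone\mathscr{F}'$ is $\aone$-local by construction; and its base $B\Laone G$ is simplicially connected (being the classifying space of a simplicial group) and $\aone$-local, because $\bpi_1(B\Laone G) \iso \bpi_0(\Laone G) = \bpi_0^{\aone}(\Omega\mathscr{B})$ is strongly $\aone$-invariant by hypothesis, while $\bpi_j(B\Laone G) \iso \bpi_{j-1}(\Laone G)$ is strictly $\aone$-invariant for $j \geq 2$ by Lemma \ref{lem:unstableconnectivityconsequences}(1) (as $\Laone G$ is $\aone$-local), so that Lemma \ref{lem:unstableconnectivityconsequences}(2) applies; hence $B(\ast, \Laone G, \Laone\mathscr{F}')$ is $\aone$-local by Lemma \ref{lem:Mfield6.51}.

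Consequently the $\aone$-weak equivalence $\mathscr{E} \weq B(\ast, G, \mathscr{F}') \to B(\ast, \Laone G, \Laone\mathscr{F}')$ exhibits $\Laone\mathscr{E} \weq B(\ast, \Laone G, \Laone\mathscr{F}')$, and the homotopy-commutative square relating $f$ to the projection $B(\ast, \Laone G, \Laone\mathscr{F}') \to B\Laone G \weq \Laone\mathscr{B}$ identifies $\Laone(f)$ with that projection; its homotopy fiber is $\Laone\mathscr{F}' \weq \Laone\mathscr{F}$, and naturality of the $\aone$-localization maps identifies the canonical comparison map $\Laone\mathscr{F} = \Laone\hofib(f) \to \hofib(\Laone(f))$ with this weak equivalence, proving the main assertion. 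For the final clause, when $\bpi_1(\mathscr{B})$ is strongly $\aone$-invariant, I would combine the main assertion applied to the first Postnikov fibration $\mathscr{B}\langle 1\rangle \to \mathscr{B} \to B\bpi_1(\mathscr{B})$ (whose hypothesis is automatic, since $\Omega B\bpi_1(\mathscr{B}) \weq \bpi_1(\mathscr{B})$ is $\aone$-local) with Theorem \ref{thm:unstablenconnectivity} (which forces $\Laone\mathscr{B}\langle 1\rangle$ to be simply connected), obtaining $\bpi_1^{\aone}(\mathscr{B}) \iso \bpi_1(\mathscr{B})$ from the long exact homotopy sequence; since $\bpi_0^{\aone}(\Omega\mathscr{B}) \iso \bpi_1^{\aone}(\mathscr{B})$ (as $\Laone$ commutes with finite limits, so $\Laone\Omega\mathscr{B} \weq \Omega\Laone\mathscr{B}$), the hypothesis of the main assertion holds for the original fiber sequence.

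I expect the main obstacle to be the construction of the bar-construction model $\mathscr{F}' \to B(\ast, G, \mathscr{F}') \to BG$ — compatibly with the projection to $BG$ and in a manner natural enough to identify the comparison map at the end — since this amounts to transporting the classical classification of fibrations over classifying spaces, together with the identification of quotients by categorically free actions with Borel constructions, into the category of simplicial presheaves. Once that model is in place, the rest is a formal consequence of the preservation properties of $\Laone$ recorded in Proposition \ref{prop:aonelocalization} and of the $\aone$-locality criteria of Lemmas \ref{lem:Mfield6.51} and \ref{lem:unstableconnectivityconsequences}. One could also avoid the final identification of the comparison map by observing that $\hofib(\Laone(f))$, being a homotopy limit of the $\aone$-local spaces $\ast$, $\Laone\mathscr{E}$, $\Laone\mathscr{B}$, is automatically $\aone$-local, so that it suffices to check $\Laone\mathscr{F} \to \hofib(\Laone(f))$ is an $\aone$-weak equivalence — which the bar-construction presentation again provides.
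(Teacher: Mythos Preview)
Your argument for the main assertion follows essentially the same route as the paper: reduce to a reduced base, model the fiber sequence via a two-sided bar construction over the Kan loop group $G(\mathscr{B})$, apply $\Laone$ to the bar resolution levelwise, and invoke Lemma~\ref{lem:Mfield6.51} together with Lemma~\ref{lem:unstableconnectivityconsequences} to see that the resulting total space is $\aone$-local. The paper organizes the final comparison through an explicit commutative square involving a simplicial fibrant replacement $R_{\Nis}$, and cites Theorem~\ref{thm:kanloopgroupmodelaonelocal} for the $\aone$-locality of $B\Laone G$ rather than reproving it inline as you do, but the substance is the same.

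Your treatment of the final clause, however, has a gap. You assert $\bpi_0^{\aone}(\Omega\mathscr{B}) \iso \bpi_1^{\aone}(\mathscr{B})$ on the grounds that ``$\Laone$ commutes with finite limits, so $\Laone\Omega\mathscr{B} \weq \Omega\Laone\mathscr{B}$''. But Proposition~\ref{prop:aonelocalization}(iii) concerns \emph{strict} finite limits, whereas the (derived) loop space requires the cotensor $(-)^{\Delta^1}$, which is not a finite limit of copies of the input in the category of simplicial presheaves: the level-$n$ piece $(X^{\Delta^1})_n = \hom(\Delta^n\times\Delta^1,X)$ already depends on $X_{n+1}$. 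The identification $\Laone\Omega\mathscr{B} \weq \Omega\Laone\mathscr{B}$ is precisely the content of Theorem~\ref{thm:loopsaonelocal}, which carries the very hypothesis you are trying to verify, so invoking it here would be circular. The paper instead handles the final clause by the short direct argument of Lemma~\ref{lem:simpliciallysimplyconnectedguaranteeshypothesis}: since $\bpi_0(\Omega\mathscr{B}) = \bpi_1(\mathscr{B})$ is $\aone$-invariant it is $\aone$-local as a discrete space, so $\Omega\mathscr{B} \to \bpi_0(\Omega\mathscr{B})$ factors through $\Laone\Omega\mathscr{B}$; combined with the standard epimorphism $\bpi_0(\Omega\mathscr{B}) \twoheadrightarrow \bpi_0(\Laone\Omega\mathscr{B})$ this forces $\bpi_0(\Omega\mathscr{B}) \isomt \bpi_0(\Laone\Omega\mathscr{B})$. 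Your detour through the Postnikov fibration is then unnecessary.
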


\begin{proof}
The idea is to replace the simplicial fiber sequence in question by a ``principal" fibration under the Kan loop group of the base and use Theorem \ref{thm:kanloopgroupmodelaonelocal}. To this end, we begin with a reduction. \newline

\noindent {\bf Step 1.}  For any simplicial presheaf $\mathscr{X}$, the map $\mathscr{X}(U) \to Ex \mathscr{X}(U)$ is a (functorial) simplicial weak equivalence.  Thus, without loss of generality we can assume that $\mathscr{B}$ is objectwise fibrant.  Since $\mathscr{B}$ is connected, we can furthermore assume that $\mathscr{B}$ is also reduced.  To see this, write $\mathscr{B}(0)$ for the zeroth level of the (sectionwise) Moore-Postnikov factorization of $\mathscr{B}$, and set $\mathscr{B}^{(0)}$ to be the pullback of the diagram:
\[
\ast \longrightarrow \mathscr{B}(0) \longleftarrow \mathscr{B};
\]
the space $\mathscr{B}^{(0)}$ is called the $0$-th Eilenberg subcomplex of $\mathscr{B}$ (cf. \cite[p. 327, Proof of Lemma VI.3.6]{GoerssJardine}).  By the existence of functorial factorizations, we may assume that, replacing $\mathscr{E}$ by a simplicially weakly equivalent space if necessary, that $\mathscr{E} \to \mathscr{B}$ is a simplicial fibration. In that case, we replace $\mathscr{E}$ by the pullback of the diagram $\mathscr{B}^{(0)} \to \mathscr{B} \leftarrow \mathscr{E}$; this does not change the simplicial homotopy type of the fiber.   \newline

\noindent {\bf Step 2.} Since $\mathscr{B}$ is now assumed reduced, set $\mathscr{G} := G(\mathscr{B})$.  Next, we claim that the simplicial fiber sequence is equivalent to the simplicial fiber sequence associated with a principal fibration under the Kan loop group.  Indeed, since $\mathscr{B}$ is reduced, then $\mathscr{B} \weq B\mathscr{G}$ by Proposition \ref{prop:classifyingspaceofkanloops}.  Now, a priori there is an action of the $h$-group $\Omega \mathscr{B}$ on $\mathscr{F}$.  Since $\mathscr{E} \to \mathscr{B}$ is a simplicial fibration by assumption, if we set $\mathscr{F}'$ to be the pullback of $E\mathscr{G} \to B\mathscr{G} \weq \mathscr{B} \leftarrow \mathscr{E}$, then $\mathscr{F}'$ is simplicially weakly equivalent to $\mathscr{F}$ and carries an honest action of $\mathscr{G}$ (cf. \ref{entry:barconstruction} for our conventions regarding two-sided bar constructions).  One then checks that there is an induced
simplicial weak equivalence $B(\mathscr{F}',\mathscr{G},\ast) \to \mathscr{E}$ making the simplicial fiber sequence $\mathscr{F}' \to B(\mathscr{F}',\mathscr{G},\ast) \to B\mathscr{G}$ (see \cite[Proposition 7.9]{may1975}) weakly equivalent to the fiber sequence $\mathscr{F} \to \mathscr{E} \to \mathscr{B}$ with which we began. \newline

\noindent {\bf Step 3a.} We now study what happens under $\aone$-localization.  First, since the hypotheses of Theorem \ref{thm:kanloopgroupmodelaonelocal} are satisfied by assumption, and the map $\mathscr{G} \to \Laone \mathscr{G}$ induces a simplicial weak equivalence $\Laone \mathscr{B} \weq \Laone B\mathscr{G} \to B\Laone\mathscr{G}$.  In particular, $B \Laone\mathscr{G}$ is $\aone$-local.

On the other hand, there is a simplicial fiber sequence of the form
\[
\Laone \mathscr{F}' \longrightarrow B(\Laone \mathscr{F}',\Laone \mathscr{G},\ast) \longrightarrow B\Laone \mathscr{G}.
\]
Note that $\Laone \mathscr{F}'$ is $\aone$-local by assumption and $B\Laone \mathscr{G}$ is $\aone$-local and simplicially connected.  Therefore, by appeal to Lemma \ref{lem:Mfield6.51}, we conclude that $B(\Laone \mathscr{F}',\Laone \mathscr{G},\ast)$ is $\aone$-local as well.

Next, the maps $\mathscr{F}' \to \Laone \mathscr{F}'$ and $\mathscr{G} \to \Laone \mathscr{G}$ induce $\aone$-weak equivalences $\mathscr{F}' \times \mathscr{G}^{\times n} \to \Laone \mathscr{F'} \times (\Laone \mathscr{G})^{\times n}$ for all $n$.  Therefore, the induced morphism
\[
B(\mathscr{F}',\mathscr{G},\ast) \longrightarrow B(\Laone \mathscr{F}',\Laone \mathscr{G},\ast)
\]
is an $\aone$-weak equivalence by \cite[\S 2 Lemma 2.12]{MV}.

If $R_{\Nis}$ denotes a the simplicial fibrant replacement functor, then the map $B(\Laone \mathscr{F}',\Laone \mathscr{G},\ast) \to R_{\Nis}B(\Laone \mathscr{F}',\Laone \mathscr{G},\ast)$ is a simplicial weak equivalence and $R_{\Nis}B(\Laone \mathscr{F}',\Laone \mathscr{G},\ast)$ is $\aone$-fibrant.  Therefore, the composite map $B(\mathscr{F}',\mathscr{G},\ast) \to R_{\Nis}B(\Laone \mathscr{F}',\Laone \mathscr{G},\ast)$ is also an $\aone$-weak equivalence and factors through a simplicial weak equivalence of the form:
\[
\Laone B(\mathscr{F}',\mathscr{G},\ast) \longrightarrow R_{\Nis}B(\Laone \mathscr{F}',\Laone \mathscr{G},\ast).
\]

\noindent {\bf Step 3b.} The evident projections give morphisms
\begin{align*}
\Laone B(\mathscr{F}',\mathscr{G},\ast) & \longrightarrow \Laone B\mathscr{G}, \text{ and } \\
R_{\Nis}B(\Laone \mathscr{F}',\Laone \mathscr{G},\ast)&\longrightarrow R_{\Nis} B\Laone \mathscr{G}\end{align*}
that fit into the following commutative diagram:
\[
\xymatrix{
\Laone B(\mathscr{F}',\mathscr{G},\ast) \ar[r]\ar[d] & R_{\Nis}B(\Laone \mathscr{F}',\Laone \mathscr{G},\ast) \ar[d] \\
\Laone B\mathscr{G} \ar[r]& R_{\Nis} B \Laone \mathscr{G}.
}
\]
The simplicial homotopy fiber of the first column is $\hofib(\Laone f)$ by construction  while the simplicial homotopy fiber of the second column is $\Laone \hofib(f)$. Moreover, the diagram gives rise to a morphism of simplicial fiber sequences. Since the horizontal maps in the diagram are simplicial weak equivalences by the conclusions of the previous step, we conclude that the induced map of simplicial homotopy fibers is a simpicial weak equivalence.

The final statement is an immediate consequence of Lemma \ref{lem:simpliciallysimplyconnectedguaranteeshypothesis} below.
\end{proof}

\begin{lem}[{\cite[Lemma 6.54]{MField}}]
\label{lem:simpliciallysimplyconnectedguaranteeshypothesis}
If $\mathscr{X}$ is a pointed connected space, such that $\bpi_1(\mathscr{X}) = \bpi_0(\Omega\mathscr{X})$ is $\aone$-invariant, then the morphism
\[
\bpi_0(\Omega \mathscr{X}) \longrightarrow \bpi_0(\Laone \Omega \mathscr{X})
\]
is an isomorphism. In particular, if $\bpi_1(\mathscr{X})$ is strongly $\aone$-invariant, then so is $\bpi_0(\Laone \Omega \mathscr{X})$.
\end{lem}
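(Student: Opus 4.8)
The plan is to establish the displayed isomorphism by showing the relevant map is both an epimorphism and a split monomorphism of Nisnevich sheaves. By the conventions of \ref{notation:homotopysheaves} the map in question is $\bpi_0(\theta)\colon\bpi_0(\Omega\mathscr{X})\to\bpi_0(\Laone\Omega\mathscr{X}) = \bpi_0^{\aone}(\Omega\mathscr{X})$, where $\theta$ is the localization unit of Proposition \ref{prop:aonelocalization}. Since $\theta\colon\Omega\mathscr{X}\to\Laone\Omega\mathscr{X}$ is an $\aone$-weak equivalence with $\aone$-local target, Remark \ref{rem:unstable0connectivityproperty} shows $\bpi_0(\theta)$ is an epimorphism; the work is therefore to produce a left inverse, and this is where the hypothesis on $\mathbf{G} := \bpi_1(\mathscr{X}) = \bpi_0(\Omega\mathscr{X})$ is used.

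The first thing I would record is that an $\aone$-invariant sheaf of sets, regarded as a discrete simplicial presheaf, is $\aone$-local. This can be extracted from the explicit model for $\Laone$ recalled in the comments following Proposition \ref{prop:aonelocalization}: the singular construction leaves a discrete sheaf that is $\aone$-invariant (equivalently $\A^n$-invariant for all $n$) unchanged, and the Godement resolution of a sheaf of sets is a simplicial weak equivalence to a space with the same $\bpi_0$ and trivial higher homotopy sheaves; as both operations preserve local weak equivalences, the (transfinite) iteration defining $\Laone\mathbf{G}$ returns a space simplicially weakly equivalent to $\mathbf{G}$, so $\bpi_0(\theta_{\mathbf{G}})\colon\bpi_0(\mathbf{G})\to\bpi_0(\Laone\mathbf{G})$ is an isomorphism. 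Next I would use the canonical morphism $t\colon\Omega\mathscr{X}\to\bpi_0(\Omega\mathscr{X}) = \mathbf{G}$ to the sheafified presheaf of connected components, which induces the identity on $\bpi_0$. Applying $\Laone$ and invoking naturality of $\theta$ yields a commutative square
\[
\xymatrix{
\bpi_0(\Omega\mathscr{X}) \ar[r]^-{\Id} \ar[d]_-{\bpi_0(\theta)} & \bpi_0(\mathbf{G}) \ar[d]^-{\bpi_0(\theta_{\mathbf{G}})} \\
\bpi_0(\Laone\Omega\mathscr{X}) \ar[r]^-{\bpi_0(\Laone t)} & \bpi_0(\Laone\mathbf{G}),
}
\]
in which the top arrow is the identity and the right vertical arrow is an isomorphism; hence $\bpi_0(\theta_{\mathbf{G}})^{-1}\circ\bpi_0(\Laone t)$ is a left inverse of $\bpi_0(\theta)$. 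Combined with epimorphy, this proves $\bpi_0(\theta)$ is an isomorphism.

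For the concluding assertion of the statement, note that a strongly $\aone$-invariant sheaf of groups $\bpi_1(\mathscr{X})$ is in particular $\aone$-invariant: $B\bpi_1(\mathscr{X})$ is $\aone$-local, hence so is its loop space $\Omega B\bpi_1(\mathscr{X})\weq\bpi_1(\mathscr{X})$, and $\aone$-local sheaves of groups are $\aone$-invariant. Thus the hypothesis of the lemma is satisfied, and the isomorphism just established identifies $\bpi_0(\Laone\Omega\mathscr{X})$ with the strongly $\aone$-invariant sheaf $\bpi_1(\mathscr{X})$. The step I expect to require the most care is the input used at the start of the second paragraph: that for a sheaf concentrated in simplicial degree zero, $\aone$-invariance of sections already forces $\aone$-locality --- with no cohomological condition of the sort appearing in the definitions of strongly or strictly $\aone$-invariant sheaf --- so one should either verify it directly from the construction of $\Laone$ as sketched, or cite it from \cite{MV} or \cite{MField}.
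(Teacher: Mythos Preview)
Your proof is correct and follows essentially the same route as the paper's: both establish epimorphy via \cite[\S 2 Corollary 3.22]{MV} (Remark \ref{rem:unstable0connectivityproperty}), then produce a retraction by observing that the discrete $\aone$-invariant sheaf $\bpi_0(\Omega\mathscr{X})$ is $\aone$-local, so that the canonical map $\Omega\mathscr{X}\to\bpi_0(\Omega\mathscr{X})$ factors through $\Laone\Omega\mathscr{X}$. The paper dispatches the step you flagged as requiring care---that a simplicially discrete $\aone$-invariant sheaf is $\aone$-local---by a direct citation to \cite[\S 2 Proposition 3.19]{MV}, which is cleaner than unwinding the construction of $\Laone$; you may wish to do the same.
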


\begin{proof}
By \cite[\S 2 Corollary 3.22]{MV}, the morphism $\bpi_0(\Omega \mathscr{X}) \to \bpi_0(\Laone \Omega \mathscr{X})$ is always an epimorphism (cf. Remark \ref{rem:unstable0connectivityproperty}). Since $\bpi_0(\Omega \mathscr{X})$ is $\aone$-invariant and has simplicial dimension $0$ it is necessarily simplicially fibrant and therefore $\aone$-local by \cite[\S 2 Proposition 3.19]{MV}. Therefore, the morphism $\Omega \mathscr{X} \to \bpi_0(\Omega \mathscr{X})$ necessarily factors through $\Laone \Omega \mathscr{X}$. Applying $\bpi_0$, we see that the identity map of $\bpi_0(\Omega \mathscr{X})$ factors through $\bpi_0(\Laone \Omega \mathscr{X})$. Using the epimorphism from the first sentence, we conclude that $\bpi_0(\Omega \mathscr{X}) \to \bpi_0(\Laone \Omega \mathscr{X})$ is an isomorphism.
\end{proof}

\subsubsection*{The relative unstable $\aone$-connectivity theorem}
Theorem \ref{thm:MField6.53} can be used to establish a relative version of the unstable connectivity theorem, Theorem \ref{thm:unstablenconnectivity}. In the form below, this result is a variant of \cite[Theorem 6.56]{MField} and will be used repeatedly in the sequel.

\begin{convention}[Relative connectivity]
\label{convention:relativeconnectivity}
If $f: \mathscr{E} \to \mathscr{B}$ is a morphism of pointed spaces, we will say that $f$ is simplicially $i$-connected or a simplicial $i$-equivalence if the simplicial homotopy fiber of $f$ is $(i-1)$-connected. Likewise, we will say that $f$ is $\aone$-$i$-connected or an $\aone$-$i$-equivalence if the $\aone$-homotopy fiber of $f$ is $\aone$-$(i-1)$-connected.
\end{convention}

\begin{cor}
\label{cor:relativehurewicz}
Assume the unstable $\aone$-connectivity property holds for $S$, and suppose $f: \mathscr{E} \to \mathscr{B}$ is a morphism of pointed spaces in which $\mathscr{B}$ is connected. If
\begin{itemize}[noitemsep,topsep=1pt]
\item[(i)] $\bpi_0^{\aone}(\Omega \mathscr{B}) = \bpi_0(\Laone \Omega \mathscr{B})$ is strongly $\aone$-invariant, and
\item[(ii)] $\hofib(f)$ is $n$-connected for some integer $n\geq 2$, then
\end{itemize}
the space $\hofib(\Laone(f))$ is $(n-1)$-connected as well. In particular, under hypothesis (i), if $f$ is a simplicial $n$-equivalence, then $f$ is also an $\aone$-$n$-equivalence.
\end{cor}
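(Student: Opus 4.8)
The plan is to reduce Corollary \ref{cor:relativehurewicz} to Theorem \ref{thm:MField6.53} together with the absolute unstable $\aone$-connectivity theorem (Theorem \ref{thm:unstablenconnectivity}). First I would convert $f$ into a simplicial fibration by the usual functorial factorization, so that $\hofib(f)$ is modeled by the honest fiber $\mathscr{F}$, and we have a simplicial fiber sequence $\mathscr{F} \to \mathscr{E} \to \mathscr{B}$ with $\mathscr{B}$ simplicially connected. Hypothesis (i) says exactly that $\bpi_0^{\aone}(\Omega\mathscr{B})$ is strongly $\aone$-invariant, which is the running hypothesis of Theorem \ref{thm:MField6.53}. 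Therefore that theorem applies and yields a simplicial weak equivalence
\[
\Laone \mathscr{F} = \Laone \hofib(f) \weqto \hofib(\Laone f).
\]
So it suffices to show that $\Laone \mathscr{F}$ is simplicially $(n-1)$-connected.

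Next I would invoke hypothesis (ii): $\mathscr{F} = \hofib(f)$ is simplicially $n$-connected. Now apply Theorem \ref{thm:unstablenconnectivity} with the integer $n$ (and using that the unstable $\aone$-connectivity property holds for $S$): it tells us that $\Laone \mathscr{F}$ is simplicially $n$-connected, hence a fortiori simplicially $(n-1)$-connected. Combining this with the weak equivalence $\Laone \mathscr{F} \weqto \hofib(\Laone f)$ from the previous paragraph, we conclude $\hofib(\Laone f)$ is simplicially $(n-1)$-connected, which is the first assertion. (One small point to record is that $n \geq 2$ guarantees in particular $n \geq 1$, so Theorem \ref{thm:unstablenconnectivity} applies in the range where the $\aone$-connectivity property is actually needed; and $\mathscr{B}$ connected together with $\mathscr{F}$ being $n$-connected with $n\ge 1$ keeps everything in the simply connected or better range needed for the fiber sequence arguments.)

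For the final sentence, recall from Convention \ref{convention:relativeconnectivity} that ``$f$ is a simplicial $n$-equivalence'' means precisely that $\hofib(f)$ is simplicially $(n-1)$-connected, and ``$f$ is an $\aone$-$n$-equivalence'' means the $\aone$-homotopy fiber of $f$ — that is, $\hofib(\Laone f)$, since $\Laone\mathscr{E} \to \Laone\mathscr{B}$ computes the map on $\aone$-localizations — is $\aone$-$(n-1)$-connected, equivalently simplicially $(n-1)$-connected after $\aone$-localization, i.e. $\Laone\hofib(\Laone f)$ is simplicially $(n-1)$-connected. If $f$ is a simplicial $n$-equivalence then $\hofib(f)$ is $(n-1)$-connected; running the argument above with $n$ replaced by $n-1$ (still $\geq 1$) gives that $\hofib(\Laone f) \weq \Laone\hofib(f)$ is simplicially $(n-1)$-connected, hence so is its $\aone$-localization, which is what it means for $f$ to be an $\aone$-$n$-equivalence.

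I expect the main obstacle to be purely bookkeeping rather than conceptual: one must be careful that the hypothesis on $\bpi_0^{\aone}(\Omega\mathscr{B})$ is exactly what is needed to license the use of Theorem \ref{thm:MField6.53} (this is why hypothesis (i) is stated the way it is, and why Lemma \ref{lem:simpliciallysimplyconnectedguaranteeshypothesis} is available as a shortcut when $\bpi_1(\mathscr{B})$ is strongly $\aone$-invariant), and that the index shift in Convention \ref{convention:relativeconnectivity} between ``$i$-connected fiber'' and ``$i$-equivalence'' is tracked consistently when deducing the final ``in particular'' clause. No genuinely hard estimate is involved; everything follows by feeding the absolute connectivity theorem through the fiber-sequence preservation result.
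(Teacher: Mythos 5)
Your proof is correct and follows essentially the same route as the paper: apply Theorem \ref{thm:unstablenconnectivity} to $\hofib(f)$ and Theorem \ref{thm:MField6.53} (licensed by hypothesis (i) and the connectedness of $\mathscr{B}$) to identify $\Laone \hofib(f)$ with $\hofib(\Laone f)$, then combine. The only difference is that you spell out the index bookkeeping for the ``in particular'' clause via Convention \ref{convention:relativeconnectivity}, which the paper leaves implicit.
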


\begin{proof}
Since the unstable $\aone$-connectivity property holds for $S$, and since by hypothesis (ii) the space $\hofib(f)$ is assumed $(n-1)$-connected, we may apply Theorem \ref{thm:unstablenconnectivity} to conclude that $\Laone \hofib(f)$ is again $n$-connected. Again using the unstable $\aone$-connectivity property for $S$, the fact that $\mathscr{B}$ is connected, and hypothesis (i) we may apply Theorem \ref{thm:MField6.53} to conclude that the canonical map $\Laone \hofib(f) \to \hofib(\Laone(f))$ is a simplicial weak equivalence. Combining these observations, we conclude $\hofib(\Laone(f))$ is $n$-connected as well.
\end{proof}

\subsubsection*{$\aone$-localization of layers of Postnikov towers}
We can also deduce some stability properties for the layers of Postnikov towers under $\aone$-localization. To this end, assume $(\mathscr{X},x)$ is a pointed connected space.  If $\mathscr{X}^{(n)}$ is the $n$-th layer of the Postnikov tower for $\mathscr{X}$, then we write $\mathscr{X} \langle n \rangle$ for the space fitting into a simplicial fibration sequence of the form
\[
\mathscr{X}\langle n \rangle \longrightarrow \mathscr{X} \longrightarrow \mathscr{X}^{(n)}.
\]
The space $\mathscr{X}\langle n \rangle$ is the $n$-fold connective cover of $\mathscr{X}$, in particular it is $n$-connected.

\begin{lem}
\label{lem:connectivecoverofaonelocalizationstillaonelocal}
If $(\mathscr{X},x)$ is a pointed simplicially connected space, and the unstable $\aone$-connectivity property holds for our base $S$, then $(\Laone
\mathscr{X})\langle n \rangle$ is $\aone$-local.
\end{lem}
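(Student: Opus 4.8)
The plan is to reduce everything to the intrinsic characterization of $\aone$-locality supplied by Lemma~\ref{lem:unstableconnectivityconsequences}(2): a pointed, simplicially connected space is $\aone$-local precisely when its first homotopy sheaf is strongly $\aone$-invariant and all of its higher homotopy sheaves are strictly $\aone$-invariant. So it suffices to identify the homotopy sheaves of $(\Laone \mathscr{X})\langle n \rangle$ and verify these two conditions.

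First I would dispose of the degenerate case $n = 0$. By the $n = 0$ case of Theorem~\ref{thm:unstablenconnectivity} (which does not require the unstable $\aone$-connectivity property), $\Laone \mathscr{X}$ is simplicially connected, so its $0$-th Postnikov section $(\Laone \mathscr{X})^{(0)}$ is simplicially contractible; hence $(\Laone \mathscr{X})\langle 0 \rangle \weq \Laone \mathscr{X}$, which is $\aone$-local by construction. Now assume $n \geq 1$. Since $(\Laone \mathscr{X})\langle n \rangle$ is $n$-connected and $n \geq 1$, it is in particular simplicially connected and has $\bpi_1((\Laone \mathscr{X})\langle n \rangle) = \ast$, so the hypothesis on $\bpi_1$ in Lemma~\ref{lem:unstableconnectivityconsequences}(2) holds trivially. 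To control the higher homotopy sheaves, I would feed the defining simplicial fibration sequence
\[
(\Laone \mathscr{X})\langle n \rangle \longrightarrow \Laone \mathscr{X} \longrightarrow (\Laone \mathscr{X})^{(n)}
\]
into the long exact sequence of homotopy sheaves: the sheaves $\bpi_i((\Laone \mathscr{X})\langle n \rangle)$ vanish for $i \leq n$, while for $i > n$ the canonical map $\bpi_i((\Laone \mathscr{X})\langle n \rangle) \to \bpi_i(\Laone \mathscr{X})$ is an isomorphism. Since $n \geq 1$, for every $i \geq 2$ the sheaf $\bpi_i((\Laone \mathscr{X})\langle n \rangle)$ is either trivial (when $i \leq n$) or isomorphic to $\bpi_i(\Laone \mathscr{X})$ (when $i > n$); in the former case it is strictly $\aone$-invariant trivially, and in the latter case by Lemma~\ref{lem:unstableconnectivityconsequences}(1). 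Invoking Lemma~\ref{lem:unstableconnectivityconsequences}(2) then closes the argument.

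I do not expect a serious obstacle here. The one point worth flagging is that one should resist the temptation to obtain $\aone$-locality of $(\Laone \mathscr{X})\langle n \rangle$ by applying Lemma~\ref{lem:Mfield6.51} to the above fibration sequence: that lemma runs in the opposite direction (it infers $\aone$-locality of a total space from that of its fiber and base, not of the fiber from that of the total space and base). Working through the homotopy-sheaf characterization instead is what makes the proof go through, and one should be a little careful with the bookkeeping at the bottom of the Postnikov tower, which is why the $n = 0$ case is treated separately.
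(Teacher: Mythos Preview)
Your proof is correct. The paper, however, takes a different and somewhat shorter route. Rather than verifying the homotopy-sheaf criterion of Lemma~\ref{lem:unstableconnectivityconsequences}(2) directly for the connective cover, the paper first shows that the Postnikov truncation $(\Laone\mathscr{X})^{(n)}$ is $\aone$-local (this follows from Lemma~\ref{lem:unstableconnectivityconsequences}, since its homotopy sheaves agree with those of $\Laone\mathscr{X}$ in degrees $\leq n$ and vanish above), and then observes that $(\Laone\mathscr{X})\langle n\rangle$ is the simplicial homotopy fiber of $\Laone\mathscr{X} \to (\Laone\mathscr{X})^{(n)}$, a map between $\aone$-local spaces with connected base. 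Since a homotopy fiber is a homotopy limit, and homotopy limits of $\aone$-local objects are again $\aone$-local (a fact recorded at the start of Section~\ref{ss:unstableconnectivity}), the conclusion follows immediately.

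Your warning about Lemma~\ref{lem:Mfield6.51} is accurate---that lemma does run in the wrong direction---but it is worth noting that the general principle about homotopy limits \emph{does} give $\aone$-locality of the fiber directly, and this is exactly what the paper exploits. Your approach has the virtue of being entirely self-contained within the homotopy-sheaf characterization, while the paper's approach is quicker once one remembers the homotopy-limit fact. Both are valid; neither needs the case split at $n=0$ that you perform, since the homotopy-limit argument is uniform in $n$.
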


\begin{proof}
Assuming the unstable $\aone$-connectivity property holds for our base $S$, we conclude from Lemma \ref{lem:unstableconnectivityconsequences} that $(\Laone \mathscr{X})^{(n)}$ is $\aone$-local.  Moreover, $(\Laone \mathscr{X})^{(n)}$ is simplicially connected by Theorem \ref{thm:unstablenconnectivity} (though this does not require the unstable $\aone$-connectivity property).  It follows that, under these hypotheses, $(\Laone
\mathscr{X})\langle n \rangle$ is $\aone$-local since it is the simplicial homotopy fiber of the map $\Laone \mathscr{X} \to (\Laone \mathscr{X})^{(n)}$ which has a connected base.
\end{proof}

By functoriality of the Postnikov tower, there is an induced morphism $\mathscr{X} \langle n \rangle \to (\Laone \mathscr{X})\langle n \rangle$.   Assuming the unstable $\aone$-connectivity property holds for our base $S$, it follows from Lemma \ref{lem:connectivecoverofaonelocalizationstillaonelocal} that there is an induced morphism
\[
\Laone (\mathscr{X} \langle n \rangle) \longrightarrow (\Laone \mathscr{X})\langle n \rangle.
\]
Regarding this morphism, we have the following result, which is a variant of \cite[Theorem 6.59 and Corollary 6.60]{MField}.

\begin{thm}
\label{thm:MField6.60}
Assume the unstable $\aone$-connectivity property holds for $S$, and $\mathscr{X}$ is a pointed, connected space. Fix an integer $n \geq 1$. Suppose for each integer $i$ with $1 \leq i \leq n$ the sheaf $\bpi_i(\mathscr{X})$ is strongly $\aone$-invariant.
\begin{enumerate}[noitemsep,topsep=1pt]
\item The universal map $\bpi_i(\mathscr{X}) \to \bpi_i^{\aone}(\mathscr{X})$ is an isomorphism if $i \leq n$.
\item For each $i \leq n$, the morphism $\Laone (\mathscr{X} \langle i \rangle) \to (\Laone \mathscr{X})\langle i \rangle$ is a simplicial weak equivalence.
\item The universal map $\bpi_{n+1}(\mathscr{X}) \to \bpi_{n+1}^{\aone}(\mathscr{X})$ is the initial map from $\bpi_{n+1}(\mathscr{X})$ to a strictly $\aone$-invariant sheaf of groups.
\end{enumerate}
\end{thm}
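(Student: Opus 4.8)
\emph{Plan.} The strategy is to prove Points (1) and (2) simultaneously by induction on the degree, feeding each into the next, and then to deduce Point (3) by applying Point (2) to a connective cover. Two preliminary observations are used throughout. First, under the standing hypotheses every truncation $\mathscr{X}^{(i)}$ with $1 \le i \le n$ is $\aone$-local: it is simplicially connected, $\bpi_1(\mathscr{X}^{(i)}) = \bpi_1(\mathscr{X})$ is strongly $\aone$-invariant, and for $2 \le j \le i$ the sheaf $\bpi_j(\mathscr{X}^{(i)}) = \bpi_j(\mathscr{X})$ is strongly $\aone$-invariant and abelian, hence strictly $\aone$-invariant by the unstable $\aone$-connectivity property, so Lemma \ref{lem:unstableconnectivityconsequences}(2) applies. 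Second, the inclusion $\mathscr{X}\langle i-1 \rangle \to \mathscr{X}$ of a connective cover induces isomorphisms on $\bpi_j$ and on $\bpi_j^{\aone}$ for $j \ge i$, and by naturality of $\theta : \Id \to \Laone$ these are compatible with the universal maps $\bpi_j \to \bpi_j^{\aone}$; so it suffices to prove the degree-$i$ assertions for $\mathscr{X}\langle i-1\rangle$.

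\emph{Points (1) and (2).} For $i = 1$, since $\mathscr{X}$ is connected we have $\mathscr{X}\langle 0\rangle = \mathscr{X}$, and the isomorphism \eqref{eqn:homoutofpi1} says precisely that the universal map $u : \bpi_1(\mathscr{X}) \to \bpi_1^{\aone}(\mathscr{X})$ is initial among maps from $\bpi_1(\mathscr{X})$ to strongly $\aone$-invariant sheaves of groups. Since $\bpi_1(\mathscr{X})$ is itself strongly $\aone$-invariant by hypothesis and $\bpi_1^{\aone}(\mathscr{X})$ is strongly $\aone$-invariant by the unstable $\aone$-connectivity property, a two-sided Yoneda argument (test $u$ against $\mathbf{G} = \bpi_1(\mathscr{X})$ to produce a retraction $r$ with $ru = \mathrm{id}$, then against $\mathbf{G} = \bpi_1^{\aone}(\mathscr{X})$ to force $ur = \mathrm{id}$ as well) shows $u$ is an isomorphism; this is Point (1) for $i = 1$. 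Point (2) for $i=1$ now follows from Theorem \ref{thm:MField6.53} applied to the fiber sequence $\mathscr{X}\langle 1\rangle \to \mathscr{X} \to \mathscr{X}^{(1)}$: its hypotheses hold because $\bpi_1(\mathscr{X}^{(1)}) = \bpi_1(\mathscr{X})$ is strongly $\aone$-invariant, so $\Laone(\mathscr{X}\langle 1\rangle) \to \hofib(\Laone(\mathscr{X}\to\mathscr{X}^{(1)}))$ is a simplicial weak equivalence; and since $\mathscr{X}^{(1)}$ is $\aone$-local the map $\Laone\mathscr{X}\to\mathscr{X}^{(1)}$ is an isomorphism on $\bpi_1$ (Point (1)) onto a space with no higher homotopy, i.e., it is the first Postnikov section of $\Laone\mathscr{X}$, so its fiber is $(\Laone\mathscr{X})\langle 1\rangle$. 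For $2 \le i \le n$ one repeats the pattern: assuming Point (2) in degree $i-1$, the space $\mathscr{Y} := \mathscr{X}\langle i-1\rangle$ is $(i-1)$-connected with $\bpi_i(\mathscr{Y}) = \bpi_i(\mathscr{X})$ strictly $\aone$-invariant and $\bpi_i^{\aone}(\mathscr{Y}) = \bpi_i(\Laone(\mathscr{X}\langle i-1\rangle)) \iso \bpi_i((\Laone\mathscr{X})\langle i-1\rangle) = \bpi_i^{\aone}(\mathscr{X})$; the general-case argument in the proof of Theorem \ref{thm:unstablenconnectivity} shows the universal map $\bpi_i(\mathscr{Y}) \to \bpi_i^{\aone}(\mathscr{Y})$ is initial among maps to strictly $\aone$-invariant sheaves, and the same Yoneda argument (now using Lemma \ref{lem:unstableconnectivityconsequences}(1) for $\bpi_i^{\aone}(\mathscr{Y})$) gives Point (1) in degree $i$. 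Point (2) in degree $i$ then follows exactly as for $i=1$, applying Theorem \ref{thm:MField6.53} to $\mathscr{X}\langle i\rangle \to \mathscr{X}\to\mathscr{X}^{(i)}$ and using Point (1) in degrees $\le i$ to identify $\Laone\mathscr{X}\to\mathscr{X}^{(i)}$ as the $i$-th Postnikov section of $\Laone\mathscr{X}$.

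\emph{Point (3).} Apply Point (2) in degree $n$ to $\mathscr{Y} := \mathscr{X}\langle n\rangle$: it is $n$-connected, $\bpi_{n+1}(\mathscr{Y}) = \bpi_{n+1}(\mathscr{X})$, and $\bpi_{n+1}^{\aone}(\mathscr{Y}) = \bpi_{n+1}(\Laone(\mathscr{X}\langle n\rangle)) \iso \bpi_{n+1}((\Laone\mathscr{X})\langle n\rangle) = \bpi_{n+1}^{\aone}(\mathscr{X})$, all compatibly with the universal maps. The general-case argument in the proof of Theorem \ref{thm:unstablenconnectivity}, applied to the $n$-connected space $\mathscr{Y}$ with target $K(\mathbf{A},n+1)$, identifies the universal map $\bpi_{n+1}(\mathscr{Y}) \to \bpi_{n+1}^{\aone}(\mathscr{Y})$ as the initial map from $\bpi_{n+1}(\mathscr{Y})$ to a strictly $\aone$-invariant sheaf (and $\bpi_{n+1}^{\aone}(\mathscr{Y})$ is itself strictly $\aone$-invariant by Lemma \ref{lem:unstableconnectivityconsequences}(1)); transporting along the identifications above yields Point (3).

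\emph{Main obstacle.} The genuinely delicate point is not the Yoneda manipulations but the compatibility bookkeeping: one must check that the simplicial weak equivalences produced by Theorem \ref{thm:MField6.53} agree with the canonical comparison maps $\Laone(\mathscr{X}\langle i\rangle) \to (\Laone\mathscr{X})\langle i\rangle$, and that the degree-shifting identifications of homotopy sheaves (passing to connective covers and to $\Laone$) respect the universal maps $\bpi_j \to \bpi_j^{\aone}$. Once Theorem \ref{thm:MField6.53} — the substantive input — and the characterization of universal maps extracted from the proof of Theorem \ref{thm:unstablenconnectivity} are granted, the remainder is diagram chasing.
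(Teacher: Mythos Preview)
Your argument is correct and rests on the same core ingredients as the paper's: the $\aone$-locality of each truncation $\mathscr{X}^{(i)}$ via Lemma~\ref{lem:unstableconnectivityconsequences}(2), the application of Theorem~\ref{thm:MField6.53} to the connective-cover fiber sequence $\mathscr{X}\langle i\rangle \to \mathscr{X} \to \mathscr{X}^{(i)}$, and for Point~(3) the obstruction-theoretic characterization of the universal map via $K(\mathbf{A},n+1)$.

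The organization differs, however. You establish Points~(1) and~(2) by induction on $i$, at each step passing to the connective cover $\mathscr{X}\langle i-1\rangle$, invoking the initiality characterization extracted from the proof of Theorem~\ref{thm:unstablenconnectivity}, and then running a Yoneda argument to upgrade ``initial'' to ``isomorphism.'' The paper instead works with the single fiber sequence $\Laone(\mathscr{X}\langle n\rangle) \to \Laone\mathscr{X} \to \Laone(\mathscr{X}^{(n)})$ and reads Point~(1) off directly from the isomorphism chain $\bpi_i(\mathscr{X}) \iso \bpi_i(\mathscr{X}^{(n)}) \iso \bpi_i(\Laone\mathscr{X}^{(n)}) \iso \bpi_i(\Laone\mathscr{X})$, bypassing any Yoneda manipulation; Point~(2) then follows from Point~(1) rather than being proved in tandem with it. The paper's route is shorter and avoids the compatibility bookkeeping you flag as the main obstacle, while your inductive organization makes more explicit the role of the initiality property and could generalize more readily to settings where one only knows the hypotheses up to a given degree.
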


\begin{proof}
Since $\bpi_1(\mathscr{X})$ is assumed strongly $\aone$-invariant, and for any integer $n \geq 1$ the map $\bpi_1(\mathscr{X}) \to \bpi_1(\mathscr{X}^{(n)})$ is an isomorphism, we conclude that $\bpi_1(\mathscr{X}^{(n)})$ is strongly $\aone$-invariant for any $n \geq 1$. By assumption, the unstable $\aone$-connectivity property holds for $S$. Thus, for $i \geq 2$ (or $i = 1$ if $\bpi_1^{\aone}(\mathscr{X})$ is abelian) we conclude that $\bpi_i(\mathscr{X})$ is strictly $\aone$-invariant.  We may also apply Theorem \ref{thm:MField6.53} to conclude that the sequence
\[
\Laone (\mathscr{X} \langle n \rangle) \longrightarrow \Laone \mathscr{X} \longrightarrow \Laone(\mathscr{X}^{(n)})
\]
is always a simplicial fiber sequence.

By Theorem \ref{thm:unstablenconnectivity}, we know that $\Laone (\mathscr{X} \langle n \rangle)$ is simplicially $n$-connected. Therefore, we conclude that
\begin{equation}
\label{eqn:piilaonexofn}
\begin{split}
\pi_i(\Laone \mathscr{X}) &\iso \bpi_i(\Laone(\mathscr{X}^{(n)})) \text{ if } i \leq n, \text{ and } \\
\bpi_{n+1}(\Laone \mathscr{X}) &\longrightarrow \bpi_{n+1}(\Laone(\mathscr{X}^{(n)})) \text{ is an epimorphism.}
\end{split}
\end{equation}
There are also simplicial fiber sequences of the form
\[
K(\bpi_i(\mathscr{X}),i) \longrightarrow \mathscr{X}^{(i)} \longrightarrow \mathscr{X}^{(i-1)}.
\]
Since $\mathscr{X}$ is connected, Point (2) of Lemma \ref{lem:unstableconnectivityconsequences} and the assumptions about the sheaves $\bpi_i(\mathscr{X})$ guarantee that $\mathscr{X}^{(n)}$ is $\aone$-local. Thus,
\begin{equation}
\label{eqn:piixofn}
\bpi_i(\mathscr{X}^{(n)}) \iso \bpi_i(\Laone \mathscr{X}^{(n)}) \text{ if } i \leq n.
\end{equation}
Now, we can put these facts together to prove the results.

For Point (1), notice that by combining the isomorphisms of (\ref{eqn:piilaonexofn}) and (\ref{eqn:piixofn}) we obtain for $i \leq n$ the following series of isomorphisms:
\[
\begin{split}
\bpi_i(\mathscr{X}) &\iso \bpi_i(\mathscr{X}^{(n)}) \\
&\iso \bpi_i(\Laone \mathscr{X}^{(n)}) \\
& \iso \bpi_i(\Laone \mathscr{X})\\
& \iso \bpi_i^{\aone}(\mathscr{X});
\end{split}
\]
this is precisely what we wanted to show.

For Point (2) we proceed as follows. From the isomorphisms established in Point (1), we conclude that the map $\mathscr{X}^{(i)} \to (\Laone \mathscr{X})^{(i)}$ is a simplicial weak equivalence. On the other hand, we already saw that $\mathscr{X}^{(i)}$ is $\aone$-local for $i \leq n$. Thus, the map $\Laone \mathscr{X}^{(i)} \to (\Laone \mathscr{X})^{(i)}$ is a simplicial weak equivalence for $i \leq n$. Since $(\Laone \mathscr{X})\langle i \rangle$ is by definition the simplicial homotopy fiber of $\Laone \mathscr{X} \to (\Laone \mathscr{X})^{(i)}$, it follows from the fiber sequence in the previous paragraph that the induced map $\Laone (\mathscr{X} \langle i \rangle) \to (\Laone \mathscr{X})\langle i \rangle$ is a simplicial weak equivalence for $i \leq n$.

Finally, for Point (3), begin by observing that if $\mathbf{A}$ is a strictly $\aone$-invariant sheaf, then since $\mathscr{X}\langle n \rangle$ is $n$-connected, obstruction theory (see \cite[Lemma B.7]{MField}) gives a bijection
\[
\hom(\bpi_{n+1}(\mathscr{X}),\mathbf{A}) \iso [\mathscr{X}\langle n \rangle,K(\mathbf{A},n+1)]_s.
\]
Since $K(\mathbf{A},n+1)$ is $\aone$-local, any map $\mathscr{X}\langle n \rangle \to K(\mathbf{A},n+1)$ factors through $\Laone (\mathscr{X}\langle n \rangle)$. However, since $\Laone (\mathscr{X} \langle n \rangle) \to (\Laone \mathscr{X})\langle n \rangle$ is a simplicial weak equivalence by Point (2), the result follows from the fact that $\bpi_{n+1}((\Laone \mathscr{X})\langle n \rangle) = \bpi_{n+1}^{\aone}(\mathscr{X})$.
\end{proof}

\subsection{James-style models for loop spaces in \texorpdfstring{$\aone$}{A1}-homotopy theory}
\label{ss:loopspaces}
In this section, we discuss the James model for loop spaces in $\aone$-homotopy theory. The construction involves comparing the James model and the Kan loop group model, as was the case in the setting of simplicial homotopy theory. If $(\mathscr{X},x)$ is a pointed simplicially connected space, then $\Omega \Laone \mathscr{X}$ is a model for the $\aone$-derived loop space of $\mathscr{X}$. The map $\Omega \mathscr{X} \to \Omega \Laone \mathscr{X}$ factors through a morphism
\[
\Laone \Omega \mathscr{X} \longrightarrow \Omega \Laone \mathscr{X},
\]
which need not be a simplicial weak equivalence. The following result, which is a variant of \cite[Theorem 6.46]{MField}, gives a necessary and sufficient condition for the above morphism to be a simplicial weak equivalence.

\begin{thm}
\label{thm:loopsaonelocal}
Assume the unstable $\aone$-connectivity property holds for $S$ and suppose $(\mathscr{X},x)$ is a pointed space. If $\bpi_0(\Laone \Omega \mathscr{X})$ is strongly $\aone$-invariant, then
\[
\Laone \Omega \mathscr{X} \longrightarrow \Omega \Laone \mathscr{X}
\]
is a simplicial weak equivalence.
\end{thm}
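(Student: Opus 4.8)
The plan is to reduce to the case of a reduced space and then play the Kan loop group model off against Theorem~\ref{thm:kanloopgroupmodelaonelocal}, which already contains the substantive input. First I would reduce to the case in which $(\mathscr X,x)$ is reduced, arguing exactly as in Step~1 of the proof of Theorem~\ref{thm:MField6.53}: replace $\mathscr X$ by a sectionwise fibrant model, then by the basepoint component (trimmed to be reduced). This does not change the simplicial homotopy type of $\Omega\mathscr X$, and, using that $\Laone$ takes a connected space to a connected space (Theorem~\ref{thm:unstablenconnectivity}, $n=0$), it does not change the homotopy type of $\Omega\Laone\mathscr X$ either; one checks this reduction is compatible with the canonical comparison morphism.

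So assume $\mathscr X$ reduced and set $\mathscr G:=G(\mathscr X)$, the Kan loop group. By Proposition~\ref{prop:simplicialKanloopgroup} there is a natural (sectionwise, hence local) weak equivalence $\mathscr G\weq\Omega\mathscr X$; applying $\Laone$ gives $\Laone\mathscr G\weq\Laone\Omega\mathscr X$, so in particular $\bpi_0(\Laone\mathscr G)=\bpi_0(\Laone\Omega\mathscr X)$, which is strongly $\aone$-invariant by hypothesis. Theorem~\ref{thm:kanloopgroupmodelaonelocal} then yields a simplicial weak equivalence $\Laone\mathscr X\weq B\Laone\mathscr G$.

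Next, since $\Laone$ commutes with finite products (Proposition~\ref{prop:aonelocalization}(iii)), $\Laone\mathscr G$ is again a simplicial presheaf of groups, and for any simplicial presheaf of groups $H$ the canonical map $H\to\Omega BH$ (derived loops on the classifying space) is a sectionwise, hence local, weak equivalence — this is the classical fact that $\Omega\bar W H\weq H$ for a simplicial group $H$ (no connectivity hypothesis needed, as $BH$ is always connected), applied section by section. Combining the three equivalences,
\[
\Laone\Omega\mathscr X \;\weq\; \Laone\mathscr G \;\weq\; \Omega B\Laone\mathscr G \;\weq\; \Omega\Laone\mathscr X .
\]
Finally I would verify that this zig-zag realizes, up to simplicial homotopy, the canonical morphism $\Laone\Omega\mathscr X\to\Omega\Laone\mathscr X$ — the one obtained by applying $\Omega$ to $\theta_{\mathscr X}\colon\mathscr X\to\Laone\mathscr X$ and using the universal property of $\Laone$ — by chasing the naturality of $\mathscr G\weq\Omega\mathscr X$, of $\mathscr X\weq B\mathscr G$ (Proposition~\ref{prop:classifyingspaceofkanloops}), of $H\weq\Omega BH$, and of the equivalence produced in Theorem~\ref{thm:kanloopgroupmodelaonelocal}.

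I expect the genuine work to lie in this last bookkeeping step and in the reduction: producing \emph{some} weak equivalence $\Laone\Omega\mathscr X\weq\Omega\Laone\mathscr X$ is essentially formal given Theorem~\ref{thm:kanloopgroupmodelaonelocal}, but pinning down that the composite agrees with the \emph{canonical} comparison map (rather than some other self-equivalence), and checking that the passage to the reduced model is compatible with all the maps in sight, is where care is required. Everything else is a direct assembly of results already established.
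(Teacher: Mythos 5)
Your argument is essentially correct, but it takes a genuinely different route from the paper's. The paper's proof is two sentences: after reducing to $\mathscr{X}$ simplicially connected, it applies Theorem \ref{thm:MField6.53} to the simplicial fiber sequence $\Omega\mathscr{X} \to \ast \to \mathscr{X}$; the hypothesis that $\bpi_0(\Laone\Omega\mathscr{X})$ be strongly $\aone$-invariant is exactly the hypothesis of that theorem, and its conclusion is literally the assertion that the canonical map $\Laone\Omega\mathscr{X} = \Laone\hofib(\ast\to\mathscr{X}) \to \hofib(\ast\to\Laone\mathscr{X}) = \Omega\Laone\mathscr{X}$ is a simplicial weak equivalence. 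You instead unwind the Kan loop group and classifying-space machinery by hand, which amounts to re-proving the special case $\mathscr{E}=\ast$ of Theorem \ref{thm:MField6.53}; note that the paper's proof of that theorem is itself the $B(\mathscr{F}',\mathscr{G},\ast)$ argument built on Theorem \ref{thm:kanloopgroupmodelaonelocal}, so the substantive input is the same in both routes. What the citation buys is precisely the step you defer: Theorem \ref{thm:MField6.53} is stated about the canonical comparison map, so no identification of a zig-zag with that map is required, whereas in your argument this identification is genuinely needed (the theorem claims more than an abstract equivalence of the two objects) and is the least routine part --- the compatibility of $G(\mathscr{X})\weq\Omega\mathscr{X}$ (Proposition \ref{prop:simplicialKanloopgroup}), $\mathscr{X}\weq BG(\mathscr{X})$ (Proposition \ref{prop:classifyingspaceofkanloops}), $H\weq\Omega BH$, and the zig-zag implicit in Theorem \ref{thm:kanloopgroupmodelaonelocal} must all be chased; the cleanest way to do it is to form the map of simplicial fiber sequences as in Step 3b of the proof of Theorem \ref{thm:MField6.53} with $\mathscr{E}=\ast$, which then identifies the induced map on fibers with the canonical one.

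One shared caveat: your reduction to the reduced case mirrors the paper's ``without loss of generality $\mathscr{X}$ is connected,'' and your justification (via Theorem \ref{thm:unstablenconnectivity} for $n=0$) does not actually show that passing to the basepoint component leaves $\Omega\Laone\mathscr{X}$ unchanged. It does not in general: for $\mathscr{X}$ a simplicially constant representable sheaf such as ${\mathbb A}^2\setminus 0$, the basepoint component is $\ast$ and $\Laone\Omega\mathscr{X}\weq\ast$, while $\Omega\Laone\mathscr{X}$ has $\bpi_0 \iso \K^{\MW}_2 \neq \ast$. So the statement really requires $\mathscr{X}$ connected (as in all later applications); this is a soft spot of the paper's own reduction as much as of yours, so it does not distinguish the two arguments.
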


\begin{proof}
Since $\Omega \mathscr{X}$ only depends on the simplicial connected component of the base-point $x$, without loss of generality we can assume that $\mathscr{X}$ is simplicially connected.  In that case, the result follows immediately from Theorem \ref{thm:MField6.53} applied to the simplicial fiber sequence $\Omega \mathscr{X} \to \ast \to {\mathscr X}$.
\end{proof}

We now use the James construction in the category of simplicial presheaves (Proposition \ref{prop:J(X)=ho-nis=Omega_SigmaX}) together with the result just established about models for $\aone$-derived loop spaces to produce a James-style model for loops on the suspension in the $\aone$-homotopy category.

\begin{thm}
\label{thm:aonejamesconstruction}
Suppose the unstable $\aone$-connectivity property holds for $S$ and $f: \mathscr{X} \to \mathscr{Y}$ is a morphism of pointed simplicially connected spaces.
\begin{enumerate}[noitemsep,topsep=1pt]
\item There is a functorial simplicial weak equivalence $\Laone J(\mathscr{X}) \weq \Omega \Laone \Sigma \mathscr{X}$.
\item If $f$ is an $\aone$-weak equivalence, the map $J(f)$ is an $\aone$-weak equivalence.
\end{enumerate}
\end{thm}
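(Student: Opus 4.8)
The plan is to prove (1) first and then derive (2) as an easy consequence. For (1), the key input is Theorem \ref{thm:loopsaonelocal} applied to the space $\Sigma \mathscr{X}$: one needs to know that $\bpi_0(\Laone \Omega \Sigma \mathscr{X})$ is strongly $\aone$-invariant. To see this, I would first observe that since $\mathscr{X}$ is simplicially connected, $\Sigma \mathscr{X}$ is simplicially $1$-connected, so $\bpi_1(\Sigma \mathscr{X}) = \bpi_0(\Omega \Sigma \mathscr{X})$ is trivial, hence in particular strongly $\aone$-invariant. By Lemma \ref{lem:simpliciallysimplyconnectedguaranteeshypothesis}, $\bpi_0(\Laone \Omega \Sigma \mathscr{X})$ is then strongly $\aone$-invariant. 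Now Theorem \ref{thm:loopsaonelocal} gives a functorial simplicial weak equivalence
\[
\Laone \Omega \Sigma \mathscr{X} \weq \Omega \Laone \Sigma \mathscr{X}.
\]

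Next I would use the simplicial-presheaf James construction. By Proposition \ref{prop:J(X)=ho-nis=Omega_SigmaX}(2), there is a functorial local (hence simplicial, since weak equivalences are detected locally in the injective local structure) weak equivalence $J(\mathscr{X}) \weq \Omega \Sigma \mathscr{X}$. Applying the $\aone$-localization functor $\Laone$ — which preserves simplicial weak equivalences — we get a functorial simplicial weak equivalence $\Laone J(\mathscr{X}) \weq \Laone \Omega \Sigma \mathscr{X}$. Composing with the equivalence from the previous paragraph yields the desired functorial simplicial weak equivalence $\Laone J(\mathscr{X}) \weq \Omega \Laone \Sigma \mathscr{X}$, proving (1). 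Here one must be slightly careful that all the intermediate equivalences are genuinely functorial in $\mathscr{X}$: the James construction is functorial by definition, Proposition \ref{prop:J(X)=ho-nis=Omega_SigmaX} asserts functoriality, $\Laone$ and $\Omega$ are functors, and the equivalence of Theorem \ref{thm:loopsaonelocal} is the functorial comparison map $\Laone \Omega \to \Omega \Laone$, so this bookkeeping goes through.

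For (2), suppose $f: \mathscr{X} \to \mathscr{Y}$ is an $\aone$-weak equivalence of pointed simplicially connected spaces. Then $\Sigma f: \Sigma \mathscr{X} \to \Sigma \mathscr{Y}$ is an $\aone$-weak equivalence (suspension preserves $\aone$-weak equivalences, since $\Laone$ commutes with finite colimits up to the relevant localization — more concretely, $\Sigma$ preserves $\aone$-weak equivalences because it is a left Quillen functor for the $\aone$-local model structure). Hence $\Omega \Laone \Sigma f$ is a simplicial weak equivalence. By the naturality established in (1), the square relating $\Laone J(f)$ to $\Omega \Laone \Sigma f$ commutes and has the two horizontal maps simplicial weak equivalences, so $\Laone J(f)$ is a simplicial weak equivalence; equivalently, $J(f)$ is an $\aone$-weak equivalence.

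\textbf{Main obstacle.} The substantive point is verifying the hypothesis of Theorem \ref{thm:loopsaonelocal}, i.e.\ that $\bpi_0(\Laone \Omega \Sigma \mathscr{X})$ is strongly $\aone$-invariant; everything else is formal manipulation of functors and weak equivalences. As sketched above, this reduces via Lemma \ref{lem:simpliciallysimplyconnectedguaranteeshypothesis} to the observation that $\Sigma \mathscr{X}$ is simplicially $1$-connected when $\mathscr{X}$ is simplicially connected, which is elementary. A secondary point requiring a little care is checking that $\Sigma$ preserves $\aone$-weak equivalences between connected spaces — this follows because smashing with $S^1_s$ is a left Quillen endofunctor of the $\aone$-local model structure and hence preserves all weak equivalences between cofibrant objects (all objects are cofibrant in the injective local structure). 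With these two points in hand, the argument is a short diagram chase.
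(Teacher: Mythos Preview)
Your proposal is correct and follows essentially the same approach as the paper: for (1) you use simplicial $1$-connectedness of $\Sigma\mathscr{X}$, Lemma~\ref{lem:simpliciallysimplyconnectedguaranteeshypothesis}, and Theorem~\ref{thm:loopsaonelocal} together with Proposition~\ref{prop:J(X)=ho-nis=Omega_SigmaX}, and for (2) you use that $\Sigma$ preserves $\aone$-weak equivalences and then the functoriality from (1). The only cosmetic difference is that the paper cites \cite[\S 3 Lemma 2.13]{MV} for the fact that $\Sigma$ preserves $\aone$-weak equivalences rather than invoking the left Quillen property, and phrases the final step of (2) a bit more tersely.
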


\begin{proof}
By Proposition \ref{prop:J(X)=ho-nis=Omega_SigmaX}, there is a simplicial weak equivalence $J(\mathscr{X}) \weq \Omega \Sigma \mathscr{X}$. Thus, there is a simplicial weak equivalence
\[
\Laone J(\mathscr{X}) \weq \Laone \Omega \Sigma \mathscr{X}.
\]
Since $\mathscr{X}$ is connected, $\Sigma \mathscr{X}$ is necessarily $1$-connected (this follows by checking on stalks). Therefore, by Lemma \ref{lem:simpliciallysimplyconnectedguaranteeshypothesis}, we conclude that $\bpi_0(\Laone \Omega \Sigma \mathscr{X})$ is strongly $\aone$-invariant. Thus, we can apply Theorem \ref{thm:loopsaonelocal} to conclude that $\Laone \Omega \Sigma \mathscr{X} \weq \Omega \Laone \Sigma \mathscr{X}$.

For (2), it suffices to observe that if $f: \mathscr{X} \to \mathscr{Y}$ is an $\aone$-weak equivalence, then by \cite[\S 3 Lemma 2.13]{MV} the map $\Sigma \mathscr{X} \to \Sigma \mathscr{Y}$ is an $\aone$-weak equivalence. It follows immediately that the induced morphism $\Omega \Sigma \mathscr{X} \to \Omega \Sigma \mathscr{Y}$ is an $\aone$-weak equivalence. By Part (1), we conclude that $J(X) \to J(Y)$ is an $\aone$-weak equivalence.
\end{proof}

\section{The EHP sequence in \texorpdfstring{$\aone$}{A1}-homotopy theory}
\label{s:aoneehpsequence}
In this section, we study the analog in $\aone$-homotopy theory of Whitehead's EHP exact sequence from the introduction. We begin by recasting this exact sequence in the homotopy theory of simplicial sets (see Proposition \ref{prop:topoEHP}), and then explaining how to extend this result to simplicial presheaves on a site (see Proposition \ref{prop:simplicialsimplicialEHP}). For convenience, we will assume our site has enough points. In Section \ref{ss:aoneehp}, we construct a version of Whitehead's exact sequence in $\aone$-homotopy theory (see Theorem \ref{thm:EHP_range}). In Section \ref{ss:lowdegree}, we study the low-degree portion of the exact sequence of Theorem \ref{thm:EHP_range} and study very explicitly the first degree in which the suspension fails to be an isomorphism. The main result is Theorem \ref{thm:lowdegree}, which depends on various facts about $\aone$-homology.

\subsection{The EHP sequence in simplicial homotopy theory}
\label{ss:classicalEHP}
In this section, we recall Whitehead's refinement of the Freudenthal suspension theorem and adapt this result to the context of simplicial presheaves. This result appears as \cite[Chapter XII, Theorem 2.2]{Whitehead} and the main novelty of this section is that we give a different derivation of the exact sequence that we learned from Mike Hopkins; this version allows more precise control at the end of the sequence. The translation to the setting of simplicial presheaves is then straightforward.

\subsubsection*{The classical EHP sequence}
We begin by recalling the combinatorial construction of James--Hopf maps. We refer the reader to \cite[p. 169]{wu} for more details.

\begin{defn}
\label{defn:simplicialJamesHopf}
Suppose $K$ is a pointed simplicial set and $r \geq 1$ is an integer. Define a morphism of simplicial sets
\[
{\mathrm H}_r: J(K) \to J(K^{\sma r})
\]
that in each simplicial degree is given by the following formula:
\[
{\mathrm H}_r(x_1\ldots x_q) = \prod_{1 \leq i_1 < \cdots < i_r \leq q} x_{i_1} \wedge \cdots \wedge x_{i_r};
\]
the product on the right hand side is taken in (left to right) lexicographic order. We refer to ${\mathrm H}_r$ as a {\em simplicial James--Hopf invariant}.
\end{defn}

Note that ${\mathrm H}_r$ is, by definition, functorial in the input simplicial set $K$. Directly from the definition of ${\mathrm H}_r$ it follows that if $r \ge 2$, then the composite $K \overset{{\mathrm E}}{\to} J(K) \overset{{\mathrm H}_r}{\to} J(K^{\sma r})$ is trivial. We fix $r=2$, and write ${\mathrm H}$ for ${\mathrm H}_2$. There is a commutative diagram:
\begin{equation} \label{eq:referenceDiagram}
\xymatrix{ K \ar^{\mathrm E}[dr] \ar^\phi[d] \\ \hofib {\mathrm H} \ar[r] & J(K) \ar^{{\mathrm H}}[r] & J(K^{\sma 2}) }. \end{equation}

\begin{prop}
\label{prop:topoEHP}
 Suppose $K$ is $(n-1)$-connected where $n \ge 2$. Then $\phi: K \to \hofib {\mathrm H}$ is $(3n-2)$-connected. In particular, we
 obtain a long exact sequence of homotopy groups:
\begin{equation}
\label{eq:3}
\begin{split}
\xymatrix{
  \pi_{3n-2}(K) \ar@{->>}[d] & \pi_{3n-1}(\Sigma K) \ar@{-}^\iso[d] & \pi_{3n-1}(\Sigma (K^{\sma 2})) \ar@{-}^\iso[d] \\
  \pi_{3n-2}(\hofib {\mathrm H}) \ar[r] & \pi_{3n-2}(J(K)) \ar^-{{\mathrm H}}[r] & \pi_{3n-2}(J(K^{\sma 2})) \ar^-{{\mathrm P}}[r] & \pi_{3n-3}(K) \ar^-{{\mathrm E}}[r] & \cdots
} \\
\xymatrix{
 \cdots \ar[r] & \pi_q(K) \ar^-{{\mathrm E}}[r] & \pi_q(J(K)) \ar@{-}^\iso[d] \ar^-{{\mathrm H}}[r] & \pi_q(J(K^{\sma 2})) \ar@{-}^\iso[d]\ar^-{{\mathrm P}}[r] &
  \pi_{q-1}(K) \ar[r] & \dots
 \\ & & \pi_{q+1}(\Sigma K) & \pi_{q+1}(\Sigma (K^{\sma 2})).}
\end{split}
\end{equation}
\end{prop}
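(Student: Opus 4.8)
The plan is to prove the connectivity statement $\phi\colon K \to \hofib {\mathrm H}$ is $(3n-2)$-connected and then extract the long exact sequence formally. I would begin by recalling the filtration of $J(K)$ from Theorem \ref{thm:classicalJames}(2): the cofiber sequences $J_{m-1}(K) \hookrightarrow J_m(K) \to K^{\sma m}$, together with the connectivity estimate that $K^{\sma m}$ is $(mn-1)$-connected when $K$ is $(n-1)$-connected. This shows that the inclusion $J_1(K) = K \hookrightarrow J(K)$ is $(2n-1)$-connected (the first obstruction comes from $K^{\sma 2}$) and, more to the point, that $J(K) \to J(K)/J_1(K)$ has homotopy fiber that is $(2n-1)$-connected with $(3n-1)$-connected next stage — since $J_2(K)/J_1(K) \simeq K^{\sma 2}$ and $J_3(K)/J_2(K) \simeq K^{\sma 3}$ is $(3n-1)$-connected. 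So the James--Hopf map ${\mathrm H}\colon J(K) \to J(K^{\sma 2})$ should be ``close to'' the quotient map $J(K) \to J(K)/J_1(K)$ in a range.

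Concretely, the key computation is that the composite $K^{\sma 2} = J_2(K)/J_1(K) \to J(K)/J_1(K) \to J(K^{\sma 2}) = J((K^{\sma 2}))$ induced by ${\mathrm H}$ (after identifying $J_2/J_1$ with the bottom cell $K^{\sma 2} = J_1(K^{\sma 2})$) is, up to homotopy, the unit map ${\mathrm E}\colon K^{\sma 2} \to J(K^{\sma 2})$; this is immediate from the degree-wise formula for ${\mathrm H}_2$ restricted to words of length $2$, where $\prod_{i_1<i_2} x_{i_1}\wedge x_{i_2}$ has a single factor. Since the bottom-cell inclusion $K^{\sma 2} \to J(K^{\sma 2})$ is $(4n-1)$-connected, one gets that ${\mathrm H}$ and the collapse $J(K) \to J(K)/J_1(K)$ agree through a range; combined with the fact that $J(K)/J_1(K)$ is built from $K^{\sma m}$, $m \ge 2$, with the first cell in dimension $\ge 2n$ and the second in dimension $\ge 3n$, a diagram chase with the fiber sequences shows $\phi\colon K \to \hofib {\mathrm H}$ is $(3n-2)$-connected. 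I would phrase this via the comparison of the two fiber sequences $\hofib {\mathrm H} \to J(K) \to J(K^{\sma 2})$ and $J_1(K) \to J(K) \to J(K)/J_1(K)$, using that the induced map on fibers, $\hofib {\mathrm H} \to J_1(K) = K$ (a one-sided inverse to $\phi$), is also highly connected in the relevant range.

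Once the $(3n-2)$-connectivity of $\phi$ is in hand, the long exact sequence is produced by splicing. The fiber sequence $\hofib {\mathrm H} \to J(K) \to J(K^{\sma 2})$ gives its own long exact sequence of homotopy groups; using $\pi_q(J(K)) \iso \pi_{q+1}(\Sigma K)$ and $\pi_q(J(K^{\sma 2})) \iso \pi_{q+1}(\Sigma(K^{\sma 2}))$ from Point (1) of Theorem \ref{thm:classicalJames} and Milnor's identification, one rewrites it as the sequence in \eqref{eq:3}. For $q \le 3n-2$ one may replace $\pi_q(\hofib {\mathrm H})$ by $\pi_q(K)$ via the iso induced by $\phi$; the connecting map $\pi_q(J(K^{\sma 2})) \to \pi_{q-1}(\hofib {\mathrm H}) \cong \pi_{q-1}(K)$ becomes the map labelled ${\mathrm P}$, and the composite $\pi_q(K) \to \pi_q(\hofib{\mathrm H}) \to \pi_q(J(K))$ is ${\mathrm E}$ by commutativity of \eqref{eq:referenceDiagram}. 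At the top, the exact sequence is $\pi_{3n-1}(J(K)) \to \pi_{3n-1}(J(K^{\sma2})) \to \pi_{3n-2}(\hofib{\mathrm H}) \to \pi_{3n-2}(J(K))$, and surjectivity of $\pi_{3n-2}(K) \twoheadrightarrow \pi_{3n-2}(\hofib {\mathrm H})$ follows because $\phi$ is $(3n-2)$-connected (so an iso on $\pi_q$ for $q < 3n-2$ and epi on $\pi_{3n-2}$), which gives exactly the displayed start of the sequence.

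The main obstacle, I expect, is pinning down precisely the comparison between ${\mathrm H}$ and the collapse map $J(K) \to J(K)/J_1(K)$ — i.e., checking that through dimension roughly $3n-2$ the James--Hopf map really does detect the ``length $\ge 2$'' part of $J(K)$ and nothing more. This requires being careful about (a) the connectivity of the cofibers $J_m(K)/J_{m-1}(K) \simeq K^{\sma m}$ and how they assemble, and (b) the fact that the secondary structure maps in $J(K^{\sma 2})$ (which also has a James filtration, with bottom cell $K^{\sma 2}$ in dimension $\ge 2n$ and next cell $K^{\sma 4}$ in dimension $\ge 4n$) do not interfere below dimension $4n-1 \ge 3n-1$. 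Everything else is a formal manipulation of long exact sequences of fiber sequences plus the already-cited identifications from Theorem \ref{thm:classicalJames} and Remark \ref{rem:j1issuspension}; the derivation ``learned from Mike Hopkins'' presumably streamlines exactly this connectivity bookkeeping by working directly with $\hofib {\mathrm H}$ rather than with the classical pair-of-spaces (triad) arguments of \cite{Whitehead}.
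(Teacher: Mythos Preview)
Your overall strategy is sound and can be made to work, but it is \emph{not} the argument the paper gives, and your writeup contains a real gap.

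\textbf{The paper's argument.} The paper does not use the James filtration at all. Instead it runs the Serre spectral sequence for the fibration $\hofib\mathrm{H}\to J(K)\to J(K^{\sma 2})$. The homology of $J(K)$ and $J(K^{\sma 2})$ is read off from the Hilton--Milnor splitting, so in the range $p+q<3n$ the $E^2$-page has nonzero entries only on the axes. The key input is a theorem of Kuhn (cited as \cite[Theorem 6.2]{kuhn87}) that the composite $H_*(K^{\sma 2})\to H_*(J(K))\xrightarrow{\mathrm H_*} H_*(J(K^{\sma 2}))\to H_*(K^{\sma 2})$ is the identity; this forces all differentials off the bottom row to vanish in the relevant range, the spectral sequence collapses, and one reads off $\tilde H_{\le 3n-2}(\hofib\mathrm H)\cong \tilde H_{\le 3n-2}(K)$. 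Then $\phi$ is a homology isomorphism in that range between simply connected spaces, hence $(3n-2)$-connected. So the ``Mike Hopkins'' streamlining is a spectral-sequence computation, not filtration bookkeeping.

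\textbf{The gap in your argument.} You write ``the two fiber sequences $\hofib\mathrm{H}\to J(K)\to J(K^{\sma 2})$ and $J_1(K)\to J(K)\to J(K)/J_1(K)$'' and then speak of ``the induced map on fibers $\hofib\mathrm{H}\to J_1(K)=K$.'' But $K\to J(K)\to J(K)/K$ is a \emph{cofiber} sequence, not a fiber sequence; $K$ is not the homotopy fiber of $J(K)\to J(K)/K$, and there is no induced map $\hofib\mathrm{H}\to K$. What you actually need is: (i) since $\mathrm H|_K=\ast$ strictly, $\mathrm H$ factors as $J(K)\xrightarrow{q}J(K)/K\xrightarrow{\bar{\mathrm H}}J(K^{\sma 2})$, giving a genuine fiber sequence $\hofib(q)\to\hofib(\mathrm H)\to\hofib(\bar{\mathrm H})$; (ii) your connectivity estimate on $\bar{\mathrm H}$ (bottom-cell analysis plus $K^{\sma 3}$ and $K^{\sma 4}$ connectivity) shows $\hofib(\bar{\mathrm H})$ is $(3n-2)$-connected; (iii) the comparison $K\to\hofib(q)$ is a Blakers--Massey statement, giving $(3n-2)$-connectivity since $K$ is $(n-1)$-connected and $J(K)/K$ is $(2n-1)$-connected. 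The composite $K\to\hofib(q)\to\hofib(\mathrm H)$ is then $\phi$, and you get the desired connectivity. None of this is hard once stated correctly, but as written you have conflated a cofiber with a fiber and reversed the direction of the comparison map; that is the step that would fail.

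\textbf{Comparison.} Your route is more elementary in that it avoids Hilton--Milnor and the external reference to Kuhn, trading them for Blakers--Massey and careful cell-by-cell connectivity estimates. The paper's route is cleaner once one accepts the cited inputs: the single Serre spectral sequence computation replaces the two-step fiber/cofiber comparison, and Kuhn's identity on $H_*(K^{\sma 2})$ is exactly the ``$\bar{\mathrm H}$ restricts to $\mathrm E$ on the bottom cell'' fact you isolated, packaged homologically.
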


\begin{proof}
Since $K$ is $(n-1)$-connected, we conclude that $K^{\sma 2}$ is $(2n-1)$-connected. Therefore, $J(K) \weq \Omega \Sigma K$ is $(n-1)$-connected, and $J(K^{\sma 2}) \weq \Omega \Sigma K^{\sma 2}$ is $(2n-1)$-connected.

We consider the Serre spectral sequence in homology $\Hoh_*(\cdot, \ZZ)$ associated with the simplicial fiber sequence
\[
\hofib {\mathrm H} \longrightarrow J(K) \longrightarrow J(K^{\sma 2}).
\]
Since $n \geq 1$ by assumption, $J(K^{\sma 2})$ is simply connected.

By use of the Hilton--Milnor splitting \cite[Chapter VII, Theorem 2.10]{Whitehead} there are isomorphisms
\[
\tilde\Hoh_*(J(K), \ZZ) \iso \bigoplus_{i=1}^\infty \tilde \Hoh_*( K^{\sma i}, \ZZ), \quad \tilde \Hoh_*(J(K^{\sma 2}), \ZZ) \iso
 \bigoplus_{i=1}^\infty \tilde \Hoh_*( K^{\sma 2i}, \ZZ).
\]
 We remark in passing that the map ${\mathrm E}:K \to J(K)$ induces an isomorphism of $\tilde \Hoh_*(K, \ZZ)$ with the first
 summand of $\tilde \Hoh_*(J(K), \ZZ) \iso \bigoplus_{i=1}^\infty \tilde \Hoh_*( K^{\sma i}, \ZZ)$---this appears in the proof
 of \cite[Chapter VII, Theorem 2.10]{Whitehead}.

In the range where $p+q < 3n$, the $\Eoh^2$-page of the spectral sequence takes a particularly simple form. In this
 range $\Eoh^2_{0,q} = \Hoh_q(\hofib H, \ZZ)$ and $\Eoh^2_{p,0} = \Hoh_p(J(K^{\sma 2}), \ZZ) = \Hoh_p(K^{\sma 2} , \ZZ)$,
 and all other groups are necessarily $0$.

From \cite[Theorem 6.2]{kuhn87}, we know that the composite map
\[
\Hoh_*(K^{\sma 2}, \ZZ) \longrightarrow \Hoh_*(J(K), \ZZ) \overset{{\mathrm H}}{\longrightarrow} \Hoh_*(J(K^{\sma 2}), \ZZ) \longrightarrow
 \Hoh_*(K^{\sma 2}, \ZZ)
\]
is the identity. This observation implies that there are no nonzero differentials in our spectral sequence having source
 $\Eoh_{p,0}^*$ with $p < 4n$. For degree reasons, therefore, there can be no nonzero differentials the targets of which
 are the
 groups $\Eoh_{0,q}^*$ with $q < 3n-1$ either, and in the range where $p+q \le 3n-2$, the sequence collapses at the
 $\Eoh^2$ page. We obtain $\tilde \Hoh_{ \le 3n-2}(J(K), \ZZ) = \tilde \Hoh_{\le 3n-2}( K^{\sma 2}, \ZZ) \oplus \tilde
 \Hoh_{\le 3n-2}( \hofib {\mathrm H}, \ZZ)$.

In the given range, therefore, we have a commutative diagram of homology groups
 (with $\ZZ$ coefficients)
 \[ \xymatrix{ 0 \ar[r] & \tilde \Hoh_{\le 3n-2}(\hofib \mathrm{H}) \ar[r] & \tilde \Hoh_{\le 3n-2} (J(K)) \ar[r] & \tilde
  \Hoh_{\le 3n-2}(J(K^{\sma
   2})) \ar[r] & 0 \\
  0 \ar[r] &\tilde \Hoh_{\le 3n-2}(K) \ar[r] \ar^{\phi_*}[u]& \tilde \Hoh_{\le 3n-2}(K) \oplus \tilde \Hoh_{\le
   3n-2}(K^{\sma 2}) \ar[r] \ar@{=}[u]& \tilde \Hoh_{\le 3n-2}(K^{\sma 2}) \ar[r] \ar@{=}[u]& 0} \] from which it
 follows that the map $\phi_*$ is a homology isomorphism in the stated range. In particular, the map $\phi$ is $3n-2$
 connected. The long exact sequence \eqref{eq:3} now follows from the long exact sequence in homotopy associated with the simplicial
 fiber sequence
 \[ \hofib {\mathrm H} \longrightarrow J(K) \longrightarrow J(K^{\sma 2}). \]
\end{proof}

\subsubsection*{The EHP sequence for simplicial presheaves}
Using the results of the previous section, we can generalize Proposition \ref{prop:topoEHP} to the situation of pointed simplicial presheaves on a site $\mathbf{C}$ equipped with a local model structure; for simplicity, we assume that $\mathbf{C}$ has enough points. Functoriality of the simplicial James--Hopf invariants allows Definition \ref{defn:simplicialJamesHopf} to be extended to simplicial presheaves.

\begin{defn}
\label{defn:simplicialJamesHopfpresheaf}
If $X$ is a pointed simplicial presheaf on $\mathbf{C}$, define morphisms
\[
{\mathrm H}_r: J(X) \longrightarrow J(X^{\sma r})
\]
by ${\mathrm H}_r: J(X)(U) \longrightarrow J(X^{\sma r})(U)$. Set ${\mathrm H} := {\mathrm H}_2$.
\end{defn}

As before, the composite map $X \overset{{\mathrm E}}{\to} J(X) \overset{{\mathrm H}_r}{\to} J(X^{\sma r})$ is null. The next result extends Proposition \ref{prop:topoEHP} to simplicial presheaves.

\begin{prop}
\label{prop:simplicialsimplicialEHP}
Suppose $\mathbf{C}$ is a site that has enough points. Suppose, $n \geq 1$ is an integer, and $X$ is a pointed $(n-1)$-connected simplicial presheaf.  Let ${\mathrm E}: X \to J(X)$ be as in \textup{(\ref{eqn:Emap})}, ${\mathrm H}: J(X) \to J(X^{\sma 2})$ as in \textup{Definition \ref{defn:simplicialJamesHopfpresheaf}}, and let $\phi$ be a lift of the map ${\mathrm E} : X \to J(X)$ to a map $\phi: X \to \hofib {\mathrm H}$. The map $\phi$ is $(3n-2)$-connected and there is a long exact sequence of homotopy sheaves of the form:
 \begin{equation}
 \label{eq:4}
\begin{split} \xymatrix{
\bpi_{3n-2}(X) \ar[r]^-{{\mathrm E}} & \bpi_{3n-2}(J(X)) \ar^-{{\mathrm H}}[r] & \bpi_{3n-2}(J(X^{\sma 2})) \ar^-{\mathrm P}[r] & \bpi_{3n-3}(X) \ar^-{\mathrm E}[r] & \dots } \\
\xymatrix{
 \dots \ar[r] & \bpi_q(X) \ar^-{\mathrm E}[r] & \bpi_q(J(X))\ar^-{\mathrm H}[r] & \bpi_q(J(X^{\sma 2})) \ar^-{{\mathrm P}}[r] &
  \bpi_{q-1}(X) \ar[r] & \dots.}
 \end{split}
\end{equation}
\end{prop}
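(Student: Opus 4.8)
The plan is to bootstrap Proposition~\ref{prop:simplicialsimplicialEHP} from its simplicial-set counterpart Proposition~\ref{prop:topoEHP} by evaluating everything at points of the site $\mathbf{C}$. Since the James construction $J(-)$, the James--Hopf invariant ${\mathrm H}$, and the homotopy fiber are all defined sectionwise (and homotopy fiber is computed objectwise after fibrant replacement, per our conventions), all of these commute with the inverse image functors $p^*$ associated to points $p$ of $\mathbf{C}$. Thus for each point $p$ we obtain a diagram of pointed simplicial sets
\[
\xymatrix{ p^*X \ar^{\mathrm E}[dr] \ar^{p^*\phi}[d] & & \\ p^*\hofib {\mathrm H} \ar[r] & J(p^*X) \ar^{{\mathrm H}}[r] & J((p^*X)^{\sma 2}) }
\]
identical in form to Diagram~\eqref{eq:referenceDiagram}, using that $p^*$ commutes with smash products and with the relevant functors.

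First I would record that $X$ being $(n-1)$-connected as a simplicial presheaf means precisely that $p^*X$ is $(n-1)$-connected as a simplicial set for every point $p$ (connectivity of presheaves is detected on stalks, which is why we assume $\mathbf{C}$ has enough points). Then Proposition~\ref{prop:topoEHP} applies to each $p^*X$: the map $p^*\phi : p^*X \to \hofib {\mathrm H}(p^*X) = p^*\hofib{\mathrm H}$ is $(3n-2)$-connected, and we have the long exact sequence~\eqref{eq:3} of homotopy groups for each $p$. Next I would observe that a map of simplicial presheaves is $(3n-2)$-connected exactly when its stalks are, so $\phi$ is $(3n-2)$-connected. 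Sheafifying the stalkwise long exact sequences — using that sheafification is exact and that homotopy sheaves are by definition the sheafifications of the homotopy-group presheaves $U \mapsto \pi_i(\cdot)(U)$, which in turn have stalks $\pi_i(p^*(\cdot))$ — yields the exact sequence~\eqref{eq:4} of homotopy sheaves, with the morphism ${\mathrm P}$ defined as the sheafification of the boundary maps in the fiber sequence $\hofib{\mathrm H} \to J(X) \to J(X^{\sma 2})$. Finally I would note that $\bpi_i(J(X)) \iso \bpi_{i+1}(\Sigma X)$ and likewise for $X^{\sma 2}$, coming from Proposition~\ref{prop:J(X)=ho-nis=Omega_SigmaX}(2), so the sequence can be rewritten in the displayed form.

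I do not expect a serious obstacle here: the entire argument is a formal descent-to-stalks followed by sheafification, and every ingredient — sectionwise definition of $J$ and ${\mathrm H}$, objectwise computation of homotopy fibers, stalkwise detection of connectivity and of exactness — is either built into our conventions or standard local homotopy theory (e.g.\ \cite{jardine_local_2015}). The only point requiring minor care is checking that the long exact sequence in homotopy sheaves is genuinely exact as a sequence of \emph{sheaves}, i.e.\ that exactness of the stalk sequences plus exactness of the stalk functor gives exactness after sheafification; this is routine since a sequence of sheaves is exact iff it is exact on all stalks. A second small point is the naturality needed to identify the sheafified boundary map with a well-defined morphism ${\mathrm P}$ of sheaves, but this follows from functoriality of the James--Hopf construction in $X$, already noted after Definition~\ref{defn:simplicialJamesHopf}.
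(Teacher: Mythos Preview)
Your proposal is correct and follows essentially the same approach as the paper: reduce to stalks using that points preserve fiber sequences and commute with $J(\cdot)$, ${\mathrm E}$, and ${\mathrm H}$, apply Proposition~\ref{prop:topoEHP} pointwise to conclude that $p^*\phi$ is $(3n-2)$-connected for every point $p$, and deduce the long exact sequence of homotopy sheaves. The paper's own proof is slightly terser but otherwise identical in structure.
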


\begin{rem}
Proposition \ref{prop:J(X)=ho-nis=Omega_SigmaX} guarantees the existence of isomorphisms of homotopy sheaves of the form $\bpi_q(J(X))
\iso \bpi_{q+1}(X)$ and $\bpi_q(J(X^{\sma 2})) \iso \bpi_{q+1}(X^{\sma 2})$.
\end{rem}

\begin{proof}
In outline, we argue at points to reduce to the classical EHP sequence. In more detail, let $F$ denote the homotopy fiber of the map ${\mathrm H}: J(X) \to J(X^{\wedge 2})$ in the local model structure. Since the composite ${\mathrm H} \circ {\mathrm E}: X \to J(X)
 \to J(X^{\wedge 2})$ is null, there is a lift of ${\mathrm E}: X \to J(X)$ to a map $\phi: X \to F$ as follows:
 \begin{equation}
 \label{eq:1}
 \xymatrix{ X \ar_-\phi@{-->}[d] \ar^{\mathrm E}[dr] \\ F \ar[r] & J(X) \ar^-{\mathrm H}[r] & J(X^{\wedge 2}). }
 \end{equation}
 If $q^*$ is a point of the site $\mathbf{C}$, then $q^*$ preserves fiber sequences, and commutes with the formation of
 $J(\cdot)$ and ${\mathrm E}$, ${\mathrm H}$.
 In particular, applying $q^*$ throughout, we see using Proposition \ref{prop:topoEHP} that $q^*\phi$ is $(3n-2)$ connected.
Since this holds for all such $q^*$, we deduce that the map $\phi$ is itself $(3n-2)$ connected. The long exact sequence follows.
\end{proof}

\subsection{The construction of the EHP sequence in \texorpdfstring{$\aone$}{A1}-homotopy theory}
\label{ss:aoneehp}
We now transport the EHP sequence studied in the previous section to $\aone$-homotopy theory. The basic idea is to appeal to Proposition \ref{prop:simplicialsimplicialEHP} and use facts about when $\aone$-localization preserves simplicial fiber sequences from Section \ref{ss:furtherproperties}. If we $\aone$-localize the simplicial James--Hopf map ${\mathrm H}$ of Definition \ref{defn:simplicialJamesHopfpresheaf} (we abuse notation and write ${\mathrm H}$ for the resulting map), then we can consider the following sequence of morphisms
\begin{equation}
\label{eq:A1SJH}
\Laone {\mathscr X} {\longrightarrow} \Laone J({\mathscr X}) \stackrel{{\mathrm H}}{\longrightarrow} \Laone J({\mathscr X}^{\sma 2}).
\end{equation}
The next result gives an analog of Whitehead's classical exact sequence in $\aone$-homotopy theory.

\begin{thm}
\label{thm:EHP_range}
Assume the unstable $\aone$-connectivity property holds for $S$ and suppose $\mathscr X$ is a pointed $\aone$-$(n-1)$-connected space, with $n \ge 2$. There is an exact sequence of homotopy sheaves of the form:
\begin{equation}
\label{eq:5}
\begin{split}
\xymatrix{\piaone_{3n-2}(\mathscr X) \ar^-{{\mathrm E}}[r] &
\piaone_{3n-2}(J(\mathscr{X})) \ar^-{{\mathrm H}}[r] & \piaone_{3n-2}(J(\mathscr{X}^{\sma 2})) \ar^-{\mathrm P}[r] & \piaone_{3n-3}(\mathscr{X}) \ar^-{\mathrm E}[r] & \dots
} \\
\xymatrix{
 \dots \ar[r] & \piaone_q(\mathscr{X}) \ar^-{\mathrm E}[r] & \piaone_q(J(\mathscr{X}))\ar^-{\mathrm H}[r] & \piaone_q(J(\mathscr{X}^{\sma 2})) \ar^-{{\mathrm P}}[r] &
  \piaone_{q-1}(\mathscr{X}) \ar[r] & \dots.}
 \end{split}
\end{equation}
\end{thm}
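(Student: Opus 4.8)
The plan is to obtain the sequence by transporting the exact sequence of Proposition \ref{prop:simplicialsimplicialEHP} across the $\aone$-localization functor, using the preservation results of Section \ref{ss:furtherproperties}. First I would replace $\mathscr X$ by $\Laone \mathscr X$: since $\mathscr X$ is $\aone$-$(n-1)$-connected, the space $\Laone \mathscr X$ is by definition simplicially $(n-1)$-connected, so Proposition \ref{prop:simplicialsimplicialEHP} (the Nisnevich site has enough points) produces a simplicial fiber sequence
\[
\hofib \mathrm H \longrightarrow J(\Laone \mathscr X) \stackrel{\mathrm H}{\longrightarrow} J((\Laone \mathscr X)^{\sma 2})
\]
together with a lift $\phi : \Laone \mathscr X \to \hofib \mathrm H$ of $\mathrm E$ that is simplicially $(3n-2)$-connected. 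I would then record two elementary connectivity estimates: by Proposition \ref{prop:J(X)=ho-nis=Omega_SigmaX} the base $J((\Laone \mathscr X)^{\sma 2}) \weq \Omega \Sigma (\Laone \mathscr X)^{\sma 2}$ is $(2n-1)$-connected and hence, as $n \ge 2$, simply connected; and from the long exact sequence of the displayed fibration the space $\hofib \mathrm H$ is $(n-1)$-connected, in particular simply connected.

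Next I would $\aone$-localize this fiber sequence. Since the base $J((\Laone \mathscr X)^{\sma 2})$ is simply connected, its $\bpi_1$ is (trivially) strongly $\aone$-invariant, so Theorem \ref{thm:MField6.53} applies: the canonical map $\Laone \hofib \mathrm H \to \hofib(\Laone \mathrm H)$ is a simplicial weak equivalence, and therefore the localized map $\Laone J(\Laone \mathscr X) \to \Laone J((\Laone \mathscr X)^{\sma 2})$ again fits in a simplicial fiber sequence with fiber $\Laone \hofib \mathrm H$. Passing to homotopy sheaves yields a long exact sequence whose terms I would identify with $\piaone_i(\hofib \mathrm H)$, $\piaone_i(J(\mathscr X))$ and $\piaone_i(J(\mathscr X^{\sma 2}))$: indeed $\mathscr X \to \Laone \mathscr X$ is an $\aone$-weak equivalence, whence so is $\mathscr X^{\sma 2} \to (\Laone \mathscr X)^{\sma 2}$ (smashing with a fixed space preserves $\aone$-weak equivalences), and therefore both $J(\mathscr X) \to J(\Laone \mathscr X)$ and $J(\mathscr X^{\sma 2}) \to J((\Laone \mathscr X)^{\sma 2})$ are $\aone$-weak equivalences by Theorem \ref{thm:aonejamesconstruction}(2).

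It then remains to see that $\phi$ becomes an equivalence in the correct range after localization. Since $\hofib \mathrm H$ is $(n-1)$-connected with $n \ge 2$, the space $\Omega \hofib \mathrm H$ has trivial $\bpi_0$, so hypothesis (i) of Corollary \ref{cor:relativehurewicz} holds trivially; as $\phi$ is a simplicial $(3n-2)$-equivalence and $3n-3 \ge 2$, that corollary shows $\phi$ is also an $\aone$-$(3n-2)$-equivalence. Concretely, this means the localized map induces an isomorphism $\piaone_i(\mathscr X) \iso \piaone_i(\hofib \mathrm H)$ for $i \le 3n-3$ and an epimorphism for $i = 3n-2$; and since the simplicial-presheaf analog of diagram \eqref{eq:referenceDiagram} commutes, the composite $\piaone_i(\mathscr X) \to \piaone_i(\hofib \mathrm H) \to \piaone_i(J(\mathscr X))$ is the suspension map $\mathrm E$.

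Finally I would splice the isomorphism $\piaone_i(\mathscr X) \iso \piaone_i(\hofib \mathrm H)$, valid for $i \le 3n-3$, into the long exact sequence of the localized fiber sequence, define $\mathrm P : \piaone_q(J(\mathscr X^{\sma 2})) \to \piaone_{q-1}(\mathscr X)$ to be the resulting connecting homomorphism, and use the epimorphism at $i = 3n-2$ to begin the sequence at $\piaone_{3n-2}(\mathscr X)$. Exactness at each term is then immediate from exactness of the long exact sequence, together with the observation that pre-composing a homomorphism of sheaves of groups with an epimorphism, or post-composing it with an isomorphism, changes neither its image nor its kernel. Since the substantive work has already been carried out in Theorem \ref{thm:MField6.53} and Corollary \ref{cor:relativehurewicz}, I do not expect that a new idea is needed here; the main obstacle is simply the bookkeeping---keeping careful track of which spaces are $\aone$-local and verifying the connectivity estimates that license those two preservation results.
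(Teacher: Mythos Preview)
Your proposal is correct and follows essentially the same approach as the paper: replace $\mathscr{X}$ by $\Laone\mathscr{X}$, apply Proposition~\ref{prop:simplicialsimplicialEHP}, use Theorem~\ref{thm:MField6.53} to preserve the fiber sequence under $\aone$-localization, identify the James constructions via Theorem~\ref{thm:aonejamesconstruction}(2), and then use Corollary~\ref{cor:relativehurewicz} to control the connectivity of $\phi$. The only cosmetic difference is that the paper invokes Theorem~\ref{thm:MField6.60} to identify $\bpi_i(\mathscr{F})$ with $\piaone_i(\mathscr{X})$ in the range $i\le 3n-3$ before separately handling the top degree with Corollary~\ref{cor:relativehurewicz}, whereas you use Corollary~\ref{cor:relativehurewicz} uniformly for the entire range; your route is slightly cleaner and the two are equivalent.
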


\begin{rem}
Theorem \ref{thm:aonejamesconstruction} guarantees the existence of isomorphisms of sheaves $\piaone_q(J(\mathscr{X})) \iso \piaone_{q+1}(\Sigma \mathscr{X})$ and $\bpi_q(J(\mathscr{X}^{\sma 2})) \iso \piaone_{q+1}( \Sigma \mathscr{X}^{\sma 2})$.
\end{rem}

\begin{proof}
  We proceed as in the proof of Proposition \ref{prop:simplicialsimplicialEHP}, with the part of $\mathscr{X}$ played by
  $\Laone \mathscr{X}$. By hypothesis, $\Laone \mathscr{X}$ is simplicially $(n-1)$ connected. As before, set up the diagram
\begin{equation}
\label{eq:2}
\xymatrix{
\Laone \mathscr{X} \ar^\phi@{-->}[d] \ar^{\mathrm E}[dr] \\
\mathscr{F} \ar[r] & J(\Laone \mathscr X) \ar^-{\mathrm H}[r] & J((\Laone \mathscr X)^{\wedge 2}).
}
\end{equation}

Since $n \geq 1$, the space $J((\Laone \mathscr{X})^{\wedge 2})$ is simplicially $1$-connected. Then, using the unstable $\aone$-connectivity
property, we may apply Theorem \ref{thm:MField6.53} to conclude that applying $\Laone$ to the simplicial fiber sequence in \eqref{eq:1}
results in a simplicial fiber sequence of the form:
\begin{equation}
\label{eq:20}
\xymatrix{
\Laone \mathscr{F} \ar[r] & \Laone J(\Laone \mathscr{X}) \ar[r] & \Laone J((\Laone \mathscr{X})^{\wedge 2})
}.
\end{equation}
Since the map $\mathscr{X}\to \Laone \mathscr{X}$ is an $\aone$-weak equivalence, we conclude from Theorem \ref{thm:aonejamesconstruction}(2) that there are weak equivalences of the form $\Laone J( \Laone \mathscr{X}) \weq \Laone J(\mathscr{X})$ and $\Laone J((\Laone \mathscr{X})^{\wedge 2}) \weq \Laone J(\mathscr{X}^{\wedge 2})$.

Since the unstable $\aone$-connectivity property holds for $S$, the sheaves $\piaone_i(\mathscr{X})$ are strictly $\aone$-invariant by Lemma \ref{lem:unstableconnectivityconsequences}(1). Then, Theorem \ref{thm:MField6.60} implies that
\[
\bpi_i(\mathscr{F}) \iso \piaone_i(\mathscr{F}) \iso \bpi_i(\Laone \mathscr{F}) \iso \bpi_i(\Laone \mathscr X) =
 \piaone_i(\mathscr X) \quad 1 \le i \le 3n-3.
\]
These observations suffice to establish exactness everywhere except the leftmost part of the long exact sequence.

The map $\phi : \Laone \mathscr X \to \mathscr{F}$ is simplicially $(3n-2)$-connected and, since $n \geq 2$, the connectivity of $\mathscr X$ implies that $\bpi_0^{\aone}(\Omega \mathscr F) \simeq \ast$, by means of Theorem \ref{thm:unstablenconnectivity} for example. Thus, we can apply Corollary \ref{cor:relativehurewicz} to conclude that
$\Laone \phi: \Laone \mathscr X \to \Laone \mathscr{F}$ is also simplicially $(3n-2)$-connected. Therefore, there is a surjective map
$\phi_* : \piaone_{3n-2}(\mathscr X) \twoheadrightarrow \piaone_{3n-2}(\mathscr{F})$ factoring
${\mathrm E} : \piaone_{3n-2}(\mathscr X) \twoheadrightarrow \piaone_{3n-2}(J(\mathscr X))$, yielding the exactness of the long exact
sequence at the left as well.

\end{proof}

\begin{rem}
\label{rem:refinesfreudenthal}
Assume the unstable $\aone$-connectivity property holds for $S$. If $\mathscr{X}$ is a simplicially $(n-1)$-connected space, then
$J(\Laone \mathscr{X}^{\sma 2})$ is at least $\aone$-$(2n-1)$-connected by Theorem \ref{thm:unstablenconnectivity}. Theorem
\ref{thm:EHP_range} is therefore a refinement of Morel's suspension theorem, \cite[Theorem 6.61]{MField}.
\end{rem}

\subsection{Analyzing the \texorpdfstring{$\aone$}{A1}-EHP sequence in low degrees}
\label{ss:lowdegree}
The goal of this section is to study the low-degree portion of the EHP sequence in $\aone$-algebraic topology. To do this, given an $\aone$-$(n-1)$-connected space $\mathscr{X}$, we will show that $\mathscr{X}^{\sma 2}$ is at least $\aone$-$(2n-1)$-connected, identify the first non-vanishing $\aone$-homotopy sheaf of $\mathscr{X}^{\sma 2}$ and use this to give a more explicit form of the EHP sequence in the first degree in which the suspension map is not an isomorphism. Granted the results of previous sections, and some results about $\aone$-homology recalled below, the argument is a straightforward translation of a classical argument due to J.H.C. Whitehead \cite[Theorem 2]{Whiteheadrelations} in the case of spheres and more generally by P. Hilton \cite[Theorem 2.1]{Hiltonwedge}.

\subsubsection*{Some connectivity estimates}
Suppose $(\mathscr{X},x)$ and $(\mathscr{Y},y)$ are two pointed spaces. We will assume that $\mathscr{X}$ is $\aone$-$(m-1)$-connected, and $\mathscr{Y}$ is $\aone$-$(n-1)$-connected. Without loss of generality, we will assume that $m \leq n$.

\begin{lem}
\label{lem:connectivityofsmash}
Assume the unstable $\aone$-connectivity property holds over $S$. The wedge sum $\mathscr{X} \vee \mathscr{Y}$ is at least $\aone$-$(m-1)$ connected, and the smash product $\mathscr{X} \sma \mathscr{Y}$ is at least $\aone$-$(m+n-1)$-connected.
\end{lem}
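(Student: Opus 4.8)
The plan is to reduce both assertions to purely simplicial (Nisnevich-local) connectivity statements, and then to reduce those to the classical facts about pointed simplicial sets by passing to Nisnevich stalks. The starting observation is that a pointed space $\mathscr{Z}$ is $\aone$-$(j-1)$-connected exactly when $\Laone \mathscr{Z}$ is simplicially $(j-1)$-connected. Since smash products and wedges of pointed spaces are formed objectwise, they preserve $\aone$-weak equivalences (for wedges via \cite[\S 2 Lemma 2.12]{MV}; for smash products via \cite[\S 3 Lemma 2.13]{MV}, as in the proof of Theorem \ref{thm:aonejamesconstruction}(2)). Hence $\mathscr{X} \sma \mathscr{Y}$ is $\aone$-weakly equivalent to $\Laone \mathscr{X} \sma \Laone \mathscr{Y}$, and $\mathscr{X} \vee \mathscr{Y}$ is $\aone$-weakly equivalent to $\Laone \mathscr{X} \vee \Laone \mathscr{Y}$. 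By the unstable $\aone$-connectivity theorem (Theorem \ref{thm:unstablenconnectivity}), which uses the standing hypothesis that the unstable $\aone$-connectivity property holds over $S$ except in the $0$-connected case where it is not needed, $\Laone$ sends simplicially $N$-connected spaces to simplicially $N$-connected spaces. It therefore suffices to show: if $\mathscr{X}'$ is simplicially $(m-1)$-connected and $\mathscr{Y}'$ is simplicially $(n-1)$-connected, then $\mathscr{X}' \vee \mathscr{Y}'$ is simplicially $(m-1)$-connected and $\mathscr{X}' \sma \mathscr{Y}'$ is simplicially $(m+n-1)$-connected.

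For the simplicial statement I would argue on stalks. Since the Nisnevich site has enough points, simplicial connectivity is detected stalkwise (a local weak equivalence induces weak equivalences on stalks, and $\bpi_i$ is the sheafification of the stalkwise homotopy groups of a fibrant model). Because smash products and wedges of pointed simplicial presheaves are computed objectwise, and the stalk functors — being filtered colimits of evaluations — commute with these finite (co)limits, one has $p^*(\mathscr{X}' \sma \mathscr{Y}') \iso (p^*\mathscr{X}') \sma (p^*\mathscr{Y}')$ and similarly for $\vee$, at every point $p$. Since smash products and wedges of pointed simplicial sets are homotopy invariant (every simplicial set is cofibrant and the inclusion $A \vee B \hookrightarrow A \times B$ is a cofibration, so these are homotopy (co)limits), this reduces the claim to the classical facts: the wedge of an $(m-1)$-connected and an $(n-1)$-connected pointed simplicial set is $(m-1)$-connected, and their smash product is $(m+n-1)$-connected. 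These follow from the Künneth theorem and the Hurewicz theorem: as $m \geq 1$ both factors are connected (an $\aone$-$(m-1)$-connected space is in particular $\aone$-connected, so its localization is simplicially connected), the smash product is simply connected, its reduced integral homology vanishes below degree $m+n$ by Künneth, and Hurewicz then forces its homotopy sheaves to vanish below degree $m+n$; the wedge statement is similar, or follows from the cofiber sequence $\mathscr{X}' \vee \mathscr{Y}' \to \mathscr{X}' \times \mathscr{Y}' \to \mathscr{X}' \sma \mathscr{Y}'$ together with the just-established smash estimate.

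I expect the main obstacle to be the bookkeeping in the reduction step rather than anything deep: one must make sure that the objectwise smash product of simplicial presheaves really computes the derived smash — which holds because all objects are cofibrant in the injective local model structure and cofibrations are monomorphisms, so the smash of cofibrant objects is cofibrant — and that it commutes with the formation of stalks, so that the local connectivity statement genuinely reduces to its classical counterpart. Once these compatibilities are recorded, the classical argument of J.H.C. Whitehead and Hilton carries over essentially verbatim.
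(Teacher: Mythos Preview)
Your proposal is correct and follows essentially the same approach as the paper: replace $\mathscr{X}$ and $\mathscr{Y}$ by their $\aone$-localizations (using that wedge and smash preserve $\aone$-weak equivalences), check the resulting simplicial connectivity on stalks to reduce to the classical facts for pointed simplicial sets, and then invoke the unstable $\aone$-connectivity theorem. The paper's proof is terser---it simply says ``by checking on stalks'' for the classical step---while you spell out the K\"unneth/Hurewicz argument and the stalk-compatibility bookkeeping, but the logical structure is identical.
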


\begin{proof}
For the first statement, observe that the map $\mathscr{X} \vee \mathscr{Y} \to L_{\aone} \mathscr{X} \vee L_{\aone} \mathscr{Y}$ is an $\aone$-weak equivalence by \cite[\S 2 Lemma 2.11]{MV}. Since taking stalks commutes with coproducts, we conclude that the stalks of $L_{\aone} \mathscr{X} \vee L_{\aone} \mathscr{Y}$ are at least $(m-1)$-connected. Under the hypotheses Theorem \ref{thm:unstablenconnectivity} implies that $\mathscr{X} \vee \mathscr{Y}$ is at least $(m-1)$-connected.

The second statement is established similarly. By two applications of \cite[\S 3 Lemma 2.13]{MV} we can conclude that the map $\mathscr{X} \sma \mathscr{Y} \to L_{\aone} \mathscr{X} \sma L_{\aone} \mathscr{Y}$ is an $\aone$-weak equivalence. Again by checking on stalks, and using the unstable $\aone$-connectivity theorem one concludes that $\mathscr{X} \sma \mathscr{Y}$ is at least $\aone$-$(m+n-1)$-connected.
\end{proof}

\subsubsection*{\texorpdfstring{$\aone$}{A1}-homology}
The $\aone$-derived category may be constructed as a left Bousfield localization of the derived category of presheaves of abelian groups on $\Sm_S$ with respect to a notion of $\aone$-quasi-isomorphism \cite[\S 6.2]{MField}.\footnote{Morel works with the derived category of Nisnevich sheaves of abelian groups, but the exact functor of Nisnevich sheafification induces a Quillen equivalence between the model we use and Morel's model.}  Morel gives a construction of an $\aone$-localization functor $\Laone^{ab}$ \cite[Lemma 6.18]{MField}; this functor is an endofunctor of the category of chain complexes of Nisnevich sheaves of abelian groups, and there is a natural transformation $\theta: \Id \to \Laone^{ab}$ such that for any complex $A$, there is a quasi-isomorphis $A \to \Laone^{ab}(A)$ with target that is fibrant and $\aone$-local.

\begin{notation}
\label{notation:aonehomology}
If $\mathscr{X}$ is a space, then we consider $C_*(\mathscr{X})$, the normalized chain complex associated with the simplicial presheaf of free abelian groups $\Z(\mathscr{X})$.  The $\aone$-singular chain complex of $\mathscr{X}$ is the complex $\Laone^{ab}C_*(\mathscr{X})$, which we may also denote $C_*^{\aone}(\mathscr{X})$.  The structure morphism $\mathscr{X} \to S$ induces a morphism $C_*^{\aone}(\mathscr{X}) \to C_*^{\aone}(S)$, and we define $\widetilde C^{\aone}_*(\mathscr{X})$ as the kernel of this morphism.

The $\aone$-homology sheaves of $\mathscr{X}$, denoted
$\H_i^{\aone}(\mathscr{X})$, are defined as the Nisnevich sheafifications of the homology presheaves
$H_i(\Laone^{ab}C_*(\mathscr{X}))$. If $A$ is a complex of presheaves of abelian groups, we will abuse notation and
define $\H_i^{\aone}(A)$ to be the Nisnevich sheafification of the homology presheaf $H_i(\Laone^{ab}A)$. We define
$\widetilde{\H}_i^{\aone}(\mathscr{X})$ as $ \ker(\H_i^{\aone}(\mathscr{X}) \to \H_i^{\aone}(S)$
\end{notation}

The Dold--Kan adjunction shows that the Eilenberg--MacLane space associated with an $\aone$-local complex is an $\aone$-local space \cite[Proposition 4 and (3.5)]{DeligneVoevodsky}. Note, however, that the ordinary singular chain complex of an $\aone$-local space can {\em fail} to be $\aone$-local (the standard counterexample is $\gm{}$). The following property is the analog of the unstable $\aone$-connectivity property of Definition \ref{defn:unstableaoneconnectivity} and was studied in \cite[\S 6.2]{MStable} in the closely related context of $S^1$-spectra.

\begin{defn}
\label{defn:stableaoneconnectivity}
The {\em stable $\aone$-connectivity property holds for $S$} if $\Laone^{ab}$ preserves $(-1)$-connected complexes.
\end{defn}

\begin{thm}[{\cite[Theorem 6.1.8]{MStable}}]
\label{thm:stableaoneconnectivitypropertyholds}
The stable $\aone$-connectivity property holds for the spectrum of a field.\footnote{Once more, the careful reader should assume that $k$ is infinite for the same reason as in the footnote to Theorem \ref{thm:unstableaoneconnectivityperfectfields}.}
\end{thm}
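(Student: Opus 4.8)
The plan is to reprove Morel's stable $\aone$-connectivity theorem \cite[Theorem 6.1.8]{MStable} (see also \cite[\S 6.2]{MField}) by unwinding the construction of $\Laone^{ab}$ recalled above from \cite[Lemma 6.18]{MField}: $\Laone^{ab}$ is obtained by interleaving the $\aone$-singular functor $C \mapsto \mathrm{Sing}^{\aone}_\ast(C)$ --- whose value on $U \in \Sm_k$ is (the sheafification of) the total complex of the bicomplex $(p,q) \mapsto C_p(U \times \Delta^q_{\aone})$ built from the cosimplicial affine simplex $\Delta^{\bullet}_{\aone}$ --- with a Nisnevich-local fibrant replacement $\mathcal{G}_{\Nis}$, say a Godement resolution, and then passing to the filtered homotopy colimit of the tower $C \to \mathcal{G}_{\Nis}\mathrm{Sing}^{\aone}_\ast C \to \mathcal{G}_{\Nis}\mathrm{Sing}^{\aone}_\ast \mathcal{G}_{\Nis}\mathrm{Sing}^{\aone}_\ast C \to \cdots$. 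It is essential that ``$(-1)$-connected'' be understood as vanishing of the homology \emph{sheaves} $\mathcal{H}_i$ in negative degrees: already $\mathcal{G}_{\Nis}C$ acquires homology in negative \emph{sectionwise} degrees (its sections over $U$ compute Nisnevich hypercohomology of $U$), so no argument carried out levelwise on sections can work, and one must track the behaviour of the homology sheaves through the tower.

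The formal half of the argument is to isolate three operations and check that each preserves connectivity of homology sheaves. First, the map $C \to \mathcal{G}_{\Nis}C$ is a local weak equivalence, hence an isomorphism on homology sheaves, so $\mathcal{G}_{\Nis}$ preserves connectivity trivially. Second, filtered homotopy colimits commute with the formation of homology sheaves, so they preserve connectivity; a convergence point enters here, harmlessly, since over $\Spec k$ every smooth scheme has finite Nisnevich cohomological dimension. Third, if $C$ is \emph{sectionwise} $(-1)$-connected then $\mathrm{Sing}^{\aone}_\ast C$ is again sectionwise $(-1)$-connected --- the relevant bicomplex is then supported in bidegrees with $p \ge 0$ and $q \ge 0$ --- and in particular has connective homology sheaves. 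Granting these, the theorem would follow from one remaining claim: that $\mathrm{Sing}^{\aone}_\ast$ preserves connectivity of homology sheaves for arbitrary input with connective homology sheaves (equivalently, the $\aone$-chain complex of a $(-1)$-connected complex of Nisnevich sheaves is $(-1)$-connected), because after the first $\mathcal{G}_{\Nis}$ in the tower one is no longer applying $\mathrm{Sing}^{\aone}_\ast$ to a sectionwise-connective complex.

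This last claim is the hard part, and the field hypothesis is indispensable for it. The homology sheaves of $\mathrm{Sing}^{\aone}_\ast(\mathcal{G}_{\Nis}D)$ are computed by a spectral sequence whose terms are assembled from the Nisnevich sheafifications of the presheaves $U \mapsto H^j_{\Nis}(U \times \Delta^{\bullet}_{\aone}, \mathcal{A})$ for $\mathcal{A}$ a homology sheaf of $D$; the required vanishing in negative total degree amounts to the assertion that the Nisnevich cohomology of an $\aone$-invariant sheaf remains $\aone$-invariant --- that is, to Morel's strict $\aone$-invariance theorem \cite[Theorem 5.46]{MField}, part of the package established over perfect fields in Theorem \ref{thm:unstableaoneconnectivityperfectfields}. (One must first feed the tower far enough that the homology sheaves in play become $\aone$-invariant; the $\aone$-invariance of these sheaves, as opposed to merely of the corresponding presheaves, is itself engineered by the iteration.) I expect this to be the principal obstacle: the theorem genuinely fails over Noetherian bases of Krull dimension $\ge 2$ (Remark \ref{rem:unstableoneconnectivityproperty}), so no soft argument can suffice, and the infiniteness hypothesis flagged in the footnote enters precisely here, through Gabber's geometric presentation lemma on which strict $\aone$-invariance rests. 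Everything else --- the reduction, via the canonical filtration, to the case of a single Nisnevich sheaf placed in degree $0$, the benign behaviour of $\mathcal{G}_{\Nis}$ and of filtered colimits, and the assembly of the final colimit --- is routine homological bookkeeping with the Nisnevich descent spectral sequence.
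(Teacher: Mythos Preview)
The paper does not give a proof of this theorem: it is stated with a bare citation to \cite[Theorem 6.1.8]{MStable} and no proof environment follows. What you have written is therefore not a comparison target but an attempted reconstruction of Morel's original argument, and it should be read as such.

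Your outline has the right architecture --- decompose $\Laone^{ab}$ into its constituent functors, show the easy ones ($\mathcal{G}_{\Nis}$, filtered colimits) preserve connectivity of homology sheaves, and isolate the hard step as the interaction of $\mathrm{Sing}^{\aone}_\ast$ with Godement resolution --- and you are right that the field hypothesis enters through Gabber's presentation lemma. But the reduction you offer for the hard step is not quite right, and the parenthetical patch does not repair it. You assert that the required vanishing ``amounts to the assertion that the Nisnevich cohomology of an $\aone$-invariant sheaf remains $\aone$-invariant,'' i.e.\ to \cite[Theorem 5.46]{MField}. That theorem takes as input a sheaf that is already (strongly) $\aone$-invariant. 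The homology sheaves $\mathcal{A}$ of the complex $D$ you are feeding into the spectral sequence are arbitrary Nisnevich sheaves of abelian groups, with no $\aone$-invariance assumed, so the hypothesis of \cite[Theorem 5.46]{MField} is not available. Your parenthetical --- ``feed the tower far enough that the homology sheaves in play become $\aone$-invariant'' --- is circular: knowing that the homology sheaves of the iterates stabilize to $\aone$-invariant sheaves in the expected degrees is essentially the connectivity statement you are trying to prove.

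Morel's actual argument in \cite{MStable} does not route through \cite[Theorem 5.46]{MField} in this way. The geometric input from Gabber's lemma is used more directly to control, for an \emph{arbitrary} Nisnevich sheaf $\mathcal{A}$ on $\Sm_k$, the behaviour of the simplicial groups $n \mapsto H^j_{\Nis}(U \times \Delta^n_{\aone}, \mathcal{A})$ after sheafification in $U$ --- roughly, one shows that the relevant ``negative'' contributions vanish Nisnevich-locally by presenting a neighbourhood of a point as \'etale over affine space and peeling off an $\aone$-direction. This is closer in spirit to the proof that $\aone$-invariant presheaves with transfers have $\aone$-invariant cohomology, but it must be carried out without any a priori $\aone$-invariance hypothesis on $\mathcal{A}$. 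If you want to turn your sketch into a proof, this is the step that needs to be replaced.
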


\begin{rem}
\label{rem:stableaoneconnectivityproperty}
Ayoub has shown that if $S$ is a Noetherian scheme of Krull dimension $d \geq 2$, then the stable $\aone$-connectivity
property may fail for $S$ in a very strong sense, \cite{Ayoub}. Ayoub's counter-example is constructed in
Voevodsky's derived category of motives. As noted above, if a complex of sheaves of abelian groups is $\aone$-local,
then the associated Eilenberg--MacLane space is $\aone$-local as well. Therefore, Ayoub's counterexample can be
transported to yield a counter-example to the unstable $\aone$-connectivity property over $S$.
\end{rem}

If the stable $\aone$-connectivity property holds, the $\aone$-derived category has a number of very nice
properties. Write $\Ab_S$ for the category of Nisnevich sheaves of abelian groups on $\Sm_S$, and $\Ab^{\aone}_S$ for
the full subcategory of strictly $\aone$-invariant sheaves. Before proceeding, we introduce the following notation.

\begin{notation}
\label{notation:aonetensorproduct}
Given two strictly $\aone$-invariant sheaves, set
\[
\mathbf{A} \tensor^{\aone} \mathbf{B} := \H_0^{\aone}(\mathbf{A} \tensor^{\mathrm{L}} \mathbf{B}).
\]
\end{notation}

\begin{rem}
The unit object for the $\aone$-tensor product is the strictly $\aone$-invariant sheaf $\Z$.
\end{rem}

With this notation, the following result holds.

\begin{lem}[{\cite[Lemma 6.2.13]{MStable}}]
\label{lem:stableaoneconnectivitystrictlyaoneinvariantabeliancategory}
If the stable $\aone$-connectivity property holds over $S$, then $\Ab^{\aone}_S$ is an abelian category and the inclusion functor $\Ab^{\aone}_S \to \Ab_S$ is an exact embedding. Moreover, the bifunctor $(\mathbf{A},\mathbf{B}) \mapsto \mathbf{A} \tensor^{\aone} \mathbf{B}$ equips the category $\Ab^{\aone}_S$ with a symmetric monoidal structure.
\end{lem}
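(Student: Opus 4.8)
The three assertions will all follow once one knows the following key fact $(\star)$: \emph{assuming the stable $\aone$-connectivity property, the homology sheaves of any bounded-below $\aone$-local complex of Nisnevich sheaves of abelian groups are strictly $\aone$-invariant} (equivalently, the homotopy $t$-structure on $D(\Ab_S)$ restricts to the full triangulated subcategory of $\aone$-local complexes, with heart $\Ab^{\aone}_S$, the inclusion being $t$-exact). So the plan is: first establish $(\star)$; then read off that $\Ab^{\aone}_S$ is abelian with exact inclusion into $\Ab_S$; then transport the symmetric monoidal structure from $(D(\Ab_S),\tensor^{\mathrm{L}},\Z[0])$. Recall at the outset that a sheaf is strictly $\aone$-invariant exactly when its degree-zero complex is $\aone$-local in $D(\Ab_S)$ (one direction is the Dold--Kan remark in the excerpt together with the definition of strict $\aone$-invariance; the converse is the identification $\H_i^{\aone}(\mathbf A)=\mathbf A$ for $i=0$ and $0$ otherwise), and that the $\aone$-local complexes form a triangulated subcategory of $D(\Ab_S)$, being the right-orthogonal complement of the triangulated subcategory of $\aone$-acyclic complexes.

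For $(\star)$: the stable $\aone$-connectivity property says precisely that $\Laone^{ab}$ sends $(-1)$-connected complexes to $(-1)$-connected complexes, i.e.\ (after shifting) that $\Laone^{ab}$ is right $t$-exact for the homotopy $t$-structure. One must then show that the truncation functors preserve $\aone$-locality; by shifting it suffices to treat $\tau_{\ge 1}$ and $\tau_{\le 0}$, and by the two-out-of-three property in a distinguished triangle of $\aone$-local complexes it suffices to show that for $\aone$-local $C$ the complex $\tau_{\ge 1}C$ (equivalently $\tau_{\le 0}C=H_0(C)[0]$) is again $\aone$-local. Applying $\Laone^{ab}$ to the truncation triangle $\tau_{\ge 1}C\to C\to \tau_{\le 0}C\xrightarrow{+1}$ and using $\Laone^{ab}C\simeq C$ together with right $t$-exactness forces $\Laone^{ab}\tau_{\ge 1}C$ to be $1$-connective and lets one compare it with $C$; a dévissage then identifies $\tau_{\le 0}C$ with $\tau_{\le 0}\Laone^{ab}(\tau_{\le 0}C)$ and shows the higher $\aone$-homology of $\tau_{\le 0}C$ vanishes. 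This is the argument of Morel, and it is \emph{the main obstacle}: it is the one place stable $\aone$-connectivity is used in a genuinely non-formal way, and one cannot shortcut it, since the naive statement ``$(-1)$-connected and $\aone$-acyclic implies zero'' is false (the kernel of $\Z(\aone)\to\Z$, a single sheaf, is $\aone$-acyclic and nonzero). Iterating, $\tau_{\ge a}\tau_{\le b}C$ is $\aone$-local for all $a\le b$, so every $H_i(C)[i]$ is $\aone$-local, giving $(\star)$.

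Given $(\star)$, the abelian-category and exact-embedding claims are formal. For $f\colon \mathbf A\to\mathbf B$ a morphism of strictly $\aone$-invariant sheaves, the cone $[\mathbf A\xrightarrow{f}\mathbf B]$ is $\aone$-local (cone of a map of $\aone$-local complexes) and bounded, with $H_0=\coker f$ and $H_1=\ker f$; by $(\star)$ these are strictly $\aone$-invariant. Hence $\Ab^{\aone}_S$ is a full subcategory of the abelian category $\Ab_S$ closed under formation of kernels and cokernels (and finite direct sums), so it is itself abelian and the inclusion $\Ab^{\aone}_S\hookrightarrow\Ab_S$ is exact. (Equivalently: a $t$-exact inclusion of triangulated categories induces an exact functor on hearts.)

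Finally, the monoidal structure. The category $(D(\Ab_S),\tensor^{\mathrm{L}},\Z[0])$ is symmetric monoidal, and the $\aone$-acyclic complexes form a $\tensor^{\mathrm{L}}$-ideal (the localizing morphism $\Z(\aone)\to\Z$ has the property that tensoring its cone with any representable stays $\aone$-acyclic, and $\aone$-acyclics are generated from such cones under shifts and homotopy colimits), so $\Laone^{ab}$ is a monoidal Bousfield localization; consequently the subcategory of $\aone$-local complexes acquires a symmetric monoidal structure $\mathbf A\,\widehat{\tensor}\,\mathbf B:=\Laone^{ab}(\mathbf A\tensor^{\mathrm{L}}\mathbf B)$ with unit $\Laone^{ab}(\Z[0])=\Z[0]$, since $\Z$ is strictly $\aone$-invariant (as recorded after Notation~\ref{notation:aonetensorproduct}). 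Restricting to the heart: for $\mathbf A,\mathbf B\in\Ab^{\aone}_S$ the complex $\mathbf A\tensor^{\mathrm{L}}\mathbf B$ is $(-1)$-connected, hence so is $\mathbf A\,\widehat{\tensor}\,\mathbf B$ by stable $\aone$-connectivity, and $\mathbf A\tensor^{\aone}\mathbf B=\H_0^{\aone}(\mathbf A\tensor^{\mathrm{L}}\mathbf B)=\tau_{\le 0}(\mathbf A\,\widehat{\tensor}\,\mathbf B)$ lies in $\Ab^{\aone}_S$ by $(\star)$. The associativity, unit, and symmetry constraints for $\tensor^{\aone}$ are obtained by applying $\tau_{\le 0}\circ\Laone^{ab}$ to those of $\tensor^{\mathrm{L}}$ — this composite, restricted to connective complexes, is a symmetric monoidal functor onto the heart — and the pentagon and hexagon identities descend by naturality. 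Thus $(\Ab^{\aone}_S,\tensor^{\aone},\Z)$ is symmetric monoidal, completing the proof.
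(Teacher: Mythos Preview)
The paper does not give its own proof of this lemma: it is simply cited from \cite[Lemma 6.2.13]{MStable} and stated without argument. So there is no ``paper's proof'' to compare against; what you have written is essentially a reconstruction of Morel's argument, as you yourself acknowledge when you write ``This is the argument of Morel.''

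Your outline is correct and hits the right points. The crucial step is indeed $(\star)$, i.e.\ that the homotopy $t$-structure restricts to the $\aone$-local subcategory, and you correctly isolate the stable $\aone$-connectivity hypothesis as exactly what makes this work. Your treatment of that step is somewhat compressed: the sentence ``a d\'evissage then identifies $\tau_{\le 0}C$ with $\tau_{\le 0}\Laone^{ab}(\tau_{\le 0}C)$ and shows the higher $\aone$-homology of $\tau_{\le 0}C$ vanishes'' is doing real work that a reader might want spelled out (one uses that $\Laone^{ab}\tau_{\ge 1}C$ is $0$-connected to see that $H_0(C)\cong H_0(\Laone^{ab}\tau_{\le 0}C)$, and then an inductive argument on Postnikov pieces). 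But you flag this honestly as the nontrivial part and attribute it appropriately. The deductions of the abelian-category statement (via the cone argument) and of the symmetric monoidal structure (via the standard ``heart of a monoidal $t$-structure'' construction, using that $\Laone^{ab}(-\tensor^{\mathrm L}-)$ is right $t$-exact on $\aone$-local complexes by stable $\aone$-connectivity) are both sound.
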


The next result is closely related to \cite[Remark 6.2.6]{MStable} (apply that remark to shifted suspension spectra of suitably highly connected pointed spaces).

\begin{prop}
\label{prop:aonehomologyofproduct}
Assume the unstable and stable $\aone$-connectivity properties holds for $S$, and suppose $m,n$ are integers $\geq 1$. If $\mathscr{X}$ is $\aone$-$(m-1)$-connected, and $\mathscr{Y}$ is $\aone$-$(n-1)$-connected, there are canonical isomorphisms
\[
\widetilde{\H}_i^{\aone}(\mathscr{X} \times \mathscr{Y}) \isomto
\begin{cases} \widetilde{\H}_i^{\aone}(\mathscr{X})  \oplus \widetilde{\H}_i^{\aone}(\mathscr{Y}) & \text{ if } 0 \leq i \leq m+n-1, \\
  (\widetilde{\H}_m^{\aone}(\mathscr{X}) \tensor^{\aone} \widetilde{\H}_n^{\aone}(\mathscr{Y})) \oplus
  \widetilde{\H}_{m+n}^{\aone}(\mathscr{X}) \oplus \widetilde{\H}_{m+n}^{\aone}(\mathscr{Y}) & \text{ if } i =
  m+n. \end{cases}
\]
\end{prop}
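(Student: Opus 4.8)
The plan is to bootstrap from the classical Künneth and Eilenberg--Zilber picture. The diagonal identifies $\Z(\mathscr{X}\times\mathscr{Y})$ with the degreewise tensor product $\Z(\mathscr{X})\tensor\Z(\mathscr{Y})$ of simplicial presheaves of abelian groups, so the Eilenberg--Zilber theorem (the shuffle map is a natural chain homotopy equivalence) gives a natural quasi-isomorphism $C_*(\mathscr{X})\tensor^{\mathrm L}C_*(\mathscr{Y})\weq C_*(\mathscr{X}\times\mathscr{Y})$; the derived and underived tensor products agree here since $C_*(\mathscr{X})$ is degreewise a free presheaf of abelian groups, hence flat. Splitting off the basepoint sections, $C_*(\mathscr{X})\iso C_*(S)\oplus\widetilde{C}_*(\mathscr{X})$ and similarly for $\mathscr{Y}$, one obtains a natural direct sum decomposition
\[
\widetilde{C}_*(\mathscr{X}\times\mathscr{Y})\weq\widetilde{C}_*(\mathscr{X})\oplus\widetilde{C}_*(\mathscr{Y})\oplus\bigl(\widetilde{C}_*(\mathscr{X})\tensor^{\mathrm L}\widetilde{C}_*(\mathscr{Y})\bigr).
\]

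Next I would apply the $\aone$-localization functor $\Laone^{ab}$. Being additive it commutes with the finite direct sum, and the crucial input is that the symmetric monoidal structure on the $\aone$-derived category is compatible with $\aone$-localization --- equivalently, the derived tensor product of an $\aone$-quasi-isomorphism with any complex is again an $\aone$-quasi-isomorphism. Using the $\aone$-equivalences $\widetilde{C}_*(\mathscr{X})\to\widetilde{C}^{\aone}_*(\mathscr{X})$ and $\widetilde{C}_*(\mathscr{Y})\to\widetilde{C}^{\aone}_*(\mathscr{Y})$, the last summand may then be rewritten, yielding
\[
\widetilde{C}^{\aone}_*(\mathscr{X}\times\mathscr{Y})\weq\widetilde{C}^{\aone}_*(\mathscr{X})\oplus\widetilde{C}^{\aone}_*(\mathscr{Y})\oplus\Laone^{ab}\bigl(\widetilde{C}^{\aone}_*(\mathscr{X})\tensor^{\mathrm L}\widetilde{C}^{\aone}_*(\mathscr{Y})\bigr).
\]

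Then I would feed in connectivity. Since $\mathscr{X}$ is $\aone$-$(m-1)$-connected, $\Laone\mathscr{X}$ is simplicially $(m-1)$-connected, so by the classical Hurewicz theorem applied on Nisnevich stalks $\widetilde{C}_*(\Laone\mathscr{X})$ is an $(m-1)$-connected complex of sheaves; as $\widetilde{C}^{\aone}_*(\mathscr{X})\weq\Laone^{ab}\widetilde{C}_*(\Laone\mathscr{X})$ ($\aone$-homology depends only on the $\aone$-homotopy type) and $\Laone^{ab}$ preserves $k$-connectedness for every $k$ (the shifted form of the stable $\aone$-connectivity property, Definition \ref{defn:stableaoneconnectivity}), the complex $\widetilde{C}^{\aone}_*(\mathscr{X})$ is $(m-1)$-connected with $\H_m=\widetilde{\H}^{\aone}_m(\mathscr{X})$, and likewise $\widetilde{C}^{\aone}_*(\mathscr{Y})$ is $(n-1)$-connected with $\H_n=\widetilde{\H}^{\aone}_n(\mathscr{Y})$. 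The Künneth spectral sequence then shows $\widetilde{C}^{\aone}_*(\mathscr{X})\tensor^{\mathrm L}\widetilde{C}^{\aone}_*(\mathscr{Y})$ is $(m+n-1)$-connected and that the natural map induced by the truncations $\widetilde{C}^{\aone}_*(\mathscr{X})\to\widetilde{\H}^{\aone}_m(\mathscr{X})[m]$, $\widetilde{C}^{\aone}_*(\mathscr{Y})\to\widetilde{\H}^{\aone}_n(\mathscr{Y})[n]$ is an isomorphism on $\H_i$ for all $i\le m+n$; applying $\Laone^{ab}$ (which preserves this property, again by the shifted stable connectivity property) and invoking Notation \ref{notation:aonetensorproduct}, the last summand contributes $0$ in degrees $\le m+n-1$ and $\widetilde{\H}^{\aone}_m(\mathscr{X})\tensor^{\aone}\widetilde{\H}^{\aone}_n(\mathscr{Y})$ in degree $m+n$. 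Taking $\H_i$ of the displayed splitting for $0\le i\le m+n$ yields the two asserted formulas, and naturality of all the maps involved shows the isomorphisms are canonical.

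I expect the main obstacle to be the interaction of $\Laone^{ab}$ with the tensor product, together with the bookkeeping of connectivity through $\Laone^{ab}$: one needs that tensoring with an $\aone$-quasi-isomorphism preserves $\aone$-quasi-isomorphisms (so that the $\aone$-chain complexes may be commuted past $\tensor^{\mathrm L}$), and that $\Laone^{ab}$ preserves $k$-connectedness for all $k$, which is the stable $\aone$-connectivity property up to a shift. Granting these, the identification of the bottom $\aone$-homology of the localized derived tensor product with the $\aone$-tensor product of the bottom homology sheaves is exactly the content of Notation \ref{notation:aonetensorproduct} and Lemma \ref{lem:stableaoneconnectivitystrictlyaoneinvariantabeliancategory}; all of these are available from Morel's treatment in \cite[\S 6.2]{MField} and \cite[\S 6.2]{MStable}, cf. \cite[Remark 6.2.6]{MStable}.
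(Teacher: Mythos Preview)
Your proposal is correct and follows essentially the same strategy as the paper: split $\widetilde{C}_*(\mathscr{X}\times\mathscr{Y})$ via Eilenberg--Zilber into the two reduced complexes plus a cross term, then use the stable $\aone$-connectivity property and the definition of $\tensor^{\aone}$ to identify the bottom homology of the localized cross term. The paper phrases the splitting via the cofiber sequence $\mathscr{X}\vee\mathscr{Y}\to\mathscr{X}\times\mathscr{Y}\to\mathscr{X}\sma\mathscr{Y}$ (split after one suspension) and handles the range $i\le m+n-1$ by invoking Lemma~\ref{lem:connectivityofsmash} together with the $\aone$-Hurewicz theorem, whereas you work directly at the chain level with the basepoint splitting and K\"unneth; these are equivalent, and in fact you are more explicit than the paper about the point it states in one line at the end, namely that $\widetilde{C}^{\aone}_*(\mathscr{X})\tensor^{\mathrm L}\widetilde{C}^{\aone}_*(\mathscr{Y})\weq\widetilde{C}^{\aone}_*(\mathscr{X}\sma\mathscr{Y})$ requires knowing that $\tensor^{\mathrm L}$ preserves $\aone$-quasi-isomorphisms.
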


\begin{proof}
Consider the inclusion map
\[
\mathscr{X} \vee \mathscr{Y} \longrightarrow \mathscr{X} \times \mathscr{Y}.
\]
The cone of this inclusion map is $\mathscr{X} \sma \mathscr{Y}$. Note also that after a single suspension, the inclusion map is split by the projection. As a consequence, there are direct sum decompositions of the form
\[
\widetilde{\H}_{i}^{\aone}(\mathscr{X} \times \mathscr{Y}) \iso \widetilde{\H}_i^{\aone}(\mathscr{X}) \oplus \widetilde{\H}_i^{\aone}(\mathscr{Y}) \oplus \widetilde{\H}_{i}^{\aone}(\mathscr{X} \wedge \mathscr{Y}).
\]
Under the assumption that the unstable and stable $\aone$-connectivity theorems hold for $S$, Lemma \ref{lem:connectivityofsmash} together with the usual $\aone$-Hurewicz theorem \cite[Theorem 6.57]{MField} immediately imply the result for $i \leq m+n-1$ (note: Morel's Hurewicz theorem holds in this context under the assumptions given: simply replace the appeal to \cite[Theorem 6.56]{MField} in Morel's proof by an appeal to Corollary \ref{cor:relativehurewicz}).

It remains to treat the case $i = m+n$. In that case, let $\widetilde{C}_*^{\aone}(\mathscr{X})$ and
$\widetilde{C}_*^{\aone}(\mathscr{Y})$ be the $\aone$-chain complexes of $\mathscr{X}$ and $\mathscr{Y}$; recall that
these are obtained by taking the chain complex associated with the free abelian group on $\mathscr{X}$ and then
$\aone$-localizing the result. By replacing $\widetilde{C}_*^{\aone}(\mathscr{X})$ by a shift, we may assume that $n=0$
and similarly for $\mathscr{Y}$ and $m=0$.  The complex $\widetilde{C}_*^{\aone}(\mathscr{X}) \tensor^{\mathrm{L}} \widetilde{C}_*^{\aone}(\mathscr{Y})$ is also concentrated in degrees $\geq 0$, and since the stable $\aone$-connectivity property holds for $S$ it follows that $\Laone^{ab}(\widetilde{C}_*^{\aone}(\mathscr{X}) \tensor^{\mathrm{L}} \widetilde{C}_*^{\aone}(\mathscr{Y}))$ is concentrated in degrees $\geq 0$. Since the zeroth homology is obtained by truncation with respect to the homotopy $t$-structure, it follows that
\[
\H_0^{\aone}(\widetilde{C}_*^{\aone}(\mathscr{X}) \tensor^{\mathrm{L}} \widetilde{C}_*^{\aone}(\mathscr{Y})) \iso \H_0^{\aone}(\mathscr{X}) \tensor^{\aone} \H_0^{\aone}(\mathscr{Y}).
\]
To conclude it remains to identify the left hand side in terms of the smash product $\mathscr{X} \sma \mathscr{Y}$.  For this, it suffices to observe that the Eilenberg-Zilber theorem implies the existence of an isomorphism of the form:
$\widetilde{C}_*^{\aone}(\mathscr{X}) \tensor^{\mathrm{L}} \widetilde{C}_*^{\aone}(\mathscr{Y})
\iso \widetilde{C}_*^{\aone}(\mathscr{X} \sma \mathscr{Y})$.
\end{proof}

\begin{cor}
\label{cor:homologyofsmashproducts}
Assume the unstable and stable $\aone$-connectivity properties hold for $S$ and suppose $m,n$ are integers $\geq 2$. If $(\mathscr{X},x)$ is a pointed $\aone$-$(m-1)$-connected space and $(\mathscr{Y},y)$ is a pointed $\aone$-$(n-1)$-connected space, then there is a canonical isomorphism
\[
\bpi_{n+m}^{\aone}(\mathscr{X} \wedge \mathscr{Y}) \isomto \bpi_m^{\aone}(\mathscr{X}) \tensor^{\aone} \bpi_n^{\aone}(\mathscr{Y}).
\]
\end{cor}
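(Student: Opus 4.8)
The plan is to deduce this from the $\aone$-Hurewicz theorem together with the degree-$(m+n)$ part of the homological computation in Proposition \ref{prop:aonehomologyofproduct}. First, since $m,n \geq 2$, the spaces $\mathscr{X}$ and $\mathscr{Y}$ are $\aone$-simply connected, and Lemma \ref{lem:connectivityofsmash} shows that $\mathscr{X} \wedge \mathscr{Y}$ is $\aone$-$(m+n-1)$-connected, hence in particular $\aone$-simply connected. One may therefore apply the $\aone$-Hurewicz theorem --- in the axiomatic form used in the proof of Proposition \ref{prop:aonehomologyofproduct}, namely Morel's proof of \cite[Theorem 6.57]{MField} with the appeal to \cite[Theorem 6.56]{MField} replaced by Corollary \ref{cor:relativehurewicz} --- to obtain natural Hurewicz isomorphisms $\bpi_m^{\aone}(\mathscr{X}) \isomto \widetilde{\H}_m^{\aone}(\mathscr{X})$, $\bpi_n^{\aone}(\mathscr{Y}) \isomto \widetilde{\H}_n^{\aone}(\mathscr{Y})$, and $\bpi_{m+n}^{\aone}(\mathscr{X} \wedge \mathscr{Y}) \isomto \widetilde{\H}_{m+n}^{\aone}(\mathscr{X} \wedge \mathscr{Y})$.

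Next, I would invoke the computation of $\widetilde{\H}_{m+n}^{\aone}(\mathscr{X} \wedge \mathscr{Y})$ implicit in the case $i = m+n$ of Proposition \ref{prop:aonehomologyofproduct}: its proof produces, via Eilenberg--Zilber, a canonical isomorphism $\widetilde{C}_*^{\aone}(\mathscr{X}) \tensor^{\mathrm{L}} \widetilde{C}_*^{\aone}(\mathscr{Y}) \iso \widetilde{C}_*^{\aone}(\mathscr{X} \wedge \mathscr{Y})$, and identifies the relevant homology sheaf of the source with $\widetilde{\H}_m^{\aone}(\mathscr{X}) \tensor^{\aone} \widetilde{\H}_n^{\aone}(\mathscr{Y})$, using that $\widetilde{C}_*^{\aone}(\mathscr{X})$ and $\widetilde{C}_*^{\aone}(\mathscr{Y})$ are suitably connected and that the stable $\aone$-connectivity property holds. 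This yields a canonical isomorphism $\widetilde{\H}_{m+n}^{\aone}(\mathscr{X} \wedge \mathscr{Y}) \isomto \widetilde{\H}_m^{\aone}(\mathscr{X}) \tensor^{\aone} \widetilde{\H}_n^{\aone}(\mathscr{Y})$. Composing this with the three Hurewicz isomorphisms above --- all of which are natural in $\mathscr{X}$ and $\mathscr{Y}$ --- produces the desired canonical isomorphism $\bpi_{n+m}^{\aone}(\mathscr{X} \wedge \mathscr{Y}) \isomto \bpi_m^{\aone}(\mathscr{X}) \tensor^{\aone} \bpi_n^{\aone}(\mathscr{Y})$.

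The only delicate point --- the ``main obstacle'', such as it is --- is the applicability of the $\aone$-Hurewicz theorem in the present axiomatic setting rather than over a perfect field; but this has already been dispatched in the proof of Proposition \ref{prop:aonehomologyofproduct}, where it is observed that Morel's argument goes through once the relative connectivity input is supplied by Corollary \ref{cor:relativehurewicz}. Everything else --- the connectivity estimate for the smash product (Lemma \ref{lem:connectivityofsmash}) and the degree-$(m+n)$ K\"unneth computation (Proposition \ref{prop:aonehomologyofproduct}) --- is already in hand, so the proof amounts to assembling these ingredients.
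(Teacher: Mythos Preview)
Your proposal is correct and follows essentially the same approach as the paper: use Lemma \ref{lem:connectivityofsmash} for the connectivity of the smash product, apply the $\aone$-Hurewicz theorem (in the axiomatic form justified in the proof of Proposition \ref{prop:aonehomologyofproduct}), and then invoke the degree-$(m+n)$ homological identification established in that proof. The paper's version is terser, but your more explicit unpacking of the Hurewicz isomorphisms and the Eilenberg--Zilber step is faithful to the argument.
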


\begin{proof}
By Lemma \ref{lem:connectivityofsmash}, we know $\mathscr{X} \sma \mathscr{Y}$ is $\aone$-$(m+n-1)$-connected. By the $\aone$-Hurewicz theorem \cite[Theorem 6.57]{MField}, it suffices to prove the result in $\aone$-homology (again, as in the proof of Proposition \ref{prop:aonehomologyofproduct}, this holds under our assumption that the unstable and stable $\aone$-connectivity properties hold). In that case, it follows immediately from the proof of Proposition \ref{prop:aonehomologyofproduct}.
\end{proof}

The connection with the results of P. Hilton and J.H.C. Whitehead mentioned at the beginning of this section is contained in the next result, which describes the first ``non-linear" $\aone$-homotopy sheaf of a wedge sum. Given the above results, the proof is a direct consequence of the $\aone$-homotopy excision theorem (a.k.a. Blakers-Massey theorem\footnote{By inspecting the proof, one sees that this result is a direct consequence of the relative connectivity theorem (Corollary \ref{cor:relativehurewicz}) and therefore holds over any base scheme $S$ for which the unstable $\aone$-connectivity property holds.} see, e.g., \cite[Theorem 3.1]{AsokFaselComparison}, \cite[Theorem 2.3.8]{Strunk} or \cite[Proposition 2.20]{WickelgrenWilliams}) and is left to the reader.

\begin{cor}
Assume the unstable and stable $\aone$-connectivity properties hold for $S$ and suppose $m,n \geq 2$ are integers. Suppose $\mathscr{X}$ is a pointed $\aone$-$(m-1)$-connected space and $\mathscr{Y}$ is a pointed $\aone$-$(n-1)$-connected space. There are canonical isomorphisms
\[
\bpi_{i}^{\aone}(\mathscr{X} \vee \mathscr{Y}) \isomto \begin{cases} \bpi_{i}^{\aone}(\mathscr{X}) \oplus \bpi_i^{\aone}(\mathscr{X}) & \text{ if } 1 \leq i \leq m+n-2 \\ \bpi_{m+n-1}^{\aone}(\mathscr{X}) \oplus \bpi_{m+n-1}^{\aone}(\mathscr{X}) \oplus \bpi_m^{\aone}(\mathscr{X}) \tensor^{\aone} \bpi_{n}^{\aone}(\mathscr{Y}) &\text{ when $i=m+n-1$}.\end{cases}
\]
\end{cor}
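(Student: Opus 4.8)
The plan is to carry the classical argument of P.~Hilton and J.H.C.~Whitehead for the first ``non-linear'' homotopy group of a wedge over into the $\aone$-setting, comparing $\mathscr{X}\vee\mathscr{Y}$ with $\mathscr{X}\times\mathscr{Y}$ through the inclusion $j\colon\mathscr{X}\vee\mathscr{Y}\to\mathscr{X}\times\mathscr{Y}$, whose $\aone$-homotopy cofiber is $\mathscr{X}\wedge\mathscr{Y}$. I would work throughout with $\aone$-local models: by two applications of \cite[\S 3 Lemma 2.13]{MV}, passing from $\mathscr{X}$, $\mathscr{Y}$ to $\Laone\mathscr{X}$, $\Laone\mathscr{Y}$ changes none of the $\aone$-homotopy types of $\mathscr{X}\vee\mathscr{Y}$, $\mathscr{X}\times\mathscr{Y}$, or $\mathscr{X}\wedge\mathscr{Y}$, so $j$ has a well-behaved $\aone$-homotopy fiber $\mathscr{F}$ --- equivalently relative $\aone$-homotopy sheaves $\bpi_i^{\aone}(\mathscr{X}\times\mathscr{Y},\mathscr{X}\vee\mathscr{Y})\iso\bpi_{i-1}^{\aone}(\mathscr{F})$ in the sense of Convention~\ref{convention:relativeconnectivity} --- and the cofiber of $j$ remains $\mathscr{X}\wedge\mathscr{Y}$. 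Three inputs should be recorded first. (i) By Lemma~\ref{lem:connectivityofsmash}, $\mathscr{X}\vee\mathscr{Y}$ and $\mathscr{X}\times\mathscr{Y}$ are $\aone$-$(m-1)$-connected and $\mathscr{X}\wedge\mathscr{Y}$ is $\aone$-$(m+n-1)$-connected. (ii) The wedge inclusions $\mathscr{X}\hookrightarrow\mathscr{X}\vee\mathscr{Y}$ and $\mathscr{Y}\hookrightarrow\mathscr{X}\vee\mathscr{Y}$ compose with $j$ to the standard inclusions of the two factors of $\mathscr{X}\times\mathscr{Y}$, so that $j_*\colon\bpi_i^{\aone}(\mathscr{X}\vee\mathscr{Y})\to\bpi_i^{\aone}(\mathscr{X}\times\mathscr{Y})\iso\bpi_i^{\aone}(\mathscr{X})\oplus\bpi_i^{\aone}(\mathscr{Y})$ is split surjective for every $i$; since the hypothesis $m,n\ge2$ forces $\bpi_1^{\aone}$ of all the spaces in sight to vanish, only abelian sheaves occur in the relevant range and no nonabelian fundamental-sheaf subtleties arise. (iii) By Corollary~\ref{cor:homologyofsmashproducts} together with the $\aone$-Hurewicz theorem, $\widetilde{\H}_{m+n}^{\aone}(\mathscr{X}\wedge\mathscr{Y})\iso\bpi_{m+n}^{\aone}(\mathscr{X}\wedge\mathscr{Y})\iso\bpi_m^{\aone}(\mathscr{X})\tensor^{\aone}\bpi_n^{\aone}(\mathscr{Y})$.

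The first substantive step is to compute $\bpi_i^{\aone}(\mathscr{F})$ for $i\le m+n-1$. Since $j$ is the inclusion of a subobject with quotient $\mathscr{X}\wedge\mathscr{Y}$, the relative $\aone$-homology of the pair is the reduced $\aone$-homology $\widetilde{\H}_*^{\aone}(\mathscr{X}\wedge\mathscr{Y})$; concretely, the wedge-axiom splitting of $\aone$-homology and Proposition~\ref{prop:aonehomologyofproduct} identify $\widetilde{\H}_i^{\aone}(\mathscr{X}\times\mathscr{Y})$ with $\widetilde{\H}_i^{\aone}(\mathscr{X})\oplus\widetilde{\H}_i^{\aone}(\mathscr{Y})\oplus\widetilde{\H}_i^{\aone}(\mathscr{X}\wedge\mathscr{Y})$, the first two summands being $\widetilde{\H}_i^{\aone}(\mathscr{X}\vee\mathscr{Y})$. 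By (i), $\widetilde{\H}_i^{\aone}(\mathscr{X}\wedge\mathscr{Y})=0$ for $i\le m+n-1$, and then a short induction with the relative $\aone$-Hurewicz theorem --- valid under the standing hypotheses by the same device the paper uses for the absolute statement, namely replacing the appeal to \cite[Theorem 6.56]{MField} by Corollary~\ref{cor:relativehurewicz}, with base case the $\aone$-simple connectedness of $\mathscr{X}\times\mathscr{Y}$ and $\mathscr{X}\vee\mathscr{Y}$ --- shows that $\bpi_i^{\aone}(\mathscr{F})=0$ for $i\le m+n-2$ and, by (iii), that $\bpi_{m+n-1}^{\aone}(\mathscr{F})=\bpi_{m+n}^{\aone}(\mathscr{X}\times\mathscr{Y},\mathscr{X}\vee\mathscr{Y})\iso\widetilde{\H}_{m+n}^{\aone}(\mathscr{X}\wedge\mathscr{Y})\iso\bpi_m^{\aone}(\mathscr{X})\tensor^{\aone}\bpi_n^{\aone}(\mathscr{Y})$. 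Equivalently, this is precisely what the $\aone$-homotopy excision (Blakers--Massey) theorem referred to in the statement --- see \cite[Theorem 3.1]{AsokFaselComparison}, \cite[Theorem 2.3.8]{Strunk}, \cite[Proposition 2.20]{WickelgrenWilliams} --- delivers, when applied to the cofiber sequence $\mathscr{X}\vee\mathscr{Y}\to\mathscr{X}\times\mathscr{Y}\to\mathscr{X}\wedge\mathscr{Y}$: it produces an isomorphism between the relative $\aone$-homotopy sheaves of the pair and the $\aone$-homotopy sheaves of the cofiber in a range of degrees including $m+n$, and the hypothesis $m,n\ge2$ is exactly what makes this range long enough.

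With $\bpi_*^{\aone}(\mathscr{F})$ in hand I would conclude from the long exact sequence of the $\aone$-fiber sequence $\mathscr{F}\to\mathscr{X}\vee\mathscr{Y}\overset{j}{\to}\mathscr{X}\times\mathscr{Y}$. Because $j_*$ is split surjective in every degree by (ii), the maps $\bpi_i^{\aone}(\mathscr{X}\times\mathscr{Y})\to\bpi_i^{\aone}(\mathscr{X}\times\mathscr{Y},\mathscr{X}\vee\mathscr{Y})$ vanish, so the boundary maps are injective and the long exact sequence breaks into split short exact sequences
\[
0\longrightarrow\bpi_{i+1}^{\aone}(\mathscr{X}\times\mathscr{Y},\mathscr{X}\vee\mathscr{Y})\longrightarrow\bpi_i^{\aone}(\mathscr{X}\vee\mathscr{Y})\overset{j_*}{\longrightarrow}\bpi_i^{\aone}(\mathscr{X})\oplus\bpi_i^{\aone}(\mathscr{Y})\longrightarrow 0.
\]
For $1\le i\le m+n-2$ the leftmost term vanishes by the connectivity established above, giving the first case of the statement; for $i=m+n-1$ it is $\bpi_m^{\aone}(\mathscr{X})\tensor^{\aone}\bpi_n^{\aone}(\mathscr{Y})$, giving the second. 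The decomposition is canonical because the section of $j_*$ is induced by the natural wedge inclusions and the tensor-product summand is the one furnished by the Eilenberg--Zilber identification in the proof of Proposition~\ref{prop:aonehomologyofproduct}.

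The hard part is not any one computation but the bookkeeping needed to make sure every ingredient is being used in a form legitimate under the hypothesis that the unstable and stable $\aone$-connectivity properties hold for $S$: the relative $\aone$-Hurewicz theorem, the identification of relative $\aone$-homology of a cofibration pair with the reduced $\aone$-homology of the cofiber, and the $\aone$-homotopy excision theorem all ultimately rest on Corollary~\ref{cor:relativehurewicz}, and one must check that the resulting numerical range --- equivalently, the inductive bound coming out of relative $\aone$-Hurewicz --- genuinely reaches degree $m+n$, which is the role played by the hypotheses $m,n\ge2$.
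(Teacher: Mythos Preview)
Your argument is correct and follows precisely the route the paper indicates: the paper's proof consists only of the sentence ``the proof is a direct consequence of the $\aone$-homotopy excision theorem \ldots\ and is left to the reader,'' and you have supplied exactly those details, comparing $\mathscr{X}\vee\mathscr{Y}$ to $\mathscr{X}\times\mathscr{Y}$ via the long exact sequence of the pair, invoking the split surjectivity of $j_*$, and identifying the relative $\aone$-homotopy sheaves through $\aone$-homotopy excision (equivalently, the relative $\aone$-Hurewicz bootstrap) together with Corollary~\ref{cor:homologyofsmashproducts}. Your care in noting that every ingredient rests on Corollary~\ref{cor:relativehurewicz}, and that $m,n\ge 2$ is what pushes the excision range to degree $m+n$, matches the paper's footnoted remark on the validity of Blakers--Massey under the unstable $\aone$-connectivity hypothesis.
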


\begin{rem}
As in classical homotopy theory, the computation of the homotopy of a wedge sum allows one to study homotopy operations. The first ``non-linear" summand in the homotopy of a wedge sum is closely related to the Whitehead product studied in Section \ref{ss:whiteheadproduct}, though we have not attempted to establish equivalence of the definitions.
\end{rem}

\subsubsection*{The EHP sequence in low-degree}
\begin{thm}
\label{thm:lowdegree}
Assume the unstable and stable $\aone$-connectivity properties hold for $S$. Let $n \ge 2$ be an integer. If $\mathscr{X}$ is an $\aone$-$(n-1)$-connected space, then there is an exact sequence of the form
\[
\bpi_{2n+1}^{\aone}(\Sigma \mathscr{X}) \stackrel{{\mathrm H}}{\longrightarrow} \bpi_{n}^{\aone}(\mathscr{X}) \tensor^{\aone} \bpi_{n}^{\aone}(\mathscr{X}) \stackrel{{\mathrm P}}{\longrightarrow} \bpi_{2n-1}^{\aone}(\mathscr{X}) \stackrel{{\mathrm E}}{\longrightarrow} \bpi_{2n}^{\aone}(\Sigma \mathscr{X}) \longrightarrow 0
\]
In particular, one has an exact sequence as above if $S$ is the spectrum of an (infinite) perfect field.
\end{thm}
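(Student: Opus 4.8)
The plan is to obtain the asserted four-term sequence by specializing the $\aone$-EHP exact sequence of Theorem \ref{thm:EHP_range} to a single degree and rewriting the resulting terms.

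First I would record the identifications and connectivity facts that will be needed. Since $\mathscr{X}$ is $\aone$-$(n-1)$-connected with $n \geq 2$, Lemma \ref{lem:connectivityofsmash} shows that $\mathscr{X}^{\sma 2}$ is $\aone$-$(2n-1)$-connected, so $\Sigma \mathscr{X}^{\sma 2}$ is $\aone$-$2n$-connected. By Theorem \ref{thm:aonejamesconstruction}(1) there are isomorphisms $\piaone_q(J(\mathscr{X})) \iso \piaone_{q+1}(\Sigma \mathscr{X})$ and $\piaone_q(J(\mathscr{X}^{\sma 2})) \iso \piaone_{q+1}(\Sigma \mathscr{X}^{\sma 2})$ for every $q$. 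In particular $\piaone_{2n-1}(J(\mathscr{X}^{\sma 2})) \iso \piaone_{2n}(\Sigma \mathscr{X}^{\sma 2}) = 0$, while, combining the (homological) suspension isomorphism with the $\aone$-Hurewicz theorem and Corollary \ref{cor:homologyofsmashproducts}, one gets
\[
\piaone_{2n}(J(\mathscr{X}^{\sma 2})) \iso \piaone_{2n+1}(\Sigma \mathscr{X}^{\sma 2}) \iso \bpi_{2n}^{\aone}(\mathscr{X}^{\sma 2}) \iso \bpi_{n}^{\aone}(\mathscr{X}) \tensor^{\aone} \bpi_{n}^{\aone}(\mathscr{X}).
\]

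Next I would write down the portion of the exact sequence of Theorem \ref{thm:EHP_range} running through degrees $q = 2n$ and $q = 2n-1$, namely
\[
\piaone_{2n}(J(\mathscr{X})) \stackrel{{\mathrm H}}{\longrightarrow} \piaone_{2n}(J(\mathscr{X}^{\sma 2})) \stackrel{{\mathrm P}}{\longrightarrow} \piaone_{2n-1}(\mathscr{X}) \stackrel{{\mathrm E}}{\longrightarrow} \piaone_{2n-1}(J(\mathscr{X})) \stackrel{{\mathrm H}}{\longrightarrow} \piaone_{2n-1}(J(\mathscr{X}^{\sma 2})).
\]
Here $2n \leq 3n-2$ precisely because $n \geq 2$, so these degrees lie in the range where Theorem \ref{thm:EHP_range} asserts exactness (when $n = 2$ the leftmost term sits at the very start of the displayed sequence, where exactness is the surjectivity of $\phi_*$ established in the proof of that theorem). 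Substituting the identifications from the first step, and using $\piaone_{2n-1}(J(\mathscr{X}^{\sma 2})) = 0$, converts this segment into
\[
\bpi_{2n+1}^{\aone}(\Sigma \mathscr{X}) \stackrel{{\mathrm H}}{\longrightarrow} \bpi_{n}^{\aone}(\mathscr{X}) \tensor^{\aone} \bpi_{n}^{\aone}(\mathscr{X}) \stackrel{{\mathrm P}}{\longrightarrow} \bpi_{2n-1}^{\aone}(\mathscr{X}) \stackrel{{\mathrm E}}{\longrightarrow} \bpi_{2n}^{\aone}(\Sigma \mathscr{X}) \longrightarrow 0,
\]
exactness at the last spot coming from the fact that the subsequent map had trivial target; this is exactly the claimed sequence. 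Finally, for $S = \Spec k$ with $k$ an infinite perfect field, the unstable and stable $\aone$-connectivity properties hold by Theorems \ref{thm:unstableaoneconnectivityperfectfields} and \ref{thm:stableaoneconnectivitypropertyholds}, so all hypotheses are met.

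I expect the main point requiring care to be the bookkeeping of degree shifts: one must verify that each application of Theorem \ref{thm:aonejamesconstruction}, of Corollary \ref{cor:homologyofsmashproducts}, and of the suspension isomorphism stays inside the indicated connectivity ranges, and in particular that the boundary case $n = 2$ is still covered by the exactness statement at the left end of Theorem \ref{thm:EHP_range}. Everything else is a direct assembly of results already in hand.
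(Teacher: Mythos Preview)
Your proof is correct and follows exactly the same strategy as the paper: specialize the $\aone$-EHP sequence of Theorem \ref{thm:EHP_range} to the relevant degrees, use Lemma \ref{lem:connectivityofsmash} to kill $\piaone_{2n-1}(J(\mathscr{X}^{\sma 2}))$ and obtain surjectivity on the right, and invoke Corollary \ref{cor:homologyofsmashproducts} to identify $\piaone_{2n}(J(\mathscr{X}^{\sma 2}))$ with the $\aone$-tensor product. Your parenthetical about the $n=2$ boundary case is harmless but unnecessary: exactness at $\piaone_{2n}(J(\mathscr{X}^{\sma 2}))$ comes directly from the long exact sequence of the fiber sequence in the proof of Theorem \ref{thm:EHP_range} and does not rely on the surjectivity of $\phi_*$.
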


\begin{proof}
Consider the exact sequence of Theorem \ref{thm:EHP_range}. Lemma \ref{lem:connectivityofsmash} implies that $\Sigma \mathscr{X} \sma \mathscr{X}$ is at least $\aone$-$2n$-connected, and thus $J(\mathscr{X} \sma \mathscr{X})$ is at least $\aone$-$(2n-1)$-connected. This immediately yields the surjectivity in the statement. Corollary \ref{cor:homologyofsmashproducts} then yields the identification of $\bpi_{2n}^{\aone}(J(\mathscr{X} \sma \mathscr{X}))$ with the $\aone$-tensor product term. The final statement is a consequence of Theorems \ref{thm:unstableaoneconnectivityperfectfields} and \ref{thm:stableaoneconnectivitypropertyholds}.
\end{proof}

\begin{rem}
The exact sequence of Theorem \ref{thm:lowdegree} in case $\mathscr{X} = {\mathbb A}^3 \setminus 0$ is precisely the one described in \cite[Theorem 4]{AsokFaselOberwolfach}. One notational benefit of the statement of Theorem \ref{thm:lowdegree} is that the quadratic nature of the James--Hopf invariants is apparent.
\end{rem}

\section{Some \texorpdfstring{$E_1$}{E1} differentials in the \texorpdfstring{$\aone$}{A1}-EHP sequence}
\label{s:eonedifferential}
The goal of this section is to analyze the morphisms in the $\aone$-EHP sequence.  As mentioned in the introduction, classically, the morphism ${\mathrm P}$ can be described in terms of Whitehead products. In Section \ref{ss:whiteheadproduct}, we extend the definition of Whitehead product to the theory of simplicial presheaves (see Definition \ref{defn:whiteheadproduct}) to make it available in the $\aone$-homotopy category as well. These results are written in the generality of simplicial presheaves on a site with enough points. We then use the results of Section \ref{ss:whiteheadproduct} to show that the map ${\mathrm P}$ in Theorem \ref{thm:EHP_range} can indeed be expressed in terms of the Whitehead product (see Theorem \ref{P=piWhitehead}); this requires that the unstable $\aone$-connectivity property holds for $S$.

The $\aone$-EHP spectral sequence is created by combining $\aone$-EHP sequences into an exact couple.  However, since the $\aone$-EHP sequences of Theorem \ref{thm:EHP_range} are truncated, some algebraic manipulation is required to form an exact couple (for example extending the sequences to the left by a kernel and then zeros), and the resulting spectral sequence will differ from the $\aone$-EHP spectral sequence.  Nevertheless, it is shown in \cite{WickelgrenWilliams} that after localizing at $2$, the exact sequences of Theorem \ref{thm:EHP_range} are ``low-degree portions" of suitable long exact sequences, and these long exact sequences yield the $2$-primary $\aone$-EHP sequence (with the expected convergence properties).

The analysis of morphisms in the $\aone$-EHP sequence described above can be used to describe some differentials on the $E_1$ page in the $\aone$-EHP spectral sequence.  The desired $E_1$-differentials (given by the composite ${\mathrm H}{\mathrm P}$ linking the EHP sequences of different spheres) are then determined by the James--Hopf invariant of a Whitehead product. The axiomatic approach to Hopf invariants of Boardman and Steer \cite{Boardman_Steer} determines these James--Hopf invariants. In Section \ref{ss:propertiesofh}, we recast some results of Boardman and Steer in the context of simplicial presheaves. The main result is Proposition \ref{prop:s*H2P} which holds in the generality of simplicial presheaves on a site with enough points. In contrast, the remaining Section \ref{ss:HPKmwsubsection} is more specific to $\aone$-homotopy theory; Theorem \ref{thm:HP} identifies an $E_1$-differential in the $\aone$-EHP sequence with multiplication by a given element of $GW(k)$.

\subsection{Whitehead products for simplicial presheaves}
\label{ss:whiteheadproduct}
In this section, we give a construction of Whitehead products in $\aone$-homotopy theory, the construction generalizes classical results of \cite{Cohen, Arkowitz} to the context of simplicial presheaves. Suppose $\mathbf{C}$ is a site and as in Section \ref{ss:classicaljames} consider the category of pointed simplicial presheaves on $\mathbf{C}$ with its injective local model structure. If $X$ and $Y$ are pointed simplicial presheaves, we write $[X,Y]$ for morphisms in the associated (pointed) homotopy category. Unfortunately, it is also standard to use the notation $[-,-]$ for Whitehead products, but we hope that context will ensure that no confusion arises.

Recall that the constant simplicial presheaf $S^1$ is an $H$-cogroup object in the category of pointed simplicial
presheaves on $\mathbf{C}$, and there is an induced $H$-co-group structure on $\Sigma W$ for any pointed space $W$. In
particular, for any space $Z$, the space $\Map(\Sigma W,Z)$ has the structure of an $H$-group, functorially in both $Z$
and $W$. We will write $\cdot$ for the product in $\Map(\Sigma W,Z)$ and $(-)^{-1}$ for the inversion map; the constant
map to the base-point $\ast$ serves as a unit.

Suppose given pointed simplicial presheaves $X,Y$, and $Z$. The product projections $p_X: X \times Y \to X$ and
$p_Y: X \times Y \to Y$ are pointed, and induce morphisms $\Sigma p_X: \Sigma(X \times Y) \to \Sigma X$ and
$\Sigma p_Y: \Sigma(X \times Y) \to \Sigma Y$. In addition, the canonical map $X \vee Y \to X \times Y$ induces a
morphism $\Sigma (X \vee Y) \weq \Sigma X \vee \Sigma Y \to \Sigma(X \times Y)$ that fits into a cofiber sequence with
cofiber $\Sigma (X \sma Y)$.

\begin{construction}[Whitehead product]
\label{construction:whiteheadproduct}
Given maps $\alpha: \Sigma X \to Z$ and $\beta: \Sigma Y \to Z$, composition with the projections yields morphisms $a := \alpha \circ \Sigma p_X$ and $b:= \beta \circ \Sigma p_Y$. With respect to the product structure on $\Map(\Sigma (X \times Y),Z)$, we may consider the map
\[
(a^{-1}\cdot b^{-1}) \cdot (a \cdot b): \Sigma (X \times Y) \longrightarrow Z.
\]
We embed the map $(a^{-1}\cdot b^{-1}) \cdot (a \cdot b)$ into the following diagram:
\[
\xymatrix{
\Sigma X \vee \Sigma Y \ar[r]\ar[dr]& \Sigma(X \times Y) \ar[r]\ar[d]^{(a^{-1}\cdot b^{-1}) \cdot (a \cdot b)}& \Sigma(X \wedge Y) \\
& Z. &
}
\]

The pull-back of $(a^{-1} \cdot b^{-1})\cdot(a \cdot b)$ to $\Sigma (X \vee Y)$ has a prescribed null-homotopy described as follows.  The composition of the inclusion $\Sigma (X \times \ast) \to \Sigma (X \times Y)$ with $\pi_Y$ is the constant map. Thus, if we pull-back $(a^{-1} \cdot b^{-1})\cdot(a \cdot b)$ to $\Sigma (X \times \ast)$ the result coincides with the pull-back of $(a^{-1} \cdot {\ast}^{-1})\cdot(a \cdot {\ast})$. There is a canonical homotopy between $(a^{-1} \cdot {\ast}^{-1})\cdot(a \cdot {\ast})$ and the constant map $\ast$. Switching the roles of $X$ and $Y$ and $a$ and $b$, the pull-back of $(a^{-1} \cdot b^{-1})\cdot(a \cdot b)$ to $\Sigma (\ast \times Y)$ also admits a specified null-homotopy. Thus, the pull-back of $(a^{-1} \cdot b^{-1})\cdot(a \cdot b)$ to $\Sigma (X \vee Y)$ comes equipped with a specified null-homotopy. By means of this null-homotopy, the map $(a^{-1} \cdot b^{-1})\cdot(a \cdot b)$ passes to a well-defined homotopy class of maps $\Sigma(X \sma Y) \longrightarrow Z$; we write $[\alpha,\beta]$ for any representative of this class.
\end{construction}

Since the sequence $1 \to [\Sigma (X \sma Y), Z] \to [\Sigma (X \times Y), Z] \to [\Sigma(X \vee Y)] \to 1$ is exact, the choice of null-homotopy does not affect the homotopy class of  $[\alpha,\beta]$.

\begin{defn}
\label{defn:whiteheadproduct}
Given maps $\alpha: \Sigma X \to Z$ and $\beta: \Sigma Y \to Z$, a representative for the homotopy class of maps
\[
[\alpha,\beta]: \Sigma(X \wedge Y) \longrightarrow Z.
\]
in \textup{Construction \ref{construction:whiteheadproduct}} (or the homotopy class itself) is a {\em Whitehead product} of $\alpha$ and $\beta$.
\end{defn}

Following classical conventions, we write $\id_X$ for the identity map on a (pointed) simplicial presheaf $X$, and by a slight abuse of notation, we also let $\id_{\Sigma X}$ denote the inclusion $\Sigma X \to \Sigma X \vee \Sigma Y$. The construction above with $\alpha = \id_{\Sigma X}$ and $\beta = \id_{\Sigma Y}$ also yields a canonical map
\[
[\id_{\Sigma X}, \id_{\Sigma Y}]: \Sigma(X \wedge Y) \longrightarrow \Sigma X \vee \Sigma Y
\]
that can be thought of as a universal Whitehead product in the sense that the Whitehead product of Definition \ref{defn:whiteheadproduct} can be obtained by composing $[\id_{\Sigma X}, \id_{\Sigma Y}]$ with the map $\alpha \vee \beta: \Sigma X \vee \Sigma Y \to Z$. Regarding this product, we have the following result, which is a straightforward consequence of \cite[Theorem 4.2]{Arkowitz}.

To state this result, introduce the following notation. For $X$ and $Y$ pointed spaces, let $\tau: \Sigma \Sigma(X \wedge Y)  \isomt  \Sigma \Sigma(X \wedge Y) $ denote the map which switches the two suspensions and is the identity on $X \wedge Y$. Let  $\switch: \Sigma \Sigma (X \sma Y)  \isomt  \Sigma X \sma \Sigma Y$ be the map which does not change the order of the suspensions and is the identity on $X$ and $Y$.

\begin{lem}
\label{lem:cofib_wh_iXiY}
If $X$ and $Y$ are pointed connected spaces, then there is a cofiber sequence of the form:
\[
\xymatrix{
\Sigma(X \wedge Y) \ar[rr]^-{[\id_{\Sigma X}, \id_{\Sigma Y}]}&& \Sigma X \vee \Sigma Y \ar[r]& \Sigma X \times \Sigma Y
}
\]
where the second map is the usual map from the sum to the product, and such that the induced weak equivalence
\[
 \Sigma^2(X \wedge Y) \isomto  \Sigma X \wedge \Sigma Y
\]
is homotopic to $\switch \tau$.
\end{lem}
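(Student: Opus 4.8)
The plan is to deduce this from the classical statement by arguing on stalks, exactly as in the proof of Proposition \ref{prop:simplicialsimplicialEHP}. Recall that $\mathbf{C}$ has enough points, so a map of pointed simplicial presheaves is a local weak equivalence precisely when every stalk functor $p^{*}$ sends it to a weak equivalence of simplicial sets; each $p^{*}$ is a left adjoint, hence preserves all colimits, and it also preserves finite limits and monomorphisms. In the injective local model structure every pointed simplicial presheaf is cofibrant, so for any map $f\colon A\to B$ the homotopy cofiber is computed by the section-wise reduced mapping cone $C(f)$, and, being a colimit of a diagram one of whose arrows is a cofibration, $C(f)$ is preserved by each $p^{*}$, i.e. $p^{*}C(f)\weq C(p^{*}f)$. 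The operations $\Sigma$, $\vee$, $\times$, $\wedge$, the $H$-cogroup structure on the constant presheaf $S^{1}$, and the null-homotopies built in Construction \ref{construction:whiteheadproduct} are all section-wise and natural, so each $p^{*}$ also commutes with $[\id_{\Sigma X},\id_{\Sigma Y}]$, $\switch$ and $\tau$.

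First I would assemble the comparison maps globally. The composite $\Sigma(X\wedge Y)\to\Sigma X\vee\Sigma Y\to\Sigma X\times\Sigma Y$ (universal Whitehead product followed by the sum-to-product map) is canonically null-homotopic --- this is formal, following from naturality of Construction \ref{construction:whiteheadproduct} in the target together with the vanishing of the Whitehead product of the two axis inclusions into a product --- so one obtains a canonical map $\xi\colon C\big([\id_{\Sigma X},\id_{\Sigma Y}]\big)\to\Sigma X\times\Sigma Y$. Since the cofiber of the sum-to-product map is $\Sigma X\wedge\Sigma Y$, comparing the Puppe sequence of $[\id_{\Sigma X},\id_{\Sigma Y}]$ with that of the sum-to-product map produces a canonical map $\eta\colon\Sigma^{2}(X\wedge Y)\to\Sigma X\wedge\Sigma Y$. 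Both $\xi$ and $\eta$ are honest section-wise natural maps of pointed simplicial presheaves, not merely homotopy classes, and the claims of the lemma amount to saying that $\xi$ is a weak equivalence and that $\eta\weq\switch\tau$.

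Next I would apply $p^{*}$. Then $p^{*}X,p^{*}Y$ are pointed connected simplicial sets, and by the compatibilities above $p^{*}$ carries the sequence and the maps $\xi,\eta,\switch,\tau$ to the corresponding classical data for $p^{*}X,p^{*}Y$ (passing to geometric realizations if one prefers to cite the topological statement directly). By \cite[Theorem 4.2]{Arkowitz}, this classical sequence is a cofiber sequence --- so $p^{*}\xi$ is a weak equivalence --- and the resulting equivalence $p^{*}\eta$ is homotopic to $\switch\tau$. Since this holds for every point $p$ and $\mathbf{C}$ has enough points, $\xi$ is a weak equivalence, which establishes the asserted cofiber sequence and produces the equivalence $\eta$.

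The one genuinely non-mechanical point is the passage from ``$p^{*}\eta\weq p^{*}(\switch\tau)$ for all $p$'' to ``$\eta\weq\switch\tau$'' in the local homotopy category, since homotopy classes of maps need not be detected on stalks. I expect this to be the main obstacle, and I would resolve it by observing that Arkowitz's identification of the comparison equivalence with $\switch\tau$ is itself formal: his argument manipulates only cofiber sequences and the $H$-cogroup structure of $S^{1}$, so it can be carried out verbatim inside the pointed model category of pointed simplicial presheaves on $\mathbf{C}$, yielding directly a homotopy $\eta\weq\switch\tau$ with no reference to points. Thus the substantive step is to re-read Arkowitz's proof of Theorem 4.2 and confirm that every step uses nothing beyond pointed-model-categorical structure; the remainder is the routine stalk-wise reduction above.
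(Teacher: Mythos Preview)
Your approach to the cofiber sequence is sound and parallel to the paper's: both reduce to the classical statement, you via stalks and the paper via sections. The paper's route is slightly more direct because Arkowitz's comparison map $\Theta$ is given by an explicit point-set formula, so one simply defines it sectionwise and checks it is a homology isomorphism (hence a weak equivalence, both sides being simply connected) on each section; this sidesteps having to match an abstractly constructed $\xi$ with Arkowitz's $\Theta$.

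The gap is in the identification of the induced equivalence with $\switch\tau$. Two problems. First, \cite[Theorem 4.2]{Arkowitz} establishes the cofiber sequence but does \emph{not} identify the induced equivalence $\Sigma^2(X\wedge Y)\isomt\Sigma X\wedge\Sigma Y$, so even your stalkwise claim $p^*\eta\weq p^*(\switch\tau)$ is not covered by that citation. Second, your fallback---that the identification is ``formal'' and ``can be carried out verbatim inside the pointed model category''---is incorrect. The paper obtains the identification by unwinding Arkowitz's explicit point-set construction of $\Theta$ (via the map $N$ on cones), passing to quotients, and then checking that the resulting self-map of $S^1\wedge S^1$, given piecewise by $(u,t)\mapsto(u,1-2t(1-u))$ and $(u,t)\mapsto(1-2(1-t)(1-u),u)$, has degree $-1$ by a Jacobian computation at a regular value. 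This is a genuine topological computation on the suspension coordinates, not a consequence of cofiber-sequence axiomatics or the $H$-cogroup structure alone. What makes it global is precisely that the formulas, and hence the homotopy, involve only the constant presheaf $S^1\wedge S^1$ and are identical on every section; the homotopy therefore assembles sectionwise. You would need to actually perform (or cite) this degree computation and then invoke its uniformity in $U$; appealing to formality does not close the gap.
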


\begin{proof}
Recall that if $X$ and $Y$ are two pointed spaces, their join, typically denoted $X \ast Y$, is the homotopy pushout of the diagram $X \leftarrow X \times Y \rightarrow Y$. There is a functorial sectionwise weak equivalence $X \ast Y \weq \Sigma X \sma Y$. Using this identification, the universal Whitehead product can be thought of as a map with source $X \ast Y \to \Sigma X \vee \Sigma Y$ (cf. \cite[Definition 2.3]{Arkowitz}).

We first treat the case of simplicial sets. Thus, suppose $A$ and $B$ are connected simplicial sets. Let $C(A \ast B \to \Sigma A \vee \Sigma B )$ be the reduced mapping cone of $[\id_{\Sigma A}, \id_{\Sigma B}]$. In the proof of \cite[Theorem 4.2]{Arkowitz}, one finds a natural map
\[
\Theta: C(A \ast B \to \Sigma A \vee \Sigma B ) \longrightarrow \Sigma A \times \Sigma B
\]
(this map is called $G$ in \cite{Arkowitz} and is originally due to D.E. Cohen \cite{Cohen}). We claim the map $\Theta$ is a homology isomorphism. In \cite[Theorem 4.2]{Arkowitz}, this is fact is established for $A$ and $B$ polyhedra with one of $A$ or $B$ compact. As a consequence, it is true for finite simplicial sets. Since homology commutes with filtered direct limits and since the map $\Theta$ is evidently compatible with passing to sub-simplicial-sets by inspection, it follows that $\Theta$ is a homology isomorphism for $A$ and $B$ arbitrary simplicial sets.

Now, we treat the general case of simplicial presheaves. Write $C([\id_{\Sigma X}, \id_{\Sigma Y}])$ for the cofiber of $[\id_{\Sigma X}, \id_{\Sigma Y}]: X \ast Y \to \Sigma X \vee \Sigma Y$. It follows that there is a map $\Theta: C([\id_{\Sigma X}, \id_{\Sigma Y}]) \to \Sigma X \times \Sigma Y $ defined sectionwise, i.e., for each object $U \in \mathbf{C}$ define $\Theta(U) : C([\id_{\Sigma X}, \id_{\Sigma Y}])(U) \to \Sigma X (U) \times \Sigma Y (U)$ to be the map above. For each such $U$, $\Theta(U)$ is a homology equivalence between simply connected simplicial sets, and therefore a sectionwise weak equivalence. Combining with the sectionwise weak-equivalence $X \ast Y \to \Sigma(X \sma Y)$ and using the compatibility of the definitions of the Whitehead product we have established the claimed cofiber sequence.

The cofiber sequence identifies the suspension of $\Sigma(X \wedge Y)$ with the homotopy cofiber of $\Sigma X \vee \Sigma Y \to \Sigma X \times \Sigma Y.$  To prove that the homotopy class of the resulting map
\begin{equation}
\label{desired_htyp_equiv}
\Sigma^2(X \wedge Y) \isomto  \Sigma X \wedge \Sigma Y
\end{equation}
is $\switch \tau$, by working sectionwise it suffices to establish the analogous claim in the context of simplicial sets, which in turn can be reduced to checking the claim in the context of CW complexes, as considered in \cite{Arkowitz}.

We recall the following constructions from \cite{Arkowitz}. Let $S$ denote the unreduced suspension, $T$ denote the unreduced cone, and $C$ denote the reduced cone. Let $A$ and $B$ be pointed, locally-finite, connected, CW-complexes. The map $\Theta$ induces a map \begin{equation}\label{theta_mod_SigmaXvSigmaY}C(A \ast B \to \Sigma A \vee \Sigma B ) / (\Sigma A \vee \Sigma B) \longrightarrow (\Sigma A \times \Sigma B)/(\Sigma A \vee \Sigma B).\end{equation} By construction \cite[Theorem 4.2, Lemma 4.1]{Arkowitz}, \eqref{theta_mod_SigmaXvSigmaY} is induced from a map of pairs
\begin{equation}
\label{stN}
(T(A \ast B), A \ast B ) \longrightarrow (\Sigma A \times \Sigma B, \Sigma A \vee \Sigma B),
\end{equation} constructed as follows. Define
\[
N: (T(A \ast B), A \ast B) \longrightarrow (TA \times TB, TA \times B \cup A \times TB)$$ by $$ N (u,(t, a,b)) =  \begin{cases} (u, a) \times (1-2t(1-u),b)&\mbox{if }0 \leq t \leq \frac{1}{2} \\
(1-2(1-t)(1-u),a) \times (u, b) & \mbox{if } \frac{1}{2} \leq t \leq 1 \end{cases}.
\]
For a space $W$, let $t_W: TW \to SW$ and $s_W: SW \to \Sigma W$ be the quotient maps. Then \eqref{stN} is defined to be the composite
\[
\xymatrix{ (T(A \ast B), A \ast B ) \ar[rr]^N && (TA \times TB, TA \times B \cup A \times TB) \ar[d]^{t_A \times t_B}\\ (\Sigma A \times \Sigma B, \Sigma A \vee \Sigma B) && \ar[ll]_{S_A \times S_B} (SA \times SB, SA \vee SB) }.
\]
There is a weak equivalence $\mu': A \ast B \to \Sigma A \wedge B$ given by quotienting by points of the form $(a, \ast, t)$ and $(\ast, b, t)$, where $\ast$ denotes the base points, and $t$ denotes the coordinate of the interval in the standard representation of the join. Let $\mu$ be a homotopy inverse. The map \eqref{desired_htyp_equiv} in the homotopy category is  $\overline{M} \circ \Sigma (\mu)$, where $\overline{M}$ denotes the map on quotient spaces associated to the map of pairs $(S_X \times S_Y)  \circ (t_X \times t_Y) \circ N$. It therefore suffices to show that
\[
(u,t) \longmapsto \begin{cases} (u, 1-2t(1-u))&\mbox{if }0 \leq t \leq \frac{1}{2} \\
(1-2(1-t)(1-u), u) & \mbox{if } \frac{1}{2} \leq t \leq 1 \end{cases}
\]
determines an endomorphism of $S^1 \sma S^1$ of degree $-1$. This is easily checked: For example, the fiber over $(\frac{1}{4}, \frac{3}{4})$ consists of the single point $(u,t) =(\frac{1}{4}, \frac{1}{6})$ and the Jacobian determinant at $(\frac{1}{4}, \frac{1}{6})$ is negative.

\end{proof}

\begin{prop}
If $Z$ is an $h$-space in the category of pointed simplicial presheaves on $\mathbf{C}$, then $[\alpha,\beta] = 0$ for all $\alpha \in [\Sigma X,Z]$ and $\beta \in [\Sigma Y,Z]$.
\end{prop}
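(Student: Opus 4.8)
The plan is to reduce the vanishing of $[\alpha,\beta]$ to the cofiber sequence of Lemma~\ref{lem:cofib_wh_iXiY}. Recall from the discussion preceding that lemma that the Whitehead product factors as $[\alpha,\beta] = (\alpha \vee \beta) \circ [\id_{\Sigma X}, \id_{\Sigma Y}]$, where $\alpha \vee \beta \colon \Sigma X \vee \Sigma Y \to Z$ and $[\id_{\Sigma X}, \id_{\Sigma Y}] \colon \Sigma(X \wedge Y) \to \Sigma X \vee \Sigma Y$ is the universal Whitehead product. Lemma~\ref{lem:cofib_wh_iXiY} exhibits
\[
\Sigma(X \wedge Y) \xrightarrow{\ [\id_{\Sigma X},\id_{\Sigma Y}]\ } \Sigma X \vee \Sigma Y \xrightarrow{\ j\ } \Sigma X \times \Sigma Y
\]
as a cofiber sequence, so the composite $j \circ [\id_{\Sigma X},\id_{\Sigma Y}]$ is nullhomotopic (two consecutive maps in a cofiber sequence compose to a map through a contractible cone). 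Consequently, it is enough to show that $\alpha \vee \beta$ factors, up to homotopy, through $j$: if $\alpha \vee \beta \weq g \circ j$ for some $g \colon \Sigma X \times \Sigma Y \to Z$, then $[\alpha,\beta] = (\alpha \vee \beta) \circ [\id_{\Sigma X},\id_{\Sigma Y}] \weq g \circ \bigl(j \circ [\id_{\Sigma X},\id_{\Sigma Y}]\bigr) \weq \ast$.

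To build the factorization I would use the $h$-space structure on $Z$. Let $m \colon Z \times Z \to Z$ denote the multiplication, for which the constant map $\ast$ is a two-sided homotopy unit, and set
\[
g := m \circ (\alpha \times \beta) \colon \Sigma X \times \Sigma Y \longrightarrow Z \times Z \longrightarrow Z.
\]
Restricting along $j$ onto the summand $\Sigma X = \Sigma X \times \ast$ of the wedge, the composite $g \circ j$ is the map $x \mapsto m(\alpha(x),\ast)$, which is homotopic to $\alpha$ since $\beta$ is pointed and $\ast$ is a right homotopy unit; symmetrically, on $\Sigma Y = \ast \times \Sigma Y$ the composite $g \circ j$ is homotopic to $\beta$. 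Since $\Sigma X \vee \Sigma Y$ is the coproduct in pointed spaces, $[\Sigma X \vee \Sigma Y, Z] \iso [\Sigma X, Z] \times [\Sigma Y, Z]$, and the classes of $g \circ j$ and of $\alpha \vee \beta$ have the same image there; hence $g \circ j \weq \alpha \vee \beta$, which is what we needed.

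I do not expect a genuine obstacle here: once Lemma~\ref{lem:cofib_wh_iXiY} is in hand the argument is purely formal and runs verbatim over any site $\mathbf{C}$ with its injective local model structure (all objects are cofibrant, and one replaces $Z$ by a fibrant model so that $[-,-]$ is computed by genuine homotopy classes of maps). The only point deserving a word of care is the passage from the two separate unit homotopies on the two wedge summands to a single homotopy of pointed maps $g \circ j \weq \alpha \vee \beta$; this is handled cleanly by the coproduct identification $[\Sigma X \vee \Sigma Y, Z] \iso [\Sigma X, Z] \times [\Sigma Y, Z]$ used above, or, if one prefers, by first replacing $m$ by a multiplication that is strictly unital on one side.
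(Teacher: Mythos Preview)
Your argument is correct but takes a different route from the paper. The paper's proof is a one-liner via Eckmann--Hilton: since $Z$ is an $h$-space, the group $[\Sigma(X\times Y),Z]$ acquires a second multiplication compatible with the suspension co-$h$-group structure, hence is abelian; the commutator $(a^{-1}\cdot b^{-1})\cdot(a\cdot b)$ defining the Whitehead product is therefore trivial already before passing to $\Sigma(X\wedge Y)$. Your approach instead uses the cofiber sequence of Lemma~\ref{lem:cofib_wh_iXiY} together with the observation that the $h$-space multiplication lets $\alpha\vee\beta$ extend over $\Sigma X\times\Sigma Y$. Both are standard; the paper's route is slightly more self-contained in that it needs only Construction~\ref{construction:whiteheadproduct} and not Lemma~\ref{lem:cofib_wh_iXiY}. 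One small caveat on your version: Lemma~\ref{lem:cofib_wh_iXiY} is stated for \emph{connected} $X$ and $Y$, a hypothesis absent from the proposition. This is harmless, since the only consequence you use is that $j\circ[\id_{\Sigma X},\id_{\Sigma Y}]$ is nullhomotopic, and that follows in general by projecting to each factor and noting that a Whitehead product with a constant map vanishes.
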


\begin{proof}
If $Z$ is an $h$-space, then the group $[\Sigma(X \times Y),Z]$ is an abelian group by the Eckmann--Hilton argument. Therefore, the commutator must be zero.
\end{proof}

\subsection{On the map \texorpdfstring{${\mathrm P}$}{P} in the \texorpdfstring{$\aone$}{A1}-EHP sequence}
\label{ss:pmapinaoneehp}
In this section, we return to the setting of $\aone$-homotopy theory and we analyze the map
\[
{\mathrm P}: \bpi_{i+1}^{\aone}(J(\mathscr{X}^{\sma 2}) )\longrightarrow \bpi_i^{\aone}(\mathscr{X})
\]
in the exact sequence of Theorem \ref{thm:EHP_range} under the additional assumption that the pointed space $\mathscr{X}$ is itself a suspension $\mathscr{X}= \Sigma \mathscr{Z}$ (see the beginning of Section \ref{ss:unstableconnectivity} for a reminder regarding conventions). In direct analogy with the results mentioned in the introduction, the map ${\mathrm P}$ can be described in terms of the Whitehead product introduced in Definition \ref{defn:whiteheadproduct}; the main result is Theorem \ref{P=piWhitehead}.

The map ${\mathrm P}$ was defined as the connecting homomorphism in the exact sequence of Theorem
\ref{thm:EHP_range}. This exact sequence does not arise directly from a fiber sequence, however. If $\mathscr{X}$ is as
above, then we can recast the sequence of (\ref{eq:A1SJH}) as the homotopy commutative diagram:
\[
\xymatrix{
\Laone \mathscr{X} \ar[d]^-{\mathrm{E}} \ar[r]& \ast \ar[d] \\
\Laone J(\mathscr{X}) \ar[r]^-{\mathrm{H}} & \Laone J(\mathscr{X}^{\sma 2}).
}
\]
By functoriality of homotopy fibers, there is an induced morphism
\[
\hofib(\Laone \mathscr{X} \to \Laone J(\mathscr{X})) \longrightarrow \Omega \Laone J(\mathscr{X}^{\sma 2})
\]
and the connectivity of this map is what allows us to define the map $P$ in Theorem \ref{thm:EHP_range}.

In the range where the map above is connected, it makes sense to consider the composite map:
\begin{equation}
\label{eqn:Pcomparisonpartofsquare}
\bpi_i^{\aone}(\Sigma \mathscr{Z} \wedge \mathscr{Z}) \longrightarrow \bpi_{i+1}^{\aone}(J(\Sigma \mathscr{Z} \wedge \Sigma {\mathscr Z})) \stackrel{{\mathrm P}}{\longrightarrow} \bpi_i^{\aone}(\Sigma \mathscr{Z}).
\end{equation}
Precisely, if $\mathscr{Z}$ is $(n-2)$-connected, then this composite is defined for $i \le 3n-3$.
We shall furthermore see that, provided the $\aone$-connectivity property holds for $S$ and $n \geq 2$, the first map in
\eqref{eqn:Pcomparisonpartofsquare} is in fact an isomorphism in the range being considered.

On the other hand, we saw in Section \ref{ss:whiteheadproduct} that the Whitehead square of the identity $[\iota_{\Sigma \mathscr{Z}},\iota_{\Sigma \mathscr{Z}}]$ gives a morphism $\Sigma \mathscr{Z} \sma \mathscr{Z} \longrightarrow \Sigma \mathscr{Z}$. We will abuse notation and write $[\iota_{\Sigma \mathscr{Z}},\iota_{\Sigma \mathscr{Z}}]$ for the map
\[
[\iota_{\Sigma \mathscr{Z}},\iota_{\Sigma \mathscr{Z}}]: \Laone \Sigma \mathscr{Z} \wedge \mathscr{Z} \longrightarrow \Laone \Sigma \mathscr{Z}.
\]
This morphism induces a pushforward map on homotopy sheaves
\[
[\iota_{\Sigma \mathscr{Z}},\iota_{\Sigma \mathscr{Z}}]_*: \bpi_i^{\aone}(\Sigma \mathscr{Z} \wedge \mathscr{Z}) \longrightarrow \bpi_i^{\aone}(\Sigma \mathscr{Z})
\]
that we would like to compare with the map \eqref{eqn:Pcomparisonpartofsquare}. The next result, which gives precisely such a comparison, provides an analog of \cite[Theorem XII.2.4]{Whitehead} or, rather, its extension to general $(n-1)$-connected spaces in the spirit of \cite[Theorem 3.1 and p. 231]{Ganea}, in the context of unstable $\aone$-homotopy theory.

\begin{thm}
\label{P=piWhitehead}
Assume the unstable $\aone$-connectivity property holds for $S$ and suppose $n \geq 2$ is an integer. If $\mathscr{Z}$ is an $\aone$-$(n-2)$-connected pointed space, and $\mathscr{X} = \Sigma \mathscr{Z}$, then for any positive integer $i \leq 3n-4$ the composite morphism of \textup{(\ref{eqn:Pcomparisonpartofsquare})} fits into a commutative diagram of the form
\[
\xymatrix{
\bpi_i^{\aone}( \Sigma (\mathscr{Z} \wedge \mathscr{Z}))\ar[rr]^{\sim}\ar@/_2pc/[rrrr]_-{[\id_{\Sigma \mathscr{Z}}, \id_{\Sigma \mathscr{Z}}]_*} && \bpi_{i+1}^{\aone} (J(\mathscr{X}^{\wedge 2})) \ar[rr]^-{\mathrm P} && \bpi_i^{\aone}(\mathscr{X});
}
\]
the isomorphism of homotopy sheaves in the diagram is induced by two-fold suspension.
\end{thm}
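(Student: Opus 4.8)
The plan is to reduce the statement to a purely classical fact about simplicial presheaves (no $\aone$-localization), and then transfer it to the $\aone$-setting using the machinery of Sections~\ref{ss:furtherproperties} and~\ref{ss:loopspaces}. First I would recall that the map ${\mathrm P}$ arises as the connecting homomorphism associated with the composite
\[
\hofib({\mathrm E}\colon \Laone \mathscr{X} \to \Laone J(\mathscr{X})) \longrightarrow \Omega \Laone J(\mathscr{X}^{\wedge 2}),
\]
so that $\phi\colon \Laone\mathscr{X} \to \mathscr{F} = \hofib({\mathrm H})$ (the map appearing in the proof of Theorem~\ref{thm:EHP_range}) is the object that controls the low-degree behavior. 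Classically, the identification of ${\mathrm P}$ with the Whitehead product for $\mathscr{X} = \Sigma \mathscr{Z}$ follows from the fact that the $2$-skeleton-type approximation of $\mathscr{F}$ is, in the relevant range, the mapping cone of the universal Whitehead product map $[\id_{\Sigma\mathscr{Z}},\id_{\Sigma\mathscr{Z}}]\colon \Sigma(\mathscr{Z}\wedge\mathscr{Z}) \to \Sigma\mathscr{Z}$, which is exactly what Lemma~\ref{lem:cofib_wh_iXiY} supplies (there it is phrased via the cofiber sequence $\Sigma(X\wedge Y)\to \Sigma X\vee \Sigma Y\to \Sigma X\times \Sigma Y$; specializing $X=Y=\mathscr{Z}$ and composing with the fold map $\Sigma\mathscr{Z}\vee\Sigma\mathscr{Z}\to\Sigma\mathscr{Z}$ gives the Whitehead square of the identity). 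So the first key step is to assemble, sectionwise (equivalently, stalkwise, since $\mathbf{C}$ has enough points), the classical comparison between the EHP connecting map and the Whitehead product — this is precisely \cite[Theorem XII.2.4]{Whitehead} in the form extended by Ganea \cite{Ganea} to $(n-1)$-connected spaces, applied to $X(U)$ or to $p^*\mathscr{X}$.

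The second step is to check that the classical comparison is compatible with $\aone$-localization in the stated range. Here I would argue exactly as in the proof of Theorem~\ref{thm:EHP_range}: since the unstable $\aone$-connectivity property holds for $S$, Theorem~\ref{thm:MField6.53} guarantees that applying $\Laone$ to the relevant simplicial fiber sequences preserves them (the base spaces $J((\Laone\mathscr{X})^{\wedge 2})$ etc.\ are simplicially $1$-connected), and Theorem~\ref{thm:MField6.60} identifies $\bpi_i(\mathscr{F})\cong \bpi_i^{\aone}(\mathscr{F})$ and $\bpi_i^{\aone}(\mathscr{X})\cong\bpi_i(\Laone\mathscr{X})$ in the range $i\le 3n-3$. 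Since $\mathscr{Z}$ is $\aone$-$(n-2)$-connected, $\mathscr{X}=\Sigma\mathscr{Z}$ is $\aone$-$(n-1)$-connected and $\mathscr{Z}\wedge\mathscr{Z}$ is $\aone$-$(2n-3)$-connected, so Corollary~\ref{cor:homologyofsmashproducts} (together with the $\aone$-Hurewicz theorem in the axiomatic form used there) shows that the two-fold suspension map $\bpi_i^{\aone}(\Sigma(\mathscr{Z}\wedge\mathscr{Z}))\to \bpi_{i+1}^{\aone}(J(\mathscr{X}^{\wedge 2}))$ — which is the map labeled "$\sim$" in the diagram — is an isomorphism for $i\le 3n-4$; this is the range hypothesis in the statement. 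The identification of the double suspension with the appropriate smash of suspensions uses the weak equivalence $\switch\tau$ produced in Lemma~\ref{lem:cofib_wh_iXiY}, so that diagram really does commute on the nose (up to the stated sign bookkeeping, which is harmless at the level of the statement as phrased).

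Putting these together: the classical diagram commutes sectionwise/stalkwise, all three terms and all three maps are preserved by $\Laone$ in the range $i\le 3n-4$ by the results cited above, and the two-fold-suspension edge is an isomorphism there by Corollary~\ref{cor:homologyofsmashproducts}; hence the $\aone$-version of the diagram commutes, which is the assertion of Theorem~\ref{P=piWhitehead}. The main obstacle I anticipate is not conceptual but bookkeeping: one must be careful that the classical identification of ${\mathrm P}$ with the Whitehead product is genuinely natural (functorial in the input space $\mathscr{X}$, and in particular compatible with the point functors $p^*$ of the site), since the derivation of the EHP sequence used in Section~\ref{ss:classicalEHP} — the one "learned from Mike Hopkins" via the Serre spectral sequence — is not literally the same as the classical derivation in \cite{Whitehead}, so one needs to confirm that the connecting map ${\mathrm P}$ produced here agrees (in the overlapping range) with the classical one before quoting \cite[Theorem XII.2.4]{Whitehead}. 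A clean way around this is to observe that both versions of ${\mathrm P}$ are, by construction, the composite $\bpi_{i+1}^{\aone}(J(\mathscr{X}^{\wedge 2}))\cong \bpi_i^{\aone}(\Omega J(\mathscr{X}^{\wedge2})) \to \bpi_i^{\aone}(\hofib{\mathrm H})$ followed by the boundary in the fibration $\hofib{\mathrm H}\to J(\mathscr{X})$, and this description is manifestly the one used classically; so the comparison reduces to identifying the composite $\Sigma(\mathscr{Z}\wedge\mathscr{Z})\to \hofib{\mathrm H}\to J(\mathscr{X})\simeq\Omega\Sigma\mathscr{X}$ with the adjoint of the Whitehead square, which is exactly the content of Lemma~\ref{lem:cofib_wh_iXiY}.
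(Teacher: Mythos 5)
Your overall strategy (prove a presheaf-level statement and then transfer along $\Laone$) is in the right spirit, but the step you lean on hardest is not actually available in the form you cite, and this is a genuine gap rather than bookkeeping. You claim that identifying the composite $\Sigma(\mathscr{Z}\wedge\mathscr{Z})\to\hofib{\mathrm H}\to J(\mathscr{X})$ with (the adjoint of) the Whitehead square ``is exactly the content of Lemma \ref{lem:cofib_wh_iXiY}.'' It is not: that lemma only supplies the cofiber sequence $\Sigma(X\wedge Y)\to\Sigma X\vee\Sigma Y\to\Sigma X\times\Sigma Y$ and the $\switch\tau$ identification of $\Sigma^2(X\wedge Y)\simeq\Sigma X\wedge\Sigma Y$; it says nothing about $J$, the James--Hopf map, or $\hofib{\mathrm H}$. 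The essential content of the paper's proof is precisely the piece you skip: one maps the cofiber sequence of the universal Whitehead product through $J_2(\Sigma\mathscr{Z})$ (diagram \eqref{eqn:morphismofcofibersequences}) into the fiber sequence of ${\mathrm H}$ (diagram \eqref{eq:11}), and then uses the explicit combinatorial formula for ${\mathrm H}$ (Definition \ref{defn:simplicialJamesHopf}) to identify the induced map $f:\Sigma\mathscr{Z}\sma\Sigma\mathscr{Z}\to J((\Sigma\mathscr{Z})^{\sma 2})$ as the suspension ${\mathrm E}$, whence its adjoint $g$ is the two-fold suspension (up to the $\switch\tau$ sign). Without this, your ``clean way around'' the convention-matching problem collapses back into the problem itself: \cite[Theorem XII.2.4]{Whitehead} is stated for spheres and Ganea's comparison lives in a different framework with its own Hopf invariant, so quoting them stalkwise requires exactly the naturality and convention-matching (for this paper's ${\mathrm H}$, ${\mathrm E}$, ${\mathrm P}$ and Whitehead product) that you flag but never carry out.

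Two further points would need repair even granting the classical input. First, a connectivity mismatch: the classical range statements apply to stalks that are simplicially $(n-1)$-connected, but $\mathscr{X}=\Sigma\mathscr{Z}$ is only assumed $\aone$-$(n-1)$-connected, so its stalks need not satisfy the hypotheses; you would have to run the classical argument on $\Sigma\Laone\mathscr{Z}$ (using Theorem \ref{thm:unstablenconnectivity}) and then compare back, and the transfer via Theorems \ref{thm:MField6.53} and \ref{thm:MField6.60} does not by itself fix the range if the unlocalized stalks lack the connectivity. The paper avoids this entirely by making all identifications in diagram \eqref{eq:11} formally, with no connectivity input, and invoking connectivity only after localization. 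Second, Corollary \ref{cor:homologyofsmashproducts} is the wrong tool for the ``$\sim$'' edge of the diagram: it computes the bottom homotopy sheaf of a smash product and, moreover, assumes the stable $\aone$-connectivity property, which is not a hypothesis of Theorem \ref{P=piWhitehead}. The isomorphism in the stated range is the $\aone$-suspension theorem (Theorem \ref{thm:EHP_range} together with Remark \ref{rem:refinesfreudenthal}) applied to the $\aone$-$(2n-3)$-connected space $\mathscr{Z}\wedge\mathscr{Z}$, combined with the $(3n-2)$-connectivity of $\phi$ (the map $t$), which is how the paper concludes via ${\mathrm P}=t_*^{-1}\circ s_*$.
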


\begin{proof}%[Proof of Theorem \ref{P=piWhitehead}]
Without loss of generality, assume $\mathscr{Z}$ is $\aone$-fibrant.

We begin with the cofiber sequence
\[\xymatrix{
\Sigma(\mathscr{Z} \wedge \mathscr{Z}) \ar[rr]^-{u}&& \Sigma
\mathscr{Z} \vee \Sigma \mathscr{Z} \ar[r]& \Sigma \mathscr{Z} \times \Sigma \mathscr{Z}}
\]
of Lemma \ref{lem:cofib_wh_iXiY}. Here $u$ denotes the ``universal'' Whitehead product; the map $[\id_{\Sigma
  \mathscr{Z}}, \id_{\Sigma \mathscr{Z}}]: \Sigma(\mathscr{Z} \wedge \mathscr{Z}) \to \Sigma \mathscr{Z}$ is obtained
by composing $u$ with a fold map.

By construction, the map $u$ is represented by $u: \mathscr{C} \to \Sigma
\mathscr{Z} \vee \Sigma \mathscr{Z}$, where $\mathscr{C}$ denotes the reduced mapping cone of $\Sigma (\mathscr{Z} \vee \mathscr{Z}) \to \Sigma (\mathscr{Z} \times \mathscr{Z})$.  We now consider the following commutative diagram:
\begin{equation}
\label{eqn:morphismofcofibersequences}
\xymatrix{
\mathscr{C} \ar^u[rr]\ar@{=}[d] && \Sigma \mathscr{Z} \vee \Sigma \mathscr{Z} \ar[r]\ar[d]& \Sigma \mathscr{Z} \times \Sigma \mathscr{Z} \ar[d] \ar[r] & \Sigma \mathscr{Z} \sma  \Sigma \mathscr{Z} \ar@{=}[d]\\
\mathscr{C} \ar[rr]^-{[\iota_{\Sigma \mathscr{Z}},\iota_{\Sigma \mathscr{Z}}]} && \Sigma \mathscr{Z} \ar[r]& J_2(\Sigma \mathscr{Z}) \ar[r]  & \Sigma \mathscr{Z} \sma  \Sigma \mathscr{Z};
}
\end{equation}
The vertical maps are, reading left to right, the identity, the fold map, the canonical map from the product to $J_2$,
and the identity. The horizontal arrows $\Sigma \mathscr{Z} \to J_2(\Sigma \mathscr{Z})$ in the center are the canonical
inclusions. We observe that the center square is a pushout, being the definition of $J_2(\Sigma \mathscr{Z})$, and
therefore the rightmost two horizontal maps are quotient maps.
Since $\Sigma \mathscr{Z} \vee \Sigma \mathscr{Z} \to
\Sigma \mathscr{Z} \times \Sigma \mathscr{Z}$ is a cofibration, this square is in fact a homotopy pushout square. The two rows are homotopy cofiber sequences.

We form the following diagram, where the upper row is a homotopy cofiber sequence and the lower row a fiber sequence
\begin{equation}
  \label{eq:11}
  \xymatrix{ \Sigma \mathscr{Z} \sma \mathscr{Z} \ar^-{[\iota_{\Sigma \mathscr{Z}},\iota_{\Sigma \mathscr{Z}}]}[r] \ar^-{g}@{-->}[d] & \Sigma \mathscr Z \ar[r] \ar^-{t}[d] & J_2 (\Sigma \mathscr Z) \ar[r] \ar[d] & \Sigma \mathscr{Z} \sma \Sigma \mathscr{Z} \ar^-{f}@{-->}[d] \\
    \Omega J((\Sigma \mathscr{Z})^{\sma 2}) \ar^-{s}[r] & \hofib \mathrm{H} \ar[r] & J(\Sigma \mathscr Z) \ar^-{\mathrm{H}}[r] & J((\Sigma \mathscr{Z})^{\sma 2} ) }
\end{equation}
The maps indicated by the dashed arrows are adjoint to one another: by \cite[Chapter I.3, proof of Proposition 6, 3.13]{Quillen_homotopical_algebra} the map $ \Sigma^2 \mathscr{Z} \sma \mathscr{Z}\to \Sigma \mathscr{Z} \sma \Sigma \mathscr{Z}\stackrel{f}{\to} J((\Sigma \mathscr{Z})^{\sma 2} )$ is inverse adjoint to $g$. By Lemma \ref{lem:cofib_wh_iXiY}, the map $ \Sigma^2 \mathscr{Z} \sma \mathscr{Z}\to \Sigma \mathscr{Z} \sma \Sigma \mathscr{Z}$ is simplicially homotopic to $\switch \tau$, which reverses the order of the two suspensions, inducing $-1$ in the homotopy category.

The map $f: \Sigma \mathscr{Z} \sma \Sigma
\mathscr{Z} \to J((\Sigma \mathscr{Z})^{\sma 2} )$ is given by the canonical map $\Sigma \mathscr{Z} \times \Sigma \mathscr{Z} \to  J(\Sigma
\mathscr{Z})$ followed by $\mathrm{H}$; an analysis of $\mathrm{H}$, Definition \ref{defn:simplicialJamesHopf}, now shows that $f$ is the suspension map $\mathrm{E}$ as
applied to $\Sigma \mathscr{Z} \sma \Sigma \mathscr{Z}$. Therefore, $g$ is the two-fold suspension map.

We may $\aone$ localize throughout. We do not draw Diagram \eqref{eq:11} a second time with
$\Laone$ prepended to all terms. Since all spaces appearing above are at least $1$-connected, Corollary \ref{cor:relativehurewicz} applies throughout and maps that are $n$-connected remain $n$-connected after $\aone$ localization. We apply $\bpi_i^{\aone}$ to the leftmost square to obtain a commuting square
\[
\xymatrix{ \bpi_i^{\aone}(\Sigma \mathscr{Z} \sma \mathscr{Z}) \ar^-{[\iota_{\Sigma \mathscr{Z}},\iota_{\Sigma \mathscr{Z}}]}[r] \ar^-{g_*}[d] & \bpi_i^{\aone}( \Sigma \mathscr Z )  \ar^-{t_*}[d]  \\
\bpi_{i+1}^{\aone}( J((\Sigma \mathscr{Z})^{\sma 2}) )\ar^-{s_*}[r] & \bpi_i^{\aone}(\hofib \mathrm{H}).
}
\]
The map $g_*$, the two-fold suspension map, is an isomorphism when $i \le 4n-3$.  When $i \le 3n-2$, the map $t_*$ is an isomorphism, and by definition $\mathrm{P} = t_*^{-1} \circ s_*$; the result follows.
\end{proof}

\subsection{The James--Hopf invariant of a Whitehead product}
\label{ss:propertiesofh}
In this section, we collect some properties of the James--Hopf invariant defined in Section \ref{ss:classicalEHP}. Probably due to proliferation of different definitions of ``Hopf invariants" made at the time, Boardman and Steer \cite{Boardman_Steer} made an axiomatic study of such invariants. We adapt some of their results to the context under consideration.

Let $\mathbf{C}$ be a site with enough points and consider simplicial presheaves on $\mathbf{C}$ equipped with the injective local model structure. For pointed simplicial presheaves $X$ and $Y$, we continue to use the notation $[X,Y]$ for morphisms in the associated (pointed) homotopy category, and rely on context to distinguish this notation from that for the Whitehead product.

\subsubsection*{Cup products}
Before passing to the main results, it will be necessary to recall some constructions from \cite{Boardman_Steer}.

\begin{construction}[{Cup product (cf. \cite[Definition 1.3]{Boardman_Steer})}]
\label{construction:cupproduct}
Let $X$, $Y$, and $Z$ be pointed simplicial presheaves. The reduced diagonal map $X \to X \sma X$ is the composite of the diagonal $X \to X \times X$ and the map $X \times X \to X \sma X$; this map is null-homotopic if $X$ is a suspension. The smash product induces a map
\[
[\Sigma^m X,Y] \times [\Sigma^n X,Z] \longrightarrow [\Sigma^m X \wedge \Sigma^n X,Y \wedge Z].
\]
The reduced diagonal then induces a morphism
\[
\Sigma^{m+n} X \longrightarrow \Sigma^{m+n} X \wedge X \weq \Sigma^m X \wedge \Sigma^n X;
\]
the isomorphism on the right does not permute the suspension factors. The composite of these two morphisms defines the cup product pairing
\[
\cuppr : [\Sigma^m X, Y] \times [\Sigma^n X, Z] \longrightarrow [\Sigma^{n+m} X, Y \wedge Z]
\]
\end{construction}

The following result, which is identical in form to \cite[Lemma 1.4]{Boardman_Steer} summarizes the properties of cup products we use.

\begin{lem}
\label{lem:cupproduct}
Suppose $X$, $Y$ and $Z$ are pointed simplicial presheaves on $\mathbf{C}$.
\begin{enumerate}[noitemsep,topsep=1pt]
\item The cup product pairing is bilinear and associative.
\item If $X$ is itself a suspension, the cup-product pairing is trivial.
\end{enumerate}
\end{lem}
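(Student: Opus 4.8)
The plan is to argue directly from the definition of $\cuppr$ in Construction \ref{construction:cupproduct}, following the pattern of \cite[Lemma 1.4]{Boardman_Steer}. Every ingredient — iterated suspension, smash product, the reduced diagonal — and every homotopy we need makes sense and holds formally in the pointed homotopy category of simplicial presheaves on $\mathbf{C}$, so strictly speaking no appeal to points is required; alternatively one may check all the homotopy-category identities below stalkwise, using that $\mathbf{C}$ has enough points, and thereby reduce to simplicial sets and ultimately to the statements of \cite{Boardman_Steer}. Throughout, write $\bar\Delta_{m,n}\colon \Sigma^{m+n}X \to \Sigma^m X \wedge \Sigma^n X$ for the map built in the construction, namely the $(m+n)$-fold suspension of the reduced diagonal $\bar\Delta_X\colon X \to X \wedge X$ followed by the shuffle isomorphism $\Sigma^{m+n}(X\wedge X) = \Sigma^{m+n}X \wedge X \cong \Sigma^m X \wedge \Sigma^n X$ which leaves the sphere coordinate $S^{m+n} = S^m \wedge S^n$ fixed; by definition $\alpha \cuppr \beta = (\alpha \wedge \beta)\circ \bar\Delta_{m,n}$.

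For Point (1), bilinearity: the smash pairing $(\alpha,\beta)\mapsto \alpha\wedge\beta$ is additive in $\alpha$ for the co-$H$ structure on $\Sigma^m X \wedge \Sigma^n X$ coming from the $S^m$ suspension coordinate, because smashing the pinch map $\psi\colon \Sigma^m X \to \Sigma^m X \vee \Sigma^m X$ with $\id_{\Sigma^n X}$ and using distributivity of $\wedge$ over $\vee$ is exactly the $S^m$-coordinate pinch on $\Sigma^m X \wedge \Sigma^n X$, so $(\alpha+\alpha')\wedge\beta = \big((\alpha\wedge\beta)\vee(\alpha'\wedge\beta)\big)\circ(\psi\wedge\id) = (\alpha\wedge\beta)+(\alpha'\wedge\beta)$ along that coordinate; symmetrically it is additive in $\beta$ along the $S^n$ coordinate. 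Next, $\bar\Delta_{m,n}$ is a co-$H$ map for the $S^m$- (resp. $S^n$-) coordinate comultiplications, since it is an $(m+n)$-fold suspension — hence a co-$H$ map for every sphere coordinate — followed by the shuffle, which commutes with the pinch maps on the $S^m$ and $S^n$ coordinates. Therefore precomposition with $\bar\Delta_{m,n}$ is a homomorphism for those structures, so $\cuppr$ is additive in each variable, and because $\Sigma^{m+n}X$ is an iterated suspension all of its coordinatewise additions on $[\Sigma^{m+n}X, Y\wedge Z]$ coincide (and are abelian), so additivity along the $S^m$ and along the $S^n$ coordinates both express additivity for the single ambient group law. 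For associativity, for $\alpha\colon\Sigma^\ell X\to Y$, $\beta\colon\Sigma^m X\to Z$, $\gamma\colon\Sigma^n X\to W$ one checks that both $(\alpha\cuppr\beta)\cuppr\gamma$ and $\alpha\cuppr(\beta\cuppr\gamma)$ equal $(\alpha\wedge\beta\wedge\gamma)\circ\bar\Delta_{\ell,m,n}$, where $\bar\Delta_{\ell,m,n}\colon\Sigma^{\ell+m+n}X\to\Sigma^\ell X\wedge\Sigma^m X\wedge\Sigma^n X$ is the iterated reduced diagonal; this reduces to coassociativity of $\bar\Delta_X$, which is inherited from coassociativity of the ordinary diagonal $X\to X\times X$ by passing to the quotient and suspending, and is a routine diagram chase.

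For Point (2), recall from Construction \ref{construction:cupproduct} that when $X$ is a suspension the reduced diagonal $\bar\Delta_X\colon X\to X\wedge X$ is null-homotopic. Hence its $(m+n)$-fold suspension, and so the map $\bar\Delta_{m,n}\colon\Sigma^{m+n}X\to\Sigma^m X\wedge\Sigma^n X$ through which every cup product factors, is null-homotopic; therefore $\alpha\cuppr\beta = (\alpha\wedge\beta)\circ\bar\Delta_{m,n} = 0$ for all $\alpha,\beta$.

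\emph{The main obstacle.} The only genuinely delicate point is the bookkeeping in Point (1): verifying that the various co-$H$/group structures induced by different suspension coordinates on $[\Sigma^{m+n}X,Y\wedge Z]$ agree, and that $\bar\Delta_{m,n}$ is simultaneously a co-$H$ map for the two relevant ones, so that additivity in $\alpha$ and in $\beta$ are both additivity for the ambient group law. Everything else — functoriality of $\wedge$ and $\Sigma$, distributivity of $\wedge$ over $\vee$, and (co)associativity of the diagonal — is formal and goes through as in \cite{Boardman_Steer}.
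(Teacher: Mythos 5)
Your proof is correct and takes essentially the same route as the paper's: Point (1) follows from bilinearity and associativity of the smash pairing together with the fact that precomposition with the suspended reduced diagonal is a homomorphism, and Point (2) follows from the null-homotopy of the reduced diagonal of a suspension noted in Construction \ref{construction:cupproduct}. The paper dispatches each of these in a single sentence; your careful co-$H$/coordinate bookkeeping is just a detailed elaboration of that same argument.
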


\begin{proof}
For Point (1), note that the smash product is bilinear and associative, and the pull-back is a homomorphism. Thus, the bilinearity and associativity of the cup product follow immediately.
As regards Point (2), if there exists a pointed simplicial presheaf $W$ such that $X = \Sigma W$, then the reduced diagonal map $\Sigma W \to \Sigma W \sma \Sigma W$ is simplicially null-homotopic, which means the cup-product pairing is trivial.
\end{proof}

\subsubsection*{Hopf invariants after Boardman--Steer}
Now, fix a pointed space $Z$ and consider the James--Hopf invariant ${\mathrm H}: J(Z) \to J(Z^{\sma 2})$ from Definition \ref{defn:simplicialJamesHopfpresheaf}. Applying $[X,-]$ to this morphism, the identification of $J(-)$ with $\Omega \Sigma(-)$ of Proposition \ref{prop:J(X)=ho-nis=Omega_SigmaX} and the loops suspension adjunction, ${\mathrm H}$ determines a map
\[
{\mathrm H}: [\Sigma X, \Sigma Z] \longrightarrow [\Sigma X, \Sigma Z \wedge Z];
\]
in an abuse of notation, we denote this map also by $\mathrm H$, the context should make clear which version we mean.

Following Boardman and Steer \cite[Definition 2.1]{Boardman_Steer}, define
$\lambda_2: [\Sigma X, \Sigma Z] \to [\Sigma^2 X, \Sigma^2 (Z \sma Z)]$ by
$\lambda_2 = \Sigma \circ {\mathrm H}$. Note that $\lambda_2$ vanishes on suspensions. The classical interaction between
$\lambda_2$ and the group operation $\cdot$ induced by the co-$h$-group structure of a suspension generalizes to the
context under consideration; this is a special case of the Cartan formula \cite[Definition 2.1 (c) and Theorem
3.15]{Boardman_Steer}, which uses the notion of cup products just introduced. To state the result, let
$\switch: \Sigma Z \sma \Sigma Z \isomt \Sigma^2 Z \sma Z$ be the map which does not change the order of the
suspensions or the order of the $Z$'s. The next result provides a direct analogue of \cite[Formula 3.14]{Boardman_Steer}, but we include the proof
for the convenience of the reader.

\begin{lem}
\label{CartanSuspensions}
Suppose $X$ and $Z$ are pointed simplicial presheaves on $\mathbf{C}$. Given $\alpha_1,\ldots, \alpha_n$ in $[X,Z]$,
consider $\Sigma \alpha_i \in [\Sigma X,\Sigma Z]$. The following formula holds in $[\Sigma^2 X, \Sigma^2 Z \sma Z]$:
\[
\lambda_2(\Sigma \alpha_1 \cdots \Sigma \alpha_n) = \prod_{1 \leq i<j \leq n} \switch (\Sigma \alpha_{i} \cuppr \Sigma \alpha_{j}),
\]
where the product $\Sigma \alpha_1 \cdots \Sigma \alpha_n$ is taken with respect to the group structure on $[\Sigma X,\Sigma Z]$ and $\prod$ denotes the group operation $\cdot$ ordered lexicographically from left to right.
\end{lem}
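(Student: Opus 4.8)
The plan is to reduce the statement to the classical Cartan formula of Boardman--Steer by arguing sectionwise and then at points of the site. First I would recall that $\lambda_2 = \Sigma \circ {\mathrm H}$ where ${\mathrm H} : J(Z) \to J(Z^{\sma 2})$ is the $r=2$ James--Hopf invariant of Definition \ref{defn:simplicialJamesHopfpresheaf}, which is defined sectionwise by the combinatorial formula ${\mathrm H}(x_1 \cdots x_q) = \prod_{i<j} x_i \wedge x_j$ in lexicographic order. Both the co-$h$-group multiplication $\cdot$ on $[\Sigma X, \Sigma Z]$ and the cup product $\cuppr$ of Construction \ref{construction:cupproduct} are defined sectionwise, compatibly with restriction maps and hence with the action of any point $q^*$ of $\mathbf{C}$. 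The key structural fact is that all the maps entering the formula --- the multiplication, $\lambda_2$, the cup products, and the switch map $\switch$ --- are induced by honest morphisms of (pre)sheaves that commute with $q^*$; therefore it suffices to verify the identity after applying $q^*$ for every point $q^*$, i.e.\ to verify it for pointed simplicial sets.

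Next, for pointed simplicial sets the identity is exactly \cite[Formula 3.14]{Boardman_Steer} (a special case of their Cartan formula, \cite[Definition 2.1(c) and Theorem 3.15]{Boardman_Steer}), applied to the elements $\Sigma\alpha_1, \ldots, \Sigma\alpha_n \in [\Sigma X, \Sigma Z]$ which all lie in the image of the suspension map. The one point that requires a small amount of care is bookkeeping of the suspension coordinates: Boardman--Steer's $\lambda_2$ lands in $[\Sigma^2 X, \Sigma^2(Z \sma Z)]$ with a specified identification, and one must check that the switch map $\switch \colon \Sigma Z \sma \Sigma Z \isomt \Sigma^2 Z \sma Z$ appearing here (which permutes a suspension factor past a $Z$ but leaves the order of the $Z$'s and of the two suspensions alone) is the same reindexing that appears in their normalization of the target of the cup product $\Sigma\alpha_i \cuppr \Sigma\alpha_j \in [\Sigma^2 X, \Sigma(\Sigma Z \sma Z)]$. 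Since the cup product of Construction \ref{construction:cupproduct} was deliberately set up with the ``no permutation of suspension factors'' convention matching \cite[Definition 1.3]{Boardman_Steer}, this is a direct comparison of conventions rather than a genuine computation; one can also note that both terms vanish on suspensions so that the formula is really an assertion about the ``primitive'' part.

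Finally I would assemble the pieces: the right-hand side $\prod_{i<j} \switch(\Sigma\alpha_i \cuppr \Sigma\alpha_j)$ is a morphism in $[\Sigma^2 X, \Sigma^2 Z \sma Z]$ built functorially out of the $\alpha_i$, the left-hand side $\lambda_2(\Sigma\alpha_1 \cdots \Sigma\alpha_n)$ is likewise, and they agree after applying every $q^*$; since $\mathbf{C}$ has enough points, a morphism in the pointed homotopy category of simplicial presheaves is determined by its images under all the $q^*$ (weak equivalences are detected on stalks), so the two sides are equal. The main obstacle I anticipate is purely the sign/coordinate bookkeeping in the previous paragraph --- making sure the switch map in our statement is literally the one dictated by \cite{Boardman_Steer}'s normalization, so that the translated formula has no spurious reindexing --- together with confirming that $[X,Z]$ being possibly non-abelian (only $[\Sigma X, \Sigma Z]$ carries the group structure used in the product on the left) causes no trouble; the content of the proof beyond that is entirely transported from \cite{Boardman_Steer} via the sectionwise/pointwise argument.
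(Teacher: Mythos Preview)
Your approach has a genuine gap in the final step. You claim that ``a morphism in the pointed homotopy category of simplicial presheaves is determined by its images under all the $q^*$ (weak equivalences are detected on stalks),'' but this inference is invalid: points detect weak equivalences, not equality of homotopy classes. For a concrete failure, take maps into a classifying space $B\mathbf{G}$; the set $[\mathscr{X}, B\mathbf{G}]_s$ computes $H^1_{\Nis}(\mathscr{X}, \mathbf{G})$, which can be nontrivial even though every stalk of $B\mathbf{G}$ has vanishing $\pi_0$ and $\pi_1$. So two classes in $[\Sigma^2 X, \Sigma^2 Z \sma Z]$ that agree at every point need not agree globally, and the reduction to Boardman--Steer at points does not close.

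The paper avoids this entirely by computing directly with representatives rather than appealing to the classical result. Choosing actual maps $\alpha_i : X \to Z$, the product $\Sigma\alpha_1 \cdots \Sigma\alpha_n$ is adjoint to the map $X \to J(Z)$ given sectionwise by $x \mapsto \alpha_1(x)\cdots\alpha_n(x)$. Applying the combinatorial James--Hopf map ${\mathrm H}$ of Definition~\ref{defn:simplicialJamesHopf} to this word yields, literally by the formula, $x \mapsto \prod_{i<j} \alpha_i(x) \wedge \alpha_j(x) = \prod_{i<j} (\alpha_i \wedge \alpha_j)(\bar\Delta(x))$, where $\bar\Delta$ is the reduced diagonal. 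This is an equality of \emph{maps} $X \to J(Z \sma Z)$, not merely of homotopy classes, so no stalkwise argument is needed; passing back through the adjunction and suspending once gives the claimed identity in $[\Sigma^2 X, \Sigma^2 Z \sma Z]$, with the $\switch$ appearing exactly from the reindexing you identified. Your instinct that everything is sectionwise is correct --- you should just exploit it at the level of maps, where it gives an honest equality, rather than at the level of homotopy classes, where the pointwise-to-global step fails.
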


\begin{proof}
We abuse notation slightly and write $\alpha_i$ for a specified representative of the homotopy class of maps $\alpha_i \in [X,Z]$. In that case, $\Sigma \alpha_1 \cdots \Sigma \alpha_n$ is adjoint to the element of $[X, J(Z)]$ determined by $x \mapsto \alpha_1(x)\alpha_2(x)\cdots \alpha_n(x)$ for $x \in X(U)$.

Then, ${\mathrm H} (\Sigma \alpha_1 \cdots \Sigma \alpha_n)$ is adjoint to the element of $[X, J(Z \sma Z)]$ represented by
\[
x \mapsto \prod_{1 \leq i<j \leq n} \alpha_i(x) \wedge \alpha_j(x) = \prod_{1 \leq i<j \leq n} (\alpha_i \wedge \alpha_j)(x \wedge x),
\]
where $\prod$ denotes the group operation on $J(Z \sma Z)$ ordered lexicographically from left to right. Thus, $\lambda_2(\Sigma \alpha_1 \cdots \Sigma \alpha_n) = \prod_{1 \leq i<j \leq n} \switch(\Sigma \alpha_{i} \cuppr \Sigma \alpha_{j}),$ as claimed.
\end{proof}

Let
\[
\begin{split}
s: \Sigma (X \times Y) &\longrightarrow \Sigma(X \wedge Y) \text{, and } \\
\Sigmas: \Sigma^2 (X \times Y) &\longrightarrow \Sigma^2(X \wedge Y)
\end{split}
\]
be the suspension and two-fold suspension of the usual map from the product to the smash. The group operation on $[\Sigma^2(X \times Y), \Sigma^2 Z \sma Z]$ is abelian, so we use the symbol $+$ instead of $\cdot$ when writing this operation.

\begin{prop}\label{lambda2WhiteheadProd_suspensions}
Let $X$ and $Y$ be pointed simplicial presheaves on $\mathbf{C}$ and assume both are suspensions. If $\alpha \in [\Sigma X, \Sigma Z]$ and $\beta \in [\Sigma Y, \Sigma Z]$ are suspensions, then
\[
\Sigmas^* \lambda_2[\alpha,\beta] = - \switch(\pi_Y^* \beta \cuppr \pi_X^* \alpha) + \switch (\pi_X^* \alpha \cuppr \pi_Y^* \beta).
\]
\end{prop}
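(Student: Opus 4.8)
The statement is the analog of Boardman--Steer's formula \cite[Formula 3.14]{Boardman_Steer} computing $\lambda_2$ of a Whitehead product, and I would prove it by reducing to the Cartan formula established in Lemma \ref{CartanSuspensions}. The key observation is that the Whitehead product $[\alpha,\beta]$, when pulled back to $\Sigma(X\times Y)$, is by its very construction (Construction \ref{construction:whiteheadproduct}) the commutator $(a^{-1}\cdot b^{-1})\cdot(a\cdot b)$, where $a=\alpha\circ\Sigma p_X$ and $b=\beta\circ\Sigma p_Y$. Hence if I can compute $\lambda_2$ of this four-fold product using the multiplicativity statement of Lemma \ref{CartanSuspensions}, the result will follow by tracking the cross-terms.

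First I would set up the diagram: the two-fold suspension $\Sigmas: \Sigma^2(X\times Y)\to\Sigma^2(X\wedge Y)$ is the cofiber map, and by the exactness of $1\to[\Sigma^2(X\wedge Y),\Sigma^2 Z\sma Z]\to[\Sigma^2(X\times Y),\Sigma^2 Z\sma Z]\to[\Sigma^2(X\vee Y),\Sigma^2 Z\sma Z]\to 1$ (the same exact sequence used after Construction \ref{construction:whiteheadproduct}), the class $\Sigmas^*\lambda_2[\alpha,\beta]$ is determined by $\lambda_2$ of the pullback of $[\alpha,\beta]$ to $\Sigma(X\times Y)$, i.e.\ by $\lambda_2\bigl((a^{-1}\cdot b^{-1})\cdot(a\cdot b)\bigr)$. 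Since $X$ and $Y$ are suspensions and $\alpha,\beta$ are suspensions, the maps $a$ and $b$ are themselves suspensions (of $\alpha\circ p_X$ and $\beta\circ p_Y$ respectively), so Lemma \ref{CartanSuspensions} applies to the product $a^{-1}\cdot b^{-1}\cdot a\cdot b$ (a product of four suspended maps, with signs on the first two factors coming from the co-$H$-inversion). Expanding via the Cartan formula, the ``diagonal'' self-cup-product terms $\Sigma\gamma\cuppr\Sigma\gamma$ vanish because $X\times Y$ --- being a product of suspensions --- has trivial reduced diagonal, so by Lemma \ref{lem:cupproduct}(2) all cup products with source $\Sigma(X\times Y)$ built from a single factor vanish; more precisely, the surviving terms are the mixed ones pairing an $a$-factor with a $b$-factor.

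Next I would collect the surviving cross terms. From the lexicographically ordered product $a^{-1}\cdot b^{-1}\cdot a\cdot b$ the four mixed pairs are $(a^{-1},b^{-1})$, $(a^{-1},b)$, $(b^{-1},a)$, and, reading in the correct order, $(a,b)$; using bilinearity of the cup product (Lemma \ref{lem:cupproduct}(1)) in the form $\Sigma(-\gamma)\cuppr\Sigma\delta = -(\Sigma\gamma\cuppr\Sigma\delta)$, the contributions of $(a^{-1},b^{-1})$ and $(a,b)$ cancel, and likewise $(a^{-1},b)$ contributes $-\switch(\pi_X^*\alpha\cuppr\pi_Y^*\beta)$ while $(b^{-1},a)$ contributes $-\switch(\pi_Y^*\beta\cuppr\pi_X^*\alpha)$ --- wait, one must be careful with the ordering inside each cup product, which is dictated by the order of the two indices in Lemma \ref{CartanSuspensions}: a pair $(i,j)$ with $i<j$ gives $\switch(\Sigma\alpha_i\cuppr\Sigma\alpha_j)$. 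Tracking this bookkeeping carefully gives exactly $-\switch(\pi_Y^*\beta\cuppr\pi_X^*\alpha)+\switch(\pi_X^*\alpha\cuppr\pi_Y^*\beta)$, as claimed (note these two terms no longer cancel because $\switch$ and the cup product are not symmetric).

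**The main obstacle.** The genuinely delicate point is the sign and ordering bookkeeping: keeping straight (i) the signs introduced by the co-$H$-group inversions applied to $a$ and $b$, (ii) the order in which index pairs $(i,j)$ appear in the Cartan formula and hence the order of arguments inside each $\switch(\Sigma\alpha_i\cuppr\Sigma\alpha_j)$, and (iii) the identification of $\pi_X^*\alpha$ with the suspension of $a$ (and similarly for $b$) under the various adjunctions. A secondary subtlety is justifying that Lemma \ref{CartanSuspensions}, which is stated for $n$-fold products of suspensions $\Sigma\alpha_i$ of classes in $[X,Z]$, applies verbatim when some factors carry formal inverses --- this follows from bilinearity of $\cuppr$ and the fact that $\lambda_2$ of an inverse is minus $\lambda_2$ plus a correction that is itself a product of cup-product terms, but the correction vanishes here by the triviality of the reduced diagonal on the product, so in fact the clean Cartan formula suffices. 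Once the combinatorics is organized --- ideally by first writing the general expansion of $\lambda_2(c_1\cdots c_m)$ for $c_\ell\in\{a^{\pm1},b^{\pm1}\}$ and then specializing --- the proof is a short computation.
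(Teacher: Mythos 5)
Your overall route is the paper's: pull $[\alpha,\beta]$ back along $s$ to the commutator $\pi_X^*\alpha^{-1}\cdot\pi_Y^*\beta^{-1}\cdot\pi_X^*\alpha\cdot\pi_Y^*\beta$ on $\Sigma(X\times Y)$, use naturality of $\lambda_2$ together with the Cartan formula of Lemma \ref{CartanSuspensions}, discard the self terms, and collect cross terms. (Incidentally, the opening appeal to the split exact sequence is beside the point: the identity to be proved already lives in $[\Sigma^2(X\times Y),\Sigma^2 Z\sma Z]$, so all that is needed is naturality, $\Sigmas^*\lambda_2[\alpha,\beta]=\lambda_2(s^*[\alpha,\beta])$.) However, two of your justifications fail as stated. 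The reduced diagonal of $X\times Y$ is \emph{not} null-homotopic when $X$ and $Y$ are suspensions: composing it with the map to $X\sma Y$ induced by $p_X\sma p_Y$ gives the quotient $X\times Y\to X\sma Y$, which is already essential for $X=Y=S^1$. If your claim were true, Lemma \ref{lem:cupproduct}(2) would kill \emph{every} cup product with source $\Sigma(X\times Y)$, including the mixed terms $\pi_X^*\alpha\cuppr\pi_Y^*\beta$, and the proposition would degenerate to $0=0$; so this cannot be the mechanism, and it also invalidates your treatment of the inverses (``the correction vanishes by triviality of the reduced diagonal on the product''). The correct reasons, which are the ones the paper uses, are: (i) the inverses are honest suspensions, because $X$ is a suspension, so $[X,Z]$ is a group, $\Sigma\colon[X,Z]\to[\Sigma X,\Sigma Z]$ is a homomorphism, hence $\alpha^{-1}=\Sigma(\alpha_0^{-1})$ and $\pi_X^*\alpha^{\pm1}=\Sigma(\alpha_0^{\pm1}\circ p_X)$, so Lemma \ref{CartanSuspensions} applies verbatim to all four factors; and (ii) the self terms vanish by naturality of the cup product under pullback along a single projection, $\pi_X^*\alpha^{-1}\cuppr\pi_X^*\alpha=\pi_X^*(\alpha^{-1}\cuppr\alpha)=0$, where $\alpha^{-1}\cuppr\alpha=0$ by Lemma \ref{lem:cupproduct}(2) applied to $X$ itself, not to $X\times Y$.

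Your sign bookkeeping is also wrong as written, and you then assert the answer rather than derive it. Ordering the factors as $(\pi_X^*\alpha^{-1},\pi_Y^*\beta^{-1},\pi_X^*\alpha,\pi_Y^*\beta)$, the pairs $(1,2)$ and $(3,4)$ each contribute $+\switch(\pi_X^*\alpha\cuppr\pi_Y^*\beta)$ (the two inversion signs in $(1,2)$ cancel by bilinearity), so these two terms do \emph{not} cancel each other; the actual cancellation is between $(1,2)$ and $(1,4)$, the latter contributing $-\switch(\pi_X^*\alpha\cuppr\pi_Y^*\beta)$, and what survives is $(2,3)$, contributing $-\switch(\pi_Y^*\beta\cuppr\pi_X^*\alpha)$, together with $(3,4)$, contributing $+\switch(\pi_X^*\alpha\cuppr\pi_Y^*\beta)$. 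With the cancellation pattern you state, the total would be $-\switch(\pi_X^*\alpha\cuppr\pi_Y^*\beta)-\switch(\pi_Y^*\beta\cuppr\pi_X^*\alpha)$, which is not the claimed formula. Both defects are repairable, and the repairs are exactly the steps in the paper's proof, but as written your argument does not establish the proposition.
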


We remind the reader that $[\alpha, \beta]$ denotes the Whitehead product of $\alpha$, $\beta$. Here
$\pi^*_X : [ \Sigma X, \Sigma Z] \to [ \Sigma (X \times Y), \Sigma Z]$ is the map induced by projection.

\begin{proof}
All cup products will be followed by $\switch$, so we suppress $\switch$ from the notation. By definition, $s^* [\alpha, \beta] = \pi_X^* \alpha^{-1} \cdot \pi_Y^* \beta^{-1} \cdot \pi_X^* \alpha \cdot \pi_Y^* \beta$. By the naturality of $\lambda_2$ and Proposition \ref{CartanSuspensions},
\begin{align*}
\Sigmas^* \lambda_2 [\alpha, \beta] = & \pi_X^* \alpha^{-1} \cuppr \pi_Y^* \beta^{-1} + \pi_X^* \alpha^{-1} \cuppr \pi_X^* \alpha + \pi_X^* \alpha^{-1} \cuppr \pi_Y^* \beta + \\
& \pi_Y^* \beta^{-1} \cuppr \pi_X^* \alpha + \pi_Y^* \beta^{-1} \cuppr \pi_Y^* \beta + \pi_X^* \alpha \cuppr \pi_Y^* \beta
\end{align*}
Since $X$ is a suspension, the cup product $\alpha^{-1} \cuppr \alpha$ vanishes by Lemma \ref{lem:cupproduct}(2), whence $\pi_X^* \alpha^{-1} \cuppr \pi_X^* \alpha = 0$. The same reasoning shows that $\pi_Y^* \beta^{-1} \cuppr \pi_Y^* \beta=0$. By Lemma \ref{lem:cupproduct}(1), the cup-product is bilinear and associative, and so we obtain the following formula:
\begin{align*}
\Sigmas^* \lambda_2 [\alpha, \beta] &=  \pi_X^* \alpha \cuppr \pi_Y^* \beta - \pi_X^* \alpha \cuppr \pi_Y^* \beta - \pi_Y^* \beta \cuppr \pi_X^* \alpha +  \pi_X^* \alpha \cuppr \pi_Y^* \beta \\
 &= - \pi_Y^* \beta \cuppr \pi_X^* \alpha +  \pi_X^* \alpha \cuppr \pi_Y^* \beta.
\end{align*}
\end{proof}

\subsubsection*{Hopf invariants of Whitehead products}
Let $Z$ be a pointed simplicial presheaf on $\mathbf{C}$. Consider the maps
\[
[\id_{\Sigma Z}, \id_{\Sigma Z}]: \Sigma Z \wedge Z \longrightarrow \Sigma Z,
\]
and
\[
\Sigma {\mathrm H} [\id_{\Sigma Z}, \id_{\Sigma Z}]: \Sigma^2 Z \wedge Z \longrightarrow \Sigma^2 Z \wedge Z.
\]
Let $e: Z \sma Z \to Z \sma Z$ denote the exchange map, i.e., the map that permutes the two factors.

\begin{prop}
\label{prop:s*H2P}
Let $Z$ be a pointed simplicial presheaf on $\mathbf{C}$ that is a suspension. In the homotopy category, there is an equality
\[
\Sigma {\mathrm H} [\id_{\Sigma Z}, \id_{\Sigma Z}] = - \Sigma^2 e + \Sigma^2 \id_{Z \wedge Z}.
\]
\end{prop}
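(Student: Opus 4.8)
The plan is to reduce the identity $\Sigma {\mathrm H} [\id_{\Sigma Z}, \id_{\Sigma Z}] = - \Sigma^2 e + \Sigma^2 \id_{Z \wedge Z}$ to the Cartan-type formula for $\lambda_2$ on a Whitehead product, namely Proposition \ref{lambda2WhiteheadProd_suspensions}, and then evaluate the cup-product terms appearing there. First I would recall that, by definition, $\lambda_2 = \Sigma \circ {\mathrm H}$, so $\Sigma {\mathrm H}[\id_{\Sigma Z}, \id_{\Sigma Z}] = \lambda_2[\id_{\Sigma Z}, \id_{\Sigma Z}]$, viewed as an element of $[\Sigma^2 Z \wedge Z, \Sigma^2 Z \wedge Z]$. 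The universal Whitehead product $[\id_{\Sigma Z}, \id_{\Sigma Z}]$ is, by Construction \ref{construction:whiteheadproduct}, obtained from the map $(a^{-1} \cdot b^{-1}) \cdot (a \cdot b) : \Sigma(Z \times Z) \to \Sigma Z \vee \Sigma Z$ by passing to the quotient $\Sigma(Z \sma Z)$ via the prescribed null-homotopy on $\Sigma(Z \vee Z)$. Here $a = \id_{\Sigma Z} \circ \Sigma p_1$ and $b = \id_{\Sigma Z} \circ \Sigma p_2$ are precisely the maps $\pi_X^*\alpha$ and $\pi_Y^*\beta$ of Proposition \ref{lambda2WhiteheadProd_suspensions} with $X = Y = Z$, $\alpha = \beta = \id_{\Sigma Z}$.

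Now, since $Z$ is assumed to be a suspension, both hypotheses of Proposition \ref{lambda2WhiteheadProd_suspensions} are satisfied ($X = Y = Z$ is a suspension, and $\alpha = \beta = \id_{\Sigma Z}$ is a suspension). Applying it, and precomposing with $\Sigmas$ to descend from $\Sigma^2(Z \times Z)$ to $\Sigma^2(Z \wedge Z)$, we obtain
\[
\Sigmas^* \lambda_2[\id_{\Sigma Z}, \id_{\Sigma Z}] = - \switch(\pi_2^* \id_{\Sigma Z} \cuppr \pi_1^* \id_{\Sigma Z}) + \switch(\pi_1^* \id_{\Sigma Z} \cuppr \pi_2^* \id_{\Sigma Z}).
\]
The key computational step is to identify $\switch(\pi_1^* \id_{\Sigma Z} \cuppr \pi_2^* \id_{\Sigma Z})$ with $\Sigma^2 \id_{Z \wedge Z}$ and $\switch(\pi_2^* \id_{\Sigma Z} \cuppr \pi_1^* \id_{\Sigma Z})$ with $\Sigma^2 e$. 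For the first: by Construction \ref{construction:cupproduct}, the cup product of $\pi_1^*\id_{\Sigma Z}$ and $\pi_2^*\id_{\Sigma Z}$ on $\Sigma Z \wedge Z$ is the composite of the reduced diagonal $\Sigma^2 (Z \wedge Z) \to \Sigma^2 (Z \wedge Z) \wedge (Z \wedge Z) \weq \Sigma(Z \wedge Z) \wedge \Sigma(Z \wedge Z)$ with the smash of the two projection maps; since on $\Sigma Z \wedge Z$ the projections $p_1, p_2$ restrict to the identity on the relevant smash factors, this composite unwinds to (after applying $\switch$) the identity $\Sigma^2 \id_{Z \wedge Z}$. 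For the second, the roles of the two $Z$-factors are exchanged in the smash, so the resulting self-map of $\Sigma^2(Z \wedge Z)$ is $\Sigma^2 e$, the double suspension of the factor-exchange map on $Z \wedge Z$. Plugging these in gives $\Sigmas^*\lambda_2[\id_{\Sigma Z}, \id_{\Sigma Z}] = -\Sigma^2 e + \Sigma^2 \id_{Z \wedge Z}$, and since $\Sigmas$ is (after quotienting) the relevant identification and $[\id_{\Sigma Z}, \id_{\Sigma Z}]$ already factors through $\Sigma(Z \wedge Z)$, the left-hand side equals $\Sigma {\mathrm H}[\id_{\Sigma Z}, \id_{\Sigma Z}]$ as a map $\Sigma^2 Z \wedge Z \to \Sigma^2 Z \wedge Z$, completing the proof.

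The main obstacle I anticipate is the careful bookkeeping of suspension coordinates and the various identification maps ($\switch$, $\tau$, $\Sigmas$, $s$): one must check that the reduced diagonal composite in Construction \ref{construction:cupproduct} really does unwind, upon restriction to the smash factor $\Sigma Z \wedge Z$ and after $\switch$, to literally $\Sigma^2 \id_{Z \wedge Z}$ rather than to some permuted or sign-twisted variant, and similarly that the second term picks up exactly the exchange map $e$ and no extra sign. This is the sort of coordinate-chasing computation that is routine in spirit but where an error in the order of smash factors would change the answer; one resolves it by fixing conventions for the isomorphism $\Sigma^{m+n} X \wedge X \weq \Sigma^m X \wedge \Sigma^n X$ (which, per Construction \ref{construction:cupproduct}, does \emph{not} permute suspension factors) and tracking the $Z$-coordinates through the reduced diagonal. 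Everything else — the vanishing of the diagonal terms $\pi_X^*\alpha^{-1} \cuppr \pi_X^*\alpha$, the bilinearity used in deriving Proposition \ref{lambda2WhiteheadProd_suspensions} — is already packaged in the cited lemmas.
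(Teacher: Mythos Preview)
Your approach is essentially the same as the paper's, and the overall strategy is correct. There is, however, one genuine gap in the final step.

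After applying Proposition~\ref{lambda2WhiteheadProd_suspensions} you obtain an equation of the form
\[
\Sigmas^* \lambda_2[\id_{\Sigma Z}, \id_{\Sigma Z}] \;=\; -\switch(\pi_2^* \id \cuppr \pi_1^* \id) + \switch(\pi_1^* \id \cuppr \pi_2^* \id),
\]
where both sides lie in $[\Sigma^2(Z \times Z),\, \Sigma^2 Z \wedge Z]$, not in $[\Sigma^2(Z \wedge Z),\, \Sigma^2 Z \wedge Z]$. The cup product terms are maps out of $\Sigma^2(Z \times Z)$ (the reduced diagonal in Construction~\ref{construction:cupproduct} is that of $Z \times Z$), so your identifications should read $\switch(\pi_1^* \id \cuppr \pi_2^* \id) = \Sigmas = \Sigmas^*(\Sigma^2 \id_{Z \wedge Z})$ and $\switch(\pi_2^* \id \cuppr \pi_1^* \id) = \Sigmas^*(\Sigma^2 e)$, exactly as in the paper. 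What you end up with is thus
\[
\Sigmas^* \bigl(\Sigma {\mathrm H}[\id_{\Sigma Z}, \id_{\Sigma Z}]\bigr) \;=\; \Sigmas^*\bigl(-\Sigma^2 e + \Sigma^2 \id_{Z \wedge Z}\bigr),
\]
and to conclude you must cancel $\Sigmas^*$. Your remark that ``$[\id_{\Sigma Z}, \id_{\Sigma Z}]$ already factors through $\Sigma(Z \wedge Z)$'' does not accomplish this: it only says the left-hand side is in the image of $\Sigmas^*$, not that $\Sigmas^*$ is injective. The paper supplies the missing argument: after one suspension the cofiber sequence $Z \vee Z \to Z \times Z \to Z \wedge Z$ splits (via $\Sigma p_1 + \Sigma p_2$), so mapping any further suspension of it into a target space yields split short exact sequences, and in particular $\Sigmas^*$ is injective. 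This is the step you should add; the coordinate bookkeeping you flagged as the main obstacle is secondary.
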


\begin{proof}
Let $\pi_i : \Sigma (Z \times Z) \to \Sigma Z$ denote the suspension of the $i$th projection for $i=1,2$. Let $\id:
\Sigma Z \to \Sigma Z$ denote the identity. Let $\switch: \Sigma Z \sma \Sigma Z \to \Sigma^2 (Z \sma Z)$ be the permutation that does not change the order of suspensions, and let $\switch^{-1}: \Sigma^2 (Z \sma Z) \to \Sigma Z \sma \Sigma Z$ be its inverse.

Consider the map $\Sigmas: \Sigma^2 (Z \times Z) \to \Sigma^2 (Z \sma Z)$ as introduced above. In that case, Proposition \ref{lambda2WhiteheadProd_suspensions} allows us to conclude the following equality holds:
\[
\Sigmas^*\Sigma {\mathrm H} [\id, \id] = -\switch(\pi_2^* \id \cuppr \pi_1^* \id) + \switch(\pi_1^* \id \cuppr \pi_2^* \id).
\]

Write $\Delta: Z \times Z \to (Z \times Z) \sma (Z \times Z)$ for the reduced diagonal map. Let
\[
\switch': \Sigma^2 (Z \times Z) \wedge (Z \times Z) \longrightarrow \Sigma (Z \times Z) \wedge \Sigma (Z \times Z)
\]
denote the permutation which does not swap the order of the two suspensions in $\Sigma^2 (Z \times Z) \sma (Z \times Z)$. Note that
\[
(\pi_1^* \id \wedge \pi_2^* \id) \circ \switch' \Sigma^2 \Delta : \Sigma^2 (Z \times Z) \longrightarrow \Sigma(Z \times Z) \wedge \Sigma (Z \times Z) \longrightarrow \Sigma (Z) \wedge \Sigma (Z)
\]
is homotopic to $\Sigmas$ followed by the permutation $\switch^{-1}$. Thus, $(\pi_1^* \id \sma \pi_2^* \id) \circ \switch' \Sigma^2 \Delta = \switch^{-1} \Sigmas$ up to homotopy.

By definition, $ \pi_1^* \id \cuppr \pi_2^* \id = (\pi_1^* \id \sma \pi_2^* \id) \circ \switch' \Sigma^2 \Delta$. Thus $ \pi_1^* \id \cuppr \pi_2^* \id = \switch^{-1} \Sigmas$ in the homotopy category. Applying $\switch$ to both sides, we conclude that
\[
\switch (\pi_1^* \id \cuppr \pi_2^* \id) = \Sigmas.
\]

Note that
\[
(\pi_2^* \id \wedge \pi_1^* \id) \circ \switch' \Sigma^2 \Delta : \Sigma^2 (Z \times Z) \longrightarrow \Sigma(Z \times Z) \wedge \Sigma (Z \times Z) \longrightarrow \Sigma (Z) \wedge \Sigma (Z)
\]
is homotopic to $\switch^{-1} \circ \Sigma ^2 e \circ \Sigmas$, whence
\[
\switch(\pi_2^* \id \cuppr \pi_1^* \id)= \Sigmas^* \Sigma^2 e.
\]

It follows that
\[
\Sigmas^* \Sigma {\mathrm H} [\id_{\Sigma Z}, \id_{\Sigma Z}] = \Sigmas^* ( - \Sigma^2 e + \Sigma^2 \id_{Z \wedge Z}).
\]
To conclude, we simply observe that $\Sigmas^*$ is injective. Indeed, this follows from the standard fact that for
simplicial presheaves $X,Y$, after a single suspension, the cofiber sequence $X \vee Y \to X \times Y \to X \sma Y$ is
split by the sum of the projections $\Sigma p_X$ and $\Sigma p_Y$. In that case, the long exact sequence in homotopy
obtained by mapping any suspension of the above cofiber sequence into the space $Z$ splits into a collection of short
exact sequences.
\end{proof}

\subsection{The composite \texorpdfstring{${\mathrm H} {\mathrm P}$}{HP} for a sphere as an element of \texorpdfstring{$\K^{\MW}_0(k)$}{K0(k)}}
\label{ss:HPKmwsubsection}
In this section, we analyze the composite map ${\mathrm H}{\mathrm P}$ for a sphere. Up to this point in the paper, we
have worked either in the context of simplicial presheaves on a site having enough points or in the unstable $\aone$-homotopy theory over a base for which the unstable $\aone$-connectivity property holds. The results in this section differ from those earlier in the paper because they will use finer structure of the $\aone$-homotopy category over a base field $k$ assumed to be perfect (and infinite for those being especially careful). We will try to be clear about exactly which ingredients do not follow from the ``axiomatic" point of view.

Morel shows the sheaf $\bpi_p^{\aone}(S^{p + q \alpha})$ (see \ref{notation:spheres}) is isomorphic to the Milnor--Witt K-theory sheaf $\K^{\MW}_q$ for $p \geq 2$ (or, somewhat exceptionally, for $p = 1$ and $q = 2$) \cite[Theorem 1.23]{MField}. Stringing the EHP exact sequences of Theorem \ref{thm:EHP_range} for different spheres together, one obtains the following diagram:
\[
\xymatrix{
& \bpi_{i+3}^{\aone}(S^{2p+3+2q\alpha}) \ar[d]^-{{\mathrm P}} & & \\
\bpi_{i}^{\aone}(S^{p+q\alpha})\ar[r]^-{{\mathrm E}} & \bpi_{i+1}^{\aone}(S^{p+1+q\alpha}) \ar[r]^-{\mathrm H} \ar[d]^-{{\mathrm E}}& \bpi_{i+1}^{\aone}(S^{2p+1+2q\alpha}) \ar[r]^-{{\mathrm P}} & \bpi_{i-1}^{\aone}(S^{p+q\alpha}) \\
& \bpi_{i+2}^{\aone}(S^{p+2+q\alpha}) & &
}
\]
The composite map ${\mathrm H}{\mathrm P}$ becomes the $E_1$-differential in the EHP spectral sequence.

When $i = 2p$, the composite map ${\mathrm H}{\mathrm P}$ is, by means of Morel's computations, a morphism
\[
{\mathrm H}{\mathrm P}: \K^{\MW}_{2q} \longrightarrow \K^{\MW}_{2q}.
\]
Note that by definition of Milnor--Witt K-theory sheaves, there is a ring homomorphism $\K^{\MW}_0(k) \to \hom(\K^{\MW}_{2q},\K^{\MW}_{2q})$ induced on sections by multiplication; moreover this homomorphism is necessarily injective. Lemma \ref{lem:homfromkmwiscontraction}, combined with the computation of contractions of Milnor--Witt K-theory sheaves (see the discussion after Lemma \ref{lem:contractionisexact}), implies this morphism is an isomorphism if $k$ has characteristic unequal to $2$; that this map is an isomorphism is also true if $k$ has characteristic $2$ but a different proof is required---see the discussion after Lemma \ref{lem:contractionisexact} for more details. In any case, the map ${\mathrm H}{\mathrm P}$ corresponds to an element of $\hom(\K^{\MW}_{2q},\K^{MW}_{2q})$; we will see below that it always lies in the subring $\K^{\MW}_0(k)$.

In order to state the result, we need some more precise information about the structure of the Milnor--Witt K-theory
ring. Recall that $\K^{\MW}_*(k)$ is generated by elements $[a] \in k^*$ of degree $+1$ and an element $\eta$ of degree
$-1$ subject to various relations \cite[Definition 3.1]{MField}. For a unit $a \in k^*$, set $\langle a \rangle := 1 +
\eta [a]$; the identification $\K^{\MW}_0(k) \iso GW(k)$ sends the element $\langle a \rangle$ to the class of the
$1$-dimensional symmetric bilinear form of the same name \cite[Lemma 3.10]{MField}. Following Morel \cite[p. 51]{MField}
or \cite[\S 6.1]{MIntro}, we set $\epsilon := - \langle -1 \rangle$. The class $\epsilon$ is related to the map $\gm{}
\sma \gm{} \to \gm{} \sma \gm{}$ that exchanges the two factors: see \cite[Lemma 6.1.1(2)]{MIntro} for a ``stable"
statement or \cite[Lemma 3.43]{MField} for an ``unstable" statement. Note that $1 - \varepsilon$ is the hyperbolic form
$h$, which intercedes in the definition of Milnor--Witt K-theory.

\begin{thm}
\label{thm:HP}
Assume $k$ is a perfect field and suppose $p,q$ are integers with $p > 1$ and $q \geq 1$. The map
\[
{\mathrm H}{\mathrm P}:\K^{\MW}_{2q} = \bpi_{2p+2}^{\aone} J ((S^{p + 1+ q \alpha})^{\wedge 2} ) \longrightarrow \bpi_{2p} J((S^{p + q \alpha})^{\wedge 2}) = \K^{\MW}_{2q},
\]
is given by $1 - (-1)^{p}\epsilon^q \in \K^{\MW}_0(k)$. Equivalently:
\[
{\mathrm H}{\mathrm P} = \begin{cases}0 & \text{ if } p \text{ and } q \text{ are even}, \\ 2 & \text{ if } p \text{ is odd and } q \text{ is even},\\ h & \text{ if } p \text{ is even and } q \text{ is odd, and} \\ 1+\epsilon & \text{ if } p \text{ and }q \text{ are odd.}\end{cases}
\]
\end{thm}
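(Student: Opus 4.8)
The strategy is to reduce the computation of $\mathrm{H}\mathrm{P}$ on a sphere to the universal identity established in Proposition \ref{prop:s*H2P}, and then identify the relevant permutation maps of motivic spheres with elements of $GW(k)$ using Morel's formulas for the twist map on $\gm{} \sma \gm{}$ and the degree $-1$ self-map of $S^1_s$. Concretely, I would set $\mathscr{Z} = S^{p-1+q\alpha}$, so that $\mathscr{X} = \Sigma \mathscr{Z} = S^{p+q\alpha}$ is $\aone$-$(p-1)$-connected with $p \geq 2$, and the composite
\[
\mathrm{H}\mathrm{P}: \bpi_{2p+2}^{\aone}(J((S^{p+1+q\alpha})^{\sma 2})) \longrightarrow \bpi_{2p}^{\aone}(J((S^{p+q\alpha})^{\sma 2}))
\]
lands in the stable range where Theorem \ref{P=piWhitehead} applies: there, $\mathrm{P}$ is identified (after the two-fold suspension isomorphism coming from $g_*$) with the pushforward along the Whitehead square $[\id_{\Sigma\mathscr{Z}},\id_{\Sigma\mathscr{Z}}]: \Sigma(\mathscr{Z}\sma\mathscr{Z}) \to \Sigma\mathscr{Z}$. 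Composing with $\mathrm{H}$ and using the fact that $\mathrm{H}$ on $[\Sigma X, \Sigma Z]$ is computed by $\lambda_2 = \Sigma \circ \mathrm{H}$ up to a suspension shift, I would reduce to computing $\Sigma\mathrm{H}[\id_{\Sigma\mathscr{Z}},\id_{\Sigma\mathscr{Z}}]$, which by Proposition \ref{prop:s*H2P} equals $-\Sigma^2 e + \Sigma^2 \id_{\mathscr{Z}\sma\mathscr{Z}}$, where $e: \mathscr{Z}\sma\mathscr{Z} \to \mathscr{Z}\sma\mathscr{Z}$ is the exchange map. (I should check that $\mathscr{Z} = S^{p-1+q\alpha}$ is indeed a suspension, i.e. $p-1 \geq 1$ or one factor of $\gm{}$ is present, so that the hypothesis of Proposition \ref{prop:s*H2P} is met; for $p \geq 2$ this holds.)

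The next step is bookkeeping with the two-fold suspensions and the permutation $\switch\tau$ appearing in Lemma \ref{lem:cofib_wh_iXiY} and in Diagram \eqref{eq:11}: the map $g$ in that diagram is the two-fold suspension but is related to the honest suspension via a sign $-1$ coming from $\switch\tau$ reversing the order of suspensions. I would carefully track all such signs — there is one from the adjunction/reversal of suspensions in \eqref{eq:11}, and the formula in Proposition \ref{prop:s*H2P} already carries a sign on the exchange term — so that the upshot is an endomorphism of $\bpi_{2p}^{\aone}(J((S^{p+q\alpha})^{\sma 2})) \iso \K^{\MW}_{2q}$ of the form $\langle 1\rangle \pm (\text{class of }e)$, with the overall sign determined by the parity of $p$ (each simplicial suspension reversal contributes $-1 = \langle -1\rangle^{?}$ in the appropriate sense, but on $\K^{\MW}$ a simplicial sign is an honest $-1$, not $\langle -1\rangle$).

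Finally, I would identify the class in $\K^{\MW}_0(k) \iso GW(k)$ of the exchange map $e$ on $\mathscr{Z}\sma\mathscr{Z} = (S^{p-1+q\alpha})^{\sma 2}$. Writing $S^{p-1+q\alpha} = S^{p-1}_s \sma \gm{\sma q}$, the exchange map on the smash square is a shuffle permutation of $2(p-1)$ simplicial circles and $2q$ copies of $\gm{}$. On the simplicial circles the induced self-map has classical degree $(-1)^{(p-1)^2} = (-1)^{p-1}$, contributing an honest sign $(-1)^{p-1}$; on the $\gm{}$-factors the shuffle of $2q$ copies, using Morel's identity that the transposition of two $\gm{}$-factors is $\epsilon = -\langle -1\rangle$ (see \cite[Lemma 3.43]{MField} or \cite[Lemma 6.1.1(2)]{MIntro}), contributes $\epsilon^{q^2} = \epsilon^q$. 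Hence the class of $e$ is $(-1)^{p-1}\epsilon^q$, and combining with the sign from the suspension reversals one gets $\mathrm{H}\mathrm{P} = \langle 1\rangle - (-1)^{p}\epsilon^q = 1 - (-1)^p\epsilon^q$, which unwinds to the four-case table by substituting $\epsilon = -\langle -1\rangle$, $1 - \epsilon = h$ the hyperbolic form, $1+\epsilon = 1 - \langle -1\rangle \cdot \langle-1\rangle^{0}$... more precisely: $p,q$ even gives $1 - 1 = 0$; $p$ odd, $q$ even gives $1+1 = 2$; $p$ even, $q$ odd gives $1 - \epsilon = h$; $p,q$ odd gives $1 + \epsilon$. \textbf{The main obstacle} I anticipate is the sign bookkeeping: correctly matching the simplicial-suspension reversals (which contribute honest $\pm 1$) against the $\gm{}$-transpositions (which contribute $\epsilon = -\langle -1\rangle$, \emph{not} $-1$), and making sure the two-fold suspension isomorphism $g_*$ in Theorem \ref{P=piWhitehead} is inserted with the correct sign $\switch\tau$ from Lemma \ref{lem:cofib_wh_iXiY}; a single misplaced $\langle -1\rangle$ versus $-1$ would corrupt the final table. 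A secondary subtlety is checking that all the maps in Diagram \eqref{eq:11} are in the connectivity range where $\aone$-localization is harmless, which follows from Corollary \ref{cor:relativehurewicz} once one verifies the spaces involved are sufficiently highly connected — here the hypothesis $p > 1$ is exactly what is needed.
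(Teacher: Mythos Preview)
Your overall strategy is exactly the paper's: identify $\mathrm{P}$ with composition by the Whitehead square via Theorem~\ref{P=piWhitehead}, then compute $\mathrm{H}$ of that Whitehead square using Proposition~\ref{prop:s*H2P}, and finally read off the degree of the exchange map using Morel's identification of the $\gm{}$-twist with $\epsilon$.

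There is, however, an indexing error that propagates through your sign bookkeeping. The map $\mathrm{P}$ in the composite $\mathrm{H}\mathrm{P}$ is the connecting map in the EHP sequence for $\mathscr{X}=S^{p+1+q\alpha}$, not for $S^{p+q\alpha}$: it goes $\bpi_{2p+2}^{\aone}(J((S^{p+1+q\alpha})^{\sma 2}))\to\bpi_{2p+1}^{\aone}(S^{p+1+q\alpha})$. Hence in Theorem~\ref{P=piWhitehead} one must take $\mathscr{Z}=S^{p+q\alpha}$ (so that $\Sigma\mathscr{Z}=S^{p+1+q\alpha}$), not $\mathscr{Z}=S^{p-1+q\alpha}$. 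With the correct $\mathscr{Z}$, the exchange $e$ on $\mathscr{Z}\sma\mathscr{Z}=(S^{p+q\alpha})^{\sma 2}$ has degree $(-1)^{p}\epsilon^{q}$ (the simplicial block contributes $(-1)^{p^2}=(-1)^p$ and the $\gm{}$-block contributes $\epsilon^{q^2}=\epsilon^q$), and Proposition~\ref{prop:s*H2P} together with a single desuspension (an isomorphism in this range by Remark~\ref{rem:refinesfreudenthal}) gives $\mathrm{H}[\id,\id]=-\Sigma e+\Sigma\id$, hence $\mathrm{H}\mathrm{P}=1-(-1)^p\epsilon^q$ directly.

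The extra ``sign from suspension reversals'' you invoke to pass from $(-1)^{p-1}$ to $(-1)^p$ is spurious: the $\switch\tau$ appearing in Lemma~\ref{lem:cofib_wh_iXiY} is already absorbed into the proof of Theorem~\ref{P=piWhitehead}, whose conclusion is a clean identification of $\mathrm{P}$ with $[\id_{\Sigma\mathscr{Z}},\id_{\Sigma\mathscr{Z}}]_*$ requiring no further sign correction. Once you fix the choice of $\mathscr{Z}$, that phantom sign disappears and the computation is the paper's verbatim.
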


\begin{rem}
See Remark \ref{rem:charunequalto2} for more details on the situation when $k$ has characteristic $2$. There is a corresponding statement if $q = 0$ as well, but in that case, the composite in question gets identified with an element of $\Z$, not $\K^{\MW}_0(k)$ and the result is the classical computation of $\mathrm{H}\mathrm{P}$.
\end{rem}

\begin{proof}
Under the hypothesis on $k$, the unstable $\aone$-connectivity property holds by Theorem \ref{thm:unstableaoneconnectivityperfectfields}. The hypotheses on $p$ and $q$ are simply those that need to be imposed to appeal to Morel's computations of homotopy sheaves.

We begin by applying Theorem \ref{P=piWhitehead} with ${\mathscr X}= S^{p + 1+ q \alpha}$, ${\mathscr Z} = S^{p+q \alpha}$, $n=p+1$ and $i = 2p+1$ to interpret ${\mathrm P}$ as a Whitehead product. More precisely, since $1 \leq p $, it follows that $i=(2p+1) \leq 3(p+1)-3$, and so we know that ${\mathrm P}$ is induced by the map $[\id_{\Sigma {\mathscr Z}}, \id_{\Sigma {\mathscr Z}}]_*$ in degree $2p+1$. It follows that the composite ${\mathrm H}{\mathrm P}$ is isomorphic to the map obtained by applying $\bpi_{2p+1}^{\aone}$ to ${\mathrm H} [\id_{\Sigma \mathscr Z}, \id_{\Sigma \mathscr Z}]$.

Next, we appeal to our results about James--Hopf invariants of Whitehead products (Proposition \ref{prop:s*H2P}) to produce an explicit formula for ${\mathrm H}{\mathrm P}$ in terms of the exchange map $e: \mathscr{Z} \sma \mathscr{Z} \to \mathscr{Z} \sma \mathscr{Z}$. Indeed, Proposition \ref{prop:s*H2P} yields an equality of the form
\[
\Sigma {\mathrm H}[\id_{\Sigma {\mathscr Z}},\id_{\Sigma {\mathscr Z}}] = - \Sigma^2 e + \Sigma^2 \id_{{\mathscr Z} \wedge {\mathscr Z}}.
\]
However, recall that by Theorem \ref{thm:EHP_range} (combined with Remark \ref{rem:refinesfreudenthal}), the suspension map
\[
\Sigma: [\Sigma({\mathscr Z} \wedge {\mathscr Z}), \Sigma({\mathscr Z} \wedge {\mathscr Z})] \longrightarrow [\Sigma^2({\mathscr Z} \wedge {\mathscr Z}), \Sigma^2({\mathscr Z} \wedge {\mathscr Z})]
\]
is an isomorphism and we conclude that the following equality holds:
\[
{\mathrm H}[\id_{\Sigma {\mathscr Z}},\id_{\Sigma {\mathscr Z}}] = - \Sigma e + \Sigma \id_{{\mathscr Z} \wedge {\mathscr Z}}.
\]
Thus, we see that ${\mathrm H}{\mathrm P}$ is isomorphic to the map induced by applying $\bpi_{2p+1}^{\aone}$ to $-\Sigma e + \Sigma \id_{{\mathscr Z} \sma {\mathscr Z}}$.

Now we identify the homotopy class of the exchange map, $e$. Since it can be effected by pairwise exchanging copies of $S^1$, and $\gm{}$ it suffices to understand the effect of each such exchange on a homotopy class. By \cite[Lemma 3.43]{MField}, in the presence of a single suspension, the exchange of copies of $\gm{}$ contributes a factor of $\epsilon$. It is well-known that the permutation map on $S^1 \sma S^1$ has degree $-1$. Combining these two observations, a straightforward induction argument allows us to conclude that $e$ has degree $(-1)^{p}\epsilon^{q}$. Thus, the map induced by applying $\bpi^{\aone}_{2p+1}(-)$ to $-\Sigma e + \Sigma \id_{{\mathscr Z} \sma {\mathscr Z}}$ is multiplication by $1 - (-1)^{p}\epsilon^{q}$. Since $\epsilon^2 = 1$ in $\K^{\MW}_0(k)$ by \cite[Lemma 3.5]{MField}, the statement of the theorem follows by simply listing the possible cases.
\end{proof}

\begin{rem}
\label{rem:zmod2equivariantinterpretation}
Classically, the composite ${\mathrm H}{\mathrm P}$ computed above is either $2$ or $0$ depending on the parity of the
dimension of the sphere in question (this follows immediately from the definition of the James--Hopf invariant and
symmetry properties of the Whitehead square of the identity). If one invokes real realization \cite[p. 121]{MV}, Theorem
\ref{thm:HP} can be viewed as a direct analog of this classical result. First, observe that $\K^{\MW}_0(\real) \iso
GW(\real) \iso \Z \oplus \Z$; this identification sends a symmetric bilinear form over the real numbers to its rank and signature. Under real realization, the sphere $S^{p+q\alpha}$ is sent to the $\Z/2$-equivariant sphere of the same name. In particular, the $\Z/2$-fixed point locus of $S^{p+q\alpha}$ is simply $S^p$, while the fixed point locus under the trivial group is the sphere $S^{p+q}$. From this point of view, the formula for ${\mathrm H}{\mathrm P}$ from Theorem \ref{thm:HP} simply reflects the relative parities of $p$ and $p+q$: the signature keeps track of the degree on fixed point loci for $\Z/2$, while the rank keeps track of the degree on fixed point loci for the trivial subgroup. For example, when $p$ is even and $q$ is odd, ${\mathrm H}{\mathrm P}$ is multiplication by $h$, which has rank $2$ and signature $0$.
\end{rem}

\section{Applications}
\label{s:applications}
We now collect some computational applications of Theorems \ref{thm:EHP_range} and \ref{thm:lowdegree}. Section
\ref{ss:milnorwittktheory} is of a preliminary nature and contains a number of results about Milnor--Witt K-theory
sheaves that are used elsewhere in the text; some of these facts are certainly well-known, but we could not find good
references. Section \ref{ss:pi4ponesmash3} contains new computations of a family of unstable $\aone$-homotopy sheaves of
motivic spheres: it contains the first computation since Morel's of an $S^1$-stable $\aone$-homotopy sheaf (see Theorems
\ref{thm:pi45ponesmash3} and \ref{thm:nun}). Finally, Section \ref{ss:miscellaneouscomputations} contains results
regarding unstable rationalized motivic homotopy sheaves, and $S^1$-stable homotopy sheaves of Voevodsky's mod $m$
motivic Eilenberg--MacLane spaces (see Theorems \ref{thm:rationalized} and \ref{thm:sonestableemspaces}). While it
should be clear from the referencing, essentially all of the results of this section require finer properties of the
unstable $\aone$-homotopy category than merely the unstable $\aone$-connectivity property.

\subsection{On Milnor--Witt K-theory sheaves}
\label{ss:milnorwittktheory}
In this section we study some properties of the Milnor--Witt K-theory sheaves $\K^{\MW}_n$ \cite[\S 3]{MField}. By \cite[Theorem 3.37]{MField}, the sheaves $\K^{\MW}_n$ are strictly $\aone$-invariant sheaves for any integer $n$. In fact, for any integer $n \geq 1$, $\K^{\MW}_n$ is the free strictly $\aone$-invariant sheaf on the sheaf of pointed sets $\gm{\sma n}$, and $\K^{\MW}_0$ is the free strictly $\aone$-invariant sheaf on $\gm{}/\gm{\times 2}$ by \cite[Theorem 3.46]{MField} (not pointed in this case).

\subsubsection*{Basic properties of Milnor--Witt K-theory sheaves}
If $\mathbf{M}$ is a presheaf of groups (actually, pointed sets suffices), then its contraction $\mathbf{M}_{-1}$ is the presheaf of groups on $\Sm_k$ defined by
\[
\mathbf{M}_{-1}(U) := \ker(\mathbf{M}(\gm{} \times U) \stackrel{(1 \times \Id)^*}{\longrightarrow} \mathbf{M}(U))
\]
where $1: \Spec k \to \gm{}$ is the unit map. The next result summarizes the properties of contractions we will use.

\begin{lem}[{\cite[Lemmas 2.32 and 7.33]{MField}}]
\label{lem:contractionisexact}
The assignment $\mathbf{M} \mapsto \mathbf{M}_{-1}$ defines an endofunctor of the category of strictly (or strongly) $\aone$-invariant sheaves which preserves exact sequences.
\end{lem}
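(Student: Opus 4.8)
There are two assertions: that $(-)_{-1}$ sends strongly (resp.\ strictly) $\aone$-invariant sheaves to sheaves of the same kind, so that it is an endofunctor; and that it is exact. The plan is to reduce both to a single structural input from \cite{MField} --- the purity isomorphism and the $\aone$-invariance of Nisnevich cohomology for strictly $\aone$-invariant sheaves, together with their low-degree nonabelian analogues (the Rost--Schmid machinery) --- after which the arguments are formal.

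The starting point is that $\mathbf{M}_{-1}$ is the pointed internal function sheaf $\underline{\Map}_{\bullet}(\gm{},\mathbf{M})$, with $\gm{}$ pointed by $1$ and $\mathbf{M}$ by its identity section; indeed $\underline{\Map}_{\bullet}(\gm{},\mathbf{M})(U)=\ker(\mathbf{M}(\gm{}\times U)\to\mathbf{M}(U))=\mathbf{M}_{-1}(U)$ (this simplicial set is discrete since $\mathbf{M}$ is). Using the splitting $\gm{}_{+}\simeq S^{0}\vee\gm{}$ of pointed sheaves one sees moreover that $\mathbf{M}_{-1}$ is, naturally in $\mathbf{M}$, a direct factor of the sheaf of groups $U\mapsto\mathbf{M}(\gm{}\times U)$, the complementary factor being $\mathbf{M}$ itself (split off by the unit section of $\gm{}$). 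I would exploit this direct-summand structure throughout.

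For the endofunctor property in the strict case, recall that $\mathbf{A}$ is strictly $\aone$-invariant exactly when $K(\mathbf{A},n)$ is $\aone$-local for every $n\ge0$. As internal function objects into $\aone$-local spaces are again $\aone$-local (closed monoidal structure of the $\aone$-local model category), the space $\underline{\Map}_{\bullet}(\gm{},K(\mathbf{A},n+1))$ is $\aone$-local, and I would compute its homotopy sheaves. By the suspension isomorphism $\pi_{j}$ is the Nisnevich sheafification of $U\mapsto\widetilde{H}^{\,n+1-j}_{\Nis}(\gm{}\times U,\mathbf{A})$, the cohomology reduced along the unit section. The localization sequence for $\{0\}\times U\hookrightarrow\mathbb{A}^{1}_{U}\hookleftarrow\gm{}\times U$, combined with the purity isomorphism $H^{\ast}_{\{0\}\times U}(\mathbb{A}^{1}_{U},\mathbf{A})\cong H^{\ast-1}_{\Nis}(U,\mathbf{A}_{-1})$ and $\aone$-invariance of $H^{\ast}_{\Nis}(-,\mathbf{A})$, then shows that this reduced cohomology is $\mathbf{A}_{-1}(U)$ in degree $0$ and vanishes in positive degrees when $U$ is Nisnevich-local. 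Hence every homotopy sheaf vanishes except $\pi_{n+1}=\mathbf{A}_{-1}$, so $\underline{\Map}_{\bullet}(\gm{},K(\mathbf{A},n+1))\simeq K(\mathbf{A}_{-1},n+1)$ is $\aone$-local for all $n\ge0$; and $\mathbf{A}_{-1}$ is $\aone$-invariant as a sheaf directly from $\aone$-invariance of $\mathbf{A}$. Thus $\mathbf{A}_{-1}$ is strictly $\aone$-invariant. In the strong (nonabelian) case I would run the same argument with $B\mathbf{M}$ in place of $K(\mathbf{A},n+1)$: $\underline{\Map}_{\bullet}(\gm{},B\mathbf{M})$ is $\aone$-local, with homotopy sheaves $\pi_{1}=\mathbf{M}_{-1}$, $\pi_{\ge2}=0$, and $\pi_{0}$ the sheafification of the reduced nonabelian $H^{1}$ over $\gm{}$, which vanishes by the nonabelian analogue of the above --- equivalently, by checking Morel's characterization ($\aone$-invariance of $\mathbf{M}_{-1}$ and of $H^{1}_{\Nis}(-,\mathbf{M}_{-1})$) via the unit-section splitting of $H^{0}$ and $H^{1}$ over $\gm{}$. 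So $\underline{\Map}_{\bullet}(\gm{},B\mathbf{M})\simeq B\mathbf{M}_{-1}$ is $\aone$-local, i.e.\ $\mathbf{M}_{-1}$ is strongly $\aone$-invariant.

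For exactness, given an exact sequence $1\to\mathbf{M}'\to\mathbf{M}\to\mathbf{M}''\to1$ of strongly (resp.\ strictly) $\aone$-invariant sheaves, I would observe that $\mathbf{N}\mapsto(U\mapsto\mathbf{N}(\gm{}\times U))$ is left exact and is in fact exact: for $U$ Nisnevich-local, the obstruction to lifting a section of $\mathbf{M}''$ over $\gm{}\times U$ through $\mathbf{M}$ lies in $H^{1}_{\Nis}(\gm{}\times U,\mathbf{M}')$, which vanishes by the computation above; passing to the functorial direct summand $(-)_{-1}$ then yields the exact sequence $1\to\mathbf{M}'_{-1}\to\mathbf{M}_{-1}\to\mathbf{M}''_{-1}\to1$. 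The main obstacle is the homotopy-sheaf computation of the previous paragraph: this is exactly where the purity isomorphism, the $\aone$-invariance of Nisnevich cohomology of strictly $\aone$-invariant sheaves, and the corresponding low-degree nonabelian statements are needed --- precisely the part of Morel's theory lying beyond the axiomatic framework of the earlier sections. Granting those inputs, the rest is formal manipulation with internal function sheaves and the unit-section splitting.
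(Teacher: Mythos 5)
You should first note that the paper does not prove this lemma at all: it is imported verbatim from Morel (\cite[Lemmas 2.32 and 7.33]{MField}), so there is no in-paper argument to compare against; the comparison is necessarily with Morel's own proof. With that understood, your sketch is a reasonable reconstruction in the strictly $\aone$-invariant (abelian) case and it identifies the right mechanism: the identification $\mathbf{M}_{-1}=\underline{\hom}_*(\gm{},\mathbf{M})$ (which the paper itself uses later, in the proof of Lemma \ref{lem:homfromkmwiscontraction}), $\aone$-locality of derived mapping objects into $\aone$-local targets, and the computation of $\widetilde{H}^*_{\Nis}(\gm{}\times U,\mathbf{A})$ for henselian local $U$ via the localization sequence for $\{0\}\times U\subset \A^1_U$, Morel's purity isomorphism and $\aone$-invariance of Nisnevich cohomology; the reduction of exactness to the vanishing of $H^1_{\Nis}(\gm{}\times U,\mathbf{M}')$ (plus the unit-section splitting and left-exactness of sections) is also correct, modulo the routine remark that stalks are computed by such sections via continuity of the sheaves along the cofiltered limit defining a henselian local scheme.

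The soft spot is the strongly $\aone$-invariant (nonabelian) half. There your phrase ``by the nonabelian analogue of the above'' is doing all the work: the statements you need — $\aone$-invariance of $H^1_{\Nis}(-,\mathbf{M})$ for a strongly $\aone$-invariant sheaf of groups $\mathbf{M}$, and triviality of $H^1_{\Nis}(\gm{}\times U,\mathbf{M}')$ for $U$ henselian local — are not formal consequences of the abelian argument (there is no long exact sequence or purity in the nonabelian setting), and in Morel's book they are established through the unramified-sheaf axiomatics and the low-degree Rost--Schmid-type complexes, which is precisely the substance of the lemma being quoted. So in that case your sketch defers to inputs that are essentially of the same depth as the conclusion, rather than deriving it; this is acceptable given that the paper itself simply cites Morel, but you should say explicitly that the nonabelian low-degree statements are theorems of Morel being invoked, not something recovered from the $K(\mathbf{A},n)$ computation. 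With that caveat made explicit, the proposal is a faithful outline of how the result is actually proved.
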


We will freely use the fact that for any pair of integers $n,j$, $(\K^{\MW}_n)_{-j} = \K^{\MW}_{n-j}$ \cite[Lemma 2.9]{AsokFaselSpheres}.\footnote{This identification is due to Morel and appears in several places in \cite{MField} but without a proof. The proof given in \cite{AsokFaselSpheres} requires $k$ to have characteristic unequal to $2$ since it depends on the Gersten conjecture for the sheaves $\mathbf{I}^j$. The result can also be demonstrated in case $k$ has characteristic $2$ if one appeals to Morel's Gersten-Schmid resolution of $\K^{MW}_{n}$. Nevertheless, since we will momentarily restrict to the case where $k$ has characteristic different from $2$ for other reasons.} We write $\mathbf{W}$ for the sheaf of unramified Witt groups, and $\mathbf{I}^n \subset \mathbf{W}$ for the subsheaves of unramified powers of the fundamental ideal in the Witt ring \cite[\S 2.1]{MMilnor}. For any integer $m$, we write $\K^\M_n/m$ for the mod $m$ Milnor K-theory sheaf. The contractions of $\K^\M_m$ are summarized in \cite[Lemma 2.7]{AsokFaselSpheres}. There is a canonical morphism $\K^\M_n/2 \to \mathbf{I}^n/\mathbf{I}^{n+1}$; the Milnor conjecture on quadratic forms, now a theorem, asserts that this morphism is an isomorphism \cite{OVV, MMilnor}.

Suppose $k$ is a base field of characteristic unequal to $2$. Morel established \cite[Th\'eor\`eme 5.3]{MPuissances} under this hypothesis that there is a fiber product presentation of $\K^{\MW}_n$ relating the various sheaves described in the previous paragraph. For any integer $n$, there is a fiber product diagram of the form:\footnote{See \cite{GilleScullyZhong} for some corrections to \cite{MPuissances}.}
\[
\xymatrix{
\K^{\MW}_n \ar[r]\ar[d]& \mathbf{I}^n \ar[d] \\
\K^\M_n \ar[r] & \K^\M_n/2;
}
\]
by convention $\K^\M_n = \K^\M_n/2 = 0$ for $n < 0$, whereas $\mathbf{I}^n \iso\mathbf{W}$ for $n < 0$.

This fiber product presentation yields two fundamental exact sequences:
\begin{equation}
\begin{split}
\label{eqn:fundamentalexactsequences}
0 \longrightarrow \mathbf{I}^{n+1} \longrightarrow &\K^{\MW}_n \longrightarrow \K^\M_n \longrightarrow 0, \text{ and }\\
0 \longrightarrow 2\K^\M_n \longrightarrow &\K^{\MW}_n \longrightarrow \mathbf{I}^n \longrightarrow 0;
\end{split}
\end{equation}
we use these sequences repeatedly in the sequel.

\begin{rem}
\label{rem:charunequalto2}
The assumption that $k$ has characteristic unequal to $2$ is inessential above: the fiber product presentation exists without this condition as one can see by inspecting the proofs, and appealing to the results of Kato \cite{Kato} on the characteristic $2$ version of Milnor's conjecture involving symmetric bilinear forms instead of \cite{OVV}. However, a detailed proof of this generalization does not appear in the literature, and since in later applications we will be restricted to the characteristic unequal to $2$ case anyway, we have not pursued this generalization.
\end{rem}

\subsubsection*{On the structure of contracted sheaves}
\begin{lem}
\label{lem:homfromkmwiscontraction}
Suppose $\mathbf{M}$ is a strictly $\aone$-invariant sheaf.
\begin{enumerate}[noitemsep,topsep=1pt]
\item For any integer $n \geq 1$, there are isomorphisms
\[
\hom(\K^{\MW}_n,\mathbf{M}) \iso \mathbf{M}_{-n}(k).
\]
\item If $n \geq 2$, the evident map $\hom(\K^{\MW}_n,\mathbf{M}) \to \hom(\K^{\MW}_{n-1},\mathbf{M}_{-1})$ induced by contraction is an isomorphism compatible with the identification of \textup{Point (1)}.
\end{enumerate}
\end{lem}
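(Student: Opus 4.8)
The plan is to deduce Point (1) from the universal property of $\K^{\MW}_n$ recalled at the start of Section \ref{ss:milnorwittktheory}, and then to obtain Point (2) by iterating the $n=1$ case and tracking the identifications.

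For Point (1), recall that for $n \geq 1$ the sheaf $\K^{\MW}_n$ is the free strictly $\aone$-invariant sheaf on the pointed sheaf $\gm{\sma n}$, i.e., $\hom(\K^{\MW}_n,\mathbf{M}) \iso \hom_{\text{pointed sheaves}}(\gm{\sma n},\mathbf{M})$, where the right-hand $\hom$ is maps of pointed sheaves of sets. First I would unwind the right-hand side: a pointed map $\gm{\sma n} \to \mathbf{M}$ is the same as a pointed map $\gm{\times n} \to \mathbf{M}$ that is trivial whenever any coordinate is $1$. By adjunction and induction on $n$ (using that $\mathbf{M}_{-1}$ is again strictly $\aone$-invariant by Lemma \ref{lem:contractionisexact}, and that $(-)_{-1}$ is, on sections over $U$, the kernel of restriction along $1 \times \Id$), one identifies this set with $\mathbf{M}_{-n}(k)$: peeling off one $\gm{}$-factor at a time turns "pointed map out of $\gm{}$ into $\mathbf{N}$'' into "section of $\mathbf{N}_{-1}$ over the base'', and $n$ applications give $\mathbf{M}_{-n}(k)$. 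Some care is needed to see that the evaluation is at $\Spec k$ rather than over a general base; this comes from the fact that $\gm{\sma n}$ is (as a presheaf) generated in the appropriate sense by its identity section, so a morphism out of it is determined by a single global section of the iterated contraction. Concretely, the composite isomorphism sends $\varphi \colon \K^{\MW}_n \to \mathbf{M}$ to the image under $\varphi_{-n}$ (the $n$-fold contraction of $\varphi$, using $(\K^{\MW}_n)_{-n} = \K^{\MW}_0$ and the canonical class $\langle 1 \rangle \in \K^{\MW}_0(k)$) — or, more simply, one uses that $\K^{\MW}_n$ is generated by the symbol $[u_1,\dots,u_n]$ and a morphism is determined by where this universal symbol goes, which is precisely an element of $\mathbf{M}_{-n}(k)$.

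For Point (2), apply Point (1) twice: $\hom(\K^{\MW}_n,\mathbf{M}) \iso \mathbf{M}_{-n}(k)$ and $\hom(\K^{\MW}_{n-1},\mathbf{M}_{-1}) \iso (\mathbf{M}_{-1})_{-(n-1)}(k) = \mathbf{M}_{-n}(k)$, where the last equality is the (obvious) associativity $(\mathbf{N}_{-1})_{-m} = \mathbf{N}_{-(m+1)}$ for the contraction functor. I would then check that the contraction map $\hom(\K^{\MW}_n,\mathbf{M}) \to \hom(\K^{\MW}_{n-1},\mathbf{M}_{-1})$, $\varphi \mapsto \varphi_{-1}$ (which makes sense because $(\K^{\MW}_n)_{-1} = \K^{\MW}_{n-1}$ by the cited identification of Morel, valid in our setting), corresponds under these two identifications to this equality of sections. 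This is a diagram-chase: both identifications of Point (1) are built from the same process of peeling off $\gm{}$-factors, and contracting $\varphi$ once simply reindexes which factor is peeled off first; compatibility is then formal.

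\textbf{Main obstacle.} The genuinely delicate step is establishing Point (1) — specifically, making precise the claim that "pointed maps out of $\gm{\sma n}$ into a strictly $\aone$-invariant sheaf $\mathbf{M}$ are computed by $n$-fold contraction evaluated at $\Spec k$.'' The subtlety is the interplay between the sheaf-level free object (a statement about all sections) and the passage to sections over the base field: one must use that $\K^{\MW}_n$ is generated, in the category of strictly $\aone$-invariant sheaves, by a single universal symbol, so that the $\hom$-sheaf is itself recovered by evaluation at $\Spec k$ of an iterated contraction. If one wants to avoid leaning too hard on the precise phrasing of \cite[Theorem 3.46]{MField}, an alternative is to run the induction using the fiber sequence $\gm{\sma n} \hookrightarrow \gm{\times n} \to \bigvee (\text{lower smashes})$ together with strict $\aone$-invariance to kill the error terms, but bookkeeping the basepoints and the splitting is then the price paid. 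Either way, once $n=1$ is nailed down cleanly, both parts follow by bootstrapping.
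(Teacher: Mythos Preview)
Your proposal is correct and follows essentially the same route as the paper: both arguments use the freeness of $\K^{\MW}_n$ on $\gm{\sma n}$ together with the adjunction that identifies contraction with pointed maps out of $\gm{}$, and then iterate. The one place where the paper is cleaner is exactly your ``main obstacle'': rather than working with global $\hom$-sets and then worrying about why the answer is $\mathbf{M}_{-n}(k)$ rather than something over a general base, the paper works throughout with the internal pointed $\underline{\hom}_*$ in presheaves, establishes a sheaf isomorphism $\underline{\hom}(\K^{\MW}_n,\mathbf{M}) \cong \underline{\hom}_*(\gm{\sma n},\mathbf{M}) \cong \mathbf{M}_{-n}$, and only at the last step takes sections over $k$. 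This sidesteps entirely the bookkeeping you anticipate, and makes Point (2) a one-line adjunction identity $\underline{\hom}_*(\gm{\sma n},\mathbf{M}) \cong \underline{\hom}_*(\gm{\sma n-1},\underline{\hom}_*(\gm{},\mathbf{M}))$.
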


\begin{proof}
Write $\underline{\hom}_*$ for the internal $\hom$ in the category of presheaves of pointed sets on $\Sm_k$.  In that case, unwinding the definitions, there is an identification $M_{-1} = \hom_* (\gm{}, M)$.  A straightforward induction argument combined with the adjunction between $\sma$ and $\hom_*$ then shows $M_{-n} = \underline{\hom}_*(\gm{\sma n}, M)$.

For $n \geq 1$, \cite[Theorem 3.37]{MField} shows that $\K^{\MW}_n$ is the free strictly $\aone$-invariant sheaf of groups on the sheaf of pointed sets $\gm{\sma n}$.  As a consequence, there are functorial identifications
\[
\underline{\hom}_*(\gm{\sma n}, M) \isomt \underline{\hom}(\K^{\MW}_n,\mathbf{M}),
\]
where $\underline{\hom}$ on the right hand side is the internal $\hom$ in the category of presheaves of abelian groups.  To complete the verification of Point (1), simply take sections over $k$.

In light of the discussion of the previous paragraphs, to establish Point (2) one simply observes that, as long as $n \geq 2$, the map in question arises via the following sequence of identifications:
\[
\underline{\hom}(\K^{\MW}_n,\mathbf{M}) \cong \underline{\hom}_{*}(\gm{\sma n},\mathbf{M}) \cong \underline{\hom}_{*}(\gm{\sma n-1},\underline{\hom}_{*}(\gm,\mathbf{M})) \cong \underline{\hom}(\K^{\MW}_{n-1},\mathbf{M}_{-1}).
\].
\end{proof}

\begin{lem}
\label{lem:homoutofkmw0}
Suppose $\mathbf{M}$ is a strictly $\aone$-invariant sheaf.
~\begin{enumerate}[noitemsep,topsep=1pt]
\item There is an isomorphism
\[
\hom(\K^{\MW}_0,\mathbf{M}) \iso \mathbf{M}(k) \times {}_h \mathbf{M}_{-1}(k),
\]
where ${}_h \mathbf{M}_{-1}(k)$ is the $h$-torsion subgroup. The first map is induced by the projection $\K^{\MW}_0 \to \Z$, while the second is induced by a splitting of the map $\mathbf{I} \to \K^{\MW}_0$.
\item For any integer $n \geq 1$, the map $\hom(\K^{\MW}_n,\mathbf{M}) \to \hom(\K^{\MW}_0,\mathbf{M}_{-n})$ induced by contraction has image the factor $\mathbf{M}_{-n}(k)$ of the product described in \textup{Point (1)}.
\end{enumerate}
\end{lem}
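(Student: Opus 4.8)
The plan is to combine the ``fundamental exact sequences'' of \eqref{eqn:fundamentalexactsequences} (in the cases $n = 0$ and $n = 1$) with Lemma \ref{lem:homfromkmwiscontraction}, working throughout in the abelian category of strictly $\aone$-invariant sheaves, where $\hom(-,\mathbf{M})$ is left exact. For Point (1), the $n = 0$ case of the first sequence in \eqref{eqn:fundamentalexactsequences} is $0 \to \mathbf{I} \to \K^{\MW}_0 \stackrel{r}{\longrightarrow} \Z \to 0$ (with $r$ the rank map and $\mathbf{I} = \mathbf{I}^1$), and it splits by the section $1 \mapsto \langle 1 \rangle$. Applying $\hom(-,\mathbf{M})$ gives a natural decomposition $\hom(\K^{\MW}_0,\mathbf{M}) \iso \hom(\Z,\mathbf{M}) \oplus \hom(\mathbf{I},\mathbf{M})$; since $\hom(\Z,\mathbf{M}) = \mathbf{M}(k)$, realized inside $\hom(\K^{\MW}_0,\mathbf{M})$ as pullback along $r$, Point (1) reduces to producing a natural isomorphism $\hom(\mathbf{I},\mathbf{M}) \iso {}_h\mathbf{M}_{-1}(k)$, after which the description of the second coordinate via a retraction $\K^{\MW}_0 \to \mathbf{I}$ is immediate.

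To identify $\hom(\mathbf{I},\mathbf{M})$, I would use the $n = 1$ case of the second sequence in \eqref{eqn:fundamentalexactsequences}, namely $0 \to 2\K^{\M}_1 \to \K^{\MW}_1 \stackrel{p}{\longrightarrow} \mathbf{I} \to 0$, together with the key observation that the subsheaf $2\K^{\M}_1 \subset \K^{\MW}_1$ equals the image $h\K^{\MW}_1$ of multiplication by the hyperbolic form $h = 1 - \epsilon$. One inclusion is formal from the relation $\eta h = 0$; the reverse uses the fiber-product presentation of $\K^{\MW}_1$ (valid for $\mathrm{char}\,k \neq 2$, and in characteristic $2$ by Remark \ref{rem:charunequalto2}): because $h = 0$ in $\mathbf{W}$, both $h\K^{\MW}_1 \to \K^{\M}_1$ and $2\K^{\M}_1 \to \K^{\M}_1$ are injective with common image $2\K^{\M}_1 \subseteq \K^{\M}_1$, so the inclusion $h\K^{\MW}_1 \subseteq 2\K^{\M}_1$ is an equality. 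Applying $\hom(-,\mathbf{M})$ then identifies $\hom(\mathbf{I},\mathbf{M})$ with the kernel of the restriction $\hom(\K^{\MW}_1,\mathbf{M}) \to \hom(h\K^{\MW}_1,\mathbf{M})$; by Lemma \ref{lem:homfromkmwiscontraction}(1) the source is $\mathbf{M}_{-1}(k)$, and since the $\K^{\MW}_0$-action on $\hom(\K^{\MW}_1,\mathbf{M})$ by precomposition with multiplication on the $\K^{\MW}_0$-module $\K^{\MW}_1$ agrees with the usual $GW(k)$-module structure on $\mathbf{M}_{-1}(k)$, a homomorphism $\K^{\MW}_1 \to \mathbf{M}$ vanishes on $h\K^{\MW}_1$ exactly when the corresponding section of $\mathbf{M}_{-1}$ is annihilated by $h$. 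This yields $\hom(\mathbf{I},\mathbf{M}) \iso {}_h\mathbf{M}_{-1}(k)$ and completes Point (1).

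For Point (2), I would first apply Lemma \ref{lem:homfromkmwiscontraction}(2) repeatedly to obtain isomorphisms $\hom(\K^{\MW}_n,\mathbf{M}) \iso \hom(\K^{\MW}_{n-1},\mathbf{M}_{-1}) \iso \cdots \iso \hom(\K^{\MW}_1,\mathbf{M}_{-(n-1)})$, all compatible with the identifications with $\mathbf{M}_{-n}(k)$, reducing the problem to the behaviour of a single contraction $\hom(\K^{\MW}_1,\mathbf{N}) \to \hom(\K^{\MW}_0,\mathbf{N}_{-1})$. I would analyze this by evaluating homomorphisms on the universal element $[t] \in \K^{\MW}_1(\gm{})$, which lies in the contraction because it restricts to $0$ along $t = 1$ and which corresponds to $\langle 1 \rangle \in \K^{\MW}_0(k) = (\K^{\MW}_1)_{-1}(k)$ under Morel's identification: this computes the composite of the contraction map with the first-coordinate projection of Point (1) applied to $\mathbf{N}_{-1}$ and shows it recovers the isomorphism of Lemma \ref{lem:homfromkmwiscontraction}(1), which pins down the relevant factor $\mathbf{M}_{-n}(k)$. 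The step I expect to be the main obstacle is exactly this last bit of bookkeeping: tracking Morel's canonical isomorphisms $(\K^{\MW}_m)_{-1} \iso \K^{\MW}_{m-1}$ and the preferred generators through the iterated contraction carefully enough that the two coordinates of the decomposition of Point (1) come out as asserted, and reconciling it with the $\K^{\MW}_0$-module structures used to establish $\hom(\mathbf{I},\mathbf{M}) \iso {}_h\mathbf{M}_{-1}(k)$.
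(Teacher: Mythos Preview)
Your argument is correct and arrives at the same destination as the paper, but the route to Point~(1) is organized differently. You split $\K^{\MW}_0$ via the rank sequence $0 \to \mathbf{I} \to \K^{\MW}_0 \to \Z \to 0$ and then identify $\hom(\mathbf{I},\mathbf{M})$ by presenting $\mathbf{I}$ as $\K^{\MW}_1/h\K^{\MW}_1$ using the second fundamental exact sequence together with the equality $2\K^{\M}_1 = h\K^{\MW}_1$ (which you check via the fiber-product description). The paper instead invokes Morel's description of $\K^{\MW}_0$ as the free strictly $\aone$-invariant sheaf on $\gm{}/\gm{\times 2}$: the disjoint basepoint gives the $\mathbf{M}(k)$ factor immediately, and the reduced part $\widetilde{\H}_0^{\aone}(\gm{}/\gm{\times 2})$ is presented as the cokernel of the squaring map on $\gm{}$, which on $\aone$-homology becomes multiplication by $h$ on $\K^{\MW}_1$ (by \cite[Lemma~3.14]{MField}). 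So both proofs end up with $\mathbf{I} \cong \K^{\MW}_1/h$, but the paper gets there without the separate verification that $2\K^{\M}_1 = h\K^{\MW}_1$ and without needing the fiber-product presentation at that step; your approach is more hands-on and makes the dependence on the fundamental exact sequences explicit. For Point~(2) the two arguments are essentially identical: reduce to $n=1$ via Lemma~\ref{lem:homfromkmwiscontraction}(2), then track the universal element (your $[t]$, the paper's $\langle 1 \rangle$) through the contraction to see it lands in the $\mathbf{M}_{-n}(k)$ factor.
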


\begin{proof}
Note that $\K^{\MW}_0 = \H_0^{\aone}(\gm{}/\gm{\times 2}) = \widetilde{\H}_0^{\aone}(\gm{}/\gm{\times 2}_+)$ by \cite[Theorem 3.46]{MField} and what follows amounts to unwinding the proof of this result.  Note that $\gm{}/\gm{\times 2}$ is pointed by the image of $1 \in \gm{}(k)$ and there is an associated splitting $\gm{}/\gm{\times 2}_+ \cong S^0_k \vee \gm{}/\gm{\times 2}$.

By adjunction, one then obtains identifications of the form
\[
\begin{split}
\underline{\hom}(\K^{\MW}_0,\mathbf{M}) &\cong \underline{\hom}_{\ast}(\gm{}/\gm{\times 2}_+,\mathbf{M}) \\
&\cong \underline{\hom}_{\ast}(S^0_k \vee \gm{}/\gm{\times 2},\mathbf{M}) \\
&\cong \underline{\hom}_{\ast}(S^0_k,\mathbf{M}) \times \underline{\hom}_{\ast}(\gm{}/\gm{\times 2},\mathbf{M}) \\
&\cong \underline{\hom}(\Z,\mathbf{M}) \times \underline{\hom}(\widetilde{\H}_0^{\aone}(\gm{}/\gm{\times 2}),\mathbf{M}) \\
&\cong \mathbf{M} \times \underline{\hom}(\widetilde{\H}_0^{\aone}(\gm{}/\gm{\times 2}),\mathbf{M})
\end{split}
\]
Under this decomposition, the projection map $\K^{\MW}_0 \to \Z$ is precisely the rank map. On the other hand, the splitting $\gm{}/\gm{\times 2}_+ \cong S^0_k \wedge \gm{}/\gm{\times 2}$ corresponds to the splitting $\mathbf{I} \to \K^{\MW}_0$ as described before \cite[Corollary 3.47]{MField}.

Next, there is an exact sequence of Nisnevich sheaves of abelian groups of the form
\[
\gm{} \overset{x\mapsto x^2}{\longrightarrow} \gm{} \longrightarrow \gm{}/\gm{\times 2} \longrightarrow 0.
\]
Taking reduced $\aone$-homology (as the composite of taking the (based) free abelian group functor and the exact functor $\Laone^{ab}$) yields an exact sequence of the form:
\[
\widetilde{\H}_0^{\aone}(\gm{}) \longrightarrow \widetilde{\H}_0^{\aone}(\gm{}) \longrightarrow \widetilde{\H}_0^{\aone}(\gm{}/\gm{\times 2}) \longrightarrow 0.
\]
The map $\widetilde{\H}_0^{\aone}(\gm{}) = \K^{\MW}_1 \to \K^{\MW}_1$ induced by the squaring map on $\gm{}$ is multiplication by $h = \langle 1 \rangle + \langle -1 \rangle$ by \cite[Lemma 3.14]{MField}.  Thus, we conclude that
\[
\hom(\widetilde{\H}_0^{\aone}(\gm{}/\gm{\times 2}),\mathbf{M}) \iso {}_h \mathbf{M}_{-1}(k),
\]
which is what we wanted to show.

For Point (2), we appeal to Lemma \ref{lem:homfromkmwiscontraction}.  Indeed, it suffices by Lemma \ref{lem:homfromkmwiscontraction}(2) and induction to treat the case where $n = 1$.  As in the proof of Lemma \ref{lem:homfromkmwiscontraction}, adjunction yields identifications of the form:
\[
\underline{\hom}(\K^{\MW}_1,\mathbf{M}) \cong \underline{\hom}_*(\gm{},\mathbf{M}) \cong \underline{\hom}_{*}(S^0_k,\underline{\hom}_{*}(\gm{},\mathbf{M})) \cong \underline{\hom}(\Z,\mathbf{M}_{-1}).
\]
In particular, the map
\[
\hom(\K^{\MW}_1,\mathbf{M}) \longrightarrow \hom(\K^{\MW}_0,\mathbf{M}_{-1})
\]
induced by contraction factors through the isomorphism $\hom(\K^{\MW}_1,\mathbf{M}) \cong \hom(\Z,\mathbf{M}_{-1})$.

We treat the universal case: taking $\mathbf{M} = \K^{\MW}_1$, we see that the map induced by contraction factors through a homomorphism $\hom(\Z,\K^{\MW}_0)$.  Such homomorphisms correspond to elements of $\K^{\MW}_0(k)$ via the image of $1 \in \Z$ and under the identification, the identity map $\K^{\MW}_1 \to \K^{\MW}_1$ is sent to the class of $\langle 1 \rangle$.

The identification $(\K^{\MW}_1)_{-1} \cong \K^{\MW}_0$ can be seen in terms of the fiber product presentations $\K^{\MW}_1 \isomt \K^\M_1 \times_{\K^M_1/2} \mathbf{I}$ and $\K^{\MW}_0 \isomt \Z \times_{\Z/2} \mathbf{W}$.  The symbol map $\gm{} \to \K^{\MW}_1$ can be thought of as a set-theoretic splitting of the projection map $\K^{\MW}_1 \to \K^M_1$.  After contraction, this projection is sent to the rank map $\K^{\MW}_0 \to \Z$.  The factorization produced in the previous paragraph thus corresponds to a splitting of the rank map $\K^{\MW}_0 \to \Z$.  On the other hand, the decomposition in Point (1) corresponded with a decomposition $\K^{\MW}_0 \cong \Z \oplus \mathbf{I}$ and under this identification the unit $\langle 1 \rangle$ is sent to $(1,0)$, thus we conclude that the projection onto the other factor is the zero map.
\end{proof}

\begin{lem}
\label{lem:contractionsfactor}
Fix a base field $k$. If $\phi: \K^{\MW}_n \to
\mathbf{M}$ is a morphism of sheaves such that
$\phi_{-j} =0$, then
\begin{enumerate}[noitemsep,topsep=1pt]
\item \label{i:cf1} assuming $n \geq j \geq 0$, the morphism $\phi$ is trivial; and
\item  \label{i:cf2} assuming $0 \leq n < j$, the morphism $\phi$ factors through a morphism $\K^{\MW}_n/\mathbf{I}^{j} \to \mathbf{M}$.
\end{enumerate}
\end{lem}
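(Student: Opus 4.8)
The strategy is to reduce everything to the universal case $\mathbf{M} = \K^{\MW}_n$ (or, after enough contractions, to $\K^{\MW}_0$ and $\Z$), exploiting the representability results recorded in Lemma~\ref{lem:homfromkmwiscontraction} and Lemma~\ref{lem:homoutofkmw0}, together with the fiber-product presentation of $\K^{\MW}_n$ and the two fundamental exact sequences \eqref{eqn:fundamentalexactsequences}. The key point is that contraction is exact (Lemma~\ref{lem:contractionisexact}) and that $(\K^{\MW}_n)_{-j} = \K^{\MW}_{n-j}$; so the hypothesis $\phi_{-j} = 0$ lets us run an induction on $j$ via the adjunction identities already established.

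For Point~\eqref{i:cf1}, assume $n \ge j \ge 0$. First I would use Lemma~\ref{lem:homfromkmwiscontraction}(2) repeatedly to reduce to the case $j = n$, since contracting $j$ times identifies $\hom(\K^{\MW}_n,\mathbf{M})$ with $\hom(\K^{\MW}_{n-j},\mathbf{M}_{-j})$ compatibly, and $\phi \mapsto \phi_{-j}$ under this identification; if $\phi_{-j}=0$ then the corresponding element of $\hom(\K^{\MW}_{n-j},\mathbf{M}_{-j})$ is zero, but for $j < n$ the identification is an isomorphism $\hom(\K^{\MW}_n,\mathbf M)\cong\hom(\K^{\MW}_{n-j},\mathbf M_{-j})$, forcing $\phi=0$ outright. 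The remaining case is $j = n \ge 1$: here $\phi_{-n} \in \hom(\K^{\MW}_0,\mathbf{M}_{-n})$, and by Lemma~\ref{lem:homoutofkmw0}(2) the map $\hom(\K^{\MW}_n,\mathbf{M}) \to \hom(\K^{\MW}_0,\mathbf{M}_{-n})$ induced by contraction is \emph{injective} (its image is precisely the factor $\mathbf{M}_{-n}(k)$, and the map $\hom(\K^{\MW}_n,\mathbf M)\cong\mathbf M_{-n}(k)$ of Lemma~\ref{lem:homfromkmwiscontraction}(1) is a bijection onto that factor). Hence $\phi_{-n}=0$ already implies $\phi = 0$. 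The degenerate case $n = j = 0$ is immediate since contracting zero times is the identity.

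For Point~\eqref{i:cf2}, assume $0 \le n < j$. Again contract $n$ times using Lemma~\ref{lem:homfromkmwiscontraction}(2) to reduce to $n = 0$: we must show that $\psi \colon \K^{\MW}_0 \to \mathbf{N}$ with $\psi_{-m} = 0$ for some $m \ge 1$ (here $m = j - n$ and $\mathbf{N} = \mathbf{M}_{-n}$) factors through $\K^{\MW}_0/\mathbf{I}^{m}$. Now use the decomposition $\K^{\MW}_0 \cong \Z \times_{\Z/2} \mathbf{W} $ and the exact sequences \eqref{eqn:fundamentalexactsequences} in degree $0$: contracting the first sequence $0 \to \mathbf{I} \to \K^{\MW}_0 \to \Z \to 0$ $m$ times (invoking exactness of contraction and $(\mathbf I)_{-1}=\mathbf I$, $\Z_{-1}=0$) gives $(\K^{\MW}_0)_{-m} \cong \mathbf{W}$ for $m \ge 1$, and the contracted map $\psi_{-m}\colon\mathbf W\to\mathbf N_{-m}$ being zero means $\psi$ kills the ``Witt part'' after $m$ contractions; tracking this through the isomorphisms $\hom(\K^{\MW}_0,\mathbf N)\cong\mathbf N(k)\times{}_h\mathbf N_{-1}(k)$ of Lemma~\ref{lem:homoutofkmw0}(1) one sees that $\psi$ must annihilate $\mathbf{I}^{m} \subset \K^{\MW}_0$, i.e.\ factors through $\K^{\MW}_0/\mathbf{I}^{m}$. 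Unwinding the reduction gives the factorization through $\K^{\MW}_n/\mathbf{I}^{j}$.

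**Main obstacle.** The routine inductive bookkeeping is harmless; the delicate step is Point~\eqref{i:cf2}, where one must keep careful track of how the splitting $\mathbf{I} \to \K^{\MW}_0$ and the $h$-torsion subgroup in Lemma~\ref{lem:homoutofkmw0}(1) interact with iterated contraction, and in particular verify that ``$\psi_{-m} = 0$'' is equivalent to ``$\psi$ vanishes on $\mathbf{I}^{m}$'' rather than merely on a larger or smaller power of the fundamental ideal. This is where the fiber-product presentation and the precise identification $(\K^{\MW}_n)_{-j} = \K^{\MW}_{n-j}$ (for negative indices, $\K^{\MW}_{-r} \cong \mathbf{W}$ with $\mathbf I^{-r}\cong\mathbf W$) must be used with care, so I expect the bulk of the writing to be devoted to making that equivalence precise.
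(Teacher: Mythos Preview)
Your approach to Point~\eqref{i:cf1} is essentially correct and close in spirit to the paper's, though the paper streamlines it: rather than iterating Lemma~\ref{lem:homfromkmwiscontraction}(2) and then invoking Lemma~\ref{lem:homoutofkmw0}(2) for the boundary case $j=n$, the paper first replaces $\mathbf{M}$ by the image of $\phi$ (so $\phi$ becomes surjective and $\mathbf{M}$ is strictly $\aone$-invariant), whence $\phi_{-j}=0$ forces $\mathbf{M}_{-j}=0$, hence $\mathbf{M}_{-n}=0$, hence $\hom(\K^{\MW}_n,\mathbf{M}) \cong \mathbf{M}_{-n}(k)=0$ in one stroke. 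Note that you implicitly assume $\mathbf{M}$ is strictly $\aone$-invariant in order to cite Lemma~\ref{lem:homfromkmwiscontraction}; the paper's image-factorization is what justifies this.

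For Point~\eqref{i:cf2}, your plan is substantially more complicated than necessary, and the ``unwinding'' step you flag as the main obstacle is a genuine gap: you would need to know that the contraction map $\hom(\mathbf{I}^j,\mathbf{M}) \to \hom(\mathbf{I}^{j-n},\mathbf{M}_{-n})$ is injective, which is not covered by the lemmas you cite and whose proof would itself require the trick you are missing. The paper's argument avoids the reduction to $n=0$ entirely. After the same image-factorization (so $\mathbf{M}_{-j}=0$), it simply precomposes $\phi$ with the composite
\[
\K^{\MW}_j \longtwoheadrightarrow \mathbf{I}^j \longhookrightarrow \mathbf{I}^{n+1} \longhookrightarrow \K^{\MW}_n,
\]
using the surjection from the second fundamental exact sequence and the inclusions of powers of $\mathbf{I}$. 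The resulting map $\K^{\MW}_j \to \mathbf{M}$ lies in $\hom(\K^{\MW}_j,\mathbf{M}) \cong \mathbf{M}_{-j}(k) = 0$ by Lemma~\ref{lem:homfromkmwiscontraction}(1); since $\K^{\MW}_j \to \mathbf{I}^j$ is surjective, $\phi|_{\mathbf{I}^j}=0$ and the factorization follows immediately. This precomposition idea is the key insight you are missing: it converts a question about maps out of $\mathbf{I}^j$ (where no representability is available) into one about maps out of $\K^{\MW}_j$ (where Lemma~\ref{lem:homfromkmwiscontraction} applies directly).
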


\begin{proof}
Factor $\phi: \K^{\MW}_n \onto \operatorname{Im}(\phi) \hookrightarrow \mathbf{M}$.  Since the inclusion of the abelian category of strictly $\aone$-invariant sheaves into the abelian category of abelian sheaves is exact (see Lemma \ref{lem:stableaoneconnectivitystrictlyaoneinvariantabeliancategory}) we can assume without loss of generality that $\operatorname{Im}(\phi)$ is strictly $\aone$-invariant.  Thus, it suffices to consider the
case where $\phi: \K^{\MW}_n \onto \mathbf{M}$ is an epimorphism and $\mathbf{M}$ is strictly $\aone$-invariant.

If $\mathbf{M}_{-0} = 0$, then $\mathbf{M} = 0$, and there is nothing to check. Therefore, we can assume without loss of generality that $j \geq 1$. By Lemma \ref{lem:homfromkmwiscontraction}, for any integer $r \geq 1$, $\mathbf{M}_{-r}(k) \iso \hom(\K^{\MW}_r,\mathbf{M})$. For \eqref{i:cf1} we simply observe that if $n \geq j$, then the morphism $\K^{\MW}_n \to \mathbf{M}$ is the trivial map since $\mathbf{M}_{-n}(k) = 0$ as well.

For \eqref{i:cf2} begin by observing that, since $0 \leq n < j$, we can consider the following diagram:
\[
\K^{\MW}_j \longtwoheadrightarrow \mathbf{I}^j \longhookrightarrow \mathbf{I}^{n+1} \longhookrightarrow \K^{\MW}_n \longtwoheadrightarrow \mathbf{M};
\]
reading from the left, the first and third maps are those in the exact sequences in Diagram (\ref{eqn:fundamentalexactsequences}),
the map $\mathbf{I}^j \hookrightarrow \mathbf{I}^{n+1}$ is the standard inclusion (since $j \geq n+1$, this makes sense), and the final epimorphism is the one given by the assumptions. Since $\mathbf{M}_{-j} = 0$ (and $j \geq 1$ by assumption), this composite is trivial, which means the map $\K^{\MW}_n \to \mathbf{M}$ factors through the quotient $\K^{\MW}_n/\mathbf{I}^j$.
\end{proof}

\begin{rem}
This result will be applied below with $\K^{\MW}_n \to \mathbf{M}$ a map to a strictly $\aone$-invariant sheaf with
strictly $\aone$ invariant cokernel.
\end{rem}

\subsubsection*{Some result on \texorpdfstring{$\aone$}{A1}-tensor products}
\begin{prop}
\label{prop:aonetensorproductmilnorwittktheorysheaves}
For any integers $m,n \geq 1$, there is an isomorphism $\K^{\MW}_m \tensor^{\aone} \K^{\MW}_n \isomt \K^{\MW}_{m+n}$.
\end{prop}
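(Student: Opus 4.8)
The plan is to deduce the statement from the already-established computation of the first non-vanishing $\aone$-homotopy sheaf of a smash product (Corollary \ref{cor:homologyofsmashproducts}), applied to a pair of suitably highly suspended motivic spheres, rather than by a direct analysis of $\H_0^{\aone}(\K^{\MW}_m \tensor^{\mathrm{L}} \K^{\MW}_n)$. The key point is that, by Morel's computation \cite[Theorem 1.23]{MField}, $\K^{\MW}_m$ is the first non-vanishing $\aone$-homotopy sheaf of $S^{a+m\alpha}$ for any $a \geq 2$; the smash product of two such spheres is again a sphere of the same type, whose first non-vanishing $\aone$-homotopy sheaf is $\K^{\MW}_{m+n}$; and Corollary \ref{cor:homologyofsmashproducts} relates the latter to $\K^{\MW}_m \tensor^{\aone} \K^{\MW}_n$.

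Concretely, I would first fix auxiliary integers $a, b \geq 2$ (for instance $a = b = 2$) and put $\mathscr{X} := S^{a+m\alpha}$ and $\mathscr{Y} := S^{b+n\alpha}$. Since $S^{a+m\alpha} = S^a_s \wedge \gm{\sma m}$ is an $a$-fold simplicial suspension, it is simplicially $(a-1)$-connected; as $k$ is perfect the unstable $\aone$-connectivity property holds (Theorem \ref{thm:unstableaoneconnectivityperfectfields}), so Theorem \ref{thm:unstablenconnectivity} shows $\mathscr{X}$ is $\aone$-$(a-1)$-connected, and likewise $\mathscr{Y}$ is $\aone$-$(b-1)$-connected. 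Because $a, b \geq 2$, Morel's theorem gives $\bpi_a^{\aone}(\mathscr{X}) \iso \K^{\MW}_m$ and $\bpi_b^{\aone}(\mathscr{Y}) \iso \K^{\MW}_n$. Next, using that the smash product is symmetric monoidal, I would identify $\mathscr{X} \wedge \mathscr{Y} \cong S^{(a+b)+(m+n)\alpha}$, and since $a+b \geq 2$ apply Morel's theorem once more to get $\bpi_{a+b}^{\aone}(\mathscr{X} \wedge \mathscr{Y}) \iso \K^{\MW}_{m+n}$. Finally I would invoke Corollary \ref{cor:homologyofsmashproducts} --- whose hypotheses are met, since both the unstable and stable $\aone$-connectivity properties hold over $k$ (Theorems \ref{thm:unstableaoneconnectivityperfectfields} and \ref{thm:stableaoneconnectivitypropertyholds}) and the connectivity bounds $a, b \geq 2$ are in force --- to obtain $\bpi_{a+b}^{\aone}(\mathscr{X} \wedge \mathscr{Y}) \isomto \bpi_a^{\aone}(\mathscr{X}) \tensor^{\aone} \bpi_b^{\aone}(\mathscr{Y})$, and then chain these isomorphisms to conclude $\K^{\MW}_{m+n} \iso \K^{\MW}_m \tensor^{\aone} \K^{\MW}_n$.

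I do not expect a genuine obstacle: the substantive work is already packaged into Corollary \ref{cor:homologyofsmashproducts} (which itself rests on the $\aone$-Hurewicz theorem and the Eilenberg--Zilber computation of Proposition \ref{prop:aonehomologyofproduct}) and into Morel's computation of the homotopy sheaves of motivic spheres. The only things demanding care are the indexing --- choosing $a, b$ large enough that Morel's identification $\bpi_p^{\aone}(S^{p+q\alpha}) \iso \K^{\MW}_q$ is valid (i.e.\ $p \geq 2$) and so that the connectivity hypotheses of Corollary \ref{cor:homologyofsmashproducts} hold --- and checking that nothing is circular, i.e.\ that neither Morel's computation nor Corollary \ref{cor:homologyofsmashproducts} relies on the present proposition. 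As an alternative one could instead show directly that $\widetilde{C}^{\aone}_*(\gm{\sma m})$ is concentrated in degree $0$ with value $\K^{\MW}_m$ and combine this with the Eilenberg--Zilber isomorphism $\widetilde{C}^{\aone}_*(\gm{\sma m}) \tensor^{\mathrm{L}} \widetilde{C}^{\aone}_*(\gm{\sma n}) \iso \widetilde{C}^{\aone}_*(\gm{\sma(m+n)})$ and the definition of $\tensor^{\aone}$; the route through highly-connected spheres has the advantage of not requiring that degeneration statement to be proved separately.
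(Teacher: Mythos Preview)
Your proposal is correct and follows essentially the same idea as the paper: realize $\K^{\MW}_m$ and $\K^{\MW}_n$ as the first non-vanishing $\aone$-homotopy (or homology) sheaves of motivic spheres, smash those spheres together to obtain another sphere, and read off $\K^{\MW}_{m+n}$. The paper uses the specific models ${\mathbb A}^m \setminus 0 \weq S^{(m-1)+m\alpha}$ together with the join identification ${\mathbb A}^m \setminus 0 \ast {\mathbb A}^n \setminus 0 \weq {\mathbb A}^{m+n} \setminus 0$, and phrases the argument in terms of $\aone$-homology via Proposition \ref{prop:aonehomologyofproduct}; you instead pad with extra simplicial suspensions (choosing $a,b \geq 2$) and invoke the homotopy-sheaf version, Corollary \ref{cor:homologyofsmashproducts}. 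Your choice has the minor advantage that the connectivity hypotheses are manifestly satisfied in all cases, whereas the paper must add a parenthetical remark to cover $m=1$ or $n=1$; conversely, the paper's join formulation makes the geometric content of the smash-product identification a little more explicit.
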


\begin{proof}
Morel computed $\widetilde{\H}_{n-1}^{\aone}({\mathbb A}^i \setminus 0) \iso \K^{\MW}_n$ \cite[Theorem 6.40]{MField} (strictly speaking, this result is stated for $n \geq 2$, but the result is true for $n = 1$ as well by unwinding the definitions and appealing to \cite[Theorem 3.37]{MField}). There are identifications
\[
\Sigma {\mathbb A}^m \setminus 0 \wedge {\mathbb A}^n \setminus 0 \weq {\mathbb A}^m \setminus 0 \ast {\mathbb A}^n \setminus 0 \weq {\mathbb A}^{m+n} \setminus 0
\]
(here $\ast$ means join), for any $m,n \geq 1$. Proposition \ref{prop:aonehomologyofproduct} then yields
\[
\K^{\MW}_m \tensor^{\aone} \K^{\MW}_n \iso \widetilde{\H}_{n+m-2}^{\aone}({\mathbb A}^m \setminus 0 \wedge {\mathbb A}^n \setminus 0),
\]
which when combined with the suspension isomorphism $\widetilde{\H}_{n+m-2}^{\aone}({\mathbb A}^m \setminus 0\,\sma\,{\mathbb A}^n \setminus 0) \iso \widetilde{\H}_{n+m-1}^{\aone}({\mathbb A}^m \setminus 0 \ast {\mathbb A}^n \setminus 0) \iso \K^{\MW}_{n+m}$ yields the result.
\end{proof}

\begin{lem}
\label{lem:aonetensorproductmilnorktheorysheaves}
For any integers $m,n \geq 1$, and any integer $r \geq 0$ there are canonical isomorphisms $\K^{\MW}_m \tensor^{\aone}
\K^\M_n/r \iso \K^\M_{m+n}/r$. There are also canonical isomorphisms $\K^\M_m/r \tensor^{\aone} \K^\M_n/r \iso \K^\M_{m+n}/r$.
\end{lem}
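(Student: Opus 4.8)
The plan is to reduce everything to Proposition~\ref{prop:aonetensorproductmilnorwittktheorysheaves} and the fundamental exact sequences \eqref{eqn:fundamentalexactsequences}, using repeatedly that $\tensor^{\aone}$ is right exact in each variable. Throughout we work over a perfect field $k$, so that the stable $\aone$-connectivity property holds (Theorem~\ref{thm:stableaoneconnectivitypropertyholds}); by Lemma~\ref{lem:stableaoneconnectivitystrictlyaoneinvariantabeliancategory} the category $\Ab^{\aone}_k$ is then abelian, its inclusion into $\Ab_k$ is exact, and it is the heart of the homotopy $t$-structure. Right exactness of $-\tensor^{\aone}\mathbf{B}$ follows from the fact that a short exact sequence $0\to\mathbf{A}'\to\mathbf{A}\to\mathbf{A}''\to 0$ in $\Ab^{\aone}_k$ gives a triangle $\mathbf{A}'\tensor^{\mathrm{L}}\mathbf{B}\to\mathbf{A}\tensor^{\mathrm{L}}\mathbf{B}\to\mathbf{A}''\tensor^{\mathrm{L}}\mathbf{B}\to$ of $(-1)$-connected complexes, and applying the homological functor $\H_0^{\aone}$ — which vanishes in negative degrees on $(-1)$-connected complexes, by the stable $\aone$-connectivity property — produces $\mathbf{A}'\tensor^{\aone}\mathbf{B}\to\mathbf{A}\tensor^{\aone}\mathbf{B}\to\mathbf{A}''\tensor^{\aone}\mathbf{B}\to 0$. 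Applying this to $0\to r\mathbf{N}\to\mathbf{N}\to\mathbf{N}/r\to 0$, and using that $\mathbf{N}\twoheadrightarrow r\mathbf{N}$, one gets $\mathbf{A}\tensor^{\aone}(\mathbf{N}/r)\iso(\mathbf{A}\tensor^{\aone}\mathbf{N})/r$ for every $r\geq 0$.

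The first step is the integral statement $\K^{\MW}_m\tensor^{\aone}\K^\M_n\iso\K^\M_{m+n}$. Tensoring the first sequence of \eqref{eqn:fundamentalexactsequences}, namely $0\to\mathbf{I}^{n+1}\to\K^{\MW}_n\to\K^\M_n\to 0$, with $\K^{\MW}_m$ and using Proposition~\ref{prop:aonetensorproductmilnorwittktheorysheaves} identifies $\K^{\MW}_m\tensor^{\aone}\K^\M_n$ with the cokernel of the multiplication-induced map $\mu\colon \K^{\MW}_m\tensor^{\aone}\mathbf{I}^{n+1}\to\K^{\MW}_m\tensor^{\aone}\K^{\MW}_n=\K^{\MW}_{m+n}$. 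The image of $\mu$ is a strictly $\aone$-invariant subsheaf of $\K^{\MW}_{m+n}$, hence — since such sheaves are unramified, sections over a smooth connected scheme injecting into sections over its function field — is determined by its stalks at finitely generated fields $E/k$, where $\mu$ is the honest multiplication $\K^{\MW}_m(E)\tensor\mathbf{I}^{n+1}(E)\to\K^{\MW}_{m+n}(E)$. Using Morel's fiber product presentation $\K^{\MW}_\ell\iso\K^\M_\ell\times_{\K^\M_\ell/2}\mathbf{I}^\ell$ from \cite{MPuissances} — under which $\mathbf{I}^{n+1}=\ker(\K^{\MW}_n\to\K^\M_n)$ corresponds to $\{0\}\times\mathbf{I}^{n+1}$ and $\K^{\MW}_m\to\mathbf{I}^m$ is surjective — together with the Witt-ring identity $\mathbf{I}^a(E)\cdot\mathbf{I}^b(E)=\mathbf{I}^{a+b}(E)$, this stalk is $\{0\}\times\mathbf{I}^{m+n+1}(E)$, so $\operatorname{im}(\mu)=\mathbf{I}^{m+n+1}\subseteq\K^{\MW}_{m+n}$. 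By \eqref{eqn:fundamentalexactsequences} once more, $\K^{\MW}_m\tensor^{\aone}\K^\M_n\iso\K^{\MW}_{m+n}/\mathbf{I}^{m+n+1}\iso\K^\M_{m+n}$. Tensoring with $\Z/r$ as in the first paragraph now yields the first isomorphism of the lemma, $\K^{\MW}_m\tensor^{\aone}\K^\M_n/r\iso(\K^{\MW}_m\tensor^{\aone}\K^\M_n)/r\iso\K^\M_{m+n}/r$.

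For the second isomorphism I first compute $\K^\M_m\tensor^{\aone}\K^\M_n/r$. Tensoring $0\to\mathbf{I}^{m+1}\to\K^{\MW}_m\to\K^\M_m\to 0$ with $\K^\M_n/r$ and using the first isomorphism presents $\K^\M_m\tensor^{\aone}\K^\M_n/r$ as the cokernel of a map $\mathbf{I}^{m+1}\tensor^{\aone}\K^\M_n/r\to\K^\M_{m+n}/r$. By naturality of the multiplication pairing this map factors through the map $\mathbf{I}^{m+1}\tensor^{\aone}\K^\M_n/r\to\K^\M_m\tensor^{\aone}\K^\M_n/r$ induced by $\mathbf{I}^{m+1}\hookrightarrow\K^{\MW}_m\twoheadrightarrow\K^\M_m$, which is zero since $\mathbf{I}^{m+1}=\ker(\K^{\MW}_m\to\K^\M_m)$; hence $\K^\M_m\tensor^{\aone}\K^\M_n/r\iso\K^\M_{m+n}/r$. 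A final mod-$r$ reduction, tensoring $0\to r\K^\M_m\to\K^\M_m\to\K^\M_m/r\to 0$ with $\K^\M_n/r$, gives $\K^\M_m/r\tensor^{\aone}\K^\M_n/r\iso(\K^\M_m\tensor^{\aone}\K^\M_n/r)/r\iso\K^\M_{m+n}/r$. All of the isomorphisms above are induced by the multiplication pairings and the canonical projections, so they are natural.

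The main obstacle is the identification $\operatorname{im}(\mu)=\mathbf{I}^{m+n+1}$ in the second paragraph. Two things require care: that the abstractly-defined map $\mu$ coming from $\tensor^{\mathrm{L}}$ really is the sheaf-level multiplication pairing (this holds because $\tensor^{\aone}$ receives a natural map from the ordinary tensor product of sheaves of abelian groups, compatibly with the graded-ring structure of $\K^{\MW}_*$), and the reduction of the subsheaf identification to field-valued points, which rests on the unramifiedness of strictly $\aone$-invariant sheaves together with the quadratic-form inputs ($\K^{\MW}_m(E)\to\mathbf{I}^m(E)$ surjective and $\mathbf{I}^a\mathbf{I}^b=\mathbf{I}^{a+b}$). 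Everything else is a formal consequence of right exactness of $\tensor^{\aone}$, the fundamental exact sequences \eqref{eqn:fundamentalexactsequences}, and Proposition~\ref{prop:aonetensorproductmilnorwittktheorysheaves}.
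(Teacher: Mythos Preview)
Your argument is correct, but it takes a detour that the paper avoids.  The paper does not use the short exact sequence $0\to\mathbf{I}^{n+1}\to\K^{\MW}_n\to\K^\M_n\to 0$ directly; instead it precomposes with the surjection $\K^{\MW}_{n+1}\twoheadrightarrow\mathbf{I}^{n+1}$ given by multiplication by $\eta$, obtaining the presentation
\[
\K^{\MW}_{n+1}\xrightarrow{\ \eta\ }\K^{\MW}_n\longrightarrow\K^\M_n\longrightarrow 0.
\]
Tensoring this with $\K^{\MW}_m$ keeps both non-trivial terms inside the range where Proposition~\ref{prop:aonetensorproductmilnorwittktheorysheaves} applies verbatim, and the induced map $\K^{\MW}_{m+n+1}\to\K^{\MW}_{m+n}$ is again multiplication by $\eta$ (by compatibility of the isomorphisms of Proposition~\ref{prop:aonetensorproductmilnorwittktheorysheaves} with the graded ring structure).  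The cokernel is then immediately $\K^\M_{m+n}$.  Your route, by contrast, requires you to compute the image of the map $\K^{\MW}_m\tensor^{\aone}\mathbf{I}^{n+1}\to\K^{\MW}_{m+n}$, for which you invoke the fiber product presentation of $\K^{\MW}_*$ from \cite{MPuissances} (hence the restriction $\operatorname{char} k\neq 2$), the Witt-ring identity $\mathbf{I}^a\cdot\mathbf{I}^b=\mathbf{I}^{a+b}$, and the fact that the natural map $\mathbf{A}(E)\otimes\mathbf{B}(E)\to(\mathbf{A}\tensor^{\aone}\mathbf{B})(E)$ is surjective for fields $E$.  All of this is true, but none of it is needed: the $\eta$-presentation sidesteps $\mathbf{I}^{n+1}$ entirely.

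For the second isomorphism your argument is again correct but more delicate than necessary: you use that the isomorphism $\K^{\MW}_m\tensor^{\aone}\K^\M_n/r\iso\K^\M_{m+n}/r$ is a \emph{multiplication} map, hence factors through the projection $\K^{\MW}_m\to\K^\M_m$, forcing the restriction to $\mathbf{I}^{m+1}\tensor^{\aone}\K^\M_n/r$ to vanish.  (Note that once you know the isomorphism factors through the surjection $\K^{\MW}_m\tensor^{\aone}\K^\M_n/r\twoheadrightarrow\K^\M_m\tensor^{\aone}\K^\M_n/r$, that surjection is automatically an isomorphism---this is a quicker way to conclude than showing $\alpha=0$.)  The paper simply says ``repeating this discussion in the other factor'', i.e.\ it runs the same $\eta$-presentation argument with the roles of the two factors exchanged.
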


\begin{proof}
By Lemma \ref{lem:homfromkmwiscontraction}, for $n \geq 2$, we can identify $\hom(\K^{\MW}_{n+1},\K^{\MW}_n) \iso
\K^{\MW}_{-1}(k) \iso \mathbf{W}(k)$ (the final identification by \cite[Lemma 3.10]{MField}). The group $\K^{MW}_{-1}(k)$ contains the element $\eta$ and we refer to the corresponding map $\K^{\MW}_{n+1} \to \K^{\MW}_n$ using the same notation. Unwinding the definitions, this map corresponds to the composite of the $\K^{\MW}_{n+1} \to \mathbf{I}^{n+1}$ defined on sections by multiplication by $\eta$ and the inclusion map $\mathbf{I}^{n+1} \hookrightarrow \K^{\MW}_{n}$.

By the discussion of the previous paragraph, the first exact sequence of Diagram (\ref{eqn:fundamentalexactsequences}) yields the exact sequence
\[
\K^{\MW}_{n+1} \stackrel{\eta}{\longrightarrow} \K^{\MW}_n \longrightarrow \K^\M_n \longrightarrow 0.
\]
Tensoring this exact sequence with $\K^{\MW}_{m}$ and applying Proposition \ref{prop:aonetensorproductmilnorwittktheorysheaves} we conclude that there is an exact sequence
\[
\K^{\MW}_{m+n+1} \stackrel{\eta}{\longrightarrow} \K^{\MW}_{m+n} \longrightarrow \K^{\MW}_m \tensor^{\aone} \K^\M_{n} \longrightarrow 0.
\]
However, this sequence identifies $\K^{\MW}_m \tensor^{\aone} \K^\M_{n} \iso \K^\M_{m+n}$. Repeating this discussion using the exact sequence $\K^\M_n \to \K^\M_n \to \K^\M_n/r$ we obtain the isomorphism of the statement. Repeating this discussion in the other factor allows us to obtain the final statement.
\end{proof}

\subsubsection*{Rationalized Milnor--Witt K-theory sheaves}
Now, we turn our attention to rationalized Milnor--Witt sheaves $\K^{\MW}_n \tensor \Q$, which will reappear in Section \ref{ss:miscellaneouscomputations}.

\begin{lem}
\label{lem:rationalizedmilnorwitt}
Fix a base field $k$, assumed to have characteristic unequal to $2$.
\begin{enumerate}[noitemsep,topsep=1pt]
\item For every integer $n$, there is a canonical isomorphism $\K^{\MW}_n \tensor \Q \isomt \K^\M_n \tensor \Q \times \mathbf{I}^n \tensor \Q$.
\item If $k$ is not formally real, then $\mathbf{I}^n \tensor \Q$ is trivial, i.e., $\K^{\MW}_n \tensor \Q \isomt \K^\M_n \tensor \Q$.
\end{enumerate}
\end{lem}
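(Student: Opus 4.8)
The plan is to deduce both points from Morel's fibre product presentation of $\K^{\MW}_n$ recalled above, exploiting that rationalization $(-)\tensor\Q$ is an exact functor (since $\Q$ is flat over $\Z$) on the abelian category $\Ab^{\aone}_k$ — here I use Lemma \ref{lem:stableaoneconnectivitystrictlyaoneinvariantabeliancategory}, which tells us the inclusion into $\Ab_k$ is exact, and the affirmed Milnor conjecture on quadratic forms, which identifies $\mathbf{I}^n/\mathbf{I}^{n+1}\iso \K^\M_n/2$.

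For Point (1), I would first convert the fibre product square into a short exact sequence. Since $\K^\M_n \to \K^\M_n/2$ is an epimorphism and $\K^{\MW}_n = \K^\M_n \times_{\K^\M_n/2} \mathbf{I}^n$, the canonical projections fit into an exact sequence of strictly $\aone$-invariant sheaves
\[
0 \longrightarrow \K^{\MW}_n \longrightarrow \K^\M_n \oplus \mathbf{I}^n \longrightarrow \K^\M_n/2 \longrightarrow 0,
\]
where the right-hand map sends $(x,y)$ to $\bar x - \bar y$ under the identification above (and with the conventions $\K^\M_n = \K^\M_n/2 = 0$, $\mathbf{I}^n \iso \mathbf{W}$ for $n<0$, in which case the statement is simply the tautological $\K^{\MW}_n \iso \mathbf{W}$). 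Applying $(-)\tensor\Q$ and using that $\K^\M_n/2$ is $2$-torsion, hence $\K^\M_n/2 \tensor\Q = 0$, shows that the canonical map $\K^{\MW}_n\tensor\Q \longrightarrow (\K^\M_n\tensor\Q) \oplus (\mathbf{I}^n\tensor\Q)$ induced by the two projections is an isomorphism; this gives Point (1), canonically.

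For Point (2) it remains to prove $\mathbf{I}^n\tensor\Q = 0$ when $k$ is not formally real, which I would do by showing the sheaf $\mathbf{I}^n$ is annihilated by a fixed positive integer. Since $k$ is not formally real, its level $s(k)$ is finite; set $N := 2\,s(k)$. For any field extension $K/k$ one has $s(K) \leq s(k)$, so $N\cdot\langle 1\rangle = 0$ in $W(K)$, and since $W(K)$ is additively generated by one-dimensional forms $\langle a\rangle = \langle a\rangle\tensor\langle 1\rangle$ this forces $N\cdot W(K) = 0$. By the unramified description of $\mathbf{W}$ (its sections over a smooth $k$-scheme inject into the product of the Witt groups of the residue fields at its points), the Nisnevich sheaf $\mathbf{W}$, and hence its subsheaf $\mathbf{I}^n$ — with $\mathbf{I}^n \iso \mathbf{W}$ for $n<0$ — is killed by $N$. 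Therefore $\mathbf{I}^n\tensor\Q = 0$, and Point (1) yields $\K^{\MW}_n\tensor\Q \isomto \K^\M_n\tensor\Q$.

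The main obstacle is the first step's bookkeeping: verifying carefully that the fibre product square really does produce the displayed short exact sequence of \emph{strictly} $\aone$-invariant sheaves (invoking exactness of $\Ab^{\aone}_k \hookrightarrow \Ab_k$ and the Milnor conjecture), and, for Point (2), passing rigorously from "$W(K)$ uniformly torsion over residue fields $K$" to "$\mathbf{W}$ torsion as a Nisnevich sheaf" via the Gersten-type injectivity for the unramified Witt sheaf. Everything else is a routine consequence of flatness of $\Q$ and the classical fact that non-formally-real fields have finite level.
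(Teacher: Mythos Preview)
Your proof is correct and follows essentially the same route as the paper: for Point (1) both arguments use the fibre product presentation and the vanishing of $\K^\M_n/2\tensor\Q$, and for Point (2) both reduce via unramifiedness to the torsion of Witt groups over non-formally-real extension fields. Your version supplies more detail (the explicit short exact sequence and the uniform torsion bound $2s(k)$), but the underlying ideas are identical.
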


\begin{proof}
The first statement follows from the fiber product presentation of $\K^{\MW}_n$ together with the fact that $\K^\M_n/2 \tensor \Q = 0$. For the second statement, since $\mathbf{I}^n$ is an unramified sheaf, it suffices to show that $\mathbf{I}^n(L) \tensor \Q = 0$ for $L$ a finitely generated extension of $k$. If $k$ is not formally real, then any extension field has the same property, and the result follows immediately from the fact that $\mathbf{I}^n(L)$ is a $2$-torsion sheaf if $L$ is not formally real \cite[Proposition 31.4]{ElmanKarpenkoMerkurjev}.
\end{proof}

\begin{cor}
\label{cor:nontrivialityofrationalization}
Fix a base field $k$, assumed to have characteristic unequal to $2$.
\begin{enumerate}[noitemsep,topsep=1pt]
\item For any integer $n \geq 0$, $\K^{\MW}_n \tensor \Q$ is non-trivial.
\item If $k$ is formally real, then for any integer $n$, $\K^{\MW}_n \tensor \Q$ is non-trivial.
\end{enumerate}
\end{cor}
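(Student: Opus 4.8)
The plan is to reduce everything to Lemma \ref{lem:rationalizedmilnorwitt}(1), which identifies $\K^{\MW}_n \tensor \Q$ with the product $\K^\M_n \tensor \Q \times \mathbf{I}^n \tensor \Q$: it then suffices to exhibit a nonzero section of one of these two factors over a finitely generated extension field of $k$. Since $\K^{\MW}_n$, $\K^\M_n$ and $\mathbf{I}^n$ are unramified (in particular strictly $\aone$-invariant) sheaves, so are their rationalizations, and such a sheaf is non-trivial if and only if it takes a nonzero value on some finitely generated extension of $k$; I will use this reduction without further comment.

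For Point (1), fix $n \geq 0$ and show $\K^\M_n \tensor \Q$ is non-trivial. The case $n = 0$ is clear since $\K^\M_0 = \Z$. For $n \geq 1$, work over the rational function field $F := k(t_1, \ldots, t_n)$ and consider the symbol $\{t_1, \ldots, t_n\} \in \K^\M_n(F)$. Composing the $n$ successive residue homomorphisms attached to the valuations with uniformizers $t_n, t_{n-1}, \ldots, t_1$ yields a homomorphism $\K^\M_n(F) \to \K^\M_0(k) = \Z$ that carries $\{t_1, \ldots, t_n\}$ to $1$. Hence this symbol has infinite order, so $\K^\M_n(F) \tensor \Q \neq 0$; by Lemma \ref{lem:rationalizedmilnorwitt}(1) this shows $\K^{\MW}_n \tensor \Q$ is non-trivial.

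For Point (2), assume $k$ is formally real and fix an ordering on $k$; I show $\mathbf{I}^n \tensor \Q$ is non-trivial for every $n \in \Z$. If $n \geq 0$, the signature at the chosen ordering restricts to a homomorphism $\mathbf{I}^n(k) \to \Z$ taking the value $2^n$ on the $n$-fold Pfister form $\langle 1,1 \rangle^{\tensor n} \in \mathbf{I}^n(k)$, so its image is nonzero. If $n < 0$, the conventions in force give $\mathbf{I}^n \iso \mathbf{W}$, and the signature sends $\langle 1 \rangle$ to $1$. Either way $\mathbf{I}^n(k) \tensor \Q \neq 0$, and again Lemma \ref{lem:rationalizedmilnorwitt}(1) gives that $\K^{\MW}_n \tensor \Q$ is non-trivial.

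The only non-formal ingredient is the fact that the Milnor symbol $\{t_1, \ldots, t_n\}$ over a purely transcendental extension of $k$ is non-torsion, which I expect to be the main step to state carefully (it is standard, e.g.\ via the iterated residue computation above, or via Milnor's exact sequence computing $\K^\M_*(k(t))$); everything else is bookkeeping with the decomposition of Lemma \ref{lem:rationalizedmilnorwitt} and with the signature homomorphism on the Witt sheaf.
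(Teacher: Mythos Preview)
Your proof is correct, but it takes a different route from the paper's. The paper argues by \emph{contraction}: since $(\K^{\MW}_n \tensor \Q)_{-m} \iso \K^{\MW}_{n-m} \tensor \Q$, it suffices to find some $m$ for which the contracted sheaf is visibly nonzero. For Point~(1) one takes $m = n$ and observes $\K^\M_0 \iso \Z$; for Point~(2) one takes $m > n$, lands in $\mathbf{W} \tensor \Q$, and then passes to a real closure $k'$ where $W(k') \iso \Z$.

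Your approach instead stays at the fixed level $n$ and produces explicit non-torsion sections: a Milnor symbol over a purely transcendental extension for Point~(1), and a Pfister form detected by signature for Point~(2). Both arguments are valid; the paper's is shorter because the contraction machinery has already been set up and lets one sidestep constructing any elements at all, while yours is more self-contained and avoids invoking the compatibility of contraction with rationalization. One minor stylistic point: your appeal to unramifiedness is not really needed---exhibiting a single nonzero section over (an open in) $\mathbb{A}^n_k$ already shows the sheaf is non-trivial.
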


\begin{proof}
Both tensoring with $\Q$ and contraction are exact endofunctors of the category of strictly $\aone$-invariant sheaves
(cf. Lemma \ref{lem:contractionisexact}) and it follows immediately from the definitions that the two constructions
commute, i.e., if $\mathbf{M}$ is strictly $\aone$-invariant, then $(\mathbf{M} \tensor \Q)_{-1} \iso \mathbf{M}_{-1}
\tensor \Q$. Thus, to show $\K^{\MW}_n \tensor \Q$ is non-trivial, it suffices to show that for some $m > 0$,
$(\K^{\MW}_n \tensor \Q)_{-m} = (\K^{\MW}_{n})_{-m} \tensor \Q \iso \K^{\MW}_{n-m} \tensor \Q$ is non-trivial. By Lemma
\ref{lem:rationalizedmilnorwitt}(1), there is a canonical identification $\K^{\MW}_{n-m} \tensor \Q \iso \K^\M_{n-m} \tensor \Q \oplus \mathbf{I}^{n-m} \tensor \Q$.

For (1), take $m = n$, and observe that $\K^\M_{0} \iso \Z$. For (2) observe that if $m > n$, then $\K^{\MW}_{n-m}
\tensor \Q \iso \mathbf{W} \tensor \Q$.  Since $k$ is assumed formally real, we can choose an ordering of $k$
\cite[Proposition 31.20]{ElmanKarpenkoMerkurjev}, and thus find a real closed field $k'$ containing $k$. In that case,
observe that $W(k') \iso \Z$ by Sylvester's law of inertia \cite[Proposition 31.5]{ElmanKarpenkoMerkurjev}. Thus, $\mathbf{W}(k') \tensor \Q$ is non-zero, so the sheaf $\mathbf{W} \tensor \Q$ is non-trivial.
\end{proof}

\begin{rem}
Once again, the assumption that $k$ has characteristic unequal to $2$ is inessential. This assumption only appears by way of our appeal to Morel's fiber square presentation of $\K^{\MW}_n$ (cf. Remark \ref{rem:charunequalto2}).
\end{rem}

\begin{rem}
Rationalized Milnor K-theory sheaves can be quite large. If $L$ is an infinite field, write $L^{alg}$ for an algebraic closure. The Bloch-Kato conjecture \cite{VMod2,VModl} implies that for $n \geq 2$, the groups $K^M_n(L^{alg})$ are (non-trivial) uniquely divisible (these groups are evidently divisible for $n = 1$). By using transfers in Milnor K-theory \cite[\S I.5]{BassTate} it is easy to see that the restriction map $K^M_n(L) \to K^M_n(L^{alg})$ is injective modulo torsion. Any element $\alpha \in K^M_n(L)$ that goes to zero in $K^M_n(L^{alg})$ necessarily goes to zero in a finite extension $L'/L$. In that case, the composite $K^M_n(L) \to K^M_n(L') \to K^M_n(L)$ of restriction with transfer is multiplication by the degree. Thus, $[L':L]\alpha = 0$, i.e., $\alpha$ is torsion. Equivalently, one can use the identification of Milnor K-theory with motivic cohomology \cite[Theorem 5.1]{MVW} and transfers there.
\end{rem}

\subsection{On \texorpdfstring{$\aone$}{A1}-homotopy sheaves of spheres}
\label{ss:pi4ponesmash3}
The goal of this section is to establish Conjecture 5 of \cite{AsokFaselOberwolfach}. The results below depends rather heavily on the results of \cite{AsokFaselA3minus0} and thus we assume throughout this section that $k$ is an infinite perfect field of characteristic unequal to $2$.

\subsubsection*{On the computation of $\bpi_{3+j\alpha}^{\aone}(S^{2+3\alpha})$}
To begin, we recall some results \cite{AsokFaselA3minus0} where we used the notation $\bpi_{3,j}({\mathbb A}^3 \setminus 0)$ for the sheaf in the title. One begins by considering the fiber sequence
\begin{equation}
\label{eqn:fibseqa3minus0}
SL_4/Sp_4 \longrightarrow SL_6/Sp_6 \longrightarrow {\mathbb A}^5 \setminus 0.
\end{equation}
A stable range was described for the homotopy sheaves of $SL_{2n}/Sp_{2n}$ in \cite[Proposition
4.2.2]{AsokFaselA3minus0} in terms of Grothendieck--Witt sheaves (see \cite[\S 3.1 and 3.3]{AsokFaselA3minus0} and the
references there for explication of the notation). Also obtained there was a short exact sequence of sheaves of the form:
\begin{equation}
\label{eqn:exactsequencea3minus0}
\mathbf{GW}^3_5 \longrightarrow \mathbf{K}^{\MW}_5 \longrightarrow \bpi_{3}^{\aone}(S^{2+3\alpha}) \longrightarrow \mathbf{GW}^3_4 \longrightarrow 0.
\end{equation}
The cokernel of morphism $\mathbf{GW}^3_5 \to \K^{\MW}_5$ was called ${\mathbf F}_5$ and a description of
${\mathbf F}_5$ was given in \cite[Theorem 4.4.1]{AsokFaselA3minus0}. Before discussing the structure of this morphism,
we introduce a further convention to simplify the notation.

\begin{convention}
\label{convention:loops}
Write $\Omega(-)$ for the $\aone$-derived loop functor, i.e., $\Omega \Laone (-)$.
\end{convention}

The map $\K^{\MW}_5 \to \bpi_{3}^{\aone}(S^{2+3\alpha})$ is by construction induced by a morphism $\Omega S^{4+5\alpha} \to S^{2+3\alpha}$. The composite map
\[
\delta: S^{3+5\alpha} \longrightarrow \Omega S^{4+5\alpha} \longrightarrow S^{2+3\alpha}
\]
was a shown to be a generator of $\bpi_{3+5\alpha}(S^{2+3\alpha})$ in \cite[Proposition 5.2.1]{AsokFaselA3minus0}. We deduce a few simple consequences of these results now.

\begin{lem}
\label{lem:contractionsofa3minus0}
Suppose $j \geq 6$ is an integer.
\begin{enumerate}[noitemsep,topsep=1pt]
\item There is a canonical isomorphism $\bpi_{3+j\alpha}^{\aone}(S^{2+3\alpha}) \iso \mathbf{W}$.
\item Any $\aone$-homotopy class of maps $S^{3+j\alpha} \to S^{2+3\alpha}$ lifts uniquely along $\delta$ to a map $S^{3+j\alpha} \to S^{3+5\alpha}$.
\end{enumerate}
\end{lem}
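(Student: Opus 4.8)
The plan is to contract the exact sequence \eqref{eqn:exactsequencea3minus0} the necessary number of times and read off the answer. First I would record the natural isomorphism $\bpi_{n+j\alpha}^{\aone}(\mathscr{X})\cong(\bpi_n^{\aone}(\mathscr{X}))_{-j}$, which is immediate from the splitting $(\gm{})_+\simeq S^0\vee\gm{}$ together with the definition of contraction, and which is compatible with push-forward along maps. In particular, Morel's computation $\bpi_3^{\aone}(S^{3+5\alpha})\cong\K^{\MW}_5$ \cite[Theorem 1.23]{MField} and the identity $(\K^{\MW}_m)_{-j}=\K^{\MW}_{m-j}$ \cite[Lemma 2.9]{AsokFaselSpheres} give $\bpi_{3+j\alpha}^{\aone}(S^{3+5\alpha})\cong\K^{\MW}_{5-j}$, which for $j\ge 6$ equals $\mathbf{I}^{5-j}\cong\mathbf{W}$ (since $\K^\M_{5-j}=0$ and $\mathbf{I}^{n}\cong\mathbf{W}$ for $n<0$). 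Finally, as explained in the paragraph preceding the lemma, the map $\K^{\MW}_5\to\bpi_3^{\aone}(S^{2+3\alpha})$ occurring in \eqref{eqn:exactsequencea3minus0} is, after identifying $\bpi_3^{\aone}(S^{3+5\alpha})$ with $\bpi_3^{\aone}(\Omega S^{4+5\alpha})$ via the suspension map, equal to the push-forward $\delta_*=\bpi_3^{\aone}(\delta)$.

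Next I would apply the contraction functor $(-)_{-j}$, which is exact by Lemma \ref{lem:contractionisexact}, to \eqref{eqn:exactsequencea3minus0}; this yields, for every $j$, an exact sequence
\[
(\mathbf{GW}^3_5)_{-j}\stackrel{f_{-j}}{\longrightarrow}\K^{\MW}_{5-j}\stackrel{\bpi_{3+j\alpha}^{\aone}(\delta)}{\longrightarrow}\bpi_{3+j\alpha}^{\aone}(S^{2+3\alpha})\longrightarrow(\mathbf{GW}^3_4)_{-j}\longrightarrow 0.
\]
The lemma then reduces to two vanishing statements valid for $j\ge 6$: (i) $(\mathbf{GW}^3_4)_{-j}=0$; and (ii) $f_{-j}=0$. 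Granting these, $\bpi_{3+j\alpha}^{\aone}(\delta)$ is an isomorphism $\K^{\MW}_{5-j}\;\isomto\;\bpi_{3+j\alpha}^{\aone}(S^{2+3\alpha})$; combined with $\K^{\MW}_{5-j}\cong\mathbf{W}$ this gives part (1), and, since the isomorphism is induced by $\delta$, passing to sections over $\Spec k$ gives part (2): every $\aone$-homotopy class $S^{3+j\alpha}\to S^{2+3\alpha}$ is $\delta$ composed with a unique class $S^{3+j\alpha}\to S^{3+5\alpha}$.

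It remains to establish (i) and (ii). For this I would feed in the explicit descriptions of $\mathbf{GW}^3_4$, $\mathbf{GW}^3_5$ (through the stable range for $SL_{2n}/Sp_{2n}$, \cite[\S 3 and Proposition 4.2.2]{AsokFaselA3minus0}) and of $\mathbf{F}_5=\coker(\mathbf{GW}^3_5\to\K^{\MW}_5)$ \cite[Theorem 4.4.1]{AsokFaselA3minus0}, together with the standard contraction identities $(\K^\M_m)_{-1}=\K^\M_{m-1}$, $(\K^\M_m/n)_{-1}=\K^\M_{m-1}/n$ and $(\mathbf{I}^m)_{-1}=\mathbf{I}^{m-1}$ \cite[Lemmas 2.7 and 2.9]{AsokFaselSpheres}. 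The mechanism is that a (mod-$n$) Milnor K-theory summand in weight $m$ is killed by more than $m$ contractions: since $\mathbf{GW}^3_4$ is a torsion sheaf built from such summands in weights $\le 5$, six contractions annihilate it, proving (i); and the cokernel description shows $\operatorname{im}(f)$ lies in $\ker(\K^{\MW}_5\to\mathbf{I}^5)=2\K^\M_5$, which is killed by at least six contractions, so $f_{-j}=0$ for $j\ge 6$, proving (ii). I expect (ii) to be the main obstacle: it requires tracking the Witt-theoretic part of $\mathbf{F}_5$ precisely through \cite[Theorem 4.4.1]{AsokFaselA3minus0}, and not merely its Milnor quotient, to conclude that $\operatorname{im}(f)$ is of Milnor type; the same bookkeeping makes transparent why $5-j<0$, i.e.\ $j\ge 6$, is precisely the threshold at which the answer stabilises to $\mathbf{W}$.
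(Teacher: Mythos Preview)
Your overall strategy---contract the exact sequence \eqref{eqn:exactsequencea3minus0} $j$ times and use $(\K^{\MW}_5)_{-j}\cong\mathbf{W}$ for $j\geq 6$---is exactly what the paper does, and your handling of Point~(2) via $\delta_*$ is fine (the paper splits this into two steps, first lifting to $\Omega S^{4+5\alpha}$ via the fiber sequence and then to $S^{3+5\alpha}$ via Freudenthal, but the content is the same).

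The one place you go astray is your claim~(ii). You propose to show $f_{-j}=0$ by arguing that $\operatorname{im}(f)\subset 2\K^\M_5$ via the structure of $\mathbf{F}_5$ from \cite[Theorem~4.4.1]{AsokFaselA3minus0}, and you flag this as the ``main obstacle''. It is not clear that $\operatorname{im}(f)\subset 2\K^\M_5$ actually holds: the description of $\mathbf{F}_5$ as a quotient of $\mathbf{T}_5$ does not obviously imply that $\mathbf{I}^6\to\mathbf{F}_5$ is injective, so the kernel of $\K^{\MW}_5\to\mathbf{F}_5$ could meet $\mathbf{I}^6$ nontrivially. The paper avoids this entirely by killing the \emph{source} of $f$ rather than analyzing its image: by \cite[Lemma~3.4.1]{AsokFaselA3minus0} one has $(\mathbf{GW}^3_5)_{-5}\cong\mathbf{GW}^2_0\cong\Z$, and one further contraction of the constant sheaf $\Z$ gives $0$, so $(\mathbf{GW}^3_5)_{-j}=0$ for $j\geq 6$ and $f_{-j}=0$ follows immediately. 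Similarly, for~(i) the paper just cites \cite[Proposition~3.4.3]{AsokFaselA3minus0} for $(\mathbf{GW}^3_4)_{-j}=0$ when $j\geq 5$, rather than decomposing $\mathbf{GW}^3_4$ into Milnor-type pieces.
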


\begin{proof}
For Point (1), begin by applying \cite[Theorem 6.13]{MField} to the exact sequence of (\ref{eqn:exactsequencea3minus0}). By \cite[Proposition 3.4.3]{AsokFaselA3minus0}, we observe that $(\mathbf{GW}^3_4)_{-j} = 0$ for $j \geq 5$. By \cite[Lemma 3.4.1]{AsokFaselA3minus0}, if $j = 5$, we conclude that $(\mathbf{GW}^3_5)_{-5} = \mathbf{GW}^2_0 \iso \mathbb{Z}$. Therefore, $(\mathbf{GW}^3_5)_{-j} = 0$ for $j \geq 6$. Thus, we conclude that there is a sequence of isomorphisms
\[
\bpi_{3+j\alpha}^{\aone}(S^{2+3\alpha}) \iso \bpi_{3+j\alpha}^{\aone}(\Omega S^{4+5\alpha}) \iso
\bpi_{4+j\alpha}^{\aone}(S^{4+5\alpha}) \iso (\K^{\MW}_5)_{-j} \iso \mathbf{W}
\]
if $j \geq 6$.

For Point (2), take a map $\phi: S^{3+j\alpha} \to S^{2+3\alpha}$ as in the statement. Mapping $S^{3+j\alpha}$ into the fiber sequence of (\ref{eqn:exactsequencea3minus0}), the argument of Point (1) shows that, for $j \geq 6$, such a map lifts uniquely to a map $S^{3+j\alpha} \to \Omega S^{4+5\alpha}$. Since $S^{3+5\alpha}$ is $\aone$-$2$-connected, the unit of the loop-suspension adjunction $S^{3+5\alpha} \to \Omega S^{4+5\alpha}$ induces an isomorphism on $\aone$-homotopy sheaves in degrees $\leq 4$ by, e.g., Theorem \ref{thm:EHP_range} and Remark \ref{rem:refinesfreudenthal}. Therefore, $\phi$ lifts uniquely along $\delta$.
\end{proof}

\subsubsection*{On the computation of \texorpdfstring{$\bpi_{j+1}^{\aone}(S^{j+3\alpha})$}{pi_(j+1)^(A1)(S^(j+3a))}, \texorpdfstring{$j \geq
    3$}{ j>= 3}}
\begin{prop}
\label{prop:pi46ponesmash3}
If $k$ is a field of characteristic $0$ and containing a quadratically closed subfield, then $\pi_{4+6 \alpha}^{\aone}(S^{3+3\alpha}) = 0$.
\end{prop}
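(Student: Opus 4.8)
The statement concerns the sphere $S^{3+3\alpha}$, which is $\aone$-$2$-connected, so with $n=3$ the EHP sequence of Theorem \ref{thm:EHP_range} applies. The plan is to use the low-degree portion of this sequence together with the computation of $\mathrm{H}\mathrm{P}$ from Theorem \ref{thm:HP}. First I would note that $S^{3+3\alpha} = \Sigma S^{2+3\alpha} \weq \Sigma(\mathbb{A}^3\setminus 0)$, and that $\bpi_4^{\aone}(S^{3+3\alpha}) \iso \bpi_3^{\aone}(J(S^{2+3\alpha}))$, so we are studying $\bpi_3^{\aone}$ of the James construction on $S^{2+3\alpha}$; by Lemma \ref{lem:contractionsofa3minus0} and the surrounding discussion, after contracting six times we land in the regime where $\bpi_{3+j\alpha}^{\aone}(S^{2+3\alpha})$ is controlled by $\mathbf{W}$ and the generator $\delta: S^{3+5\alpha}\to S^{2+3\alpha}$. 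Since the characteristic is $0$ and $k$ contains a quadratically closed subfield, one has $\mathbf{W}(k) \tensor \Q$ and the relevant $2$-torsion behavior at our disposal; in particular $\eta$-divisibility and the vanishing of $\mathbf{W}$-type contributions beyond a point become available.

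The core computational step is as follows. Apply the EHP sequence of Theorem \ref{thm:EHP_range} to $\mathscr{X} = S^{2+3\alpha}$, or more directly string together the EHP sequences for the spheres $S^{p+q\alpha}$ with $q=3$ as in Section \ref{ss:HPKmwsubsection}. We are interested in the sheaf $\bpi_{4+6\alpha}^{\aone}(S^{3+3\alpha})$, i.e.\ the value on the $(\cdot)_{-3}$-contraction of $\bpi_{4+3\alpha}^{\aone}(S^{3+3\alpha})$. The exact sequence
\[
\bpi_{i+3}^{\aone}(S^{2p+3+2q\alpha}) \stackrel{\mathrm{P}}{\longrightarrow} \bpi_{i+1}^{\aone}(S^{p+1+q\alpha}) \stackrel{\mathrm{H}}{\longrightarrow} \bpi_{i+1}^{\aone}(S^{2p+1+2q\alpha}) \stackrel{\mathrm{P}}{\longrightarrow} \bpi_{i-1}^{\aone}(S^{p+q\alpha})
\]
with the appropriate choice of $p$ (here $p=2$ so that $S^{p+1+q\alpha} = S^{3+3\alpha}$) and $i=p+1$ pins down $\bpi^{\aone}$ of $S^{3+3\alpha}$ as an extension built from $\bpi^{\aone}$ of $S^{2+3\alpha}$ (known by \eqref{eqn:exactsequencea3minus0} and Lemma \ref{lem:contractionsofa3minus0}) and of the double sphere $S^{5+6\alpha}$ (first nonvanishing homotopy $\K^{\MW}_6$ by Morel). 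The map $\mathrm{H}\mathrm{P}$ entering from the next sphere up is, by Theorem \ref{thm:HP} with $p=2$, $q=3$ (so $p$ even, $q$ odd), the hyperbolic form $h = 1-\epsilon$. One then contracts the whole diagram three times using Lemma \ref{lem:contractionisexact} and the identities $(\K^{\MW}_n)_{-j} = \K^{\MW}_{n-j}$: multiplication by $h$ on the relevant contracted Milnor--Witt sheaves, combined with the hypotheses on $k$ (characteristic $0$, quadratically closed subfield, hence $\mathbf{W}$ behaves suitably and $h$-multiplication becomes surjective/injective in the needed ranges), forces the target $\bpi_{4+6\alpha}^{\aone}(S^{3+3\alpha})$ to vanish: the incoming $\mathrm{P}$ surjects onto it while the preceding $\mathrm{H}$ is killed by the $h$-multiplication identification.

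The main obstacle will be bookkeeping the contractions precisely: one must track exactly which contracted Grothendieck--Witt and Milnor--Witt sheaves appear in the extension \eqref{eqn:exactsequencea3minus0} after applying $(\cdot)_{-j}$ for the relevant $j$, invoke $(\mathbf{GW}^3_\bullet)_{-j}$ vanishing from \cite{AsokFaselA3minus0} to clean up the error terms, and then verify that multiplication by $h$ on $\mathbf{W}$ (equivalently, the image of $h$ in $\hom(\mathbf{W},\mathbf{W})$) annihilates the relevant sheaf because $2 = h$ in the Witt-theoretic summand is exactly the obstruction that dies under our hypotheses. A secondary subtlety is ensuring the connectivity estimates keep us inside the range where Theorem \ref{thm:HP} and Theorem \ref{P=piWhitehead} apply (we need $i \leq 3n-4$ there), which for $n=3$ gives $i \leq 5$; since we work at $i = p+1 = 3$, this is comfortably satisfied. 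Once the vanishing of the relevant $\mathrm{H}$-term and the surjectivity of $\mathrm{P}$ onto $\bpi_{4+6\alpha}^{\aone}(S^{3+3\alpha})$ are both established, the conclusion $\bpi_{4+6\alpha}^{\aone}(S^{3+3\alpha}) = 0$ is immediate.
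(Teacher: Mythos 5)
Your plan stalls at exactly the point where the paper's proof has its real content. The setup is right: applying Theorem \ref{thm:EHP_range} (or Theorem \ref{thm:lowdegree}) to $\mathscr{X}=S^{2+3\alpha}$ and contracting six times gives
\[
\K^{\MW}_0 \iso \bpi_{5+6\alpha}^{\aone}(S^{5+6\alpha}) \stackrel{\mathrm{P}}{\longrightarrow} \bpi_{3+6\alpha}^{\aone}(S^{2+3\alpha}) \iso \mathbf{W} \longrightarrow \bpi_{4+6\alpha}^{\aone}(S^{3+3\alpha}) \longrightarrow 0,
\]
so the proposition is equivalent to the \emph{surjectivity} of this $\mathrm{P}$. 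But Theorem \ref{thm:HP} cannot deliver that. First, it computes a different composite in a different degree: with $p=2$, $q=3$ it concerns $i=2p=4$ (not $i=p+1$ as you write), and its $\mathrm{P}$ is the map $\bpi_7^{\aone}(S^{7+6\alpha})\to\bpi_5^{\aone}(S^{3+3\alpha})$ from the EHP sequence of $S^{3+3\alpha}$, followed by the $\mathrm{H}$ of $S^{2+3\alpha}$; both source and target are $\K^{\MW}_6$, whereas the $\mathrm{P}$ you need lands in $\bpi_3^{\aone}(S^{2+3\alpha})$, which is not a Milnor--Witt sheaf but the extension of $\mathbf{GW}^3_4$ by $\mathbf{F}_5$ from \cite{AsokFaselA3minus0}. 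Second, and more seriously, the information goes the wrong way: by exactness of the EHP sequence at $\bpi_5^{\aone}(S^{5+6\alpha})$, knowing $\mathrm{H}\mathrm{P}=h$ only tells you that $h\cdot\K^{\MW}_6$ lies in $\operatorname{im}(\mathrm{H})=\ker(\mathrm{P})$, i.e.\ it bounds the image of the $\mathrm{P}$ you care about from \emph{above} (it factors through $\K^{\MW}_6/h$, consistent with the contracted map $\mathbf{W}\to\mathbf{W}$), and is equally consistent with $\mathrm{P}=0$. Note also that multiplication by $h$ is zero on $\mathbf{W}$, so no "$h$-multiplication identification" can force surjectivity.

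What the paper actually does at this point is non-formal: by Theorem \ref{P=piWhitehead} the map $\mathrm{P}$ is composition with the Whitehead square $[\id_{S^{2+3\alpha}},\id_{S^{2+3\alpha}}]$, which by Lemma \ref{lem:contractionsofa3minus0}(2) lifts uniquely through $\delta$ to an element of $\mathbf{W}(k)\iso\Z/2$ (for $k$ quadratically closed of characteristic $0$), and its non-triviality is established by complex realization together with Serre's theorem that $[\iota_5,\iota_5]$ generates $\pi_9(S^5)\iso\Z/2$; the case of a field merely containing such a subfield follows by base change. This is precisely where the hypotheses "characteristic $0$" and "quadratically closed subfield" are used — not, as you suggest, through the behavior of $\mathbf{W}$ under $h$-multiplication. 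Your proposal contains no substitute for this realization input, so as written it cannot conclude.
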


\begin{proof}
Taking $\mathscr{X} = S^{2+3\alpha}$, and observing that $S^{2+3\alpha}$ is $\aone$-$1$-connected, Theorem \ref{thm:lowdegree}, the exactness of contraction and \cite[Theorem 6.13]{MField} yield the following exact sequence:
\begin{equation}
  \label{eq:6}
  \bpi_{5+6\alpha}^{\aone}(S^{3+3\alpha}) \longrightarrow \bpi_{5+6\alpha}^{\aone}(S^{5+6\alpha}) \stackrel{{\mathrm P}}{\longrightarrow} \bpi_{3+6\alpha}^{\aone}(S^{2+3\alpha}) \longrightarrow \bpi_{4+6\alpha}^{\aone}(S^{3+3\alpha}) \longrightarrow 0.
\end{equation}
The sheaf $\bpi_{5+6\alpha}^{\aone}(S^{5+6\alpha}) \iso \K^{\MW}_0$ again by Morel's computations \cite[Theorem 1.23]{MField}.

By Theorem \ref{P=piWhitehead}, the morphism ${\mathrm P}$ is induced by composition with
\[
[\id_{S^{2+3\alpha}},\id_{S^{2+3\alpha}}]: S^{3+6\alpha} \longrightarrow S^{2+3\alpha};
\]
we will refer to this map as composition with the Whitehead square of the identity. By Lemma \ref{lem:contractionsofa3minus0}(2), this map lifts uniquely through $\delta$ to a map
\[
[\id_{S^{2+3\alpha}},\id_{S^{2+3\alpha}}]: S^{3+6\alpha} \longrightarrow S^{3+5\alpha},
\]
which (by \cite[Corollary 6.43]{MField}) can be viewed as an element of $\mathbf{W}(k)$. By Lemma
\ref{lem:contractionsofa3minus0}(1), the exact sequence \eqref{eq:6} becomes
\[
\K^{\MW}_0 \stackrel{{\mathrm P}}{\longrightarrow} \mathbf{W} \longrightarrow \bpi_{4+6\alpha}^{\aone}(S^{3+3\alpha}) \longrightarrow 0.
\]

We claim that each of the sheaves in the above exact sequence are sheaves of $\K^{\MW}_0$-modules in a natural way, and that the morphisms are morphisms of sheaves of $\K^{\MW}_0$-modules.  To see this, it suffices to observe that the portion of the $\aone$-EHP sequence under consideration takes the form $\bpi_{3+6\alpha}(\Omega^2 S^{5 + 6\alpha}) \to \bpi_{3+6\alpha}^{\aone}(S^{2+3\alpha}) \to \bpi_{3+6\alpha}^{\aone}(\Omega S^{3+3\alpha})$, and the $\K^{\MW}_0$-module structure is induced by precomoposition with $\bpi_{3+6\alpha}(S^{3+6\alpha})$.  From these observations it follows that the map $\mathrm{P}$ is determined by an element of $\hom_{\K^{\MW}_0}(\K^{\MW}_0,\mathbf{W}) = \mathbf{W}(k)$.

Assume first that $k$ is a quadratically closed field of characteristic $0$. In that case $\mathbf{W}(k) = \Z/2$,
and, to establish the claim, it suffices to prove that our morphism is non-trivial. To see this, fix an embedding $k \hookrightarrow
\cplx$. Using complex realization (see \cite[\S 3 Lemma 3.4]{MV} or \cite{DuggerIsaksen}), and the fact that complex
realization takes spheres to spheres, it suffices to prove that composition with the Whitehead square of the identity is
non-trivial after taking $\cplx$-points. Serre showed that $\pi_9(S^5) = \Z/2$ and that Whitehead square of the identity
on $S^5$ is a generator, \cite[\S 41]{SerreEM}. Consequently, we conclude that the our morphism ${\mathrm P}$ also corresponds to the non-trivial element of $\mathbf{W}(k)$ and is therefore an epimorphism.

If $L/k$ is an extension field, then the morphism ${\mathrm P}$ in our sequence viewed over the base field $L$ is pulled back from the morphism ${\mathrm P}$ over $k$.  Thus, by appeal to the conclusion of the previous paragraph, we conclude in this case as well that the morphism $\K^{\MW}_0 \to \mathbf{W}$ is necessarily the standard epimorphism, and therefore that $\bpi_{4+6\alpha}^{\aone}(S^{3+3\alpha}) = 0$.
\end{proof}

\begin{rem}
In the preceding proof, the assumption that $k$ has characteristic $0$ can likely be weakened to the assumption that $k$ has characteristic unequal to $2$ via appeal to \'etale realization \cite{Isaksenetalerealization}. As a consequence, the same remark applies to all statements below appealing to Proposition \ref{prop:pi46ponesmash3}. Removing the assumption that $k$ contains a quadratically closed subfield will probably require different techniques. Nevertheless, it seems likely that the ``lifted" map $[\id_{S^{2+3\alpha}},\id_{S^{2+3\alpha}}]: S^{3+6\alpha} \to S^{3+5\alpha}$ is simply a suspension of $\eta$ and the above result can be established without reference to realization of any sort.
\end{rem}

The above vanishing statement has a number of useful consequences.

\begin{thm}
\label{thm:pi45ponesmash3}
If $k$ is a field of characteristic $0$ and containing a quadratically closed subfield, then for every integer $j \geq 3$, there is an exact sequence of the form
\begin{equation}
  \label{eq:9}
  0 \longrightarrow \mathbf{F}_5' \longrightarrow \bpi_{j+1}^{\aone}(S^{j+3\alpha}) \longrightarrow \mathbf{GW}^3_4 \longrightarrow 0,
\end{equation}
together with an epimorphism $\K^\M_5/24 \to \mathbf{F}_5'$ that becomes an isomorphism after $4$-fold contraction.   Moreover, the composite map $\K^\M_5/24 \to \bpi_{4}^{\aone}(S^{3+3\alpha})$ determines an isomorphism $\Z/24 \iso \bpi_{4+5\alpha}^{\aone}(S^{3+3\alpha})$.
\end{thm}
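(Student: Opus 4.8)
The plan is to feed the $\aone$-EHP sequence of Theorem \ref{thm:EHP_range} (together with Theorem \ref{P=piWhitehead} describing $\mathrm P$ via a Whitehead product) the input sphere $\mathscr{X} = S^{j+3\alpha}$ for $j \ge 3$, and compare it against the $j = 3$ case treated in the earlier work \cite{AsokFaselA3minus0}. First I would recall from \eqref{eqn:exactsequencea3minus0} and \eqref{eqn:fibseqa3minus0} the exact sequence computing $\bpi_3^{\aone}(S^{2+3\alpha})$, and the description of the cokernel $\mathbf{F}_5 = \coker(\mathbf{GW}^3_5 \to \K^{\MW}_5)$ from \cite[Theorem 4.4.1]{AsokFaselA3minus0}. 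Applying Theorem \ref{thm:EHP_range} with $\mathscr{X} = S^{j-1+3\alpha}$ (so that $\Sigma \mathscr{X} = S^{j+3\alpha}$ and $\mathscr{X}^{\sma 2} = S^{2j-2 + 6\alpha}$), the relevant portion reads
\[
\bpi_{j+1}^{\aone}(J((S^{j-1+3\alpha})^{\sma 2})) \stackrel{\mathrm P}{\longrightarrow} \bpi_{j}^{\aone}(S^{j-1+3\alpha}) \stackrel{\mathrm E}{\longrightarrow} \bpi_{j+1}^{\aone}(S^{j+3\alpha}) \stackrel{\mathrm H}{\longrightarrow} \bpi_{j+1}^{\aone}(J((S^{j-1+3\alpha})^{\sma 2})),
\]
and for $j \ge 3$ the smash square $S^{2j-2+6\alpha}$ is $\aone$-$(2j-3)$-connected, hence $J$ of it is $\aone$-$(2j-4)$-connected, so $\bpi_{j+1}^{\aone}(J((S^{j-1+3\alpha})^{\sma 2})) = 0$ once $j+1 \le 2j-4$, i.e. $j \ge 5$; for $j = 3, 4$ one needs to analyze this term by hand (using Corollary \ref{cor:homologyofsmashproducts} to identify the bottom homotopy sheaf and Theorem \ref{thm:HP}/Theorem \ref{P=piWhitehead} to compute $\mathrm P$) and check that the image of $\mathrm P$ is killed, reducing to the $j=3$ vanishing statement $\bpi_{4+6\alpha}^{\aone}(S^{3+3\alpha}) = 0$ of Proposition \ref{prop:pi46ponesmash3} after suitable contractions. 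The upshot of this bookkeeping is that $\mathrm E: \bpi_j^{\aone}(S^{j-1+3\alpha}) \to \bpi_{j+1}^{\aone}(S^{j+3\alpha})$ is an isomorphism for $j$ large and, by an inductive descent through $j = 4, 3$ using the vanishing in Proposition \ref{prop:pi46ponesmash3}, for all $j \ge 3$, so that $\bpi_{j+1}^{\aone}(S^{j+3\alpha})$ is independent of $j$ in this range and equals $\bpi_4^{\aone}(S^{3+3\alpha})$.

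Next I would transport the structure of $\bpi_3^{\aone}(S^{2+3\alpha})$ through one suspension. The exact sequence \eqref{eqn:exactsequencea3minus0} exhibits $\bpi_3^{\aone}(S^{2+3\alpha})$ as an extension of $\mathbf{GW}^3_4$ by $\mathbf{F}_5$. Since $\mathrm E$ is an isomorphism $\bpi_3^{\aone}(S^{2+3\alpha}) \to \bpi_4^{\aone}(S^{3+3\alpha})$ — this is the $j=3$ instance, which requires Proposition \ref{prop:pi46ponesmash3} together with the EHP sequence as above — we obtain the exact sequence \eqref{eq:9} with $\mathbf{F}_5' := \mathbf{F}_5$ (or, more precisely, with $\mathbf{F}_5'$ the image of $\mathbf{F}_5$ under suspension, which one checks is an isomorphism onto its image). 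The claimed epimorphism $\K^\M_5/24 \to \mathbf{F}_5'$ that becomes an isomorphism after $4$-fold contraction should follow from the description of $\mathbf{F}_5$ in \cite[Theorem 4.4.1]{AsokFaselA3minus0}: there $\mathbf{F}_5$ is presented as a quotient of $\K^{\MW}_5$, and combining the fiber-square presentation of $\K^{\MW}_5$ with the exactness of contraction (Lemma \ref{lem:contractionisexact}) and the contraction formula $(\K^{\MW}_n)_{-j} = \K^{\MW}_{n-j}$ one identifies the $4$-fold contraction with $\K^\M_1/24$ or equivalently traces through the numerics to get the stated $24$. The use of Lemma \ref{lem:contractionsfactor} should be the clean way to see that a map out of $\K^{\MW}_5$ with the right vanishing of contractions factors through $\K^\M_5/\mathbf{I}^j$, hence through $\K^\M_5/24$.

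Finally, for the concrete consequence $\Z/24 \iso \bpi_{4+5\alpha}^{\aone}(S^{3+3\alpha})$, I would contract the exact sequence \eqref{eq:9} five times. By \cite[Proposition 3.4.3]{AsokFaselA3minus0} (as used in Lemma \ref{lem:contractionsofa3minus0}), $(\mathbf{GW}^3_4)_{-5} = 0$, so the $5$-fold contraction of \eqref{eq:9} yields $(\mathbf{F}_5')_{-5} \iso \bpi_{4+5\alpha}^{\aone}(S^{3+3\alpha})$; and since $\K^\M_5/24 \to \mathbf{F}_5'$ is an isomorphism after $4$-fold contraction, one further contraction gives $(\K^\M_5/24)_{-5} = \K^\M_0/24 = \Z/24 \iso \bpi_{4+5\alpha}^{\aone}(S^{3+3\alpha})$, using $(\K^\M_n/r)_{-1} = \K^\M_{n-1}/r$ from \cite[Lemma 2.7]{AsokFaselSpheres} and $\K^\M_0 = \Z$. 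The main obstacle I anticipate is the low-$j$ bookkeeping: for $j = 3$ and $j = 4$ the term $\bpi_{j+1}^{\aone}(J((S^{j-1+3\alpha})^{\sma 2}))$ is nonzero, so one must carefully invoke Proposition \ref{prop:pi46ponesmash3}, the identification of $\mathrm P$ with a Whitehead square via Theorem \ref{P=piWhitehead}, and the computation of the Hopf–Whitehead composite via Theorem \ref{thm:HP} to see that $\mathrm E$ is nonetheless an isomorphism, rather than merely an epimorphism, in exactly these two degrees; getting this compatibility precisely right (including the module structure over $\K^{\MW}_0$ and matching the extension classes in \eqref{eqn:exactsequencea3minus0} before and after suspension) is where the real work lies.
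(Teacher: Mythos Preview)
There is a genuine gap. Your central claim that $\mathrm{E}: \bpi_3^{\aone}(S^{2+3\alpha}) \to \bpi_4^{\aone}(S^{3+3\alpha})$ is an isomorphism is false, and this derails the rest of the argument. In the EHP sequence for $\mathscr{X} = S^{2+3\alpha}$ one has
\[
\K^{\MW}_6 \cong \bpi_5^{\aone}(S^{5+6\alpha}) \stackrel{\mathrm P}{\longrightarrow} \bpi_3^{\aone}(S^{2+3\alpha}) \stackrel{\mathrm E}{\longrightarrow} \bpi_4^{\aone}(S^{3+3\alpha}) \longrightarrow 0,
\]
and $\mathrm P$ is \emph{not} zero: Proposition~\ref{prop:pi46ponesmash3} does not say this, it says only that the $6$-fold contraction $\bpi_{4+6\alpha}^{\aone}(S^{3+3\alpha})$ vanishes. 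In fact the proof of that proposition shows that after $6$-fold contraction $\mathrm P$ becomes the standard epimorphism $\K^{\MW}_0 \to \mathbf{W}$, which is nonzero. So $\mathrm E$ has nontrivial kernel, and consequently $\mathbf{F}_5' \neq \mathbf{F}_5$ in general; your proposed identification ``$\mathbf{F}_5' := \mathbf{F}_5$'' cannot work.

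The paper's argument proceeds differently and more delicately. It never claims $\mathrm E$ is an isomorphism. Instead it observes (via Lemma~\ref{lem:homfromkmwiscontraction} and $(\mathbf{GW}^3_4)_{-6} = 0$) that the composite $\K^{\MW}_6 \stackrel{\mathrm P}{\to} \bpi_3^{\aone}(S^{2+3\alpha}) \to \mathbf{GW}^3_4$ vanishes, which produces the surjection $\bpi_4^{\aone}(S^{3+3\alpha}) \to \mathbf{GW}^3_4$. Then the vanishing $(\bpi_4^{\aone}(S^{3+3\alpha}))_{-6} = 0$ from Proposition~\ref{prop:pi46ponesmash3} is used, via Lemma~\ref{lem:contractionsfactor}, to factor the composite $\K^{\MW}_5 \to \bpi_3^{\aone}(S^{2+3\alpha}) \stackrel{\mathrm E}{\to} \bpi_4^{\aone}(S^{3+3\alpha})$ through $\K^{\MW}_5/\mathbf{I}^6 = \K^\M_5$. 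The sheaf $\mathbf{F}_5'$ is then \emph{defined} as the image of $\mathbf{F}_5$ in $\bpi_4^{\aone}(S^{3+3\alpha})$, and sits in an exact sequence $\mathbf{I}^6 \to \mathbf{F}_5 \to \mathbf{F}_5' \to 0$; the epimorphism $\K^\M_5/24 \to \mathbf{F}_5'$ comes from composing this with the description $\K^\M_5/24 \onto \mathbf{S}_5$ and the fiber-product presentation $\mathbf{T}_5 \onto \mathbf{F}_5$ from \cite{AsokFaselA3minus0}. Your final contraction computation is fine once \eqref{eq:9} is in hand, but the point is that establishing \eqref{eq:9} requires tracking the image of the nonzero map $\mathrm P$, not arguing it away.
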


\begin{proof}
We treat the case where $j = 3$, building upon the analysis in the proof of Proposition \ref{prop:pi46ponesmash3}.  Since $S^{3+3\alpha}$ is $\aone$-$2$-connected, the case $j \geq 4$ will follow immediately from this case and the $\aone$-simplicial suspension theorem (Theorem \ref{thm:EHP_range} and Remark \ref{rem:refinesfreudenthal}).

Take $\mathscr{X} = S^{2+3\alpha}$ in Theorem \ref{thm:lowdegree} and consider the map ${\mathrm P}: \K^{\MW}_6 =
\bpi_{5}^{\aone}(S^{5+6\alpha}) \to \bpi_{3}^{\aone}(S^{2+3\alpha})$.  Recall the exact sequence of
\ref{eqn:exactsequencea3minus0}, which appears here as the horizontal line of \eqref{eq:7}
\begin{equation}
  \label{eq:7}
  \xymatrix{ & & \bpi^{\aone}_5(S^{5+6\alpha}) =\K^{\MW}_6 \ar^-{\mathrm{P}}[d] \ar@{..>}[dr]& \\ \mathbf{GW}^3_5 \ar[r] & \K^{\MW}_5 \ar[r]  &
  \bpi_{3}^{\aone}(S^{2+3\alpha}) \ar[r] \ar^-{\mathrm{E}}[d] &  \mathbf{GW}^3_4 \ar[r] &  0 \\
& & \bpi_4^{\aone}(S^{3+3\alpha}) \ar[d] \ar@{-->}[ur]\\ & & 0.}
\end{equation}
The vertical sequence is the EHP sequence applied to $S^{2+3\alpha}$. The dotted diagonal map is an element of
$\hom(\K^{\MW}_6,\mathbf{GW}^3_4) \cong (\mathbf{GW}^3_4)_{-6}(k)$ by Lemma \ref{lem:homfromkmwiscontraction}. On the other
hand \cite[Proposition 3.4.3]{AsokFaselA3minus0} allows us to conclude that $(\mathbf{GW}^3_4)_{-6} = 0$ so this
diagonal map vanishes, and therefore there is an induced epimorphism, denoted by the dashed diagonal arrow in
\eqref{eq:7}, $\bpi_{4}^{\aone}(S^{3+3\alpha}) \to \mathbf{GW}^3_4$, as required by the theorem.
By combining a portion of diagram \eqref{eq:7} with the exact sequence $0 \to \mathbf{I}^6 \to \K^{\MW}_5 \to \K^{\M}_5
\to 0$ we obtain diagram \eqref{eq:9a}, ignoring the dotted arrow for the moment.
\begin{equation}
  \label{eq:9a}
  \xymatrix{ & 0 \ar[d] \\ &\mathbf{I}^6 \ar[d] & & \\ \mathbf{GW}^3_5 \ar[r] & \K^{\MW}_5 \ar[d] \ar[r]  &
  \bpi_{3}^{\aone}(S^{2+3\alpha}) \ar[r] \ar^-{\mathrm{E}}[d] &  \mathbf{GW}^3_4 \ar[r] \ar@{=}[d] &  0 \\
& \K^{\M}_5 \ar@{..>}[r] \ar[d] & \bpi_4^{\aone}(S^{3+3\alpha}) \ar[d]  \ar[r] &\mathbf{GW}^3_4 \ar[r] & 0.\\ & 0 & 0}
\end{equation}
Since we know that $\bpi_4^{\aone}(S^{3+3\alpha})_{-6} = 0$, it follows from Lemma \ref{lem:contractionsfactor} that the
composite map $\K^{\MW}_5 \to \bpi_4^{\aone}(S^{3+3\alpha})$ factors through $\K^{\MW}_5/\mathbf{I}^6 = \K^{\M}_5$,
giving the dotted arrow in diagram \eqref{eq:9a}.

We define $\mathbf{F}_5$, as in \cite{AsokFaselA3minus0}, to be the cokernel of the map $\mathbf{GW}^3_5 \to \K^{\MW}_5$,
and define $\mathbf{F}_5'$ to be the image
of $\mathbf{F}_5$ in $\bpi_{4}^{\aone}(S^{3+3\alpha})$. The exact sequence
\[
0 \longrightarrow \mathbf{F}_5' \longrightarrow \bpi_{4}^{\aone}(S^{3+3\alpha}) \longrightarrow \mathbf{GW}^3_4 \longrightarrow 0.
\]
is an immediate
consequence of this definition. Furthermore, there is a diagram of exact sequences
\begin{equation}
  \label{eq:10}
  \xymatrix{ 0 \ar[r] &  \mathbf{I}^6 \ar@{=}[d] \ar[r] &  \K^{\MW}_5 \ar@{->>}[d] \ar[r] &  \K^{\M}_5  \ar@{->>}[d] \ar[r] & 0
  \\
   &  \mathbf{I}^6 \ar[r] &  \mathbf{F}_5 \ar[r] &  \mathbf{F}'_5 \ar[r] & 0. }
\end{equation}
To determine the behavior of $\mathbf{F}_5'$, we need finer information regarding the sheaf $\mathbf{F}_5$ as described
in \cite[Theorems 4.3.1 and 4.4.1]{AsokFaselA3minus0}. We provide a brief recapitulation of that description here. The
sheaf $\mathbf{F}_5$ is identified there as a quotient of a fibered product as follows.  One defines a sheaf
$\mathbf{S}_5$, the cokernel of a ``Chern class" map $\K^Q_5 \to \K^\M_5$ \cite[Definition 3.6]{AsokFaselSpheres}. The
sheaf $\mathbf{S}_5$ is equipped with a
canonical surjection onto $\K^\M_5/2$ (see \cite[Lemma 3.13]{AsokFaselSpheres} and the subsequent discussion). One then
defines a sheaf $\mathbf{T}_5$ to be the fiber product of $\mathbf{S}_5$ and $\mathbf{I}^5$ over $\K^\M_5/2$
\cite[p. 911]{AsokFaselSpheres}; the maps $\mathbf{I}^5 \to \K^\M_5/2$ and $\K^\M_5 \to \mathbf{S}_5 \to \K^\M_5/2$
coincide with the defining maps in the fiber product presentation in $\K^{\MW}_5$. By \cite[Theorem 4.3.1]{AsokFaselA3minus0} (cf. \cite[Theorem 4.4.1]{AsokFaselA3minus0}), there is an epimorphism
$\mathbf{T}_5 \to \mathbf{F}_5$ and this epimorphism becomes an isomorphism after $4$-fold contraction by \cite[Lemma
5.1.1]{AsokFaselA3minus0}. Assembling all the above, there is a diagram of
short exact sequences, enlarging \eqref{eq:10},
\begin{equation}
  \label{eq:8}
  \xymatrix{ 0 \ar[r] &  \mathbf{I}^6 \ar@{=}[d] \ar[r] &  \K^{\MW}_5 \ar@{->>}[d] \ar[r] &  \K^{\M}_5  \ar@{->>}[d] \ar[r] & 0
  \\
  0 \ar[r] &  \mathbf{I}^6 \ar[r] \ar@{=}[d]  &  \mathbf{T}_5 \ar[r]  \ar^{\phi}@{->>}[d] &  \mathbf{S}_5 \ar[r] \ar@{->>}^{\phi'}[d]  & 0 \\
 &  \mathbf{I}^6 \ar[r] &  \mathbf{F}_5 \ar[r] &  \mathbf{F}'_5 \ar[r] & 0.}
\end{equation}
where $\phi$, and therefore $\phi'$, becomes an isomorphism after $4$-fold contraction.

There is an epimorphism $\K^\M_{5}/24 \onto \mathbf{S}_5$ that becomes an isomorphism after $4$-fold contraction,
\cite[Corollary 3.11]{AsokFaselSpheres}. It follows there is such an epimorphism $\K^\M_{5}/24 \onto \mathbf{F}_5'$ as
well.

Since $(\K^{\M}_5)_{-5} \iso \ZZ$, and $(\mathbf{GW}_4^3)_{-5} = 0$, the latter by \cite[Proposition 3.4.3]{AsokFaselA3minus0}, the $5$-fold contraction of the sequence
\[
0 \longrightarrow \mathbf{F}_5' \longrightarrow \bpi_{4}^{\aone}(S^{3+3\alpha}) \longrightarrow \mathbf{GW}^3_4 \longrightarrow 0
\]
reduces to an isomorphism $\ZZ/24 \iso \bpi_{4+5\alpha}^{\aone}(S^{3+3\alpha})$.
\end{proof}

\begin{rem}
Because of the observation of \cite[Remark 5.1.2]{AsokFaselA3minus0}, we do not know whether the map $\K^\M_5/24 \to \mathbf{F}'_5$ of Theorem \ref{thm:pi45ponesmash3} is an isomorphism after $3$-fold contraction. Nevertheless, it seems likely that this is the case.
\end{rem}

Consider the motivic Hopf map $\nu: S^{3+4\alpha} \to S^{2+2\alpha}$. The standard construction of this map is via the Hopf construction \cite[p. 190]{MField} on the multiplication map $SL_2 \times SL_2 \to SL_2$.

\begin{cor}
\label{cor:nugenerates}
If $k$ is a field having characteristic $0$ and containing a quadratically closed subfield, then for every integer $j \geq 3$, the group $\bpi_{j+1+5\alpha}^{\aone}(S^{j+3\alpha}) \iso \Z/24$ is generated by $\Sigma^{j-2+\alpha} \nu$.
\end{cor}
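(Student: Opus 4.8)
The plan is to leverage the computation $\bpi_{4+5\alpha}^{\aone}(S^{3+3\alpha}) \iso \Z/24$ from Theorem \ref{thm:pi45ponesmash3}, together with the classical fact that the suspension $\Sigma \nu \in \pi_9(S^5) \iso \Z/24$ generates. First I would reduce to the case $j = 3$: for $j \geq 4$, the space $S^{j + 3\alpha}$ is $\aone$-$(j-1)$-connected with $j \geq 3 > 2$, so the suspension map $\bpi_{j+1+5\alpha}^{\aone}(S^{j+3\alpha}) \to \bpi_{j+2+5\alpha}^{\aone}(S^{j+1+3\alpha})$ is an isomorphism in the relevant range by Theorem \ref{thm:EHP_range} together with Remark \ref{rem:refinesfreudenthal} (i.e.\ the $\aone$-simplicial suspension theorem), and $\Sigma^{j-2+\alpha}\nu$ is, by definition, the image of $\Sigma^{j-3+\alpha}\nu$ under this suspension. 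Thus it suffices to show that $\Sigma^{1+\alpha}\nu = \Sigma \Sigma^\alpha \nu$ generates $\bpi_{4+5\alpha}^{\aone}(S^{3+3\alpha}) \iso \Z/24$, where $\Sigma^\alpha$ denotes smashing with $\gm{}$.

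Next I would set up the realization argument exactly as in the proof of Proposition \ref{prop:pi46ponesmash3}. Since $k$ has characteristic $0$ and contains a quadratically closed subfield, fix an embedding $k \hookrightarrow \cplx$ and invoke complex realization \cite[\S 3 Lemma 3.4]{MV} (see also \cite{DuggerIsaksen}). Complex realization takes the motivic sphere $S^{i + j\alpha}$ to the topological sphere $S^{i+j}$, takes the simplicial suspension $\Sigma$ to ordinary suspension, takes smashing with $\gm{}$ to ordinary suspension, and takes the motivic Hopf map $\nu: S^{3+4\alpha} \to S^{2+2\alpha}$ to the classical Hopf map $\nu: S^7 \to S^4$ (this last point follows from compatibility of the Hopf construction with realization, since the realization of $SL_2$ is $S^3$). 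Consequently the realization of $\Sigma^{1+\alpha}\nu \in \bpi_{4+5\alpha}^{\aone}(S^{3+3\alpha})$ is $\Sigma^2 \nu \in \pi_9(S^5)$, which is a generator of $\pi_9(S^5) \iso \Z/24$ by Serre's computation (\cite[\S 41]{SerreEM}, or \cite[Proposition 5.1.1]{Toda}). Since realization $\bpi_{4+5\alpha}^{\aone}(S^{3+3\alpha})(k) \to \pi_9(S^5)$ is a homomorphism between abelian groups, both of which are $\Z/24$, and it sends the class of $\Sigma^{1+\alpha}\nu$ to a generator, the class of $\Sigma^{1+\alpha}\nu$ must itself be a generator of the group of $k$-points; since $\bpi_{4+5\alpha}^{\aone}(S^{3+3\alpha})$ is the constant sheaf $\Z/24$ (its sections over any field extension are $\Z/24$, as one sees by base change and the same contraction argument), the class of $\Sigma^{1+\alpha}\nu$ generates the sheaf.

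For extension fields $L/k$, the element $\Sigma^{j-2+\alpha}\nu$ over $L$ is pulled back from the one over $k$, and the isomorphism $\bpi_{j+1+5\alpha}^{\aone}(S^{j+3\alpha}) \iso \Z/24$ is compatible with base change (it is induced by the $5$-fold contraction of the sequence \eqref{eq:9}, as in the proof of Theorem \ref{thm:pi45ponesmash3}), so the conclusion over $k$ propagates to all such $L$ and hence holds at the level of sheaves. The main obstacle I anticipate is bookkeeping rather than conceptual: one must verify carefully that complex realization carries the \emph{motivic} Hopf map $\nu$ built from the Hopf construction on $SL_2 \times SL_2 \to SL_2$ to the \emph{classical} Hopf map, and track how the sign conventions and the distinction between the two kinds of suspension ($\Sigma$ versus $\Sigma^\alpha$) interact under realization — but since we only need to identify a \emph{generator} of $\Z/24$, any nonzero scalar ambiguity (a unit in $\Z/24$) is harmless, so this obstacle is mild.
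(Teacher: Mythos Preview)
Your approach is essentially correct and genuinely different from the paper's, but there is an indexing slip: the complex realization of $S^{3+3\alpha}$ is $S^{6}$, not $S^{5}$, so $\Sigma^{1+\alpha}\nu$ realizes to $\Sigma^{2}\nu \in \pi_{9}(S^{6})$. This matters because $\pi_{9}(S^{5}) \iso \Z/2$ (that is the group computed in \cite[\S 41]{SerreEM} and invoked in Proposition~\ref{prop:pi46ponesmash3}, generated by the Whitehead square of the identity), whereas what you actually need is $\pi_{9}(S^{6}) \iso \Z/24$, which lies in the stable range and is generated by $\Sigma^{2}\nu$. With $S^{5}$ replaced by $S^{6}$ throughout and the citation adjusted (e.g.\ to the stable stem $\pi_{3}^{s} \iso \Z/24$, as in \cite{Toda}), your argument goes through. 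A minor further point: the embedding $k \hookrightarrow \cplx$ need not exist for arbitrary $k$ of characteristic~$0$; as in the proof of Proposition~\ref{prop:pi46ponesmash3}, you should first pass to the quadratically closed subfield before embedding and then pull back.

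By contrast, the paper's proof is a one-line citation: Theorem~\ref{thm:pi45ponesmash3} supplies the isomorphism with $\Z/24$, and \cite[Corollary~5.3.1]{AsokFaselA3minus0} already identifies a suspension of $\nu$ as the generator via the geometry underlying the exact sequence~\eqref{eqn:exactsequencea3minus0}. Your realization argument trades that external dependence for a direct comparison with classical topology; both routes are short, but yours is more self-contained within the present paper.
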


\begin{proof}
This follows by combining Theorem \ref{thm:pi45ponesmash3} and \cite[Corollary 5.3.1]{AsokFaselA3minus0}.
\end{proof}

\begin{rem}
Since $\eta: S^{1+2\alpha} \to S^{1+\alpha}$ we can consider $\nu \sma \eta: S^{4+6\alpha} \to S^{3+3\alpha}$. Proposition \ref{prop:pi46ponesmash3} then guarantees that $\nu \sma \eta$ and $\eta \sma \nu$ are null-homotopic. Since they remain null-homotopic after suspension, we obtain a purely unstable proof of one of the motivic null-Hopf relations \cite[Proposition 5.4]{DuggerIsaksenHopf}.

Similarly, if $\eta_s: S^3_s \to S^2_s$ is the simplicial Hopf map, then we can the composite map:
\[
\Sigma^{2+2\alpha} \eta \circ \Sigma^{2+3\alpha}\eta \circ \Sigma^{2+4\alpha}\eta \circ \Sigma^{1+6\alpha}\eta_s: S^{4+6\alpha} \longrightarrow S^{3+3\alpha}.
\]
Once again, Proposition $\ref{prop:pi46ponesmash3}$ implies this composition is null-homotopic. Stabilizing with respect to $\pone$-suspension, this implies the relation $\eta^3 \eta_s = 0$ in the motivic stable homotopy ring. This relation is an incarnation of the fact that the topological Hopf map $\eta_{top}$ satisfies $\eta_{top}^4 = 0$.

The existence of such null-homotopies allows us to construct new elements in unstable homotopy sheaves of motivic
spheres using Toda brackets \cite{Toda}. It would be interesting to study such constructions more systematically.
\end{rem}

\subsubsection*{On the structure of \texorpdfstring{$\bpi_{n+1}^{\aone}(S^{n-1+n\alpha})$}{pi_(n+1)^(A1)(S^(n-1+na))}, \texorpdfstring{$n \geq 4$}{n>=4}}
Finally, we can establish \cite[Conjecture 5]{AsokFaselOberwolfach} under the additional hypothesis that our base field contains a quadratically closed field having characteristic $0$.

\begin{thm}
\label{thm:nun}
Suppose $k$ is a field that contains a quadratically closed field having characteristic $0$. For every integer $n \geq 4$, $\nu_n := \Sigma^{(n-2)+(n-2)\alpha}\nu$ induces a non-trivial morphism
\[
(\nu_n)_*: \K^\M_{n+2}/24 \longrightarrow \bpi_{n+1}^{\aone}(S^{n+n\alpha}).
\]
\end{thm}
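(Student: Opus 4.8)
The plan is to reduce the case $n \geq 4$ to the case $n = 3$ already handled in Theorem~\ref{thm:pi45ponesmash3} (or rather its suspension-stable consequences), using the $\aone$-simplicial suspension theorem to transport the non-vanishing of $\bpi_{4+5\alpha}^{\aone}(S^{3+3\alpha})$ upward. First I would record that $\nu_n = \Sigma^{(n-2)+(n-2)\alpha} \nu$ is the $(n-3)+(n-3)\alpha$-fold suspension of $\nu_3 = \Sigma^{1+\alpha}\nu$, and that composition with $\nu_3$ induces the map $(\nu_3)_*\colon \bpi_{j+1+5\alpha}^{\aone}(S^{j+3\alpha}) \to \bpi_{j+1}^{\aone}(S^{j+3\alpha})$, which by Corollary~\ref{cor:nugenerates} and Theorem~\ref{thm:pi45ponesmash3} realizes the generator of $\bpi_{j+1+5\alpha}^{\aone}(S^{j+3\alpha}) \iso \Z/24$ as a generator of the subsheaf $\bpi_{4+5\alpha}^{\aone}(S^{3+3\alpha}) \iso \Z/24$ of $\bpi_4^{\aone}(S^{3+3\alpha})$.

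Next I would make precise the target map. Since $\bpi_{n+2}^{\aone}(S^{n+2+n\alpha}) \iso \K^{\MW}_n$ and $\K^\M_{n+2}/24$ is a quotient of $\K^{\MW}_{n+2}$ (via the surjection $\K^{\MW}_{n+2} \to \K^\M_{n+2} \to \K^\M_{n+2}/24$, which is legitimate because $\eta$ acts as zero on the $24$-torsion and on the Milnor quotient), the claim is that precomposition of $\nu_n$ with the appropriate class gives a nonzero map $\K^\M_{n+2}/24 \to \bpi_{n+1}^{\aone}(S^{n+n\alpha})$. The point is that the Milnor--Witt computation $\bpi_p^{\aone}(S^{p+q\alpha}) \iso \K^{\MW}_q$ of Morel identifies the relevant ``indexing'' homotopy sheaf, and the $\aone$-simplicial suspension theorem (Theorem~\ref{thm:EHP_range} together with Remark~\ref{rem:refinesfreudenthal}) guarantees that suspension
\[
\Sigma\colon \bpi_{m+1+5\alpha}^{\aone}(S^{m+3\alpha}) \longrightarrow \bpi_{m+2+5\alpha}^{\aone}(S^{m+1+3\alpha})
\]
is an isomorphism for $m$ large relative to the (constant) gap, since $S^{m+3\alpha}$ becomes arbitrarily highly $\aone$-connected. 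Therefore the $\Z/24$ we found at level $n = 3$ survives, and the induced morphism $(\nu_n)_*$ on the relevant contraction is, after $5$-fold contraction, the identity on $\Z/24$, hence nonzero.

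The one technical point requiring care is to check that the source of the morphism in the statement really is $\K^\M_{n+2}/24$ and not merely a subquotient: this follows by combining the exact sequence~\eqref{eq:9} of Theorem~\ref{thm:pi45ponesmash3}, its stabilization, and the identification of $\bpi_{n+2}^{\aone}$ of the domain sphere with a Milnor--Witt sheaf, together with the fact (Lemma~\ref{lem:aonetensorproductmilnorktheorysheaves}, Lemma~\ref{lem:contractionsfactor}) that the relevant map kills $\mathbf{I}^{n+3}$ and factors through the mod $24$ Milnor quotient. The main obstacle I expect is purely bookkeeping: keeping the bidegrees $(n+n\alpha)$, $(n+2+n\alpha)$, $(n-1+n\alpha)$ straight across the suspension isomorphisms and making sure the suspension theorem applies in the claimed range (i.e. that for $n \geq 4$ the space $S^{n+n\alpha}$ is $\aone$-$(n-1)$-connected with $n - 1$ large enough that $3n-2 > n+1$, which holds since $n \geq 2$), rather than any genuinely new homotopy-theoretic input; all the substance was already extracted in Theorem~\ref{thm:pi45ponesmash3} and Corollary~\ref{cor:nugenerates}.
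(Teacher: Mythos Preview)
Your proposal is in the right spirit—reduce to the $n=3$ case where Theorem~\ref{thm:pi45ponesmash3} and Corollary~\ref{cor:nugenerates} already supply the answer—but the mechanism you invoke for the reduction is not quite the right one, and the argument that the source is $\K^\M_{n+2}/24$ remains a gap.

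The difficulty is that $\nu_n$ differs from $\nu_3 = \Sigma^{1+\alpha}\nu$ by a $\pone$-suspension $\Sigma^{(n-3)(1+\alpha)}$, not merely a simplicial one; the $\aone$-simplicial suspension theorem (Theorem~\ref{thm:EHP_range}) controls only the simplicial direction and therefore does not let you transport the factorization at $n=3$ to general $n$. Your fallback via Lemma~\ref{lem:contractionsfactor} could at best kill $\mathbf{I}^{n+3}$ and hence factor the map through $\K^\M_{n+2}$, but it gives no handle on the mod~$24$ reduction—nothing in a contraction argument sees the integer $24$.

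The paper's proof instead exploits the $\pone$-suspension directly. Since $(\Sigma^{1+\alpha}\nu)_*: \K^{\MW}_5 \to \bpi_4^{\aone}(S^{3+3\alpha})$ factors through $\K^\M_5/24$ (this is what Theorem~\ref{thm:pi45ponesmash3} provides), and since smashing with $S^{(n-3)(1+\alpha)}$ corresponds on first non-vanishing homotopy sheaves to $\aone$-tensoring with $\K^{\MW}_{n-3}$ (Corollary~\ref{cor:homologyofsmashproducts} and Proposition~\ref{prop:aonetensorproductmilnorwittktheorysheaves}), the map $(\nu_n)_*: \K^{\MW}_{n+2} \to \bpi_{n+1}^{\aone}(S^{n+n\alpha})$ factors through $\K^{\MW}_{n-3} \tensor^{\aone} \K^\M_5/24$. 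Lemma~\ref{lem:aonetensorproductmilnorktheorysheaves} then identifies this tensor product with $\K^\M_{n+2}/24$. You do cite that lemma, but the $\pone$-suspension/tensor-product reasoning that makes it applicable—and which simultaneously supplies the mod~$24$ reduction—is exactly the missing ingredient in your outline. Non-triviality is then inherited from Corollary~\ref{cor:nugenerates}, as you say.
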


\begin{proof}
This follows essentially from Corollary \ref{cor:nugenerates}. In more detail, the map $(\nu_n)_*$ determines a morphism $\K^{\MW}_{n+2} \to \bpi_{n+1}^{\aone}(S^{n+n\alpha})$, but by construction, this morphism factors through $\pone$-suspension. In particular, since the map $\K^{\MW}_5 \to \bpi_{4}^{\aone}(S^{3+3\alpha})$ factors through a morphism $\K^\M_5/24 \to \bpi_{4}^{\aone}(S^{3+3\alpha})$, we conclude that for any integer $n \geq 4$, the morphism $\K^{\MW}_{n+2} \to \bpi_{n+1}^{\aone}(S^{n+n\alpha})$ factors through a map $\K^{\MW}_{n-3} \tensor^{\aone} \K^\M_5/24 \to \bpi_{n+1}^{\aone}(S^{n+n\alpha})$. Lemma \ref{lem:aonetensorproductmilnorktheorysheaves} allows us to conclude that $\K^{\MW}_{n-3} \tensor^{\aone} \K^\M_5/24 \iso \K^\M_{n+2}/24$, which is precisely what we wanted to show.
\end{proof}

Recall that in \cite[Theorem 5]{AsokFaselKO}, a morphism
\[
\bpi_{n+1}^{\aone}(S^{n+n\alpha}) \longrightarrow \mathbf{GW}^n_{n+1}
\]
is constructed using ``Suslin matrices". The composite map
\[
\K^{\MW}_{n+2} \longrightarrow \bpi_{n+1}^{\aone}(S^{n+n\alpha}) \longrightarrow \mathbf{GW}^n_{n+1}
\]
is, by means of Lemma \ref{lem:homfromkmwiscontraction}, determined by an element of $(\mathbf{GW}^n_{n+1})_{-n-2}(k)$; since the latter group is trivial by \cite[Proposition 3.4.3]{AsokFaselA3minus0}, we conclude that this composite is trivial. Combining these observations with Theorem \ref{thm:nun} and the connectivity estimate from the $\aone$-simplicial suspension theorem (see Theorem \ref{thm:EHP_range} and Remark \ref{rem:refinesfreudenthal}), we now refine \cite[Conjecture 7]{AsokFaselOberwolfach}.

\begin{conj}
\label{conj:structureofpinplus1}
For any pair of integers $n \geq 4$ and $i \geq 0$, there is an exact sequence of the form
\[
\K^\M_{n+2}/24 \longrightarrow \bpi_{n}^{\aone}(S^{(n-1+i)+n\alpha}) \longrightarrow \mathbf{GW}^n_{n+1};
\]
the right hand map becomes an epimorphism after $(n-3)$-fold contraction, and the sequence becomes a short exact sequence after $n$-fold contraction.
\end{conj}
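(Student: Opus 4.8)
Two of the three maps in the conjectured sequence are already available: the map $\K^{\M}_{n+2}/24 \to \bpi_{n+1}^{\aone}(S^{n+n\alpha})$ is the morphism $(\nu_n)_*$ of Theorem \ref{thm:nun}, and the map $\bpi_{n+1}^{\aone}(S^{n+n\alpha}) \to \mathbf{GW}^n_{n+1}$ is the Suslin-matrix map of \cite[Theorem 5]{AsokFaselKO}. The discussion preceding the conjecture already shows that the composite $\K^{\MW}_{n+2} \to \mathbf{GW}^n_{n+1}$ vanishes, since $\hom(\K^{\MW}_{n+2},\mathbf{GW}^n_{n+1}) \iso (\mathbf{GW}^n_{n+1})_{-n-2}(k) = 0$ by Lemma \ref{lem:homfromkmwiscontraction} together with \cite[Proposition 3.4.3]{AsokFaselA3minus0}. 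Thus the two maps already form a complex, and what remains is exactness in the middle and the two contraction statements. The plan is to run the argument of Theorem \ref{thm:pi45ponesmash3} ``one rank higher'' and then to spread the conclusion over all $i$ by simplicial suspension.

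First I would reduce to the case $i = 0$. Inspecting the EHP sequences of Theorem \ref{thm:EHP_range} applied to the spaces $S^{(n-1+i)+n\alpha}$, and using Morel's identification of their first non-vanishing $\aone$-homotopy sheaves with $\K^{\MW}_n$, one checks that for $n \ge 4$ the simplicial suspension maps between the sheaves $\bpi_{n+i}^{\aone}(S^{(n-1+i)+n\alpha})$, $i \ge 0$, are all isomorphisms; so it suffices to treat $\bpi_{n+1}^{\aone}(S^{n+n\alpha})$ (equivalently $\bpi_n^{\aone}(S^{n-1+n\alpha})$), the sheaf appearing in Theorem \ref{thm:nun}. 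The only discrepancies that could arise in this bookkeeping come from $P$-maps emanating from Morel spheres $\bpi_{\bullet}^{\aone}(S^{\bullet+2n\alpha}) \iso \K^{\MW}_{2n}$; by Theorem \ref{P=piWhitehead} such a $P$-map is composition with a Whitehead square of the identity, and the composite of that Whitehead square with a James--Hopf invariant is pinned down by Theorem \ref{thm:HP} (or, at the level of unstable homotopy classes, by Proposition \ref{prop:s*H2P}), so whatever corrections occur are completely explicit.

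Next I would analyze $\bpi_{n+1}^{\aone}(S^{n+n\alpha})$ by generalizing the geometric input of \cite{AsokFaselA3minus0}. The symplectic orbit-space fiber sequence \eqref{eqn:fibseqa3minus0}, together with the stable-range computation \cite[Proposition 4.2.2]{AsokFaselA3minus0} of the $\aone$-homotopy sheaves of the spaces $SL_{2m}/Sp_{2m}$ in terms of Grothendieck--Witt sheaves, should have counterparts at every rank, yielding the analog of the four-term exact sequence \eqref{eqn:exactsequencea3minus0}, of the shape
\[
\mathbf{GW}^{\bullet}_{\bullet} \longrightarrow \K^{\MW}_{n+2} \longrightarrow \bpi_{n+1}^{\aone}(S^{n+n\alpha}) \longrightarrow \mathbf{GW}^n_{n+1} \longrightarrow 0,
\]
with $\mathbf{GW}^n_{n+1}$ as the right-hand term (providing the right-hand map and its surjectivity before contraction). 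Writing $\mathbf{F}_{n+2}$ for the cokernel of the first map --- the rank-$n$ analog of the sheaf $\mathbf{F}_5$ --- one then observes that $\bpi_{n+1}^{\aone}(S^{n+n\alpha})$ has vanishing $(n+3)$-fold contraction, so by Lemma \ref{lem:contractionsfactor} the map $\K^{\MW}_{n+2} \to \bpi_{n+1}^{\aone}(S^{n+n\alpha})$ factors through $\K^{\MW}_{n+2}/\mathbf{I}^{n+3} = \K^{\M}_{n+2}$; the factor of $24$ then arises from bounding the image of $\mathbf{F}_{n+2}$ by a quotient of $\K^{\M}_{n+2}/24$ via the Chern-class and symbol-map computations of \cite{AsokFaselSpheres}, with Lemma \ref{lem:aonetensorproductmilnorktheorysheaves} used exactly as in the proof of Theorem \ref{thm:nun} to rewrite the relevant $\aone$-tensor products. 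The two contraction assertions should then follow from the known contractions of $\mathbf{GW}^{\bullet}_{\bullet}$, $\K^{\M}_{\bullet}$, $\mathbf{I}^{\bullet}$ and $\mathbf{W}$, precisely as in the proof of Theorem \ref{thm:pi45ponesmash3}.

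The hard part will be this geometric step: the higher-rank analog of the delicate sheaf-level analysis of \cite[\S 4--5]{AsokFaselA3minus0} --- a description of $\mathbf{F}_{n+2}$, and of its image in $\bpi_{n+1}^{\aone}(S^{n+n\alpha})$, precise enough to see that it is a quotient of $\K^{\M}_{n+2}/24$ and that the comparison maps become isomorphisms after $(n-3)$-fold contraction --- is not in the literature and seems genuinely more involved than the $n=3$ case, since one must establish the stable range for $SL_{2m}/Sp_{2m}$ and the vanishing of the relevant high contractions of the Grothendieck--Witt terms uniformly in $n$. Short of that, one can still record unconditionally that the two displayed maps exist, that their composite is zero, and --- from the suspension argument above --- that the right-hand map is surjective after a sufficiently high contraction.
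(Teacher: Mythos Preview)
The statement is labeled a \emph{Conjecture} in the paper, and the paper offers no proof of it. The paragraph preceding the conjecture establishes only what you also record as unconditional: the two maps exist (via Theorem~\ref{thm:nun} and \cite[Theorem 5]{AsokFaselKO}) and their composite vanishes because $(\mathbf{GW}^n_{n+1})_{-n-2}(k)=0$. The paper then \emph{states} the conjecture as a refinement of \cite[Conjecture 7]{AsokFaselOberwolfach}, noting in the subsequent remark that it is an unstable strengthening of Morel's conjecture on the stable $\bpi_1$ of the sphere spectrum.

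Your document is therefore not comparable to a proof in the paper---there is none---and you are right to frame it as a ``sketch of a possible approach.'' The strategy you outline (reduce to a single value of $i$ by simplicial suspension, then seek a rank-$n$ analogue of the four-term sequence \eqref{eqn:exactsequencea3minus0} coming from a higher-rank version of the fibration \eqref{eqn:fibseqa3minus0} and the stable-range computations of \cite{AsokFaselA3minus0}) is the natural one, and you correctly isolate the genuine obstruction: a description of the higher-rank analogue of $\mathbf{F}_5$ precise enough to bound it by $\K^{\M}_{n+2}/24$ and to control its contractions is not in the literature and is exactly what is missing.

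One indexing remark: the displayed sheaf in the conjecture reads $\bpi_{n}^{\aone}(S^{(n-1+i)+n\alpha})$, which vanishes for $i\ge 2$ by connectivity; the intended sheaf is surely $\bpi_{n+i}^{\aone}(S^{(n-1+i)+n\alpha})$, matching the pattern of Theorem~\ref{thm:pi45ponesmash3}. Your suspension reduction already uses this corrected indexing implicitly, which is the right thing to do.
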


\begin{rem}
In private communication from 2005, Morel stated a conjecture about the stable $\bpi_1$ sheaf of the motivic sphere
spectrum. Conjecture \ref{conj:structureofpinplus1} can be thought of as an unstable refinement of Morel's conjecture. Morel's conjecture has been verified in various situations. K. Ormsby and P.-A. {\O}stv{\ae}r verified Morel's conjecture after taking sections over fields of small cohomological dimension \cite{OrmsbyOstvaer}. Much more generally, work of P.-A. {\O}stv{\ae}r, O. R{\"o}ndigs and M. Spitzweck has verified Morel's conjecture over fields having characteristic $0$ \cite{OstvaerRondigsSpitzweck} (or, more generally, after inverting the characteristic exponent of the base field). While these results provide evidence for Conjecture \ref{conj:structureofpinplus1}, without a version of the suspension theorem for $\pone$-suspension these stable results do not imply our conjecture.
\end{rem}

\subsection{Other computations}
\label{ss:miscellaneouscomputations}
In this section, we establish non-triviality of unstable rationalized $\aone$-homotopy sheaves of motivic spheres. We then go on to compute the first $S^1$-stable $\aone$-homotopy sheaf of a mod $m$ motivic Eilenberg--MacLane space.

\subsubsection*{Rationalized \texorpdfstring{$\aone$}{A1}-homotopy sheaves of spheres}
Morel's computations of $\aone$-homotopy sheaves of spheres yield isomorphisms $\bpi^{\aone}_{2n-1 } S^{2n-1 + 2q
  \alpha} \iso \K^{\MW}_{2q}$ for $2n-1 \geq 2$ \cite[Theorem 6.40]{MField}. By \cite[Theorem 6.13]{MField} (see also \cite[Corollary 6.43]{MField}), for any integer $j$ there are induced isomorphisms $\pi^{\aone}_{2n-1 + j \alpha} S^{2n-1 + 2q \alpha} \iso (\K^{\MW}_{2q})_{-j}$. In these degrees, the James--Hopf invariant map ${\mathrm H}$ of Section \ref{ss:aoneehp} yields a morphism
\[
{\mathrm H}: \bpi^{\aone}_{2n-1 + j \alpha} S^{n + q \alpha} \longrightarrow \bpi^{\aone}_{2n-1 + j \alpha} S^{2n-1 + 2q \alpha} \iso \K^{\MW}_{2q-j}.
\]
We now study the rationalized version of this map. The next result provides an analog of the fact, due to Hopf, that there is a surjection $\pi_{4n-1}(S^{2n}) \to \Z$.

\begin{thm}
\label{thm:rationalized}
Fix a base field $k$, assumed to be perfect and to have characteristic unequal to $2$. Let $n > 2$, $q \geq 2$ be even integers.
\begin{enumerate}[noitemsep,topsep=1pt]
\item For any integer $j \ge 0$, the sequence of sheaves \[
\bpi^{\aone}_{2n-2+j\alpha} S^{n-1 + q \alpha} \tensor \Q \stackrel{{\mathrm E} \tensor \Q}{\longrightarrow} \bpi^{\aone}_{2n-1+j\alpha} S^{n + q \alpha} \tensor \Q \stackrel{{\mathrm H} \tensor \Q}{\longrightarrow} \K^{\MW}_{2q-j} \tensor \Q \longrightarrow 0
\]
is exact.
\item If $k$ is not formally real, then for any integer $j$ satisfying $0\le j \leq 2q$, the sheaf $\bpi^{\aone}_{2n-1+j\alpha} S^{n + q \alpha} \tensor \Q$ is non-trivial.
\item If $k$ is formally real, then for any integer $j\ge 0$, $\bpi^{\aone}_{2n-1+j\alpha} S^{n + q \alpha} \tensor \Q$ is non-trivial.
\end{enumerate}
\end{thm}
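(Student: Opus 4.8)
The strategy is to deduce everything from the $\aone$-EHP sequence of Theorem~\ref{thm:EHP_range} applied to the $\aone$-$(n-2)$-connected space $\mathscr{X} := S^{n-1+q\alpha}$; since $n$ is an even integer with $n>2$ we have $n \geq 4$, so this connectivity hypothesis and all of the range restrictions invoked below are satisfied. First I would extract from the EHP sequence the exact portion in internal degree $2n-2$, identify the middle terms via the James model (Theorem~\ref{thm:aonejamesconstruction}) and Morel's computation $\bpi^{\aone}_m(S^{m+r\alpha}) \cong \K^{\MW}_r$ for $m \geq 2$ (\cite[Theorems 6.40 and 1.23]{MField}), and obtain
\[
\bpi^{\aone}_{2n-2}(S^{n-1+q\alpha}) \xrightarrow{\;{\mathrm E}\;} \bpi^{\aone}_{2n-1}(S^{n+q\alpha}) \xrightarrow{\;{\mathrm H}\;} \K^{\MW}_{2q} \xrightarrow{\;{\mathrm P}\;} \bpi^{\aone}_{2n-3}(S^{n-1+q\alpha}),
\]
which is genuinely part of the exact sequence because $2n-2 \leq 3(n-1)-2$. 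All sheaves here are strictly $\aone$-invariant, so I may apply the contraction functor $j$ times --- exact by Lemma~\ref{lem:contractionisexact} --- and then $-\otimes\Q$ (also exact); using $\bpi^{\aone}_{i+j\alpha} = (\bpi^{\aone}_i)_{-j}$ and $(\K^{\MW}_{2q})_{-j} = \K^{\MW}_{2q-j}$ (\cite[Theorem 6.13]{MField}) this already yields the exactness of the interior of the sequence in part (1). What remains is that ${\mathrm H}\otimes\Q$ is surjective, equivalently --- by exactness one further step to the right --- that ${\mathrm P}\otimes\Q = 0$.

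This is the heart of the argument. By Theorem~\ref{P=piWhitehead}, applied with $\mathscr{Z} = S^{n-2+q\alpha}$ (valid since $2n-3 \leq 3(n-1)-4$, i.e.\ $n \geq 4$), the map ${\mathrm P}$ is, up to the two-fold suspension isomorphism on its source, the morphism $w_\ast\colon \K^{\MW}_{2q} = \bpi^{\aone}_{2n-3}(S^{2n-3+2q\alpha}) \to \bpi^{\aone}_{2n-3}(S^{n-1+q\alpha})$ induced by post-composition with the Whitehead square $w := [\id_{S^{n-1+q\alpha}}, \id_{S^{n-1+q\alpha}}]\colon S^{2n-3+2q\alpha} \to S^{n-1+q\alpha}$. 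I would then show that $w$ satisfies the bigraded analogue of the classical graded-commutativity relation for the Whitehead square of an odd sphere. Unwinding Construction~\ref{construction:whiteheadproduct} --- the commutator identity $[b,a] = [a,b]^{-1}$ inside the ambient $H$-group, together with the relation $\nabla \circ T = \nabla$ for the fold and wedge-swap maps --- gives $w \simeq w \circ \rho$, where $\rho := \nu \circ \Sigma e$, with $e$ the exchange of the two factors of $S^{n-2+q\alpha} \wedge S^{n-2+q\alpha}$ and $\nu$ the map reversing the suspension coordinate. By the $\aone$-degree computation carried out in the proof of Theorem~\ref{thm:HP} (cf.\ also Proposition~\ref{prop:s*H2P}), $\rho$ has $\aone$-degree $(-1)\cdot(-1)^{n-2}\epsilon^q = (-1)^{n-1}\epsilon^q$, and this equals $-1 \in GW(k) = \K^{\MW}_0(k)$ precisely because $n$ and $q$ are both even. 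Since $\bpi^{\aone}_{2n-3}(S^{2n-3+2q\alpha}) = \K^{\MW}_{2q}$ is the Hurewicz-degree $\aone$-homotopy sheaf of $S^{2n-3+2q\alpha}$, post-composition with $\rho$ acts on it by multiplication by the $\aone$-degree of $\rho$, i.e.\ by $-1$; hence $w_\ast = w_\ast \circ \rho_\ast = -w_\ast$, so $2w_\ast = 0$. As contraction is additive, ${\mathrm P}$ is $2$-torsion as a morphism of sheaves, and therefore ${\mathrm P}\otimes\Q = 0$. This completes part (1).

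Parts (2) and (3) are then formal. By part (1), $\bpi^{\aone}_{2n-1+j\alpha}(S^{n+q\alpha})\otimes\Q$ surjects onto $\K^{\MW}_{2q-j}\otimes\Q$, so it suffices to know the target is non-trivial. If $k$ is not formally real and $0 \leq j \leq 2q$ then $2q-j \geq 0$ and $\K^{\MW}_{2q-j}\otimes\Q \neq 0$ by Corollary~\ref{cor:nontrivialityofrationalization}(1), while if $k$ is formally real then $\K^{\MW}_{m}\otimes\Q \neq 0$ for every integer $m$ by Corollary~\ref{cor:nontrivialityofrationalization}(2), giving the conclusion for all $j \geq 0$.

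The main obstacle throughout is the rational vanishing of ${\mathrm P}$: the delicate point is the sign-and-degree bookkeeping for the motivic Whitehead square --- establishing the symmetry $w \simeq w \circ \rho$ from Construction~\ref{construction:whiteheadproduct} and computing $\deg(\rho)$ --- which is exactly the place where both parity hypotheses on $n$ and $q$, and the inequality $n \geq 4$ needed to invoke Theorem~\ref{P=piWhitehead}, are used; by contrast, the identification of the exact sequence, the passage through contraction and rationalization, and the deduction of parts (2) and (3) are routine.
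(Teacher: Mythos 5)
Your proof is correct, and the overall skeleton (EHP sequence for $\mathscr{X}=S^{n-1+q\alpha}$, exactness of contraction and rationalization via \cite[Theorem 6.13]{MField} and Lemma \ref{lem:contractionisexact}, then Corollary \ref{cor:nontrivialityofrationalization} for parts (2) and (3)) matches the paper. Where you genuinely diverge is the surjectivity of ${\mathrm H}\otimes\Q$: the paper simply invokes Theorem \ref{thm:HP}, which says the composite ${\mathrm H}{\mathrm P}$ (with ${\mathrm P}$ taken from the EHP sequence of the next sphere $S^{n+q\alpha}$, so $p=n-1$ odd and $q$ even) is multiplication by $1+(-1)^{n+q}\langle-1\rangle^q=2$ on $\K^{\MW}_{2q}$, hence rationally invertible, forcing ${\mathrm H}\otimes\Q$ to be onto. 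You instead kill the boundary map ${\mathrm P}:\K^{\MW}_{2q}\to\bpi^{\aone}_{2n-3}(S^{n-1+q\alpha})$ of the same EHP sequence rationally, by identifying it via Theorem \ref{P=piWhitehead} (your range check $2n-3\le 3(n-1)-4$, i.e.\ $n\ge 4$, is the right one) with post-composition by the Whitehead square $w$, and then proving the antisymmetry $w\simeq w\circ\rho$ with $\deg\rho=(-1)^{n-1}\epsilon^q=-1$, so $2w_*=0$. That symmetry is not stated anywhere in the paper, but your derivation is sound: the commutator identity $c(b,a)=c(a,b)^{-1}$ in $[\Sigma(\mathscr{Z}\times\mathscr{Z}),\Sigma\mathscr{Z}]$, the injectivity of restriction from the smash to the product (which the paper records right after Construction \ref{construction:whiteheadproduct}), and the degree bookkeeping for $\nu\circ\Sigma e$ taken from the proof of Theorem \ref{thm:HP} give exactly $w\simeq w\circ\rho$; and the fact that post-composition with an endomorphism of the sphere acts by its $GW$-class on the Hurewicz-degree sheaf is the same fact the paper uses in proving Theorem \ref{thm:HP}. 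The trade-off: the paper's route is shorter because Theorem \ref{thm:HP} already packages the computation and gives the sharper integral statement ${\mathrm H}{\mathrm P}=2$, whereas your route is closer to the classical argument that the Whitehead square of an odd-dimensional identity is $2$-torsion and yields the (also integral, and not stated in the paper) refinement $2{\mathrm P}=0$ in this range; both suffice rationally.
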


\begin{proof}
Tensoring with $\Q$ and contraction are exact functors on the category of strictly $\aone$-invariant sheaves of abelian groups (cf. Lemma \ref{lem:contractionisexact}). Combining \cite[Theorem 6.13]{MField} with the exact sequence of Theorem \ref{thm:EHP_range} (which applies since $n \geq 3$ by assumption) and then tensoring with $\Q$, we obtain exactness of the above sequence at $ \pi^{\aone}_{2n-1} S^{n + q \alpha} \tensor \Q$. Since $n$ and $q$ are even by assumption, the class $1+ (-1)^{n+q}\langle -1 \rangle^q = 2$. Surjectivity of ${\mathrm H} \tensor \Q$ follows from Theorem \ref{thm:HP}.

Points (2) and (3) follow immediately from Corollary \ref{cor:nontrivialityofrationalization}.
\end{proof}

\begin{rem}
A corresponding statement holds for $q = 0$ as well, but that result follows immediately from the classical computation of non-zero rational homotopy groups of spheres.
\end{rem}

\subsubsection*{Some \texorpdfstring{$S^1$}{S1}-stable \texorpdfstring{$\aone$}{A1}-homotopy sheaves of motivic Eilenberg--MacLane spaces}
Set $K_n:= K(\Z(n),2n)$ and $K_n/m := K(\Z/m(n),2n)$ where for an abelian group $A$, the space $K(A(n),2n)$ is a motivic Eilenberg--MacLane space in the sense of Voevodsky; see, for example, \cite[\S 2]{VRed}. We write $\H^i_{\et}(\mu_m^{\tensor n})$ for the Nisnevich sheafification of the presheaf $U \mapsto H^i_{\et}(U,\mu_m^{\tensor m})$. In the next result, which is an analog of a result appearing in \cite[Example 5.11]{Barcus}, we adhere to Convention \ref{convention:loops}.

\begin{thm}
\label{thm:sonestableemspaces}
Assume $k$ is a field having characteristic exponent $p$. Fix integers $i \geq 1$ and $m,n \geq 2$ and assume $m$ is coprime to $p$.
\begin{enumerate}[noitemsep,topsep=1pt]
\item The space $\Sigma^i K_n/m$ is $\aone$-$(n+i-1)$-connected.
\item If $j$ is an integer satisfying $0 \leq j \leq n-1$, then there are isomorphisms of the form
\[
\bpi_{n+j+i}^{\aone}(\Sigma^i K_n/m) \isomto \H^{n-j}_{\et}(\mu_m^{\tensor n}).
\]
\item There is an exact sequence of the form
\[
\H^0_{\et}(\mu_m^{\tensor n}) \longrightarrow \bpi_{2n+i}^{\aone}(\Sigma^i K_n/m) \longrightarrow \K^\M_{2n}/m \longrightarrow 0,
\]
and $\H^0_{\et}(\mu_m^{\tensor n})$ is killed by a single contraction.
\end{enumerate}
\end{thm}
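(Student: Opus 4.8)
The plan is to first pin down the simplicial $\aone$-homotopy sheaves of $K_n/m$ using $\aone$-locality together with the norm residue theorem, and then to propagate this information through the $\aone$-EHP sequence of Theorem~\ref{thm:EHP_range}, controlling the two ``edge'' degrees by contraction arguments and by a Boardman--Steer style computation of the relevant James--Hopf invariant. Since $m$ is coprime to $p$, the complex $\Z/m(n)=\Z(n)\tensor^{\mathrm L}\Z/m$ of Nisnevich sheaves is $\aone$-local, being the cone of multiplication by $m$ on the $\aone$-local complex $\Z(n)$ (cf.\ \cite{MVW}); hence $K_n/m=K(\Z/m(n),2n)$ is an $\aone$-local space and $\bpi^{\aone}_q(K_n/m)=\bpi_q(K_n/m)=\mathcal H^{2n-q}_{\Nis}(\Z/m(n))$. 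By the norm residue theorem \cite{VMod2,VModl} and the Beilinson--Lichtenbaum identification it yields, $\mathcal H^a_{\Nis}(\Z/m(n))\iso\H^a_{\et}(\mu_m^{\tensor n})$ for $a\le n$ and $\mathcal H^a_{\Nis}(\Z/m(n))=0$ for $a>n$ (the latter since $\Z(n)$ sits in cohomological degrees $\le n$). Thus $K_n/m$ is $\aone$-$(n-1)$-connected, $\bpi_n^{\aone}(K_n/m)\iso\K^\M_n/m$ (Nesterenko--Suslin--Totaro, \cite{MVW}), and $\bpi_{n+j}^{\aone}(K_n/m)\iso\H^{n-j}_{\et}(\mu_m^{\tensor n})$ for $0\le j\le n$, all of these being strictly $\aone$-invariant; I would also record the contraction formula $(\H^a_{\et}(\mu_m^{\tensor n}))_{-1}\iso\H^{a-1}_{\et}(\mu_m^{\tensor(n-1)})$, a consequence of the Gysin sequence for $\gm{}\subset\aone$ and homotopy invariance of étale cohomology with coefficients prime to $p$. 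In particular $\H^0_{\et}(\mu_m^{\tensor n})$ is killed by a single contraction, which is the last assertion of part~(3).

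Part~(1) is then immediate: $\Sigma^i K_n/m$ is simplicially $(n+i-1)$-connected, hence $\aone$-$(n+i-1)$-connected by Theorem~\ref{thm:unstablenconnectivity}. For parts~(2) and~(3) I would iterate Theorem~\ref{thm:EHP_range}: for $\mathscr X$ that is $\aone$-$(N-1)$-connected the suspension $\bpi_q^{\aone}(\mathscr X)\to\bpi_{q+1}^{\aone}(\Sigma\mathscr X)$ is an isomorphism for $q\le 2N-2$, so when $0\le j\le n-2$ every suspension $\bpi_{n+j+\ell}^{\aone}(\Sigma^\ell K_n/m)\to\bpi_{n+j+\ell+1}^{\aone}(\Sigma^{\ell+1}K_n/m)$ is an isomorphism, which gives the isomorphisms of part~(2) in this range. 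Using Theorem~\ref{thm:lowdegree}, Corollary~\ref{cor:homologyofsmashproducts} and Lemma~\ref{lem:aonetensorproductmilnorktheorysheaves} the ``quadratic'' term is identified as $\bpi_{2n}^{\aone}(J((K_n/m)^{\sma 2}))\iso\K^\M_n/m\tensor^{\aone}\K^\M_n/m\iso\K^\M_{2n}/m$.

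For $j=n-1$, Theorem~\ref{thm:lowdegree} applied to $\mathscr X=K_n/m$ gives an exact sequence $\bpi_{2n+1}^{\aone}(\Sigma K_n/m)\overset{\mathrm H}{\to}\K^\M_{2n}/m\overset{\mathrm P}{\to}\H^1_{\et}(\mu_m^{\tensor n})\overset{\mathrm E}{\to}\bpi_{2n}^{\aone}(\Sigma K_n/m)\to 0$; since $\H^1_{\et}(\mu_m^{\tensor n})$ is killed by two contractions, $\mathrm P_{-2}=0$, and as $2n\ge 2$ the argument of Lemma~\ref{lem:contractionsfactor}(\ref{i:cf1}), applied to the surjection $\K^{\MW}_{2n}\onto\K^\M_{2n}/m$, forces $\mathrm P=0$. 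Hence $\mathrm E$ is an isomorphism, and combining with the metastable range gives $\bpi_{2n-1+i}^{\aone}(\Sigma^i K_n/m)\iso\H^1_{\et}(\mu_m^{\tensor n})$ for all $i\ge 1$. Feeding this vanishing back into the $\aone$-EHP sequence of $K_n/m$ (Theorem~\ref{thm:EHP_range}), using $\bpi^{\aone}_{2n}(J(K_n/m))\iso\bpi^{\aone}_{2n+1}(\Sigma K_n/m)$ from Theorem~\ref{thm:aonejamesconstruction}, collapses it in degree $2n$ to the exact sequence $\H^0_{\et}(\mu_m^{\tensor n})=\bpi_{2n}^{\aone}(K_n/m)\overset{\mathrm E}{\to}\bpi_{2n+1}^{\aone}(\Sigma K_n/m)\overset{\mathrm H}{\to}\K^\M_{2n}/m\to 0$, which is part~(3) for $i=1$.

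It remains to treat part~(3) for $i\ge 2$, where the metastable range only yields $\bpi_{2n+i}^{\aone}(\Sigma^i K_n/m)\iso\bpi_{2n+2}^{\aone}(\Sigma^2 K_n/m)$ together with a surjection $\bpi_{2n+1}^{\aone}(\Sigma K_n/m)\onto\bpi_{2n+2}^{\aone}(\Sigma^2 K_n/m)$ whose kernel is the image of the map $\mathrm P\colon\K^\M_{2n}/m\to\bpi_{2n+1}^{\aone}(\Sigma K_n/m)$ from the EHP sequence of $\Sigma K_n/m$. I would show this $\mathrm P$ vanishes by computing the $E_1$-differential $\mathrm H\mathrm P\colon\K^\M_{2n}/m\to\K^\M_{2n}/m$ obtained by post-composing with the $\mathrm H$ of the $i=1$ sequence. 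By Theorem~\ref{P=piWhitehead}, $\mathrm P$ is composition with the Whitehead square $[\id_{\Sigma K_n/m},\id_{\Sigma K_n/m}]$; the formula of Proposition~\ref{prop:s*H2P} expressing $\mathrm H[\id_{\Sigma Z},\id_{\Sigma Z}]$ in terms of the exchange map $e$ on $Z\wedge Z$ is established there for $Z$ a suspension, but it continues to hold in homological degree $2n$ for $Z=K_n/m$ because the cup-product terms in its proof factor through the reduced diagonal of $K_n/m$, which is trivial on $\widetilde{\H}^{\aone}_{j}$ for $j\le 2n-1$ as $(K_n/m)^{\sma 2}$ is $\aone$-$(2n-1)$-connected. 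Thus $\mathrm H\mathrm P=\id-e_*$ on $\bpi_{2n}^{\aone}((K_n/m)^{\sma 2})\iso\K^\M_n/m\tensor^{\aone}\K^\M_n/m\iso\K^\M_{2n}/m$, and, exactly as in the proof of Theorem~\ref{thm:HP}, $e_*$ is a sign: the Koszul sign $(-1)^{n^2}$ incurred by the graded flip of two classes of degree $n$ cancels the antisymmetry sign $(-1)^{n^2}$ of the Milnor $K$-theory product, so $e_*=\id$ and $\mathrm H\mathrm P=0$. Hence $\operatorname{Im}\mathrm P\subseteq\ker\mathrm H$, which by the $i=1$ sequence is a quotient of $\H^0_{\et}(\mu_m^{\tensor n})$, hence killed by one contraction, and a final application of Lemma~\ref{lem:contractionsfactor}(\ref{i:cf1}) (again $2n\ge 1$) gives $\mathrm P=0$; therefore $\bpi_{2n+i}^{\aone}(\Sigma^i K_n/m)\iso\bpi_{2n+1}^{\aone}(\Sigma K_n/m)$ and the exact sequence of part~(3) holds for every $i\ge 1$. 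The hard part is precisely this last step: one must check that the failure of $K_n/m$ to be a suspension does not disturb the Boardman--Steer identification of the James--Hopf invariant of the Whitehead square, and that the exchange-map degree is $+1$ rather than something involving a factor of $2$ (otherwise the claimed surjection onto $\K^\M_{2n}/m$ would fail for even $m$) --- the connectivity of $K_n/m$ settles the former, and the Milnor-relation sign cancellation settles the latter. The statement is the $\aone$-analogue of \cite[Example 5.11]{Barcus}.
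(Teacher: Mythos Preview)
Your argument follows the paper's closely for parts (1), (2), and the $i=1$ case of (3): compute $\bpi^{\aone}_*(K_n/m)$ via Beilinson--Lichtenbaum, then push through the EHP sequence, using contraction to kill the map $\mathrm P\colon \K^{\M}_{2n}/m\to \H^1_{\et}(\mu_m^{\tensor n})$. This is exactly what the paper does.

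Where you diverge is at the passage from $i=1$ to $i\ge 2$ in part (3). The paper simply asserts that for $i\ge 1$ and $0\le j\le n$ the suspension $\bpi_{n+j+i}^{\aone}(\Sigma^i K_n/m)\to\bpi_{n+j+i+1}^{\aone}(\Sigma^{i+1}K_n/m)$ is an isomorphism by Freudenthal; strictly speaking, at $i=1$, $j=n$ Freudenthal only gives a surjection (since $\Sigma K_n/m$ is $n$-connected and $2n+1$ exceeds the isomorphism range $2n$). You correctly spot this and set out to prove the missing injectivity by showing $\mathrm P\colon\K^{\M}_{2n}/m\to\bpi_{2n+1}^{\aone}(\Sigma K_n/m)$ vanishes, via the $E_1$-differential $\mathrm H\mathrm P$. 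Your computation of $e_*$ on $\K^{\M}_{2n}/m$ is fine: the Koszul sign $(-1)^{n^2}$ from Eilenberg--Zilber cancels the graded-commutativity sign $(-1)^{n^2}$ in Milnor $K$-theory, so $e_*=\id$ and the intended conclusion is $\mathrm H\mathrm P=0$.

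The weak point is your extension of Proposition~\ref{prop:s*H2P} to $Z=K_n/m$, which is not a suspension. The proof of that proposition, via Proposition~\ref{lambda2WhiteheadProd_suspensions}, needs the self-cup-products $\pi_X^*\alpha^{-1}\cuppr\pi_X^*\alpha$ to vanish; for a suspension this is Lemma~\ref{lem:cupproduct}(2). Your substitute---that the reduced diagonal $K_n/m\to(K_n/m)^{\sma 2}$ is trivial on $\widetilde{\H}^{\aone}_j$ for $j\le 2n-1$---is true but does not obviously suffice: you are evaluating on $\bpi_{2n+2}^{\aone}$ after two suspensions, which is precisely the first degree where the target $\Sigma^2(K_n/m)^{\sma 2}$ has nontrivial homotopy, so connectivity alone does not kill the error term. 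To make this rigorous you would have to trace the self-cup-product contributions through the stable splitting $\Sigma(Z\times Z)\simeq\Sigma Z\vee\Sigma Z\vee\Sigma(Z^{\sma 2})$ and check that their projection to the $\Sigma(Z^{\sma 2})$ summand induces zero on $\bpi_{2n+2}^{\aone}$; this is plausible (the self-cup terms factor through $\Sigma^2 Z$ via a projection, and one expects them to land in the $\Sigma Z$-summands), but it is not what you wrote. In short: you have identified a genuine wrinkle that the paper glosses over, and your proposed repair is on the right track, but the key step---controlling the Boardman--Steer error terms when $Z$ is not a suspension---needs a more careful argument than ``trivial on low-degree homology.''
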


\begin{proof}
The space $K_n/m$ is $\aone$-$(n-1)$-connected and it is possible to describe all its higher $\aone$-homotopy sheaves. If $m$ is prime to $p$ by the Bloch--Kato conjecture (in Beilinson--Lichtenbaum form) together with $\aone$-representability of mod-$m$ motivic cohomology \cite{VMod2,VModl}, there are isomorphisms of the form $\bpi_{n+r}^{\aone}(K_n/m) \cong \H^{n-r}_{\et}(\mu_m^{\tensor n})$. In particular, $\H^{n-r}_{\et}(\mu_m^{\tensor n})$ is isomorphic to $\K^\M_n/m$ for $r = 0$ and vanishes for $r > n$.

We begin by investigating what happens after a single suspension. By the $\aone$-Freudenthal suspension theorem, the map $\bpi_{r}^{\aone}(K_n/m) \to \bpi_{r+1}^{\aone}(\Sigma K_n/m)$ is an isomorphism for $r \leq 2n-2$. We now show that this map is an isomorphism for $r = 2n-1$ as well.

Theorem \ref{thm:lowdegree} applied with $\mathscr{X} = K_n/m$ yields an exact sequence of the form
\[
\bpi_n^{\aone}(K_n/m) \tensor^{\aone} \bpi_n^{\aone}(K_n/m) \longrightarrow \bpi_{2n-1}^{\aone}(K_n/m) \longrightarrow \bpi_{2n}^{\aone}(\Sigma K_n/m) \longrightarrow 0.
\]
By the discussion of the previous paragraph combined with Lemma \ref{lem:aonetensorproductmilnorktheorysheaves}, we conclude that $\bpi_n^{\aone}(K_n/m) \tensor^{\aone} \bpi_n^{\aone}(K_n/m) \cong \K^\M_n/m \tensor^{\aone} \K^\M_n/m \cong \K^\M_{2n}/m$. Therefore, in the above exact sequence the left hand map is a map $\K^\M_{2n}/m \cong \bpi_n^{\aone}(K_n/m) \tensor^{\aone} \bpi_n^{\aone}(K_n/m) \to \bpi_{2n-1}^{\aone}(K_n/m)$.

One knows $(\H^{i}_{\et}(\mu_m^{\tensor n}))_{-s} \cong \H^{i-s}_{\et}(\mu_m^{\tensor n-s})$ (appeal to \cite[Example
23.3]{MVW} and sheafify).  Since \'etale cohomology vanishes in negative degrees, we conclude that
$\H^{1}_{\et}(\mu_m^{\tensor n}))$ is killed by $2$-contractions. Since there is an epimorphism $\K^{\MW}_{2n} \to
\K^\M_{2n}/m$, and $(\K^\M_1/m)_{-2n} = 0$, by appealing to Lemma \ref{lem:homfromkmwiscontraction}, we may conclude
that left hand morphism in the exact sequence displayed in the previous paragraph is the trivial map. Therefore, $\H^1_{\et}(\mu_m^{\tensor n}) \cong \bpi_{2n-1}^{\aone}(K_n/m) \to \bpi_{2n}^{\aone}(\Sigma K_n/m)$ is an isomorphism.

In light of the discussion above, by reading the exact sequence of Theorem \ref{thm:lowdegree} further to the left, we conclude that there is a short exact sequence of the form
\[
\bpi_{2n}^{\aone}(K_n/m) \longrightarrow \bpi_{2n+1}^{\aone}(\Sigma K_n/m) \longrightarrow \K^\M_{2n}/m \longrightarrow 0
\]
Then, $\bpi_{2n}^{\aone}(K_n/m) \cong \H^0_{\et}(\mu_m^{\tensor n})$ and this sheaf is killed by a single contraction as discussed in the previous paragraph.

For $i \geq 1$, and $0 \leq j \leq n$ the map $\bpi_{n+j+i}(\Sigma^i K_n/m) \to \bpi_{n+j+i+1}^{\aone}(\Sigma^{i+1}K_n/m)$ is an isomorphism by the $\aone$-Freudenthal suspension theorem. Combining these observations establishes the points listed above.
\end{proof}

\begin{rem}
It is possible to treat the case where $m$ is a power of $p$ as well, but the answer is simpler. If $m$ is a power of $p$, then Geisser--Levine computed the homotopy sheaves of $K_n/m$: by \cite[Theorem 8.3]{GeisserLevine}, $\bpi_i^{\aone}(K_n/m)$ is non-vanishing if and only if $i = n$, in which case $\bpi_n^{\aone}(K_n/m)$ may be described as the unramfied Milnor K-theory sheaf $\K^M_n/m$ (see \cite[Theorem 8.1]{GeisserLevine}). In this case, we conclude that $\pi_{n+j+i}^{\aone}(\Sigma^i K_n/m)$ simply vanishes for $1 < i \leq n-1$.
\end{rem}

{\begin{footnotesize}
\raggedright
\bibliographystyle{alpha}
\bibliography{Jamesconstruction}
\end{footnotesize}
}
\Addresses
\end{document}